\documentclass[12pt]{amsart}
\usepackage{amsmath, mathabx}
\usepackage{amsxtra}
\usepackage{amscd}
\usepackage{amsthm}
\usepackage{amsfonts}
\usepackage{amssymb}
\usepackage{mathrsfs}
\usepackage{mathbbol}

\usepackage {pstricks}
\usepackage {tikz}
\usepackage{pstricks,pst-node}
\usepackage[all]{xy}
\usepackage{mathbbol}

\usepackage[scr=esstix,cal=boondox]{mathalfa} 

\usepackage{verbatim}
\usepackage{enumitem}
\usepackage{multicol}

\usepackage{dashundergaps}
\usepackage{tikz-cd}

\usepackage{graphicx}

\usepackage{epigraph}
\usepackage{physics}

\usepackage{lmodern}

\usepackage{stmaryrd}

\newtheorem{theorem}{Theorem}[section]

\newtheorem{lemma}[theorem]{Lemma}

\newtheorem{corollary}[theorem]{Corollary}

\theoremstyle{definition}
\newtheorem{definition}[theorem]{Definition}
\newtheorem{example}[theorem]{Example}

\newtheorem{notation}[theorem]{Notation}

\theoremstyle{remark}
\newtheorem{remark}[theorem]{Remark}

\numberwithin{equation}{section}
\numberwithin{figure}{section}

\DeclareRobustCommand{\loongrightarrow}{%
  \DOTSB\relbar\joinrel\relbar\joinrel\relbar\joinrel\rightarrow
}

\newcommand{\mc}{\mathcal}

\newcommand{\be}{\begin{equation}}
\newcommand{\ee}{\end{equation}}

\newcommand{\C}{{\mathbb C}}
\newcommand{\R}{{\mathbb R}}
\newcommand{\Z}{{\mathbb Z}}

\newcommand{\K}{{\mathbb K}}

\newcommand{\BE}{{\mathbb E}}

\newcommand{\CA}{{\mathcal A}}
\newcommand{\CB}{{\mathcal B}}
\newcommand{\CC}{{\mathcal C}}

\newcommand{\CL}{{\mathcal L}}

\newcommand{\CT}{{\mathcal T}}

\newcommand{\CJ}{\mc J}

\newcommand{\SJ}{{\mathscr J}}

\newcommand{\SD}{{\mathscr D}}

\newcommand{\SR}{{\mathscr R}}

\newcommand{\mf}{\mathfrak}

\newcommand{\fg}{{\mf g}}
\newcommand{\fh}{{\mf h}}
\newcommand{\fb}{{\mf b}}
\newcommand{\fn}{{\mf n}}

\newcommand{\fsl}{{\mathfrak {sl}}}
\newcommand{\fsp}{{\mathfrak {sp}}}
\newcommand{\fso}{{\mathfrak {so}}}

\newcommand{\gl}{{\mathfrak {gl}}}
\newcommand{\osp}{{\mathfrak {osp}}}

\newcommand{\id}{{\rm{id}}}

\newcommand{\U}{{\rm{U}}}
\newcommand{\End}{{\rm{End}}}
\newcommand{\Hom}{{\rm{Hom}}}

\newcommand{\GL}{{\rm{GL}}}

\newcommand{\Sym}{{\rm{Sym}}}

\newcommand{\wt}{\widetilde}

\newcommand{\ol}{\overline}

\newcommand{\ot}{\otimes}
\newcommand{\OSp}{{\rm OSp}}

\newcommand{\beq}{\begin{eqnarray}}
\newcommand{\eeq}{\end{eqnarray}}

\newcommand{\baln}{\begin{aligned}}
\newcommand{\ealn}{\end{aligned}}

\newcommand{\lra}{\longrightarrow}

\newcommand{\wh}{\widehat}

\newcommand{\Vect}{{{\mathfrak V}(\Gamma, \omega)}}

\newcommand{\brhd}{\blacktriangleright}
\newcommand{\blhd}{\blacktriangleleft}

\newcommand{\typeA}
{\begin{picture}(50, 20)(0, 0)
\put(5, 10){\circle{10}}
\put(10, 10){\line(1, 0){10}}
\put(20, 9){...}
\put(30, 10){\line(1, 0){10}}
\put(45, 10){\circle{10}}
\end{picture}}

\newcommand{\typeGA}
{\begin{picture}(50, 20)(0, 0)
\put(5, 10){\circle{10}}\put(5, 10){\circle{5}}
\put(10, 10){\line(1, 0){10}}
\put(20, 9){...}
\put(30, 10){\line(1, 0){10}}
\put(45, 10){\circle{10}}\put(45, 10){\circle{5}}
\end{picture}}

\begin{document}

\normalfont

\title[Quantum groups of Lie colour algebras]
{Quantum groups of Lie colour algebras \\ fulfilling the Cartan--Weyl paradigm}

\author{R.B. Zhang}
\address{School of Mathematics and Statistics,
The University of Sydney, Sydney, N.S.W. 2006, Australia}
\email{ruibin.zhang@sydney.edu.au}
\date{\today}

\begin{abstract}
Let $\Gamma$ be an additive abelian group with a commutative factor $\omega$. 
We describe the simple and untwisted affine Lie colour algebras which admit a 
Cartan-Weyl description.
For both classes, we construct the quantised universal enveloping colour algebras,
 and establish their quasi-triangular Hopf colour algebraic structure.  
These ``colour quantum groups'' generalise 
the Drinfeld–Jimbo quantum groups, 
which are recovered when $\Gamma$ is trivial.
They provide an algebraic foundation for investigations in knot theory 
and soluble models in statistical mechanics.
\end{abstract}
\subjclass[2020]{17B75, 17B67, 17B37, 16T05}
\keywords{Lie colour algebras,  colour quantum groups, Hopf colour algebras, quasi-triangular Hopf structure}

\maketitle

\tableofcontents




\section{Introduction}
\noindent{1.1.}
Lie colour algebras and superalgebras, that is, Lie $(\Gamma,\omega)$-algebras for general abelian groups $\Gamma$ 
and commutative factors $\omega$, have attracted considerable interest in recent years, 
in particular because of their applications in mathematical physics \cite{RW, RW-2, Sch79, Sch83, SchZ, Z25}. 
They provide a natural algebraic framework for generalised statistics and supersymmetries 
beyond the conventional $\mathbb Z_2$-graded setting. In particular, Lie colour (super)algebras 
have been used in the study of parastatistics \cite{BFRT, SV18, SV23, T, Tf1, Tf2, YJ} (see also \cite{Z24}), 
including $\mathbb Z_2\times\mathbb Z_2$-graded and more general $\mathbb Z_2^n$-graded 
paraparticles \cite{Tf1, Tf2, Tf3,  BFRT}. Colour supersymmetries have also been developed in 
quantum mechanics \cite{AD, AIKT, AKT1, AKT2, BFRT, BD, Tf4} 
and quantum field theory \cite{JYW, LR, V, YJ, Bruce, DA}. 
Other physical applications include $\mathbb Z_2\times\mathbb Z_2$-graded extensions of the Liouville, 
sine-Gordon and sinh-Gordon theories \cite{AIKT}, KdV and mKdV hierarchies obtained 
by Drinfeld--Sokolov reduction for affine Lie colour superalgebras \cite{AFI}, and $\mathbb Z_2\times\mathbb Z_2$-graded symmetries of the Lévy--Leblond equations \cite{AKTT} (see also \cite{Rm25} and references therein). 
The physical applications of Lie colour (super)algebras continue to develop actively; 
see, for example, \cite{AIT, AIKT, AlI, KT26, SV25a, SV25b}.

There have also been substantial developments in the mathematical theory of Lie colour (super)algebras. 
Various classes of finite-dimensional and affine Lie colour (super)algebras have been studied, 
including examples associated with $\Gamma=\mathbb Z_2\times\mathbb Z_2$ and 
related commutative factors \cite{AI, AA, ISV, KT21, Rm25, SV23, SV25b, AIS, AS}.  
A cohomology theory for Lie colour (super)algebras was introduced in \cite{SchZ}, 
and Casimir operators for Lie colour algebras were constructed in \cite{AFSV}; see also \cite[\S5.4.1]{Z25}. 
Representation-theoretic questions have been investigated for generic Lie colour (super)algebras from 
a ring-theoretic point of view \cite{F}, as well as for the colour Lie algebra $\fso_3$ \cite{CSO}. 
More recently, the representation and invariant theory of the general linear Lie colour superalgebra 
was systematically developed in \cite{Z25}. Lie colour (super)algebras have also played a role 
in the study of graded associative and Hopf algebras and group gradings of Lie algebras \cite{BFM, BFM-2, Mo, BK}.

\medskip
\noindent{1.2.}
The developments described above naturally lead to the question of whether Lie colour algebras 
admit quantum deformations analogous to the Drinfeld--Jimbo quantisations of 
ordinary Lie algebras and Lie superalgebras. 
Our original goal is to construct such quantised universal enveloping colour algebras. 
In the course of pursuing this problem, however, we encountered a structural issue 
that must be resolved before the quantisation problem can be addressed in the usual Drinfeld--Jimbo framework.

The Drinfeld--Jimbo construction \cite{D1, D2, J86, J} relies on the Cartan--Weyl description of Lie algebras, and   
it is the same structure that underlies the construction of quantum supergroups \cite{BGZ, Y91, Y94, ZGB}.  
However, a (suitably generalised) Cartan--Weyl description can not be taken for granted for arbitrary simple Lie colour algebras and superalgebras, 
even for most cases of the orthogonal Lie colour algebras over $\C$. 
It can happen that the grading rules out the existence of a suitable Cartan subalgebra which is homogeneous of degree $0$. In this case, root space decomposition is unavailable, and consequently there is no direct Drinfeld--Jimbo construction based on root systems. 
This leads us to the more fundamental structural problem of determining which Lie colour algebras fulfil the Cartan--Weyl paradigm (see Section \ref{sect:CWP-0} for explanation).

We show that the Cartan--Weyl paradigm holds for important classes of Lie colour algebras and superalgebras, 
including the general and special linear Lie colour superalgebras \cite{Z25},  symplectic Lie colour algebras,  and some special cases of the orthogonal Lie colour algebras and orthosymplectic Lie colour superalgebras. 
In the orthosymplectic case (containing the orthogonal case), we obtain the necessary and sufficient condition for the paradigm to hold.  

With the Cartan-Weyl structure of these important classes of Lie colour (super)algebras understood, 
it becomes clear that the construction pioneered by Kac \cite{Kv} and Moody \cite{Mr} (also see \cite[\S1]{Kv-2} and \cite{Kv-1}) adapts to the present colour setting, enabling us to construct all Lie colour (super)algebras which fulfil the Cartan--Weyl paradigm. Now each such algebra is determined by a Cartan matrix together with a sequence 
of elements of the grading group $\Gamma$. 
Such Lie colour algebras associated with reduced Cartan matrices of  finite and untwisted affine types are the ones
which we quantise in this paper. 

Our earlier work \cite{Z25} provides part of the background for this programme. There, the representation and invariant theory of the general linear Lie colour superalgebra and the associated colour analogues of algebraic groups were systematically developed. That work also led to an interesting form of noncommutative geometry \cite[\S6.3]{Z25}, including a realisation of simple tensor modules in terms of ``holomorphic'' sections of line bundles on noncommutative flag varieties \cite{GZ, Z04}. The present paper continues this programme by establishing the Cartan--Weyl structures for a broader class of Lie colour algebras, and constructing the associated quantum groups.

\medskip
\noindent{1.3.}
We emphasise a point that is important for the physical motivation of colour quantum groups. 
In quantum systems with a Lie or Hopf colour (super)algebra symmetry, 
the grading generally reflects the statistics of the states, such as the usual Bose-Fermi statistics, 
parastatistics \cite{G} and its generalisations. In such situations, the grading carries physical information and 
therefore should not simply be eliminated by a Klein-type transformation, 
such as cocycle twisting \cite{Sch79} (see also \cite{BFM-2, MB}), or bosonisation \cite{Ma}. 
For this reason, we work directly with Hopf $(\Gamma,\omega)$-algebras rather than their bosonisations 
in this paper.

There is also a mathematical reason for retaining the grading. Eliminating the grading can obscure
or even eliminate structural information that distinguishes a colour algebra from its ungraded counterpart. 
For example, passing from the orthosymplectic Lie colour superalgebra $\osp(V;\kappa)$ 
to the corresponding ordinary orthosymplectic Lie superalgebra removes precisely 
the grading information responsible for the possible failure of the Cartan--Weyl paradigm. 
Thus, a bosonised version may not fully capture the structural properties of the original colour algebra.

From the point of view of deformation theory \cite{Gm, GS}, it is therefore natural to 
quantise the universal enveloping colour algebras as Hopf $(\Gamma, \omega)$-algebras. 
Working in this setting preserves both the grading and the commutative factor $\omega$ 
as intrinsic parts of the structure in the quantisation process.

\medskip
\noindent{1.4.}
We now describe the main results. But first, a word on the terminology. 
We make a distinction between Lie colour algebras and Lie colour superalgebras,
depending on whether there exists any non-zero homogeneous component with degree 
$\alpha$ such that $\omega(\alpha, \alpha)=-1$. 
Further explanation can be found at the end of Section \ref{sect:CWP} (see \cite[\S2.3]{Z25} for more details). 

\smallskip

The present paper contains the following main results. 

\smallskip
($i$). {\em Lie and affine colour algebras fulfilling the Cartan--Weyl paradigm}.

We first examine the Cartan--Weyl paradigm in the setting of Lie colour (super)algebras (see Section \ref{sect:CWP} 
for details). We show that the paradigm holds for important families of Lie colour (super)algebras, 
including the general and special linear Lie colour superalgebras \cite{Z25} and the symplectic Lie colour algebras, 
whereas it fails for other families.

Our analysis leads, in particular, to the following criterion. 
The orthosymplectic Lie colour superalgebra $\osp(V;\kappa)$ fulfils the Cartan--Weyl paradigm
 if and only if there exists at most one $2$-torsion element $\alpha\in\Gamma$, that is, $\alpha+\alpha=0$, 
such that the homogeneous component $V_\alpha\subset V$ has odd dimension. Thus in most cases,  $\osp(V;\kappa)$ does not fulfil the Cartan--Weyl paradigm. This result identifies explicitly an obstruction which has no 
direct counterpart in the ordinary Lie algebraic setting.

The analysis of the classical and affine Lie colour superalgebras shows that
the Kac-Moody methodology \cite{Kv, Mr}  \cite[\S1]{Kv-2} (also see \cite{Kv-1} for Lie superalgebras) 
naturally adapts to the colour setting for constructing all Lie colour superalgebras fulfilling the Cartan--Weyl paradigm.
We construct such simple Lie colour algebras and their untwisted affine Lie colour algebras in Section \ref{sect:present}.  
Each algebra in this class is determined by a reduced Cartan matrix of finite or affine type together 
with a sequence of elements of $\Gamma$. 
We give the explicit presentations in terms of generators and defining relations for these algebras.  

Conceptual implications of the failure of the Cartan--Weyl paradigm for certain Lie colour (super)algebras 
are discussed in Remarks \ref{rmk:color-y-n}, \ref{rmk:change}, and \ref{rmk:intricacy}.

\smallskip
($ii$). {\em Quantised universal enveloping colour algebras}.

The Cartan--Weyl structure established in the first part provides the basis for the construction 
of Drinfeld--Jimbo type quantisation. We construct quantised universal enveloping colour algebras 
for the finite-dimensional Lie colour algebras and their affine analogues described in Section \ref{sect:present}. 
These algebras, denoted by $\U_{q,\Xi}(A)$ and referred to as \emph{colour quantum groups}, generalise the usual quantum groups \cite{D2, D3, J} and quantum supergroups \cite{BGZ, Y91, Y94, ZGB, Z98}. 
When the grading group is trivial, they recover the corresponding ordinary quantum groups. 
By construction, the colour quantum groups inherit the Cartan--Weyl structure in 
the same way that ordinary quantum groups and quantum supergroups do.

We further establish the quasi-triangular Hopf colour algebra structure of the colour quantum groups. 
This is obtained, 
in Section \ref{sect:QT}, through the study of quantum doubles of Hopf $(\Gamma,\omega)$-algebras developed 
in Section \ref{sect:double-gen}; see Theorem \ref{thm:QD}. 
This provides the appropriate framework for the construction of universal $R$-matrices and for applications of colour quantum groups to areas in which quasi-triangular structures play a fundamental role.

As we will see in Section \ref{sect:ACQG}, 
our construction of the colour quantum groups $\U_{q,\Xi}(A)$ 
is not by merely imposing relations on generators. 
Instead, we first construct a Hopf colour algebra $\wt\U_{q, \Xi}(A)$ which may be regarded as a universal Hopf colour algebra associated with $A$ and $\Xi$, and then obtain $\U_{q,\Xi}(A)$ as a canonical quotient by a Hopf ideal $\CJ$. Here  $\CJ$ is generated by the left and right radicals of the Hopf pairing between the two quantised colour Borel subalgebras of $\wt\U_{q, \Xi}(A)$, and the colour quantum Serre elements (see Section \ref{sect:U+U-}) are precisely  the  generators of the left and right radicals.  
Therefore, the colour quantum Serre relations are not imposed ad hoc but arise canonically from the Hopf pairing.

\smallskip
($iii$). {\em Quasi-triangular Hopf $(\Gamma,\omega)$-algebras}.

For completeness, we collect in Appendix \ref{sect:double}  basic results 
on quasi-triangular Hopf $(\Gamma,\omega)$-algebras needed in the paper. 
In particular, the quantum double construction for Hopf $(\Gamma,\omega)$-algebras is developed 
in some detail in Section \ref{sect:double-gen}. We include these results to ensure that the construction 
of the colour quantum groups is self-contained and accessible to readers 
whose main area may not be Hopf algebra theory.

\medskip
\noindent{1.5.}
The resulting colour quantum groups provide a natural setting in which the structural theory of Lie colour algebras, 
quantum-group methods, and applications to mathematical physics can be brought together. 
Since they are quasi-triangular Hopf colour algebras, they possess the structures that make 
ordinary quantum groups and quantum supergroups useful in the study of integrable models 
in statistical mechanics \cite{J86, BGZ, ZGB91-b}, and low-dimensional topology \cite{RT, LGZ, ZGB91, Z92-b, Z95}. 
Also see \cite{AK} for Vassiliev invariants arising from Lie colour algebras.

Colour quantum groups are also naturally connected with noncommutative geometry \cite{M, La}. 
Lie colour (super)algebras themselves are closely related to the new geometries discussed 
in \cite[\S6.3]{Z25}, including noncommutative para-manifolds \cite{Z24} for $\Gamma=\mathbb Z^n$ 
and particular commutative factors $\omega$, and $\mathbb Z_2^n$-graded supermanifolds \cite{CGP}. 
The noncommutative geometries associated with colour quantum groups may be viewed as quantum deformations 
of these structures. Developing these connections further is a natural direction for future work.

\medskip
\noindent{1.6.}
We point out a connection with the Nichols algebra approach to quantum groups and 
quantum supergroups \cite{AO17, Hec09, Ros98, Lus10}. In that approach, 
bosonised Nichols algebras \cite{Ma} provide quantum Borel subalgebras, 
while the corresponding full quantum groups can be constructed using quantum doubles \cite{AHS10, HS20, Nic78, Wor89}. The colour quantum Borel algebras constructed in the present paper are naturally related to this framework: 
after bosonisation, one expects their braided parts to be described by Nichols algebras. 
Establishing the precise relationship, however, requires keeping track of the grading and commutative factor 
through the bosonisation process and is more subtle than simply passing to an ungraded algebra \cite{XZ, Z92, Z97}. 
Some indications of this relationship are given in Section \ref{sect:complete}.

After the first version of this paper was posted on the arXiv, we were informed of the work \cite{AAB} 
on pointed Hopf colour algebras. In \S3 of that paper, quantum doubles of certain bosonised Nichols algebras 
are constructed, together with quotient Hopf algebras that appear to be closely related to bosonisations of 
the quantised universal enveloping colour algebras $\U_{q,\Xi}(A)$ considered here. The aims of \cite{AAB}, 
however, are different from those of the present paper. In particular, that work does not introduce 
quantised universal enveloping colour algebras. We discuss the relationship between the two constructions further in Remarks \ref{rmk:AAB-1}. 

The relation to the Nichols algebra approach should not, however, be interpreted as making the colour structure inessential. Bosonisation replaces a colour Hopf algebra by an ordinary Hopf algebra together with additional group-like data, 
whereas in the present paper the $\Gamma$-grading and the commutative factor $\omega$ remain intrinsic 
throughout the construction. This distinction is important both mathematically and for physical applications 
of colour quantum groups as discussed in Section 1.3.

\medskip
\noindent{1.7.}
The constructions developed in this paper therefore provide a framework in which Lie colour algebras 
satisfying the Cartan--Weyl paradigm can be treated systematically, together with their affine extensions and Drinfeld--Jimbo type quantum deformations. 
We hope that this framework will contribute to the further development of Lie colour theory 
and its applications in mathematical physics. 

To make the paper accessible to a broad audience,  we explain notions potentially unfamiliar 
to readers who are not specialised in Lie algebras or Hopf algebras, 
prove results by elementary means whenever possible and present the pertinent details.

\section{Background material}
We present some background material pertinent for the study of classical 
Lie $(\Gamma, \omega)$-algebras. More details can be found in, 
e.g., \cite{Sch79, Z25}.  
\subsection{The category of $\Gamma$-graded vector spaces}
Fix an additive abelian group $\Gamma$, and let $\omega: \Gamma\times \Gamma\lra \C^*$ be 
a commutative factor with the defining properties \cite{B}
\beq
&&\omega(\alpha, \beta) = \omega(\beta, \alpha)^{-1}, \label{eq:cycle-1} \\
&&\omega(\alpha, \beta+\gamma)= \omega(\alpha, \beta) \omega(\alpha, \gamma), \label{eq:cycle-2}\\
&&\omega(\alpha +\beta, \gamma) = \omega(\alpha, \gamma) \omega(\beta, \gamma), \quad \forall
\alpha, \beta, \gamma.  \label{eq:cycle-3}
\eeq
Note in particular that 
$\omega(\alpha, 0)=\omega(0, \alpha)=1$, and $\omega(\alpha, \alpha)=\pm 1$ for all $\alpha$.
We let 
\beq
\Gamma^\pm=\{\alpha\in \Gamma\mid \omega(\alpha, \alpha)=\pm 1\}, \quad 
\Gamma_{\Z_2}= \{\alpha\in \Gamma\mid \alpha+\alpha=0\}. 
\eeq
It is evident that $\Gamma^+$ and $\Gamma_{\Z_2}$ are subgroups. We will call $\Gamma_{\Z_2}$ the $2$-torsion subgroup. Note that if $\alpha\in \Gamma_{\Z_2}$, then  
$\omega(\alpha, \beta) = \omega(\beta, \alpha)=\pm 1$ for all $\beta\in\Gamma$.

A $\Gamma$-graded vector space $V$ is the direct sum $V=\sum_{\alpha\in\Gamma} V_\alpha$ of its homogeneous subspaces $V_\alpha$. Denote $m_\alpha=\dim V_\alpha$. 
For any homogeneous element $v\in V$,  
 we use $d(v)\in \Gamma$ to denote the degree of $v$. 
Clearly $V=V_+\oplus V_-$ with $V_\pm=\sum_{\alpha\in \Gamma^\pm} V_\alpha$.  
In \cite{Z25}, we introduced the set 
\beq\label{eq:support}
\Gamma_R(V)=\{\alpha\in \Gamma\mid V_\alpha\ne 0\}, 
\eeq
which will be referred to as the graded support of $V$. 
If $\dim V<\infty$, the graded support  is necessarily a finite set, and hence so are also $\Gamma_R^\pm(V)=\Gamma_R(V)\cap \Gamma^\pm$. 
In this case, we denote $\aleph(V)=|\Gamma_R(V)|$ and 
$\aleph^\pm(V)=|\Gamma_R^\pm(V)|$. 
We denote by
\beq\label{eq:torsion}
\Gamma_{R, \Z_2}(V)=\Gamma_R(V)\cap\Gamma_{\Z_2},\quad 
\Gamma^\pm_{R, \Z_2}(V)=\Gamma^\pm_R(V)\cap\Gamma_{\Z_2},
\eeq
 the sets of $2$-torsion elements of $\Gamma_R(V)$ and $\Gamma^\pm_R(V)$ respectively.

The space of homomorphisms between any two $\Gamma$-graded vector spaces $U$ and $V$ is naturally $\Gamma$-graded. Note that 
$\Hom_\C(U, V)=\sum_{\alpha, \beta\in\Gamma}\Hom_\C(U_\alpha, V_\beta)$, where $\Hom_\C(U_\alpha, V_\beta)$ is of degree $\beta-\alpha$. Write $\Hom_\C(U, V)_\Upsilon=\sum_{\beta-\alpha=\Upsilon}\Hom_\C(U_\alpha, V_\beta)$. Then 
$\Hom_\C(U, V)=\sum_{\Upsilon} \Hom_\C(U, V)_\Upsilon$. 

Denote by $\Vect$ the category of $\Gamma$-graded vector spaces.
It is a strict braided monoidal category, 
with the monoidal structure given by the tensor product $\ot_\C$, 
and the braiding by a functorial map arising from the commutative factor $\omega$, 
\beq\label{eq:def-tau}
\tau_{V, W}: V\ot_\C W\lra W\ot_\C V,
\eeq
which is defined by 
\beq\label{eq:tau}
\tau_{V, W}(v\ot w)= \omega(d(v), d(w)) w\ot v, 
\eeq
 for any homogeneous $v\in V, w\in W$,  and linearly extended to inhomogeneous elements. 
It is evident that 
\beq\label{eq:invol}
\tau_{W, V} \tau_{V, W}=\id_{V\ot W}.
\eeq
One can also easily verify \cite{Z25} that the functorial map $\tau$ indeed defines 
a braiding for $\Vect$, that is, 
for any  $\Gamma$-graded vector spaces $U, V, W$,
\beq\label{eq:braid}
 &&(\tau_{V,W}\ot\id_U)      \circ    (\id_V\ot \tau_{U, W})\circ (\tau_{U, V}\ot\id_W) \\
 &&= (\id_W\ot \tau_{U, V}) \circ (\tau_{U, W}\ot\id_V)\circ  (\id_U\ot \tau_{V, W}).  
 \nonumber
\eeq

\begin{remark}[{\bf Convention}]\label{rmk:conven}
We shall adopt the convention that any explicit formula involving the commutative factor will be written in a form analogous to \eqref{eq:tau}, but is tacitly understood to be extended linearly for inhomogeneous elements. 
\end{remark}

Observe the following easy facts. 

\smallskip
\noindent{\bf Fact 1}. 
Given any $\xi\in \Gamma$,  there exists  a degree shifting functor $\digamma_\xi$ on $\Vect$ such that
\beq\label{eq:shift}
\digamma_\xi V:=\sum_{\alpha\in \Gamma}(\digamma_\xi V)_\alpha, \quad (\digamma_\xi V)_{\alpha+\xi}=V_\alpha.
\eeq
Note in particular that if $\xi\in\Gamma^-$, then $(\digamma_\xi V)_\pm = V_\mp$. 

\smallskip
\noindent{\bf Fact 2}. 
Given abelian groups $\Gamma_i$ with commutative factors $\omega_i: \Gamma_i\times \Gamma_i\lra \C^*$ 
for $i=1, 2$, we let $\Gamma=\Gamma_1\times\Gamma_2$, and define the map
\beq\label{eq:CD}
\omega: \Gamma\times \Gamma\lra \C^*, \quad 
\omega((\alpha, \mu), (\beta, \nu)) =\omega_1(\alpha, \beta)\omega_2(\mu, \nu), 
\eeq
for all $(\alpha, \mu), (\beta, \nu)\in\Gamma$. It gives rise to 
a commutative factor on $\Gamma$.  There is a functor 
\[
\mathcal{D}: {\mathfrak V}(\Gamma_1, \omega_1)\times {\mathfrak V}(\Gamma_2, \omega_2)\lra {\mathfrak V}(\Gamma, \omega)
\]
such that 
for any object $V=\sum_{\alpha\in\Gamma_1}V_\alpha$ of ${\mathfrak V}(\Gamma_1, \omega_1)$
and $W=\sum_{\beta\in\Gamma_2}W_\beta$ of ${\mathfrak V}(\Gamma_2, \omega_2)$, we have  
 $\mathcal{D}(V\times W)=\sum_{(\alpha, \beta)\in \Gamma} (\mathcal{D}(V\times W))_{(\alpha, \beta)}$ $\in {\mathfrak V}(\Gamma, \omega)$, where $(\mathcal{D}(V\times W))_{(\alpha, \beta)}=V_\alpha\ot W_\beta$.  In particular,  
\[
\mathcal{D}(V\times \C)=V\ot \C\simeq V, \quad \mathcal{D}(\C\times W)=\C\ot W\simeq W,
\]
where we regard $\C$ as being homogeneous of degree $0$.

\begin{example}\label{rmk:parity} 
Note the following special case of Fact 2. 
For $\Gamma_2 =\Z_2=\{0, 1\}$ with $\omega_2=sign:\Z_2\times\Z_2\lra \{1, -1\}$ 
given by $sign(i, j)=(-1)^{i j}$, there is the following commutative factor 
$
\omega((\alpha, i), (\beta, j)) =(-1)^{i j}\omega_1(\alpha, \beta)
$
on $\Gamma_1\times\Z_2$. Hence we have the functor 
\[
\mathcal{D}: {\mathfrak V}(\Gamma_1, \omega_1)\times {\mathfrak V}(\Z_2, sign)\lra {\mathfrak V}(\Gamma_1\times\Z_2, \omega_1\times sign).
\]
\end{example}

This example has important implications on bilinear forms on $\Gamma$-graded vector spaces, see 
Section \ref{sect:deg-shift} below. 

\subsection{Bilinear forms}

\subsubsection{Bilinear forms}\label{sect:forms}

Fix $\gamma\in\Gamma$. A bilinear form $\kappa: V\times V\lra\C$ is said to 
be homogeneous of degree $\gamma$ if 
$\kappa(V_\alpha, V_\beta)=0$ for all $\alpha, \beta$ such that $\alpha+\beta+\gamma\ne 0$.  
Note in particular that 
\begin{itemize}
\item if $\gamma\in \Gamma^-$, then $\kappa(V_+, V_+)=\kappa(V_-, V_-)=\{0\}$;
 \item if $\gamma\in \Gamma^+$, then $\kappa(V_+, V_-)=\kappa(V_-, V_+)=\{0\}$. 
\end{itemize}
If $\kappa$ is non-degenerate, then $\dim V_\alpha = \dim V_{-\alpha-\gamma}$ for all $\alpha$. 

\begin{remark}\label{rmk:pairing}
Assume that $\kappa: V\times V\lra\C$ is a non-degenerate bilinear form, 
which is homogeneous of degree $0$. Then 
$\dim V_\alpha = \dim V_{-\alpha}$ for all $\alpha$. Furthermore, 
\begin{enumerate}[label=(\roman*)]
\item if $\alpha\not\in \Gamma_{R, \Z_2}(V)$, 
the restriction of $\kappa$ to $V_\alpha \oplus V_{-\alpha}$ is non-degenerate, and 
$\kappa(V_\alpha,  V_{\alpha})=\{0\} = \kappa(V_{-\alpha}, V_{-\alpha})$; 
\item if $\alpha\in \Gamma_{R, \Z_2}(V)$, the restriction of $\kappa$ to $V_\alpha$ is 
non-degenerate. 
\end{enumerate}
\end{remark}

A bilinear form $\kappa$  is said to be  
\[
\baln
\bullet\quad &\text{$\omega$-symmetric if }  
\kappa(v_\alpha, v'_\beta) =\omega(\alpha, \beta) \kappa(v'_\beta, v_\alpha), \\
\bullet\quad &\text{$\omega$-skew symmetric if } 
\kappa(v_\alpha, v'_\beta) =-\omega(\alpha, \beta) \kappa(v'_\beta, v_\alpha),
\ealn
\]
for all $v_\alpha\in V_\alpha$ and $v'_\beta\in V_\beta$. 
We shall often regard a bilinear form on $V$ as a linear function from $V\ot V$ to $\C$. 
Thus we can define 
\[
\kappa^{(s)}:=\frac{1}{2}\kappa\circ(\id_V\ot\id_V+\tau_{V, V}), 
\quad \kappa^{(a)}:=\frac{1}{2}\kappa\circ(\id_V\ot\id_V-\tau_{V, V}). 
\]
The maps $\kappa^{(s)}$ and $\kappa^{(a)}$ lead to $\omega$-symmetric and $\omega$-skew symmetric bilinear forms on $V$ respectively.

\subsubsection{Effects of degree shifting}\label{sect:deg-shift}
It is useful to consider the effect of degree shifting on bilinear forms. 
Note the following facts.

A homogeneous bilinear form of degree $\gamma$ on $V$ gives rise to a 
homogeneous bilinear form of degree $\gamma+2\xi$ on $\digamma_\xi V$.

If $\xi\in \Gamma^-\cap\Gamma_{\Z_2}$, a homogeneous $\omega$-symmetric (resp. skew symmetric) bilinear form 
of degree $\gamma$ on $V$ leads to a homogeneous
$\omega$-skew symmetric (resp. symmetric) bilinear form of 
the same degree on $\digamma_\xi V$.

If $\Gamma^-\cap\Gamma_{\Z_2}=\emptyset$,  we may use the functor ${\mathcal D}$ (see \eqref{eq:CD})
in the situation of Remark \ref{rmk:parity} to map $V$ to 
an object ${\mathcal D}(V\times \C)=V$ in the category ${\mathfrak V}(\Gamma\times\Z_2, \omega\times sign)$, 
and then apply degree  shifting to obtain  $\digamma_{(0, 1)} {\mathcal D}(V\times \C)=\digamma_{(0, 1)}V$.  
Now $\omega$-symmetric (resp. skew symmetric) bilinear forms on $V$ 
become $(\Gamma\times\Z_2, \omega\times sign)$-skew symmetric 
(resp. symmetric) bilinear forms on $\digamma_{(0, 1)}V$.

\subsubsection{The $\omega$-trace bilinear form}
Assume that $\dim V<\infty$. 
There is a generalised trace ${\rm tr}_{(\Gamma, \omega)}: \End_\C(V)\lra\C$, referred to as the $\omega$-trace, which is defined as follows  \cite[\S3.1.2]{Z25}. For
any $\varphi =\sum_{\alpha, \beta\in\Gamma}  \varphi(\alpha, \beta)$ 
with $\varphi(\alpha, \beta)\in \Hom_\C(V_\beta, V_\alpha)$,
\[
{\rm tr}_{(\Gamma, \omega)}(\varphi) =\sum_{\alpha\in \Gamma}  \omega(\alpha, \alpha) {\rm tr}(\varphi(\alpha, \alpha)), 
\]
where ${\rm tr}$ denotes the usual trace.  Clearly ${\rm tr}_{(\Gamma, \omega)}$ is  homogeneous of degree $0$. 
Write  $\varphi_0= \sum_{\alpha\in \Gamma}  \varphi(\alpha, \alpha)$, then
$
{\rm tr}_{(\Gamma, \omega)}(\varphi) =  {\rm tr}_{(\Gamma, \omega)}(\varphi_0).
$

\begin{lemma}[\cite{Z25}]\label{lem:om-trace}
The $\omega$-trace ${\rm tr}_{(\Gamma, \omega)}: \End_\C(V)\lra\C$ has the following property. 
For any $\varphi\in\End_\C(V)_\alpha$ and $\psi\in\End_\C(V)_\beta$, where $\alpha, \beta\in\Gamma$, 
\beq\label{eq:om-sym}
{\rm tr}_{(\Gamma, \omega)}(\varphi \psi)&=&\omega(\alpha, \beta) {\rm tr}_{(\Gamma, \omega)}( \psi \varphi).
\eeq
This implies that $ {\rm tr}_{(\Gamma, \omega)}([\eta, \xi])=0$ for all $\eta, \xi\in\End_\C(V)$, 
where $[\ , \ ]$ is the graded commutator defined by \eqref{eq:bracket} below. 
\end{lemma}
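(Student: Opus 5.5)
Since ${\rm tr}_{(\Gamma,\omega)}$ only sees the degree-preserving components $\varphi(\gamma,\gamma)$, the plan is to decompose $\varphi$ and $\psi$ into block components and compute the trace blockwise. Write $\varphi=\sum_{\gamma}\varphi(\gamma+\alpha,\gamma)$ with $\varphi(\gamma+\alpha,\gamma)\in\Hom_\C(V_\gamma,V_{\gamma+\alpha})$, and similarly $\psi=\sum_\delta\psi(\delta+\beta,\delta)$. Then $\varphi\psi$ is homogeneous of degree $\alpha+\beta$. If $\alpha+\beta\neq 0$, then $(\varphi\psi)_0=0=(\psi\varphi)_0$, so both sides of \eqref{eq:om-sym} vanish and there is nothing to prove.

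Now assume $\beta=-\alpha$. The diagonal block of $\varphi\psi$ on $V_\gamma$ is $\varphi(\gamma,\gamma-\alpha)\,\psi(\gamma-\alpha,\gamma)$. Hence
\[
{\rm tr}_{(\Gamma,\omega)}(\varphi\psi)=\sum_\gamma \omega(\gamma,\gamma)\,{\rm tr}\bigl(\varphi(\gamma,\gamma-\alpha)\psi(\gamma-\alpha,\gamma)\bigr).
\]
Applying the ordinary cyclicity ${\rm tr}(AB)={\rm tr}(BA)$ to these rectangular blocks turns each summand into ${\rm tr}\bigl(\psi(\gamma-\alpha,\gamma)\varphi(\gamma,\gamma-\alpha)\bigr)$. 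This is the trace of the diagonal block of $\psi\varphi$ on $V_{\gamma-\alpha}$. Reindexing by $\delta=\gamma-\alpha$ gives
\[
{\rm tr}_{(\Gamma,\omega)}(\varphi\psi)=\sum_\delta \omega(\delta+\alpha,\delta+\alpha)\,{\rm tr}\bigl((\psi\varphi)(\delta,\delta)\bigr).
\]

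The key step is the identity $\omega(\delta+\alpha,\delta+\alpha)=\omega(\delta,\delta)\,\omega(\alpha,-\alpha)$. To prove it, expand by bimultiplicativity \eqref{eq:cycle-2}, \eqref{eq:cycle-3}:
\[
\omega(\delta+\alpha,\delta+\alpha)=\omega(\delta,\delta)\,\omega(\delta,\alpha)\,\omega(\alpha,\delta)\,\omega(\alpha,\alpha).
\]
By \eqref{eq:cycle-1}, $\omega(\delta,\alpha)\omega(\alpha,\delta)=1$. Also $\omega(\alpha,\alpha)=\pm1$, so $\omega(\alpha,\alpha)=\omega(\alpha,\alpha)^{-1}=\omega(\alpha,-\alpha)=\omega(\alpha,\beta)$. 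Substituting this factor gives ${\rm tr}_{(\Gamma,\omega)}(\varphi\psi)=\omega(\alpha,\beta)\,{\rm tr}_{(\Gamma,\omega)}(\psi\varphi)$, which is \eqref{eq:om-sym}.

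For the consequence, I use the graded commutator $[\eta,\xi]=\eta\xi-\omega(d(\eta),d(\xi))\,\xi\eta$ for homogeneous elements; this matches the later definition \eqref{eq:bracket}. By \eqref{eq:om-sym}, ${\rm tr}_{(\Gamma,\omega)}$ vanishes on $[\eta,\xi]$ for homogeneous $\eta,\xi$, and bilinearity extends this to all $\eta,\xi$. The only delicate point is the sign bookkeeping in $\omega(\delta+\alpha,\delta+\alpha)$ above, which relies on $\omega(\alpha,\alpha)^2=1$; the rest is routine block-matrix algebra.
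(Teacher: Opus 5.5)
Your proof is correct and complete. The case $\alpha+\beta\neq 0$ is trivial, and for $\beta=-\alpha$ the reduction to ordinary cyclicity of the trace on the rectangular blocks, combined with $\omega(\delta+\alpha,\delta+\alpha)=\omega(\delta,\delta)\,\omega(\alpha,-\alpha)$, is exactly what is needed. The paper gives no proof of this lemma (it is quoted from \cite{Z25}), so there is no argument to compare against, and yours is the standard blockwise verification.
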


The $\omega$-trace leads to the following bilinear form on $\End_\C(V)$.
\beq\label{eq:biform}
&&( \ , \ ): \End_\C(V)\times \End_\C(V)\lra \C, \\
&&( X,  Y )={\rm tr}_{(\Gamma, \omega)}(X Y), \quad \forall  X, Y\in \End_\C(V).\nonumber
\eeq
This is homogeneous of degree $0$, and was shown to be non-degenerate in \cite{Z25}. 
It follows the $\omega$-symmetry of $tr_{(\Gamma, \omega)}$ that the bilinear form 
\eqref{eq:biform} is $\omega$-symmetric.

\medskip
{\bf Hereafter we  consider only homogeneous bilinear forms  of degree $0$.}

\subsection{Classical Lie $(\Gamma, \omega)$-algebras}

Recall that \cite{RW, Sch79} a Lie $(\Gamma, \omega)$-algebra
is a $\Gamma$-graded vector space 
$\fg=\sum_{\alpha\in\Gamma} \fg_\alpha$ endowed with a bilinear map
$
[\ , \ ]: \fg\times \fg \lra \fg, 
$
the Lie $\omega$-bracket, 
which is homogeneous of degree $0$, and satisfies the conditions 
\beq
&&[X, Y] =- \omega(d(X), d(Y))[Y, X], \label{eq:skew-2}\\
&&[X, [Y, Z]] = [ [X, Y], Z] + \omega(d(X), d(Y)) [Y, [X, Z]],  \label{eq:Jocob-2}
\eeq
for any $X, Y, Z\in \fg$  (in the convention of Remark \ref{rmk:conven}). 
We will loosely call it a Lie colour superalgebra
if $\fg_-=\sum_{\alpha\in\Gamma^-}\fg_\alpha\ne 0$, 
and a Lie colour algebra otherwise. 

The adjoint representation  $ad: \fg \lra \End_\C(\fg)$, $X\mapsto ad_X$, is defined in the usual way by $ad_X(Y)=[X, Y]$ for all $Y$.

Let $V$ be a $\Gamma$-graded vector space. 
The general linear 
Lie $(\Gamma, \omega)$-algebra $\gl(V)$ is 
$\End_\C(V)$ with the Lie $\omega$-bracket given by the graded commutator
\beq\label{eq:bracket}
[X, Y]=X Y - \omega(d(X), d(Y)) Y X, \quad  X, Y\in \gl(V).
\eeq

Let $B(V)=(e_1, e_2, \dots, e_D)$ be an ordered homogeneous bases for $V$, 
where $D= \dim V$. Let $\gamma_a:= d(e_a)$ the $\Gamma$-degree of $e_a$ for each $a$.
As in \cite[\S 3.3.1]{Z25}, we let $\BE_{a b}\in \End_\C(V)$ (for $a, b=1, 2, \dots, d$)  be the matrix units relative to the basis $B(V)$. 
They obey the standard relations 
\beq
 \BE_{a b}\cdot e_c = \delta_{b c} e_a,  \quad
 \BE_{a b} \BE_{c d} = \delta_{b c} \BE_{a d}.
\eeq
The degree of $\BE_{a b}$ is given by $\gamma_a-\gamma_b$. Note in particular that $\BE_{a a}$ is of degree $0$ for all $a$. 
The matrix units form a homogeneous basis of $\gl(V)$, which enable us to express 
the defining relations of $\gl(V)$ as
\beq \label{eq:CR}
\phantom{XXXX}
[\BE_{a b},  \BE_{c d} ]= \delta_{b c} \BE_{a d} 
-\omega(\gamma_a - \gamma_b, \gamma_c - \gamma_d) \delta_{a  d} \BE_{c b}. 
\eeq

Fix $a\ne b$, and let $V_{a b}=\C e_a\oplus \C e_b$. 
By \cite[Lemma 3.1]{Z25}, the elements $\BE_{a b}, \BE_{b a},  \BE_{a a}$, $\BE_{b b}$ 
span a Lie $(\Gamma, \omega)$-subalgebra $\gl(V_{a b})$, which is isomorphic to 

$\bullet$
$\gl_2(\C)$ if $\gamma_a - \gamma_b$ belongs to $\Gamma^+$; 

$\bullet$ 
$\gl_{1|1}(\C)$ if $\gamma_a - \gamma_b$ belongs to $\Gamma^-$.

Since $\tr_{(\Gamma, \omega)}([X, Y])=0$ for all $X, Y\in \gl(V)$ by Lemma \ref{lem:om-trace}, the subspace 
\[
\fsl(V)=\{X\in\gl(V)\mid  \tr_{(\Gamma, \omega)}(X)=0\}
\]
forms a Lie $(\Gamma, \omega)$-subalgebra $\fsl(V)$ of $\gl(V)$, 
the special linear Lie $(\Gamma, \omega)$-algebra of $V$. 
Note that if $\dim V_+=\dim V_-$, then $\fsl(V)$ contains the identity element of $\gl(V)$, which spans an abelian ideal. Thus $\fsl(V)$ is not simple in this case. 

Now we consider Lie $(\Gamma, \omega)$-subalgebras of $\gl(V)$ preserving 
non-degenerate bilinear forms.  
By discussions in Section \ref{sect:deg-shift}, we only need to consider $\omega$-symmetric bilinear forms. 

\begin{definition}[\cite{Sch79}]
Assume that the $\Gamma$-graded vector space $V$ is equipped with a non-degenerate $\omega$-symmetric bilinear form $\kappa: V\times V\lra \C$. 
The orthosymplectic Lie $(\Gamma, \omega)$-algebra $\osp(V; \kappa)$
consists of elements $X=\sum_{\alpha\in\Gamma} X_\alpha\in \End_\C(V)$,  which satisfy the following condition.
\beq\label{eq:orth}
\kappa(X\cdot v, v') + \sum_{\alpha} \omega(\alpha, d(v)) \kappa(v, X_\alpha\cdot v')=0, \quad \forall v, v'\in V.
\eeq
\end{definition}
\begin{proof}[Proof that $\osp(V; \kappa)$ is a Lie $(\Gamma, \omega)$-subalgebra of $\fsl(V)$]
{\   }

\smallskip\noindent
{\em (a).  
$\tr_{(\Gamma, \omega)}(X) =0$ for all $X\in \osp(V; \kappa)$, i.e., 
$\osp(V; \kappa)\subset \fsl(V)$.}

Denote $\kappa_{a b}=\kappa(e_a, e_b)$, 
and let $\kappa^{-1}$ be the inverse of the matrix $(\kappa(e_a, e_b))$. 
Then $\ol{e}_a=\sum_c (\kappa^{-1})_{a c} e_c$ satisfies $\kappa(\ol{e}_a, e_b)=\delta_{a b}$, 
and $d(\ol{e}_a)=-\gamma_a$. 
Note that $\gamma_a+\gamma_b=0$ if $\kappa_{a b}\ne 0$ or  
$(\kappa^{-1})_{a b}\ne 0$, and in this case, $\omega(\gamma_a, \gamma_b)=\omega(\gamma_b, \gamma_a)=\omega(\gamma_a, \gamma_a)= \omega(\gamma_b, \gamma_b)=\pm 1$. 
By the $\omega$-symmetry of $\kappa$, 
\beq
(\kappa^{-1})_{b a}=\omega(\gamma_a, \gamma_a)(\kappa^{-1})_{a b}=\omega(\gamma_b, \gamma_b)(\kappa^{-1})_{a b}. \label{eq:kappa-inv}
\eeq

Recall that for any $X=\sum_\alpha X_\alpha$ with $X_\alpha \in \osp(V; \kappa)_\alpha$ for all $\alpha$, 
\[
\baln
\tr_{(\Gamma, \omega)}(X) &=\tr_{(\Gamma, \omega)}(X_0)= \sum_a \omega(\gamma_a, \gamma_a) \kappa(\ol{e}_a, X_0 e_a) \\
&= \sum_{a, c} \omega(\gamma_a, \gamma_a) (\kappa^{-1})_{a c}\kappa( e_c, X_0 e_a)\\
&= -\sum_{a, c} \omega(\gamma_a, \gamma_a) (\kappa^{-1})_{a c}\kappa( X_0 e_c,  e_a) 
		\quad \text{(by definition of $\osp$)}\\
&= - \sum_{a, c} \omega(\gamma_a, \gamma_a) (\kappa^{-1})_{a c} \omega(\gamma_a,   \gamma_c) \kappa(e_a, X_0  e_c)  
		\quad \text{(by $\omega$-symmetry of $\kappa$)}.
\ealn
\]
Using \eqref{eq:kappa-inv}, we obtain 
\[
\baln
\tr_{(\Gamma, \omega)}(X) &=- \sum_{a, c} \omega(\gamma_a,\gamma_a) (\kappa^{-1})_{a c}\kappa( e_c, X_0 e_a) = - \tr_{(\Gamma, \omega)}(X). 
\ealn
\] 
This shows that $\tr_{(\Gamma, \omega)}(X)=0$, 
and hence $X\in \fsl(V)$.

\smallskip\noindent
{\em (b).  
$[X, Y]\in \osp(V; \kappa)$, $\forall X, Y\in \osp(V; \kappa)$, i.e., 
$\osp(V; \kappa)$ is a subalgebra of $\fsl(V)$. }

We may assume that $X, Y\in \osp(V; \kappa)$ are homogeneous. Then for homogeneous  $v, v'\in V$, 
\[
\baln
\kappa([X, Y]\cdot v, v') 
&= \kappa(X \cdot(Y\cdot v), v') - \omega(d(X), d(Y)) \kappa(Y \cdot(X\cdot v), v').
\ealn
\]
We can re-write  the right hand side as follows,
\[
\baln
&\omega(d(Y), d(v))\omega(d(X), d(Y)+d(v)) \kappa(v, Y\cdot  (X \cdot v')) \\
&- \omega(d(X), d(Y)) \omega(d(Y), d(X)+d(v)) \omega(d(X), d(v))  \kappa( v, X\cdot(Y \cdot v')) \\
&= \omega(d(X)+d(Y), d(v))\omega(d(X), d(Y)) \kappa(v, Y\cdot  (X \cdot v')) \\
&- \omega(d(X)+d(Y), d(v)) \kappa( v, X\cdot(Y \cdot v')) \\
&=- \omega(d([X, Y]), d(v)) \kappa( v, [X, Y] \cdot v'). 
\ealn
\]
Thus $\kappa([X, Y]\cdot v, v') +\omega(d([X, Y]), d(v)) \kappa( v, [X, Y] \cdot v')=0$, 
and hence $[X, Y]\in \osp(V; \kappa)$. This shows that $\osp(V; \kappa)$ is a Lie $(\Gamma, \omega)$-subalgebra of $\fsl(V)$. 
\end{proof}

We now give an explicit construction of $\osp(V; \kappa)$.

 Consider the $\Gamma$-graded vector space isomorphism 
$\varpi: V\ot V\lra \End_\C(V)$ defined,  for any $v, v\in V$, by
\[
\varpi(v\ot v')(w)= \kappa(v', w)  v, \quad \forall w\in V. 
\]
Note in particular that
$
\varpi(e_a\ot\ol{e}_b) =\BE_{a b}.   
$
Let $c_0=\sum_{a} e_a\ot \ol{e}_a$, then $\varpi(c_0)=\id_V$.

\begin{theorem}\label{thm:osp-def}
\begin{enumerate}
\item The orthosymplectic Lie colour superalgebra is 
given by 
\beq
\osp(V; \kappa)= \varpi(\id_V\ot\id_V-\tau_{V, V})(V\ot V), 
\eeq 
and hence $\dim\osp(V; \kappa) = \frac{1}{2}M_+(M_+-1) +\frac{1}{2}M_-(M_-+1) + M_+M_-$.

 \item
The following elements, for $a, b=1, 2, \dots, \dim V$,  
\beq
X_{a b} &:=& \varpi(\id_V\ot\id_V- \tau_{V, V})(e_a\ot\ol{e}_b)   \nonumber\\
&=& \BE_{a b} - \omega(\gamma_{b}, \gamma_{a}) \sum_{a', b'} (\kappa^{-1})_{b b'} \kappa_{a  a'} \BE_{b' a'},   \label{eq:Xab}
\eeq
 span $\osp(V; \kappa)$, and satisfy the relations
\beq
{[X_{a b},  X_{c d}]}&=&\delta_{b c} X_{a d}
	-\omega(\gamma_a-\gamma_b, \gamma_c-\gamma_d)\delta_{d a} X_{c b} \label{eq:def-rel}\\
&&- \omega(\gamma_b, \gamma_a)  \kappa_{a c} \sum_{b'}(\kappa^{-1})_{b b'}X_{b' d}\nonumber\\
&&+\omega(\gamma_b, \gamma_a) \omega(\gamma_a-\gamma_b, \gamma_c-\gamma_d)  (\kappa^{-1})_{b d} \sum_{a'}\kappa_{a a'}X_{c a'}. \nonumber
\eeq
\end{enumerate}
\end{theorem}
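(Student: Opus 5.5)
The plan is to translate the defining condition \eqref{eq:orth} through the isomorphism $\varpi$ into a condition on tensors, and then read off both parts from that.

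\textbf{Step 1: transport \eqref{eq:orth} to $V\ot V$.} Take a homogeneous $X=\varpi(v\ot v')$ with $v\in V_\alpha$ and $v'\in V_\beta$, so that $d(X)=\alpha+\beta$. Then
\[
\kappa(X w, u)=\kappa(v',w)\kappa(v,u).
\]
By the $\omega$-symmetry of $\kappa$ and degree-$0$ homogeneity (only terms with $\alpha+d(u)=0$ and $\beta+d(w)=0$ survive), the adjoint term $\omega(d(X),d(w))\kappa(w,Xu)$ equals $\kappa(\varpi(\tau_{V,V}(v\ot v'))w,u)$, up to a sign factor. I will check that this factor is exactly $+1$. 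The check uses that $\omega(\alpha,\alpha)$ and $\omega(\beta,\beta)$ appear in pairs, and it relies on \eqref{eq:kappa-inv}.

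Granting this, \eqref{eq:orth} for $X=\varpi(t)$ is equivalent to
\[
\varpi(t)+\varpi(\tau_{V,V}t)=0, \quad\text{that is,}\quad \tau_{V,V}t=-t.
\]
Since $\tau_{V,V}^2=\id$ by \eqref{eq:invol}, the $(-1)$-eigenspace of $\tau_{V,V}$ is the image of $\frac12(\id-\tau_{V,V})$. This gives part (1): $\osp(V;\kappa)=\varpi(\id-\tau_{V,V})(V\ot V)$.

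\textbf{Step 2: the dimension count.} Decompose $V\ot V$ as $V_+\ot V_+$, $V_-\ot V_-$, and $V_+\ot V_-\oplus V_-\ot V_+$. The last summand contributes $M_+M_-$, because $\tau$ swaps the two pieces.

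On $V_\alpha\ot V_\beta\oplus V_\beta\ot V_\alpha$ with $\alpha\neq\beta$, $\tau$ is a swap twisted by scalars whose product is $1$, so the $(-1)$-eigenspace has dimension $m_\alpha m_\beta$.

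On $V_\alpha\ot V_\alpha$, $\tau$ equals $\omega(\alpha,\alpha)$ times the flip. The antisymmetric part therefore has dimension $\binom{m_\alpha}{2}$ for $\alpha\in\Gamma^+$, and the symmetric part has dimension $\binom{m_\alpha+1}{2}$ for $\alpha\in\Gamma^-$. Summing within $V_\pm$ gives $\frac12 M_+(M_+-1)$ and $\frac12 M_-(M_-+1)$.

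\textbf{Step 3: the formula \eqref{eq:Xab}.} Since $\{e_a\ot\ol e_b\}$ is a basis of $V\ot V$, the elements $X_{ab}$ span $\osp(V;\kappa)$. Compute
\[
\tau(e_a\ot \ol e_b)=\omega(\gamma_a,-\gamma_b)\,\ol e_b\ot e_a .
\]
Expand $\ol e_b=\sum_{b'}(\kappa^{-1})_{bb'}e_{b'}$ and $e_a=\sum_{a'}\kappa_{aa'}\ol e_{a'}$; the second expansion follows from $\kappa(\ol e_{a'},e_c)=\delta_{a'c}$. Then apply $\varpi(e_{b'}\ot\ol e_{a'})=\BE_{b'a'}$. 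Finally, convert $\omega(\gamma_a,-\gamma_b)$ to $\omega(\gamma_b,\gamma_a)$ using \eqref{eq:cycle-1}.

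\textbf{Step 4: the commutation relations \eqref{eq:def-rel}.} Substitute \eqref{eq:Xab} into the bracket and expand bilinearly with \eqref{eq:CR}. This produces four groups of terms. The first two give $\delta_{bc}X_{ad}$ and $-\omega(\cdots)\delta_{da}X_{cb}$ after regrouping, using that each $X_{ad}$ already contains its $\kappa$-conjugate term. The cross terms contract one $\kappa$ with one $\kappa^{-1}$ and produce the last two lines.

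A cleaner route avoids matrix units. Compute $[\varpi(v\ot\bar w),\varpi(x\ot \bar y)]$ directly: from $\varpi(s)\varpi(t)$ one gets the rule
\[
\varpi(v\ot v')\,\varpi(x\ot x')=\kappa(v',x)\,\varpi(v\ot x'),
\]
and then apply the antisymmetriser.

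\textbf{Main obstacle.} The step I expect to be hardest is the bookkeeping of $\omega$-factors in Step 4, and in the sign check of Step 1. The difficulty is verifying that the cross terms combine with precisely the stated coefficients $\omega(\gamma_b,\gamma_a)$ and $\omega(\gamma_a-\gamma_b,\gamma_c-\gamma_d)$. My plan is to use the constraint that nonzero $\kappa_{ac}$ or $(\kappa^{-1})_{bd}$ forces $\gamma_a=-\gamma_c$ or $\gamma_b=-\gamma_d$. This lets every $\omega$-factor be rewritten in terms of $\gamma_a$ and $\gamma_b$ alone, after which the factors can be compared using bimultiplicativity.
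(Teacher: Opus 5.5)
Your proposal is correct and follows essentially the paper's own proof. The paper also rests on the identity $\kappa(\varpi(t)v,v')=\omega(d(t),d(v))\,\kappa(v,\varpi(\tau_{V,V}t)v')$, phrased there as $\varpi(\wedge^2_\omega V)\subset\osp(V;\kappa)$ and $\varpi(S^2_\omega V)\cap\osp(V;\kappa)=0$ rather than as your $(-1)$-eigenspace condition; it obtains \eqref{eq:Xab} by the same expansion and \eqref{eq:def-rel} via your ``cleaner route'' $\varpi(u\ot u')\varpi(v\ot v')=\kappa(u',v)\varpi(u\ot v')$. Two small corrections: in Step 1 the surviving degrees are $d(w)=-\alpha$, $d(u)=-\beta$, and the sign check needs only the $\omega$-symmetry of $\kappa$ and $\omega(\alpha,\alpha)^2=1$; \eqref{eq:kappa-inv} and $\kappa_{ca}=\omega(\gamma_c,\gamma_a)\kappa_{ac}$ are what you actually need in Step 4 to match the two cross terms.
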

\begin{proof}
Let $\pi_\pm = (\id_V\ot\id_V\pm \tau_{V, V})$, and denote
 $\wedge^2_\omega V= \pi_-(V\ot V)$ 
and $S^2_\omega V= \pi_+(V\ot V)$. 
Then $V\ot V= \wedge^2_\omega V\oplus S^2_\omega V$ as $\gl(V)$-module. 
Since $e_a\ot\ol{e}_b$,  for all $a$ and $b$, form a basis for $V\ot V$,  the elements $X_{a b}$ span 
$\varpi(\wedge^2_\omega V)$,  and the elements $S_{a b}=\varpi\pi_+(e_a\ot\ol{e}_b)$  span 
$\varpi(S^2_\omega V)$. 

For arbitrary homogeneous elements  $v, v'$ of $V$, we have 
\[
\baln
\kappa(\varpi(e_a\ot\ol{e}_b) v, v') 
&= \omega(\gamma_a-\gamma_b,  d(v))\kappa(v, \varpi\tau_{V, V}(e_a\ot\ol{e}_b)  v') \\
\kappa(\varpi\tau_{V, V}(e_a\ot\ol{e}_b) v, v') 
&=\omega(\gamma_a-\gamma_b, d(v)) \kappa(v, \varpi(e_a\ot\ol{e}_b) v'), 
\ealn
\]
which can be verified by considering $v=e_c$ and $v'=e_d$ for all $c, d$. 
Therefore, 
\[
\baln
\kappa(X_{a b} v, v') &=-\omega(\gamma_a-\gamma_b, d(v)) \kappa(v, X_{a b} v'),\\ 
\kappa(S_{a b} v, v') &=\omega(\gamma_a-\gamma_b, d(v)) \kappa(v, S_{a b} v'), \quad \forall a, b.
\ealn
\]
This leads to
$\varpi(\wedge^2_\omega V)\subset \osp(V; \kappa)$ and 
$\varpi(S^2_\omega V)\cap \osp(V; \kappa)=0$. 
As $\varpi$ is a bijection, $\varpi(\wedge^2_\omega V)= \osp(V; \kappa)$.

Clearly $\pi_-(e_b\ot e_a)= - \omega(\gamma_b, \gamma_a)\pi_-(e_a\ot e_b)$ for all $a, b$. In particular, if $e_a\in V_+$, then $\pi_-(e_a\ot e_a)=0$;  and if $e_a\in V_-$, then $\pi_-(e_a\ot e_a)=2 e_a\ot e_a$.  The dimension formula for $\osp(V; \kappa)$ follows directly from these observations.  

Equation \eqref{eq:Xab} can be verified by the easy computation below. 
\[
\baln
X_{a b} &= \varpi(e_a\ot \ol{e}_b) - \omega(\gamma_b, \gamma_a)\varpi(\ol{e}_b\ot e_a)\\
&= \varpi(e_a\ot \ol{e}_b) - \omega(\gamma_b, \gamma_a)\sum_{b', a'}  \kappa_{a a'} (\kappa^{-1})_{b b'}\varpi(e_{b'}\ot \ol{e}_{a'})\\
&=\BE_{a b} - \omega(\gamma_b, \gamma_a)\sum_{b', a'} (\kappa^{-1})_{b b'} \kappa_{a a'} \BE_{b' a'}.
\ealn
\]
To prove the commutation relation \eqref{eq:def-rel}, note that
\[
\varpi(u\ot u') \varpi(v\ot v')= \kappa(u', v) \varpi(u\ot v'), \quad \forall u\ot u', v\ot v'\in V\ot V.
\]
Using this, we can prove the following relations by straightforward calculations. 
\[
\baln
X_{a b} X_{c d}
&=\delta_{b c} \varpi(e_a\ot\ol{e}_d) - \omega(\gamma_b, \gamma_a)  
		\kappa_{a c} \varpi(\ol{e}_b\ot\ol{e}_d)\\
	& - \omega(\gamma_d, \gamma_c) (\kappa^{-1})_{d b} \varpi(e_a\ot e_c)
	+ \omega(\gamma_a, \gamma_c - \gamma_b-\gamma_a)\delta_{a d}\varpi(\ol{e}_b\ot e_c), \\
X_{c d} X_{a b}
&=\delta_{d a} \varpi(e_c\ot\ol{e}_b) - \omega(\gamma_d, \gamma_c)  
		\kappa_{c a} \varpi(\ol{e}_d\ot\ol{e}_b)\\
	& - \omega(\gamma_b, \gamma_a) (\kappa^{-1})_{b d} \varpi(e_c\ot e_a)
	+ \omega(\gamma_c, \gamma_a - \gamma_d-\gamma_c)\delta_{c b}
	\varpi(\ol{e}_d\ot e_a). 
\ealn
\]
Combining these relations, we obtain
\[
\baln
[X_{a b},  X_{c d}]
&= X_{a b} X_{c d} - \omega(\gamma_a-\gamma_b, \gamma_c-\gamma_d) X_{c d} X_{a b}\\
&=\delta_{b c} \varpi\pi_-(e_a\ot\ol{e}_d) \\
&-\omega(\gamma_a-\gamma_b, \gamma_c-\gamma_d)\delta_{d a} \varpi\pi_-(e_c\ot\ol{e}_b) \\
&- \omega(\gamma_b, \gamma_a)  \kappa_{a c} \varpi\pi_-(\ol{e}_b\ot\ol{e}_d)\\
& + \omega(\gamma_a-\gamma_b, \gamma_c-\gamma_d) \omega(\gamma_b, \gamma_a) (\kappa^{-1})_{b d} \varpi\pi_-(e_c\ot e_a).
\ealn
\]
This leads to equation \eqref{eq:def-rel}. 
\end{proof}

\subsubsection{Remarks}
We give a few brief remarks on the invariant theory of the classical Lie $(\Gamma, \omega)$-algebras.  

The invariant theory of the general linear Lie colour superalgebra $\gl(V)$ was systematically developed in \cite[\S4]{Z25}, where a Howe duality of type $(\gl(V), \gl(V'))$ was established, 
which in particular implies the first and second fundamental theorems of invariant theory in a $(\Gamma, \omega)$-commutative algebra setting, and 
a Schur-Weyl duality between $\gl(V)$ and the symmetric group, with   
the action of $\Sym_r$ on $V^{\ot r}$, for any $r$, constructed from 
the symmetry $P:=\tau_{V, V}$ defined by \eqref{eq:def-tau}. 
One can easily reformulate some of the results in a categorical framework by constructing 
a full tensor functor from the oriented Brauer category (see, e.g., \cite[\S4]{LZ24})  to the full subcategory of $\gl(V)$-modules with objects being repeated tensor products of $V$ and its dual. 

The main ideas of applying the Brauer category \cite{LZ15} to study the invariant theory of the orthosymplectic supergroup \cite{DLZ, LZ17, LZ21} should also work for the orthosymplectic Lie colour superalgebra $\osp(V; \kappa)$.  Now observe that the symmetry $P$ is an $\osp(V; \kappa)$-map,  and the bilinear form $\kappa$ gives rise to  the $\osp(V; \kappa)$-maps  
\beq
&&\wh{C}: V\ot V\lra \C, \quad v\ot v'\mapsto \kappa(v, v'), \quad \forall v, v'\in V, \\
&&\check{C}: \C\lra V\ot V, \quad a\mapsto a c_0, \quad \forall a\in\C,  
\eeq
where $c_0\in V\ot V$ is defined immediately before Theorem \ref{thm:osp-def}, and $\wh{C}$ is nothing but $\kappa$ regarded as a linear map from $V\ot V$ to $\C$.
The  maps $P$, $\wh{C}$ and $\check{C}$ satisfy relations which are formally the same as those in Lemma \cite[Lemma 5.2]{LZ24}. This enables one to describe $\osp(V; \kappa)$-invariants diagrammatically by constructing a tensor functor from the Brauer category to the full 
subcategory $\CT(V)$ of $\osp(V; \kappa)$-modules with objects $V^{\ot r}$ for all $r\in\Z_+$. 

The tensor functor will not be full in general, but it is expected to product all $\OSp(V;\kappa)$-morphisms $V^{\ot r}\lra V^{\ot s}$ for all $r, s$, where $\OSp(V;\kappa)$ is a $(\Gamma, \omega)$-group scheme, which can be constructed in a similar way as for the general linear colour supergroup $\GL(V)$ given in \cite[\S6.4]{Z25}.

\section{Lie and affine colour algebras fulfilling Cartan--Weyl paradigm}\label{sect:CWP-0}
Recall that Cartan–Weyl theory describes the structure of semisimple Lie algebras and their generalisations in terms of root systems, and develops their representation theory through weight modules, using Cartan subalgebras that are semisimple in the adjoint representation. 
For ease of reference, we shall refer to the conceptual framework underlying this theory 
as the Cartan–Weyl paradigm.
Here we examine the $\Gamma$-graded context, and construct the simple Lie colour algebras
and associated affine Lie colour algebras which satisfy the Cartan--Weyl paradigm.

\subsection{Assessing the Cartan--Weyl paradigm in the colour context}
\label{sect:CWP}

The general and special linear Lie colour superalgebras 
were shown  \cite{Z25} to satisfy the Cartan-Weyl paradigm, with  
homogeneous Cartan subalgebras concentrated in degree $0$, 
which have the required semi-simplicity property. 
However, for a general Lie colour (super)algebra, this cannot be assumed a priori. 
The underlying $\Gamma$-grading can structurally obstruct the existence of any 
suitable homogeneous Cartan subalgebra of degree $0$.  

\begin{remark}\label{rem:Cartan-deg0}
One might be tempted to allow for maximal $\omega$-commutative subalgebras which are 
not homogeneous of degree $0$. 
However, such a subalgebra does not admit 
$\Gamma$-graded weight modules since the ground field is concentrated in degree $0$. 
More seriously, since $\omega$-commutativity in the universal enveloping colour algebra 
means non-commutativity in the usual sense if $\omega$ is non-trivial, 
the standard simultaneous diagonalisation arguments no longer apply. 
\end{remark} 



Let us consider the orthosymplectic Lie colour superalgebra $\osp(V; \kappa)$. 
The situation can be summarised by the following theorem.
\begin{theorem}\label{thm:paradigm}
The orthosymplectic Lie colour superalgebra   
$\osp(V; \kappa)$ admits a homogeneous Cartan subalgebra of degree $0$, 
whose adjoint action on the entire Lie colour superalgebra is semi-simple
and hence yields a root space decomposition, 
if and only if there exists at most one $2$-torsion element $\alpha \in \Gamma_{\Z_2}(V)$ such that $V_\alpha$ is odd dimensional.
\end{theorem}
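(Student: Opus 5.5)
The approach is to reduce everything to the degree-$0$ subalgebra $\fg_0:=\osp(V;\kappa)_0$ and to compute weights of a maximal toral subalgebra of $\fg_0$ on $V$ and on $\osp(V;\kappa)\cong\wedge^2_\omega V$ (Theorem \ref{thm:osp-def}). I take a \emph{homogeneous Cartan subalgebra of degree $0$} to mean an abelian subalgebra $\fh\subset\fg_0$ that acts semisimply under $ad$ and is self-centralising in the whole algebra, i.e.\ the zero weight space of $ad\,\fh$ on $\osp(V;\kappa)$ is exactly $\fh$. The root space decomposition then requires exactly this.

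\textbf{Step 1: describe $\fg_0$.} An element of degree $0$ preserves every $V_\alpha$. By Remark \ref{rmk:pairing}, for $\alpha\notin\Gamma_{\Z_2}$ the form $\kappa$ pairs $V_\alpha$ nondegenerately with $V_{-\alpha}$, and the action on $V_{-\alpha}$ is forced by that on $V_\alpha$. This contributes a copy of $\gl(m_\alpha)$ for each pair $\{\alpha,-\alpha\}$. For $\alpha\in\Gamma_{\Z_2}$, $\kappa|_{V_\alpha}$ is nondegenerate and $\kappa(v,v')=\omega(\alpha,\alpha)\kappa(v',v)$ there. So it is symmetric if $\alpha\in\Gamma^+$, giving $\fso(m_\alpha)$, and skew if $\alpha\in\Gamma^-$, giving $\fsp(m_\alpha)$ with $m_\alpha$ even. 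Hence
\[
\fg_0\cong\bigoplus_{\{\alpha,-\alpha\}}\gl(m_\alpha)\oplus\bigoplus_{\alpha\in\Gamma^+_{R,\Z_2}(V)}\fso(m_\alpha)\oplus\bigoplus_{\alpha\in\Gamma^-_{R,\Z_2}(V)}\fsp(m_\alpha).
\]
In particular, $V_\alpha$ odd-dimensional with $\alpha$ $2$-torsion forces $\alpha\in\Gamma^+$.

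\textbf{Step 2: choose $\fh$ and compute weights on $V$.} Choose homogeneous bases adapted to $\kappa$: dual bases of $V_\alpha$ and $V_{-\alpha}$, and Witt bases of each $V_\alpha$ with $\alpha$ $2$-torsion, with one extra vector $u_\alpha$ when $m_\alpha$ is odd. Let $\fh$ be the standard diagonal Cartan subalgebra of $\fg_0$. It acts diagonally on this basis, hence semisimply on $\wedge^2_\omega V$ and so on $\osp(V;\kappa)$ via $\varpi$. Its weights on $V$ are $\pm\epsilon_i$ with the $\epsilon_i$ linearly independent. The zero weight space of $V$ is $V^{0}=\bigoplus\C u_\alpha$, the sum running over the $2$-torsion $\alpha$ with $m_\alpha$ odd.

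\textbf{Step 3: zero weight space in $\osp(V;\kappa)$.} The zero weight space of $\wedge^2_\omega V$ is spanned by two kinds of elements. The first kind are $\pi_-(e\ot e')$ with $e,e'$ of weights $\epsilon_i,-\epsilon_i$. These are the paired basis vectors of degrees $\gamma,-\gamma$, and their images under $\varpi$ span exactly $\fh$. The second kind are $\pi_-(u_\alpha\ot u_\beta)$. For $\alpha=\beta$ this vanishes, because $\alpha\in\Gamma^+$ and $\pi_-(e\ot e)=0$ for $e\in V_+$, as in the proof of Theorem \ref{thm:osp-def}. For $\alpha\neq\beta$ it is a nonzero element of degree $\alpha+\beta\neq0$.

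This gives both directions. If at most one such $\alpha$ exists, the zero weight space is $\fh$, and we get a root space decomposition. If two distinct ones $\alpha,\beta$ exist, then $\varpi\pi_-(u_\alpha\ot u_\beta)$ commutes with $\fh$ but does not lie in $\fg_0$, so $\fh$ is not self-centralising.

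\textbf{Main obstacle: an arbitrary degree-$0$ subalgebra.} For the "only if" direction I must rule out every degree-$0$ candidate $\fh'$, not just the standard $\fh$. My plan is as follows. $\fh'\subset\fg_0$ is abelian and acts semisimply on $V$ (equivalently on $\fg$, since $\fg\subset\End V$ and $V$ embeds in the adjoint-type module). So $\fh'$ is a toral subalgebra of the reductive algebra $\fg_0$, and by conjugacy under the group $\prod \GL\times\prod O\times\prod Sp$ it is conjugate to a subalgebra of $\fh$. This group acts on $V$ by degree-preserving $\kappa$-isometries, so the conjugation is an automorphism of $\osp(V;\kappa)$ preserving degrees. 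Hence the centraliser of $\fh'$ contains a conjugate of the centraliser of $\fh$, and in particular contains an element of nonzero degree. Therefore $\fh'$ is not self-centralising either. Checking that this conjugation is compatible with the $\Gamma$-grading and $\omega$ is the delicate point, and it is here that I would spend the most care.
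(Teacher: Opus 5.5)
Your proof is correct, and it takes a genuinely different route from the paper's.

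\textbf{The paper's route.} The paper splits $\osp(V;\kappa)=\fso(V_+;\kappa|_{V_+})\oplus\fsp(V_-;\kappa|_{V_-})\oplus(V_+\ot V_-)$ and observes that the last summand has no degree-$0$ part. For the ``if'' direction it relies on the case-by-case constructions of Sections~\ref{sect:C}, \ref{sect:D} and \ref{sect:B} (root systems of types $C$, $D$, $B$). For the ``only if'' direction it uses Example~\ref{eg:CW-fail} together with the orthogonal splitting $V=V^0\oplus V^1$.

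\textbf{Your route.} You make a single weight computation: the zero weight space of $\wedge^2_\omega V\cong\osp(V;\kappa)$ under the standard maximal torus of the whole reductive algebra $\fg_0$. This settles both directions at once. Conjugacy of maximal tori in the group of degree-$0$ isometries of $\kappa$ then disposes of every other degree-$0$ candidate.

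\textbf{What each buys.} The paper's route produces the explicit root data (simple roots, Cartan matrices, $\Xi$, Dynkin diagrams) that feed into Definition~\ref{def:fin} and the quantisation. Yours gives a uniform and more complete proof of the equivalence itself. In particular, the paper's inference ``$\fso(V^1;\kappa|_{V^1})$ fails, hence $\fso(V;\kappa)$ fails'' is not immediate. If some odd-dimensional $V_{\gamma_i}$ has dimension at least $3$, the summand $V^0\wedge_\omega V^1$ has a nonzero degree-$0$ part coming from $V^0_{\gamma_i}\ot V^1_{\gamma_i}$. So the degree-$0$ subalgebra is strictly larger than that of $\fso(V^0)\oplus\fso(V^1)$, and one needs exactly your Step~3 plus the conjugacy argument.

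\textbf{Two small repairs.}

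First, the claim that $ad$-semisimplicity on $\osp(V;\kappa)$ implies semisimplicity on $V$ does not follow from ``$V$ embedding in an adjoint-type module''. Argue instead as follows.
\begin{itemize}
\item Take the Jordan decomposition $h=s+n$ in $\End_\C(V)$. Both $s$ and $n$ lie in $\fg_0$, because $\fg_0$ is the Lie algebra of an algebraic group (the degree-$0$ isometries of $\kappa$).
\item $ad_s$ and $ad_n$ restrict to commuting semisimple and nilpotent operators on $\osp(V;\kappa)$. So semisimplicity of $ad_h$ forces $ad_n=0$ there, and $n$ is central in $\fg_0$.
\item The centre of $\fg_0$ (scalars on the $\gl$ blocks, and the $2$-dimensional orthogonal blocks) consists of semisimple operators. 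Hence $n=0$.
\end{itemize}

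Second, the step you flag as delicate is routine. A degree-$0$ isometry $g$ of $\kappa$ preserves degrees. So $X\mapsto gXg^{-1}$ maps $\osp(V;\kappa)$ to itself by \eqref{eq:orth} and preserves the bracket \eqref{eq:bracket}.
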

\begin{proof}
Since $\kappa$ is homogeneous of degree $0$, the Lie colour superalgebra $\osp(V; \kappa)$ contains 
the subalgebra $\osp(V; \kappa)_0 = \fso(V_+; \kappa|_{V_+}) \oplus \fsp(V_-; \kappa|_{V_-})$, 
where the orthogonal Lie colour subalgebra $\fso(V_+; \kappa|_{V_+})$ 
and the symplectic Lie colour subalgebra $\fsp(V_-; \kappa|_{V_-})$ graded commute.

As a $\Gamma$-graded $\osp(V; \kappa)_0$-module, the entire algebra decomposes into the 
direct sum of $\osp(V; \kappa)_0$ and a submodule which is isomorphic to $V_+ \ot V_-$ thus has no degree $0$ component. Therefore the entire degree $0$ component of $\osp(V; \kappa)$ is contained in $\fh\subset \osp(V; \kappa)_0$.

If $\osp(V;\kappa)$ admits a homogeneous degree $0$ Cartan subalgebra $\fh$ giving a Cartan--Weyl decomposition, then $\fh\subset \osp(V; \kappa)_0$, and hence each of the direct summands $\fso(V_+;\kappa|_{V_+})$ and $\fsp(V_-;\kappa|_{V_-})$ must also admit a Cartan--Weyl description.
Conversely, if both $\fso(V_+; \kappa|_{V_+})$ and $\fsp(V_-; \kappa|_{V_-})$ 
satisfy the Cartan--Weyl paradigm, then $V_+ \ot V_-$ is a $\Gamma$-graded weight module for $\osp(V; \kappa)_0$, 
and hence the Cartan--Weyl description extends to the entire algebra $\osp(V; \kappa)$. 
Therefore, $\osp(V; \kappa)$ fulfils the Cartan--Weyl paradigm if and only if both 
$\fso(V_+; \kappa|_{V_+})$ and $\fsp(V_-; \kappa|_{V_-})$ do.

We will show in Section \ref{sect:C} that the symplectic Lie colour algebra   
$\fsp(V_-; \kappa|_{V_-})$ always satisfies the Cartan--Weyl paradigm. 
However, this is not always the case for 
the orthogonal Lie colour algebra $\fso(V_+; \kappa|_{V_+})$.
As shown in Sections \ref{sect:B} and \ref{sect:D}, this depends on 
the structure of the subspace of $V$ with graded support 
$\Gamma^+_{R, \Z_2}(V)$. 
Therefore, the proof reduces to analysing $\fso(V_+; \kappa|_{V_+})$.

If $V_\alpha$ with $\alpha \in \Gamma_{\Z_2}(V)$ is odd-dimensional, 
it must be a subspace of $V_+$. Thus, the given condition on $V$ in the theorem 
is equivalent to requiring that there is at most one $\alpha \in \Gamma_{\Z_2}(V)$ 
such that $(V_+)_\alpha$ is odd-dimensional. 
We show in Sections \ref{sect:D} and \ref{sect:B} that this is the necessary 
and sufficient condition for $\fso(V_+; \kappa|_{V_+})$ to fulfil the Cartan--Weyl paradigm.
\end{proof}

\begin{remark}\label{rmk:color-y-n}
The failure of the Cartan--Weyl paradigm will be impossible to detect if 
one works with the cocycle twisted Lie superalgebras \cite{Sch79} 
of Lie colour superalgebras. 
This shows that cocycle twisting/bosonisation 
can obscure structurally significant information about Lie colour superalgebras. 
There is a different kind of example supporting this assertion in \cite[\S6]{Mo}, 
where incompatibility was observed between the gradings of a Weyl colour algebra 
and Lie colour superalgebras realised in it.  
\end{remark}

\begin{remark}
Some of the results in this section and Sections \ref{sect:C} and \ref{sect:D-B} below were also obtained by N. Aizawa and J. Van der Jeugt, in particular, the failure of the Cartan--Weyl paradigm in some cases of the orthogonal Lie colour algebra. We thank them for sharing their insights. 
\end{remark}

%
%
%
%

\medskip 
{\bf Hereafter we will consider only Lie colour algebras $\fg$ and their quantum groups}.
%
The super case will be treated in a sequel of this paper. 

\subsection{General and special linear Lie colour algebras}

Let $V$ be a finite dimensional $\Gamma$-graded vector space such that $\Gamma_R(V)= \Gamma_R^+(V)$, thus $V_-=0$.  Then $\gl(V)$ and $\fsl(V)$ are Lie colour algebras. 

The material  in this section is largely extracted from \cite[\S3.1]{Z25}. 
We present it here to set the stage for 
studying other classical Lie colour algebras.  

\subsubsection{Structure theory}
In the special case $a=b$, equation \eqref{eq:CR} reduces to
\beq \label{eq:Cartan}
[\BE_{a a},  \BE_{c d} ]= (\delta_{a c} - \delta_{a d}) \BE_{c d}. 
\eeq
Let $\fh=\sum_{a=1}^{\dim V}\BE_{a a}$. It is of crucial importance that $\fh$ 
is homogeneous of degree $0$, forming a usual abelian Lie subalgebra of $\gl(V)$. 
Equation \eqref{eq:Cartan} shows that $\fh$ is a Cartan subalgebra of $\gl(V)$, 
which enables one to describe the structure 
of $\gl(V)$ in the standard way via root space decomposition. 

We introduce a basis $\{\varepsilon_a\mid 1\le a \le \dim V\}$ for the dual space $\fh^*$ of $\fh$ such that 
\[
\varepsilon_a(\BE_{b b})=\delta_{a b}, \quad a, b=1, 2, \dots, \dim V. 
\]
Then \eqref{eq:Cartan} can be re-written as 
\beq \label{eq:roots}
[\BE_{a a},  \BE_{c d} ]= (\varepsilon_c -  \varepsilon_d)(\BE_{a a}) \BE_{c d}. 
\eeq
Thus the set of roots of $\gl(V)$ with this choice of the Cartan subalgebra is given by 
\[
\baln
\Phi =\{\varepsilon_a -  \varepsilon_b \mid a\ne b\}. 
\ealn
\]
We take the set $\Phi^+$ of  positive roots to consist of the following elements.
\[
\Phi^+ =\{\varepsilon_a -  \varepsilon_b \mid a< b\}. 
\]
Then $\Phi=\Phi^+\cup(-\Phi^+)$. 

For any root $\Upsilon\in \Phi$, the corresponding root space is 
\[
\fg_\Upsilon =\{X\in \gl(V)\mid [h, X]=\Upsilon(h)X, \ \forall h\in\fh\},
\]
which is $1$-dimensional. Note that $\fg_\Upsilon$ is homogeneous in the $\Gamma$-grading. 
We define the following map for later use. 
\beq\label{eq:xi-map}
\xi: \Phi\lra \Gamma, \quad \xi(\Upsilon)=\text{degree of $\fg_\Upsilon$}. 
\eeq

Let $\fn= \sum_{\Upsilon\in\Phi^+} \fg_\Upsilon$ and $\ol{\fn}=\sum_{\Upsilon\in\Phi^+} \fg_{-\Upsilon}$, which are 
Lie  colour subalgebras of $\gl(V)$. We have the following triangular decomposition.
\beq
\gl(V)= \ol{\fn}+\fh+\fn.
\eeq

By \cite{Z25}, the $\omega$-symmetric bilinear $\omega$-trace form \eqref{eq:biform} on $\End_\C(V)$ satisfies
\beq\label{eq:form-basis}
(\BE_{a a},   \BE_{b b}) = \omega(\gamma_a, \gamma_b) \delta_{a b}, 
\eeq
thus is non-degenerate. It is also $ad$-invariant \cite{Z25}, i.e.,  
\[
( [X,  Y], Z ) = ( X,  [Y, Z] ), \quad \forall X, Y, Z\in \gl(V). 
\]
Its restriction $( \ , \ )|_\fh: \fh\times \fh\lra\C$ to the Cartan subalgebra is
non-degenerate and symmetric, thus induces a non-degenerate symmetric bilinear form 
on $\fh^*$,
\beq\label{eq:inner-prod}
(\ , \ ): \fh^*\times \fh^*\lra \C.
\eeq
It follows \cite[(3.1.12)]{Z25} that
\beq\label{eq:basis-prod}
(\varepsilon_a,  \varepsilon_b)=  \delta_{a b}. 
\eeq

Now $\fsl(V)$ inherits the following triangular decomposition from $\gl(V)$, 
\[
\fsl(V) =\ol{\fn}+ \fh_s+\fn, \text{  with $\fh_s= \fh\cap\fsl(V)$}.
\]
Note that $\fh_s$  is spanned by 
$\BE_{a a}- \BE_{b b}$ for all $a, b$.

We have already seen that each $\Upsilon\in\Phi^+$ 
is associated with a  Lie subalgebra $\gl_2(\C)$. 
This enables us to introduce a Weyl group.

\begin{definition}\label{def:Weyl}
Let $\fg$ be a semi-simple Lie colour algebra with the Borel and Cartan subalgebras 
$\fb=\fn+\fh\supset \fh$, where $\fh$ is homogeneous of degree $0$. 
Denote by $\Phi=\Phi^+\cup(-\Phi^+)$ its root system. 
The Weyl group $W$ of $\fg$ is the subgroup of the general linear group $\gl(E)$ for the vector space $E=\R \Phi^+$, 
which is generated by the reflections $\sigma_\Upsilon: E\lra E$, for all positive roots $\Upsilon\in\Phi^+$,  defined by 
\beq\label{eq:refl}
\sigma_\Upsilon(\mu)= \mu- \frac{2(\mu, \Upsilon)}{(\Upsilon, \Upsilon)} \Upsilon, \quad \forall \mu\in E. 
\eeq
\end{definition}

The  Weyl group  $W$ of $\fsl(V)$ is isomorphic to the symmetric group
$Sym_D$ of degree $D=\dim V$. The set of weights of any weight module \cite{Z25} of $\fsl(V)$ is $W$-stable. 
Definition \ref{def:Weyl} generalises to $\gl(V)$ by replacing $E$ by $\sum_a \R\varepsilon_a$.

\subsubsection{Generators and relations}
We take the following set of positive roots 
$
\Pi=\{\Upsilon_a:= \varepsilon_a - \varepsilon_{a+1}\mid a=1, 2, \dots, \dim V-1\}
$
as the simple roots for $\fsl(V)$. There is another piece of information associated with the root system, namely, the following sequence of elements of $\Gamma$.
\[
\Xi:=(\xi(\Upsilon_1), \xi(\Upsilon_2), \dots, \xi(\Upsilon_{\dim V-1})), \quad  \xi(\Upsilon_a)=\gamma_a-\gamma_{a+1}.
\]

\begin{remark}\label{rmk:Xi}
The sequence $\Xi$ depends on the chosen basis $B(V)$. 
\end{remark}

We define Cartan matrices and Dynkin diagrams as follows.  
\begin{definition}\label{def:Cartan}\label{def:Dynkin}
Let $\Phi$ be a root system with the set 
$\Pi=\{\Upsilon_a\mid a=1, 2, \dots, r\}$ of simple roots, 
and a sequence $\Xi=(\xi_1, \xi_2, \dots, \xi_r)$ of elements in $\Gamma$. 
The Cartan matrix $A=(A_{a b})_{a, b\in[1, r]}$ associated with $(\Phi, \Pi, \Xi)$ is the $r\times r$-matrix  with
$
A_{a b} =\frac{2(\Upsilon_a, \Upsilon_b)}{(\Upsilon_a, \Upsilon_a)}, 
$
and the Dynkin diagram associated with $(\Phi, \Pi, \Xi)$ is defined as follows.  
\begin{itemize}

\item 
Draw a node corresponding to each of the simple roots $\Upsilon_1,  \Upsilon_2, \dots, \Upsilon_r$, and order the nodes from left to right starting from that of $\Upsilon_1$.

\item  The $a$-th node is drawn 
as 
\begin{picture}(10, 10)(0, 0)
\setlength{\unitlength}{0.25mm}
\put(5, 5){\circle{10}}
\end{picture}
if $\xi(\Upsilon_a)=0$, 
and
\begin{picture}(10, 10)(0, 0)
\setlength{\unitlength}{0.25mm}
\put(5, 5){\circle{10}}
\put(5, 5){\circle{5}}
\end{picture}
 if 
$\xi(\Upsilon_a)\ne 0$.

\item  For any $a\ne b$, the corresponding nodes are connected by 
$A_{a b} A_{b a}$ lines. Furthermore, if  $|A_{a b}|<|A_{b a}|$, 
draw an arrow pointing to the $b$-th node.
\end{itemize}
\end{definition}

For $\fsl(V)$ with $(\Phi, \Pi, \Xi)$ given above, the Cartan matrix is
the usual one of type $A_{\dim V-1}$, and the Dynkin diagram is as in Figure \ref{fig:A}.
\begin{figure}[h]
\setlength{\unitlength}{0.25mm}
\begin{picture}(300, 20)(30, 0)
\put(50, 0){$\typeA$}
\put(100, 10){\line(1, 0){10}}
\put(110, 0){$\typeGA$}

\put(160, 10){\line(1, 0){10}}
\put(170, 0){$\typeA$}

\put(220, 10){\line(1, 0){10}}




\put(230, 00){$\typeA$}
\put(235, 10){\circle{5}}

\end{picture}
\caption{Dynkin diagram of type $A$}
\label{fig:A}
\end{figure}

The following result is easy to see. 
\begin{theorem}\label{thm:Serre-A}
For any $a=1, 2, \dots, \dim V-1$, let 
\[
X_a= \BE_{a,  a+1}, \quad Y_a= \BE_{a+1,  a}, \quad 
Z_a=\BE_{a  a}  - \omega(\gamma_a-\gamma_{a+1}, \gamma_a-\gamma_{a+1})\BE_{a+1,  a+1}. 
\]
These elements have $\Gamma$-degrees $d(X_a)=\xi(\Upsilon_a)$, $d(Y_a) =-\xi(\Upsilon_a)$, and $d(Z_a)=0$, respectively. They generate $\fsl(V)$ and satisfy the following relations
\beq
&&[Z_a, Z_b]=0, \label{eq:qud-1}\\
&&[Z_a, X_b]= A_{a b} X_b, \quad [Z_a, Y_b]= -A_{a b} Y_b, \label{eq:qud-2} \\
&&[X_a, Y_b]=\delta_{a b} Z_a, \quad \forall a, b, \label{eq:qud-3} \\
&& ad_{X_a}^{1-A_{a b}}(X_b)=0, \quad ad_{Y_a}^{1-A_{a b}}(Y_b)=0, \quad b\ne a. \label{eq:Serre-cl}
\eeq
\end{theorem}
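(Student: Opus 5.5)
The plan is to verify everything directly from the commutation relations \eqref{eq:CR} of $\gl(V)$ in the basis $\BE_{ab}$. The one point requiring care is the sign factor in $Z_a$, so I would first simplify it in general and then specialise using the standing hypothesis of this subsection. Write $\theta_a:=\omega(\gamma_a-\gamma_{a+1},\gamma_a-\gamma_{a+1})\in\{\pm1\}$. By bimultiplicativity \eqref{eq:cycle-2}, \eqref{eq:cycle-3} and \eqref{eq:cycle-1},
\[
\theta_a=\omega(\gamma_a,\gamma_a)\,\omega(\gamma_{a+1},\gamma_{a+1})\,\omega(\gamma_a,\gamma_{a+1})^{-1}\omega(\gamma_{a+1},\gamma_a)^{-1}=\omega(\gamma_a,\gamma_a)\,\omega(\gamma_{a+1},\gamma_{a+1}).
\]
The degrees are immediate: $d(\BE_{ab})=\gamma_a-\gamma_b$, so $d(X_a)=\xi(\Upsilon_a)$, $d(Y_a)=-\xi(\Upsilon_a)$, and $d(Z_a)=0$.

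Next I would compute the brackets in general. \eqref{eq:qud-1} holds because all $\BE_{aa}$ have degree $0$ and commute by \eqref{eq:Cartan}. For \eqref{eq:qud-3}, \eqref{eq:CR} gives
\[
[\BE_{a,a+1},\BE_{b+1,b}]=\delta_{a+1,b+1}\BE_{ab}-\omega(\xi_a,-\xi_b)\,\delta_{ab}\BE_{b+1,a+1}.
\]
This vanishes for $a\ne b$. For $a=b$ it equals $\BE_{aa}-\omega(\xi_a,\xi_a)^{-1}\BE_{a+1,a+1}=Z_a$, since $\theta_a^{-1}=\theta_a$. This holds for either value of $\theta_a$, and it is exactly why $Z_a$ carries the factor $\theta_a$.

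For \eqref{eq:qud-2}, \eqref{eq:roots} gives
\[
[Z_a,X_b]=\big(\delta_{ab}-\delta_{a,b+1}-\theta_a(\delta_{a+1,b}-\delta_{a+1,b+1})\big)X_b,
\]
and similarly for $Y_b$ with the opposite sign. In general this produces the "super" Cartan entries: $0$ on the diagonal and $\pm1$ off it when $\theta_a=-1$. Under the standing assumption $\Gamma_R(V)=\Gamma_R^+(V)$ of this subsection, every $\omega(\gamma_c,\gamma_c)=1$, so $\theta_a=1$ and $Z_a=\BE_{aa}-\BE_{a+1,a+1}$. The coefficient then becomes $2,-1,0$ according as $b=a$, $|a-b|=1$, or $|a-b|\ge2$. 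Using \eqref{eq:basis-prod}, this is $2(\Upsilon_a,\Upsilon_b)/(\Upsilon_a,\Upsilon_a)=A_{ab}$, the type $A$ Cartan matrix of Definition \ref{def:Cartan}. I would state explicitly that in the Lie colour case treated here the $\theta_a$ in the statement equals $1$. The factor is kept so that the same formula for $Z_a$ remains valid, via the computation above, once $V_-\neq0$ in the super sequel. There, however, the Serre relations \eqref{eq:Serre-cl} must be modified: when $\theta_a=-1$ one gets $[X_a,X_a]=0$ in place of $ad_{X_a}^{1-A_{ab}}$. So the argument for \eqref{eq:Serre-cl} uses $\theta_a=1$ essentially.

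For \eqref{eq:Serre-cl}, if $|a-b|\ge2$ then $A_{ab}=0$, and $[X_a,X_b]=0$ by \eqref{eq:CR} because $b\ne a+1$ and $a\ne b+1$. If $b=a+1$, then $[X_a,X_{a+1}]=\BE_{a,a+2}$, and $[\BE_{a,a+1},\BE_{a,a+2}]=0$ by \eqref{eq:CR}, since $a+1\ne a$ and $a\ne a+2$. The case $b=a-1$ is similar: $[X_a,X_{a-1}]$ is a scalar multiple of $\BE_{a-1,a+1}$, which commutes with $\BE_{a,a+1}$. The $Y$ relations follow symmetrically.

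Finally, for generation, induction on $b-a$ using $[\BE_{a,c},\BE_{c,b}]=\BE_{ab}$ for $a<c<b$ (from \eqref{eq:CR}) shows that iterated brackets of the $X$'s give every $\BE_{ab}$ with $a<b$; likewise the $Y$'s give every $\BE_{ab}$ with $a>b$. The $Z_a$ span $\fh_s$: when $\theta_a=1$ they are the differences $\BE_{aa}-\BE_{a+1,a+1}$, whose span contains every $\BE_{aa}-\BE_{bb}$. Together with the triangular decomposition $\fsl(V)=\ol{\fn}+\fh_s+\fn$, this shows the $X_a,Y_a,Z_a$ generate $\fsl(V)$. The main obstacle is keeping the $\omega$-factors straight in \eqref{eq:CR}, which is settled by the identity for $\theta_a$ above.
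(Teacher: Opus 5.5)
Your proposal is correct. It is the direct verification from \eqref{eq:CR} that the paper leaves to the reader (the paper calls the result ``easy to see'' and gives no proof), and your identity $\omega(\gamma_a-\gamma_{a+1},\gamma_a-\gamma_{a+1})=\omega(\gamma_a,\gamma_a)\,\omega(\gamma_{a+1},\gamma_{a+1})=1$ under the standing hypothesis $V=V_+$ correctly handles the only delicate point.
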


As we have pointed out already, the choice of the ordered basis $B(V)$ for $V$ 
determines the set $\Xi$, and hence also the Dynkin diagram. 
The choices described below lead to the least number of non-zero elements in $\Xi$. 

Fix a total order of $\Gamma_R(V)$, 
and write $\Gamma_R(V)=\{\alpha_1, \alpha_2, \dots, \alpha_{\aleph}\}$ (where $\aleph=|\Gamma_R(V)|$)
in such a way that $\alpha_i < \alpha_j$ for all $i<j$.
Choose an ordered basis 
$
B(\alpha)=(e(\alpha)_1, e(\alpha)_2, \dots, e(\alpha)_{m_\alpha})
$
for each $V_\alpha$ with $m_\alpha=\dim V_{\alpha}$, 
and construct the following basis for $V$.  
\beq\label{eq:BV-s}
B(V)=(B(\alpha_1), B(\alpha_2), \dots, B(\alpha_{\aleph(V)})).
\eeq
Then the Dynkin diagram corresponding to this basis 
has the smallest number, $\aleph-1$, of double circles.  
Let $k_i=m_{\alpha_i}-1$. Then 
\[
\setlength{\unitlength}{0.25mm}
\begin{picture}(300, 35)(45, -20)
\put(50, 0){$\underbrace{\typeA}_{k_1}$}
\put(100, 10){\line(1, 0){10}}

\put(115, 10){\circle{10}}\put(115, 10){\circle{5}}

\put(120, 10){\line(1, 0){10}}
\put(130, 0){$\underbrace{\typeA}_{k_2}$}
\put(180, 10){\line(1, 0){10}}

\put(195, 10){\circle{10}}\put(195, 10){\circle{5}}

\put(200, 10){\line(1, 0){10}}
\put(210, 9){...}
\put(220, 10){\line(1, 0){10}}
\put(230, 0){$\underbrace{\typeA}_{k_{\aleph}}$}
\end{picture}
\]
However,  even this depends on the chosen order of the elements of $\Gamma_R(V)$. 

\subsection{Symplectic Lie colour algebras}\label{sect:C}

In the case $\Gamma_R(\fg)\subset \Gamma^-$, thus $V=V_-$, 
the orthosymplectic Lie colour superalgebra reduces to the
symplectic Lie colour algebra $\fsp(V; \kappa)$.  It follows Remark \ref{rmk:pairing} that 
if $\alpha \in \Gamma_R(V)$ but $\alpha \not\in \Gamma_R^{\Z_2}(V)$, 
then $V_\alpha$ is paired with $V_{-\alpha}$ under the non-degenerate bilinear form $\kappa$, thus 
$V_{-\alpha}$ is isomorphic to the dual space of $V_\alpha$. If $\alpha\in \Gamma_R^{\Z_2}(V)$, the restriction of $\kappa$ to $V_\alpha$ is non-degenerate. The $\omega$-symmetry of $\kappa$ in effect means skew symmetry on $V_\alpha$ since $V=V_-$.  Thus $\dim V_\alpha$ is even dimensional, and can be decomposed into $V_\alpha = U_\alpha \oplus \ol{U}_\alpha$ such that $\kappa$ pairs $U_\alpha$ with $\ol{U}_\alpha$, and hence $\ol{U}_\alpha$ is isomorphic to the dual space of $U_\alpha$. 
Therefore,  there exists $U=\sum_{\alpha\in \Gamma_R^-(U)} U_\alpha$ 
with  dual space $U^*= \sum_{\alpha\in \Gamma_R^-(U)} (U^*)_{-\alpha}$ such that  
$V= U\oplus U^*$. The bilinear form is now given,  for any homogeneous  $v\in U$ and $\ol{v}'\in U^*$, by
\[
\kappa(\ol{v}', v) = - \kappa(v, \ol{v}') = \ol{v}'(v). 
\]
Note that $\Gamma_R(U^*)=-\Gamma_R(U)\subseteq \Gamma^-$. 

Denote $\ell=\dim U$. 
Choose ordered  homogeneous bases 
\beq\label{eq:BU}
B(U)=(e_1, e_2, \dots, e_\ell) \text{  for $U$}, \quad 
\ol{B}(U^*)=(\ol{e}_1, \ol{e}_2, \dots, \ol{e}_\ell) \text{  for $U^*$},
\eeq 
such that $\ol{e}_i(e_j)=\delta_{i j}$, and  
introduce the ordered homogeneous basis for $V$, 
\beq\label{eq:BV-gen}
B(V)= (B(U), \ol{B}(U^*)). 
\eeq
Write $B(V)=(e_1, e_2, \dots, e_{2\ell})$. Then the elements $e_a$ satisfy
 $\kappa(e_{\ell+i}, e_j)=-\kappa(e_i, e_{\ell+j})$ $= \delta_{i j}$, and hence
 $(\kappa(e_a, e_b)) = \begin{pmatrix}
0 & - I_\ell\\ I_\ell & 0
\end{pmatrix}$.
Denote $\gamma_a=d(e_a)$ for $a=1, 2, \dots, 2\ell$, where $\gamma_i = - \gamma_{\ell+i}$ for all $i\le \ell$. 

The matrix units $\BE_{a b}\in \End_\C(V)$ with respect to $B(V)$ have $\Gamma$-degrees
\[
\baln
&d(\BE_{i j}) = d(\BE_{\ell+j, \ell+i}) = \gamma_i - \gamma_j, \
& d(\BE_{i, \ell+j}) =-d(\BE_{\ell+i, j})= \gamma_i + \gamma_j, \ 
& \forall i, j\le \ell. 
\ealn
\]
Using the explicit description of $\kappa$ given above, and bearing in mind that 
$\omega(\gamma_a, \gamma_a)=-1$ for all $a$, 
we can deduce the following result from Theorem \ref{thm:osp-def}. 
\begin{lemma}
Assume that $V=V_-$ and $\dim V=2\ell$. 
The Lie colour algebra $\fsp(V;\kappa)$ has the following basis.
\[
\baln
X_{i j}&:=\BE_{i j}+ \omega(\gamma_j, \gamma_i) \BE_{\ell+j, \ell+i}, \\
X_{i, \ell+j}&:=\BE_{i, \ell+j} - \omega(\gamma_i, \gamma_j)\BE_{j, \ell+i}, \\
X_{\ell+i, j}&:=\BE_{\ell+i, j} - \omega(\gamma_i, \gamma_j) \BE_{\ell+j, i}, 
\quad \forall i, j\le \ell. 
\ealn
\] 
The basis elements satisfy the following commutation relations.
\[
\baln{}
[X_{i j}, X_{p q}]&= \delta_{j p} X_{i  q}- \omega(\gamma_i-\gamma_j, \gamma_p-\gamma_q)\delta_{i q}X_{p j}, \\
[X_{i j}, X_{p, \ell+ q}]&= \delta_{j p} X_{i, \ell+q} +\omega(\gamma_i-\gamma_j, \gamma_p)
\delta_{j q}   X_{p, \ell+i}, \\
[X_{i j}, X_{\ell+ p, q}]&= \omega(\gamma_j, \gamma_i) (\delta_{i p} X_{\ell+j, q}+\delta_{i q} \omega(\gamma_p, \gamma_i- \gamma_j)X_{\ell+p, j})\\
[X_{i, \ell+j}, X_{\ell+p, q}]
 &= \delta_{j p} X_{i q}- \delta_{i p} \omega(\gamma_i, \gamma_j) X_{j q} \\
 &-  \omega(\gamma_p, \gamma_q)( \delta_{j q}  X_{i p}- \delta_{i q}\omega(\gamma_i, \gamma_j)  X_{j p}),\\
[X_{i, \ell+ j}, X_{p, \ell+ q}]&=0,   \quad [X_{\ell+ i, j}, X_{\ell+ p, q}]=0, \quad \forall i, j, p, q\le\ell.
\ealn
\]
\end{lemma}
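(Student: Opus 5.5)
The plan is to specialise Theorem \ref{thm:osp-def} to the basis \eqref{eq:BV-gen}, where $(\kappa_{ab})=\begin{pmatrix}0&-I_\ell\\ I_\ell&0\end{pmatrix}$. Consequently $(\kappa^{-1})=\begin{pmatrix}0&I_\ell\\ -I_\ell&0\end{pmatrix}$, so $\kappa$ and $\kappa^{-1}$ each have a single nonzero entry in every row. Three facts will be used throughout: $\gamma_{\ell+i}=-\gamma_i$; $\omega(\gamma_a,\gamma_a)=-1$ for all $a$; and $\omega(-\alpha,\beta)=\omega(\alpha,\beta)^{-1}$.

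\textbf{Basis.} Substituting into \eqref{eq:Xab}, the double sum $\sum_{a',b'}(\kappa^{-1})_{bb'}\kappa_{aa'}\BE_{b'a'}$ collapses to one term. For $X_{ij}$ with $i,j\le\ell$ we have $a'=\ell+i$ with $\kappa_{i,\ell+i}=-1$ and $b'=\ell+j$ with $(\kappa^{-1})_{j,\ell+j}=1$. This gives $X_{ij}=\BE_{ij}+\omega(\gamma_j,\gamma_i)\BE_{\ell+j,\ell+i}$, matching the statement. The same substitution for $X_{i,\ell+j}$ and $X_{\ell+i,j}$ gives the remaining two formulas, after rewriting factors such as $\omega(-\gamma_j,\gamma_i)$ as $\omega(\gamma_i,\gamma_j)$. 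Note that $X_{\ell+i,\ell+j}$ is a scalar multiple of $X_{ji}$, so it is redundant. Also $X_{i,\ell+j}$ and $X_{j,\ell+i}$ are proportional, as are $X_{\ell+i,j}$ and $X_{\ell+j,i}$, with nonzero diagonal cases $i=j$ because $\omega(\gamma_i,\gamma_i)=-1$. Counting independent elements gives $\ell^2+2\cdot\tfrac12\ell(\ell+1)=\ell(2\ell+1)$, which agrees with the dimension formula in Theorem \ref{thm:osp-def} for $M_+=0$, $M_-=2\ell$. Since the $X_{ab}$ span $\fsp(V;\kappa)$ by that theorem, the listed elements form a basis, up to the stated proportionalities.

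\textbf{Commutation relations.} These follow from \eqref{eq:def-rel} by choosing index ranges. For instance, $[X_{ij},X_{pq}]$ uses $a=i,b=j,c=p,d=q\le\ell$. Here $\kappa_{ip}=0$ and $(\kappa^{-1})_{jq}=0$, so only the first two terms survive, giving the $\fgl_\ell$-type relation. In $[X_{ij},X_{p,\ell+q}]$ the term $\delta_{da}$ vanishes and $\kappa_{ip}=0$. The last term contributes $(\kappa^{-1})_{j,\ell+q}=\delta_{jq}$ and $\kappa_{i,\ell+i}=-1$, producing $X_{p,\ell+i}$ with a coefficient that must be simplified. The case $[X_{i,\ell+j},X_{p,\ell+q}]=0$ holds because every Kronecker delta and every $\kappa$ or $\kappa^{-1}$ entry linking two indices from the same half vanishes. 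The case $[X_{\ell+i,j},X_{\ell+p,q}]=0$ holds by the same reasoning. The mixed bracket $[X_{i,\ell+j},X_{\ell+p,q}]$ gets contributions from all four terms, and each produced element of the form $X_{\ell+\cdot,\ell+\cdot}$ is converted back to some $X_{\cdot\,\cdot}$ using the proportionality noted above.

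\textbf{Main obstacle.} The only real work is the bookkeeping of the $\omega$-factors. I would reduce each product of $\omega$'s using bimultiplicativity \eqref{eq:cycle-2}, \eqref{eq:cycle-3}, the identity $\omega(\gamma_a,\gamma_a)=-1$, and the index constraints imposed by the deltas, for example substituting $j=q$ into $\omega(\gamma_i-\gamma_j,\gamma_p-\gamma_q)$. The hardest case will be $[X_{i,\ell+j},X_{\ell+p,q}]$, where four terms must be matched to the stated coefficients. As a consistency check, I would compare each simplified result against the direct matrix-unit computation via \eqref{eq:CR} in a small case.
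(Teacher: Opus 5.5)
Your proposal is correct and follows the paper's own route: the paper also obtains the lemma by specialising Theorem \ref{thm:osp-def} to $(\kappa_{ab})=\begin{pmatrix}0&-I_\ell\\ I_\ell&0\end{pmatrix}$ and using $\omega(\gamma_a,\gamma_a)=-1$, and all four of your index specialisations of \eqref{eq:def-rel} do reproduce the stated coefficients. Your observation that the list is a basis only after restricting the off-diagonal blocks to $i\le j$ is accurate (the statement overstates this), and it is legitimate to use $X_{\ell+i,\ell+j}=\omega(\gamma_j,\gamma_i)X_{ji}$ to rewrite the output of \eqref{eq:def-rel}.
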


Let $\fh=\sum_{i=1}^\ell \C\ X_{i i}$. It follows from the lemma that 
\[
\baln
{}[X_{i i}, X_{p p}]&=0,  \\
{}[X_{i i}, X_{p q}]&= (\delta_{i p} - \delta_{i q})X_{p q}; \quad p\ne q, \\
{}[X_{i i}, X_{p, \ell+ q}]&= (\delta_{i p}  + \delta_{i q})   X_{p, \ell+q}, \\
{}[X_{i i}, X_{\ell+ p, q}]&= - (\delta_{i p} +\delta_{i q}) X_{\ell+p, q}, \quad i, p, q\le \ell. 
\ealn
\]
Thus $\fh$ is a Cartan subalgebra of $\fsp(V;\kappa)$, which is homogeneous of degree $0$. 
The structure of $\fsp(V;\kappa)$ is most conveniently described 
in terms of its root system. 

Let $\delta_i$, for $i=1, 2, \dots, \ell$, be elements of the dual space $\fh^*$
such that $\delta_i(X_{j j})=\delta_{i j}$ for all $i, j=1, 2, \dots, \ell$. 
They form a basis of $\fh^*$.  The last three of the above equations can be re-written as 
\beq
{}[X_{i i}, X_{p q}]&=& (\delta_p - \delta_q)(X_{i i})X_{p  q}, \quad p\ne q, \label{eq:rt-1}\\
{}[X_{i i}, X_{p, \ell+ q}]&=& (\delta_p + \delta_q)(X_{i i})X_{p, \ell+q}, \label{eq:rt-2}\\
{}[X_{i i}, X_{\ell+ p, q}]&=& - (\delta_p + \delta_q)(X_{i i}) X_{\ell+p, q}, \quad i, p, q\le \ell. \label{eq:rt-3}
\eeq

Recall that the $\omega$-trace gives rise to a non-degenerate $\omega$-symmetric bilinear form on $\gl(V)$, whose restriction to $\fsp(V; \kappa)$ is non-degenerate. We define the bilinear form $(\ , \ ): \fsp(V; \kappa)\times \fsp(V; \kappa)\lra \C$ by $(X, Y)=\frac{1}{2}{\rm tr}_{(\Gamma, \omega)}(XY)$ for all $X, Y\in \fsp(V; \kappa)$.  
It is easy to verify that $(X_{i i}, X_{j j})=-\delta_{i j}$, thus the restriction of $(\ , \ )$ to $\fh$ is non-degenerate. It induces a symmetric bilinear form $(\ , \ ): \fh^*\times \fh^*\lra \C$ on $\fh^*$ in the usual way, which satisfies
\[
(\delta_i, \delta_j)=-\delta_{i j}, \quad i, j\in[1, \ell].
\]

It follows equations \eqref{eq:rt-1}- \eqref{eq:rt-3} that the set of roots of $\fsp(V;\kappa)$ relative to the Cartan subalgebra $\fh$ is given by
\[
\Phi=\{\delta_i-\delta_j\mid i, j\in [1,  \ell], i\ne j\}\cup \{\delta_p+\delta_q,  -(\delta_p+\delta_q)\mid  p, q\in [1,  \ell], p\le q\}.
\]

The map $\xi: \Phi \lra \Gamma$ (see \eqref{eq:xi-map}) in this case is defined by 
$\xi(\delta_i\pm \delta_j)= \gamma_i\pm \gamma_j$ for all $i<j$ and 
$\xi(2\delta_i) = 2\gamma_i$. Clearly $\xi(\Upsilon)\in \Gamma^+$ 
for all $\Upsilon\in \Phi$ as 
$\omega(\xi(\Upsilon), \xi(\Upsilon))=1$.

Note that associated with the root $\delta_i-\delta_j$, there is an
$\fsl_2(\C)$ subalgebra in $\fsp(V;\kappa)$ spanned by the elements 
$X_{i j}, X_{j i}, X_{i i}-X_{j j}$.  
Also, corresponding to the root $\delta_i+\delta_j$ for any fixed $i\le j$, we have 
\[
\baln
[X_{i, \ell+j}, X_{\ell+j, i}]&= (1+ \delta_{i j})  (X_{i i} + X_{j j}).
 \ealn
\]
Again the elements $X_{i, \ell+j}, X_{\ell+j, i}, X_{i i} +X_{j j}$ span an $\fsl_2(\C)$. 
Thus for each root $\Upsilon\in \Phi$, there is the reflection $\sigma_\Upsilon$ defined by \eqref{eq:refl}, 
which
generates the Weyl group of the $\fsl_2(\C)$ subalgebra associated with $\Upsilon$. 
The Weyl group $W$ of $\fsp(V; \kappa)$ is the subgroup of $\GL(E)$ 
generated by all $\sigma_\Upsilon$. 

The element $\hat\rho=\sum_{i=1}^\ell (\ell+1-i) X_{i i} \in \fh$ satisfies  
$\Upsilon(\hat\rho)\ne 0$  for all $\Upsilon\in \Phi$.  We use it to define the set 
$\Phi^+:=\{\Upsilon\in \Phi\mid \Upsilon(\hat\rho)>0\}$ of positive roots. We have   
\[
\baln
\Phi^+&= \{\delta_i-\delta_j, \delta_i+\delta_j, 2\delta_k\mid 1\le i<j\le \ell, \ 1\le k\le \ell\}.
\ealn
\] 
Then $\Phi=\Phi^+\cup(-\Phi^+)$. The set of simple roots for this choice of the positive roots is given by 
\[
\Pi=\{\Upsilon_i:=\delta_i-\delta_{i+1}, \Upsilon_\ell:=2\delta_\ell\mid 1\le i<\ell\}. 
\]
Let $\Xi=(\xi(\Upsilon_1), \xi(\Upsilon_2),\dots, \xi(\Upsilon_\ell))$. 
The Cartan matrix of $\fsp(V; \kappa)$ with the root datum $(\Phi, \Pi, \Xi)$ 
is the same as that of type $C_\ell$, and the Dynkin diagram is depicted in Figure \ref{fig:C}, 
where the node corresponds to $\Upsilon_i$ is a double circle if $\xi(\Upsilon_i)\ne 0$, and a circle otherwise. 
\begin{figure}[h]
\setlength{\unitlength}{0.25mm}
\begin{picture}(200, 25)(370, -5)

\put(305, 0){$\typeA$}

\put(355, 10){\line(1, 0){10}}

\put(365, 0){$\typeGA$}

\put(415, 10){\line(1, 0){10}}
\put(425, 0){$\typeA$}
\put(470, 10){\circle{5}}


\put(475, 10){\line(1, 0){10}}

\put(485, 0){$\typeA$}
\put(530, 10){\circle{5}}

\put(534, 12){\line(1, 0){20}}
\put(534, 8){\line(1, 0){20}}
\put(559, 10){\circle{10}}\put(559, 10){\circle{5}}
\put(540, 5){$<$}
\put(580, 5){if $\gamma_\ell\not\in\Gamma_{\Z_2}$,}
\end{picture}

\vspace{3mm}
\begin{picture}(200, 25)(370, -5)

\put(305, 0){$\typeA$}

\put(355, 10){\line(1, 0){10}}

\put(365, 0){$\typeGA$}

\put(415, 10){\line(1, 0){10}}
\put(425, 0){$\typeA$}
\put(470, 10){\circle{5}}


\put(475, 10){\line(1, 0){10}}

\put(485, 0){$\typeA$}
\put(530, 10){\circle{5}}

\put(534, 12){\line(1, 0){20}}
\put(534, 8){\line(1, 0){20}}
\put(559, 10){\circle{10}}
\put(540, 5){$<$}
\put(580, 5){if $\gamma_\ell\in\Gamma_{\Z_2}$.}

\end{picture}
\caption{Dynkin diagram of type $C$}
\label{fig:C}
\end{figure}

We have the following result. 

\begin{theorem}\label{thm:Serre-C}
Consider the following elements of $\fsp(V; \kappa)$, 
\[
\baln
&X_i= X_{i,  i+1}, \quad Y_i= X_{i+1,  i}, \quad 
Z_i=X_{i  i} - X_{i+1,  i+1}, \quad 1\le i <\ell, \\
&X_\ell=\frac{1}{2} X_{\ell,  2\ell}, \quad Y_\ell= \frac{1}{2}X_{2\ell, \ell}, \quad 
Z_\ell=X_{\ell \ell}, 
\ealn
\]
which have $\Gamma$-degrees $d(X_j)=\xi(\Upsilon_j)$, $d(Y_j) =-\xi(\Upsilon_j)$, and $d(Z_j)=0$ respectively. They generate $\fsp(V; \kappa)$, and satisfy relations which are formally the same as \eqref{eq:qud-1}, \eqref{eq:qud-2},
\eqref{eq:qud-3}, and \eqref{eq:Serre-cl}, but with 
$A=(A_{i j})$ being the usual Cartan matrix of type $C_\ell$, and $\Xi$ as described above. 
\end{theorem}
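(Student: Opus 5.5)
The proof is a direct verification built on the preceding Lemma and on equations \eqref{eq:rt-1}--\eqref{eq:rt-3}. I would first record the degrees. By the Lemma, $d(X_{i,i+1})=\gamma_i-\gamma_{i+1}=\xi(\Upsilon_i)$ and $d(X_{\ell,2\ell})=2\gamma_\ell=\xi(\Upsilon_\ell)$. The elements $Y_j$ have the negatives of these degrees, and the $Z_j$ are combinations of the $X_{ii}$, which have degree $0$.

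Next come relations \eqref{eq:qud-1} and \eqref{eq:qud-2}. The relation $[Z_a,Z_b]=0$ holds because $\fh$ is abelian. For $[Z_a,X_b]$, each $X_b$ is a root vector, so \eqref{eq:rt-1}--\eqref{eq:rt-3} give $[Z_a,X_b]=\Upsilon_b(Z_a)X_b$. It then remains to check that $\Upsilon_b(Z_a)=A_{ab}$ for the type $C_\ell$ Cartan matrix. Using $(\delta_i,\delta_j)=-\delta_{ij}$, we have $Z_a=2h_{\Upsilon_a}/(\Upsilon_a,\Upsilon_a)$ as the coroot. Explicitly, $(\delta_b-\delta_{b+1})(X_{aa}-X_{a+1,a+1})$ gives the $A_{\ell-1}$ block. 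Further, $2\delta_\ell(X_{\ell-1,\ell-1}-X_{\ell\ell})=-2$ and $(\delta_{\ell-1}-\delta_\ell)(X_{\ell\ell})=-1$, which match $A_{\ell-1,\ell}=-2$ and $A_{\ell,\ell-1}=-1$ (up to the paper's transpose convention). Finally $2\delta_\ell(X_{\ell\ell})=2$. The same argument with signs reversed handles the $Y_b$.

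For \eqref{eq:qud-3}, the cases $a\neq b$ follow from the Lemma's bracket formulas, since $\Upsilon_a-\Upsilon_b$ is never a root. The diagonal cases come from the same formulas, taking care with the colour signs. For $1\le i<\ell$,
\[
[X_{i,i+1},X_{i+1,i}]=X_{ii}-\omega(\gamma_i-\gamma_{i+1},\gamma_{i+1}-\gamma_i)X_{i+1,i+1}=Z_i,
\]
because $\gamma_i-\gamma_{i+1}\in\Gamma^+$. For $i=\ell$, the bracket formula displayed before the statement gives $[X_{\ell,2\ell},X_{2\ell,\ell}]=2\cdot 2X_{\ell\ell}$, so the factors $\tfrac12$ yield $[X_\ell,Y_\ell]=Z_\ell$.

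The Serre relations \eqref{eq:Serre-cl} follow from the root system. By the relations above, $ad_{X_a}^{k}(X_b)$ lies in the root space of $\Upsilon_b+k\Upsilon_a$. For $k=1-A_{ab}$ this weight is not in $\Phi$: it would be $\Upsilon_b-A_{ab}\Upsilon_a+\Upsilon_a$, which lies outside the $\Upsilon_a$-string through $\Upsilon_b$ in the $C_\ell$ root system, a string of length $1-A_{ab}$ because $\Upsilon_b-\Upsilon_a\notin\Phi$. Since the root spaces listed in \eqref{eq:rt-1}--\eqref{eq:rt-3} exhaust $\fsp(V;\kappa)$ outside $\fh$, and $\Upsilon_b+k\Upsilon_a\neq0$, this vector vanishes. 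The same holds for the $Y$'s.

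Generation is the last step. Every positive root vector is reached by iterated brackets of the $X_i$. For example, $[X_{i,i+1},X_{i+1,j}]=X_{ij}$ up to a nonzero $\omega$-factor, and $X_{p,\ell+q}$ is obtained by bracketing $X_{pq}$-type elements into $X_{\ell,2\ell}$. The negative root vectors come from the $Y_i$ in the same way, and $\fh$ is spanned by the $Z_i$. The step I expect to need the most care is controlling the colour factors $\omega(\cdot,\cdot)$ here and in the diagonal brackets. These factors are all nonzero scalars, so they cannot kill any bracket, but their exact values must be tracked to confirm the normalisations in \eqref{eq:qud-3}.
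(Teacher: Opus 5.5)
Your proposal is correct and follows the argument the paper intends. The paper states this theorem without a written proof, as a direct check from the preceding Lemma and the root decomposition \eqref{eq:rt-1}--\eqref{eq:rt-3}, which is exactly what you carry out. Your values $\Upsilon_\ell(Z_{\ell-1})=-2$ and $\Upsilon_{\ell-1}(Z_\ell)=-1$ agree exactly with $A_{ab}=2(\Upsilon_a,\Upsilon_b)/(\Upsilon_a,\Upsilon_a)$, so no transpose caveat is needed.

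The one step to tighten is generation. The principle ``nonzero $\omega$-factors cannot kill a bracket'' is not valid on its own when two terms land on the same root vector. For example, in the Lemma's formula for $[X_{\ell-1,\ell},X_{\ell,2\ell}]$ the proportional terms $X_{\ell-1,2\ell}$ and $X_{\ell,2\ell-1}$ combine, here to $2X_{\ell-1,2\ell}$. A clean fix: since $\omega(\xi_a,-\xi_a)=1$, the triple $X_a,Y_a,Z_a$ acts on $\fsp(V;\kappa)$ by an honest $\fsl_2(\C)$-representation. Hence $ad_{X_a}\colon\fg_\beta\to\fg_{\beta+\Upsilon_a}$ is nonzero whenever $\beta+\Upsilon_a\in\Phi$.
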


Let us now consider bases for $V$ which will lead to the least number of double circles in the Dynkin diagram.  
Denote  $\aleph=|\Gamma_R(U)|$. 
Fix a total order for $\Gamma_R(U)=\{\alpha_1, \alpha_2, \dots, \alpha_{\aleph(U)}\}$ with $\alpha_i< \alpha_{i+1}$ for all $i$. If $\Gamma_R^{\Z_2}(U)$ is not empty, we further require that  
$\alpha_s<\alpha_t$ for all $\alpha_s\not\in\Gamma_R^{\Z_2}(U)$ and  $\alpha_t\in\Gamma_R^{\Z_2}(U)$. 
Choose an ordered basis $B(\alpha_r)=(e(\alpha_r)_1, e(\alpha_r)_2, \dots e(\alpha_r)_{m_r})$  for each $U_{\alpha_r}$, where $m_r=\dim U_{\alpha_r}$. 
Let $\ol{B}(-\alpha_r)=(\ol{e}(-\alpha_r)_1, \ol{e}(-\alpha_r)_2, \dots, \ol{e}(-\alpha_r)_{m_r})$ be an ordered basis for $(U^*)_{-{\alpha_r}}$ such that $\ol{e}(-\alpha_r)_i(e(\alpha_r))_j=\delta_{i j}$.  
We have the following ordered bases for $U$,  $U^*$ and $V=U\oplus U^*$ respectively.
\beq
&B(U)=(B(\alpha_1), B(\alpha_2),\dots, B(\alpha_\aleph)), \label{eq:BU-s}\\
&\ol{B}(U^*)=(\ol{B}(-\alpha_1),  \ol{B}(-\alpha_2), \dots,  \ol{B}(-\alpha_\aleph)), \label{eq:BU-d} \\
&B(V)= (B(U), \ol{B}(U^*)). \label{eq:BV}
\eeq
Let $\ell_r=\dim U_{\alpha_r}-1$, for $i=1, 2, \dots, \aleph$. 
Then the Dynkin diagram associated with this basis of $V$ is one of the diagrams shown below, 
depending on whether $\alpha_{\aleph(U)}\in \Gamma_R^{\Z_2}(U)$ or not. 
\[
\setlength{\unitlength}{0.25mm}
\begin{picture}(200, 90)(330, -15)

\put(305, 0){$\underbrace{\typeA}_{\ell_1}$}

\put(355, 10){\line(1, 0){10}}

\put(370, 10){\circle{10}}\put(370, 10){\circle{5}}

\put(375, 10){\line(1, 0){10}}
\put(385, 0){$\underbrace{\typeA}_{\ell_2}$}
\put(435, 10){\line(1, 0){10}}
\put(450, 10){\circle{10}}\put(450, 10){\circle{5}}
\put(455, 10){\line(1, 0){10}}
\put(465, 9){...}
\put(475, 10){\line(1, 0){10}}
\put(485, 0){$\underbrace{\typeA}_{\ell_{\aleph(U)}}$}
\put(534, 12){\line(1, 0){20}}
\put(534, 8){\line(1, 0){20}}
\put(559, 10){\circle{10}}\put(559, 10){\circle{5}}
\put(540, 5){$<$}


\put(305, 60){$\underbrace{\typeA}_{\ell_1}$}

\put(355, 70){\line(1, 0){10}}

\put(370, 70){\circle{10}}\put(370, 70){\circle{5}}

\put(375, 70){\line(1, 0){10}}
\put(385, 60){$\underbrace{\typeA}_{\ell_2}$}
\put(435, 70){\line(1, 0){10}}
\put(450, 70){\circle{10}}\put(450, 70){\circle{5}}
\put(455, 70){\line(1, 0){10}}
\put(465, 69){...}
\put(475, 70){\line(1, 0){10}}
\put(485, 60){$\underbrace{\typeA}_{\ell_{\aleph(U)}}$}
\put(534, 72){\line(1, 0){20}}
\put(534, 68){\line(1, 0){20}}
\put(559, 70){\circle{10}}
\put(540, 65){$<$}

\put(580, 65) {or}
\end{picture}
\]

\noindent
Again these are not unique as the basis \eqref{eq:BV} for $V$ is not unique.

\subsection{Orthogonal Lie colour algebras}\label{sect:D-B}
We consider  $\fso(V; \kappa)$, where $V=V_+$.  
As we will see, the structure of $\fso(V; \kappa)$ depends on the subspace $\sum_{\alpha\in \Gamma^+_{R, \Z_2}(V)} V_\alpha$ in a crucial way. 
For any $\alpha\in\Gamma^+_{R, \Z_2}(V)$, the restriction of $\kappa$ to $V_\alpha$ is non-degenerate by Remark \ref{rmk:pairing}.  Since it is symmetric in the present case,
$\dim V_\alpha$ can be even or odd. It is the odd dimensional ones that pose problems.  

\subsubsection{The case with even dimensional $V_\alpha$  
for all $\alpha\in\Gamma^+_{R, \Z_2}(V)$}\label{sect:D}

In this case, we again have 
$V= U\oplus U^*$ with $U=\sum_{\alpha\in \Gamma^+(U)} U_\alpha$, and $U^*= \sum_{\alpha\in \Gamma^+(U)} (U^*)_{-\alpha}$. The bilinear form is given by 
$
\kappa(\ol{v}', v) = \kappa(v, \ol{v}') = \ol{v}'(v)$ for all $v\in U, \ol{v}'\in U^*. 
$

Denote $\ell=\dim U$. 
We construct a homogeneous basis $
B(V)=(e_1, e_2, \dots, e_{2\ell})
$ for $V$
in the same way as described by \eqref{eq:BU},  
\eqref{eq:BV-gen}.   Then
$(\kappa(e_a, e_b)) = \begin{pmatrix}
0 & I_\ell\\ I_\ell & 0
\end{pmatrix}$.
We have $\omega(\gamma_a, \gamma_a)=1$ for all $a=1, 2, \dots, 2\ell$. 

The following result is an easy consequence of Theorem \ref{thm:osp-def}. 
\begin{lemma} 
The Lie colour algebra $\fso(V;\kappa)$ 
has a homogeneous basis consisting of the elements
\[
\baln
X_{i j}&:=\BE_{i j}- \omega(\gamma_j, \gamma_i) \BE_{\ell+j, \ell+i}, \quad \forall i, j\le \ell,  \\
X_{i, \ell+j}&:=\BE_{i, \ell+j}- \omega(\gamma_i, \gamma_j)\BE_{j, \ell+i}, \\
X_{\ell+i, j}&:=\BE_{\ell+i, j}- \omega(\gamma_i, \gamma_j) \BE_{\ell+j, i}, 
\quad \forall i < j\le \ell, 
\ealn
\] 
which satisfy the following commutation relations for all valid $i, j, p, q$.
\[
\baln
{}[X_{i j}, X_{p q}]&= \delta_{j p} X_{i  q}- \omega(\gamma_i-\gamma_j, \gamma_p-\gamma_q)\delta_{i q}X_{p j}, \\
[X_{i j}, X_{p, \ell+ q}]&= \delta_{j p} X_{i, \ell+q} +\omega(\gamma_i-\gamma_j, \gamma_p)
\delta_{j q}   X_{p, \ell+i}, \\
[X_{i j}, X_{\ell+ p, q}]&=- \omega(\gamma_j, \gamma_i) (\delta_{i p} X_{\ell+j, q}+\delta_{i q} \omega(\gamma_p, \gamma_i- \gamma_j)X_{\ell+p, j}), \\
[X_{i, \ell+j}, X_{\ell+p, q}]
&= \delta_{j p} X_{i q}- \delta_{i p} \omega(\gamma_i, \gamma_j) X_{j q} \\
 &-  \omega(\gamma_p, \gamma_q)( \delta_{j q}  X_{i p}- \delta_{i q}\omega(\gamma_i, \gamma_j)  X_{j p}), \\
[X_{i, \ell+ j}, X_{p, \ell+ q}]&=0,  \quad [X_{\ell+ i, j}, X_{\ell+ p, q}]=0.
\ealn
\]
\end{lemma}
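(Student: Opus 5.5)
The plan is to specialise Theorem \ref{thm:osp-def} to $V=V_+$ with the basis $B(V)=(B(U),\ol{B}(U^*))$, for which $\kappa_{ab}=(\kappa^{-1})_{ab}$ is given by the block matrix $\begin{pmatrix}0&I_\ell\\ I_\ell&0\end{pmatrix}$. Write indices $a\in\{i,\ell+i\}$ with $\gamma_{\ell+i}=-\gamma_i$. In formula \eqref{eq:Xab}, the sums over $a',b'$ then each collapse to a single term. Indeed $\kappa_{a a'}\neq0$ forces $a'=a\pm\ell$, and likewise $(\kappa^{-1})_{bb'}\neq0$ forces $b'=b\pm\ell$. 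This gives directly
\[
X_{ij}=\BE_{ij}-\omega(\gamma_j,\gamma_i)\BE_{\ell+j,\ell+i},\quad
X_{i,\ell+j}=\BE_{i,\ell+j}-\omega(-\gamma_j,\gamma_i)\BE_{j,\ell+i}.
\]
Since $\gamma_i+\gamma_j$ has $\omega(\gamma_i+\gamma_j,\gamma_i+\gamma_j)=1$ and $\omega(\gamma_i,\gamma_i)=\omega(\gamma_j,\gamma_j)=1$, we get $\omega(\gamma_i,\gamma_j)\omega(\gamma_j,\gamma_i)=1$, so $\omega(-\gamma_j,\gamma_i)=\omega(\gamma_i,\gamma_j)$. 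This matches the stated form, and $X_{\ell+i,j}$ is treated the same way. The remaining $X_{\ell+i,\ell+j}$ are proportional to $X_{ji}$, which can be read off from \eqref{eq:Xab}.

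For the basis claim, I would use the relation $\pi_-(e_b\ot e_a)=-\omega(\gamma_b,\gamma_a)\pi_-(e_a\ot e_b)$ from the proof of Theorem \ref{thm:osp-def}. It gives $X_{i,\ell+j}=-\omega(\gamma_i,\gamma_j)X_{j,\ell+i}$, so in particular $X_{i,\ell+i}=0$, because $\omega(\gamma_i,\gamma_i)=1$ on $V_+$. The same holds for the $X_{\ell+i,j}$. Hence the listed elements span $\fso(V;\kappa)$. Their number is $\ell^2+2\binom{\ell}{2}=\ell(2\ell-1)=\frac12 M_+(M_+-1)$ with $M_+=2\ell$, which equals $\dim\osp(V;\kappa)$ by Theorem \ref{thm:osp-def}. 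They are therefore a basis. Homogeneity is immediate, since each $X_{ab}$ has degree $\gamma_a-\gamma_b$.

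For the commutation relations I would substitute into \eqref{eq:def-rel}, case by case, for the index types (upper/lower block) of $a,b,c,d$. Only the $\kappa_{ac}$ and $(\kappa^{-1})_{bd}$ terms need care. They are nonzero only when the corresponding indices lie in opposite blocks with equal residue mod $\ell$. When they survive, the sums over $b'$ and $a'$ collapse to a single $X$, which is then rewritten in terms of the chosen basis using the antisymmetry above.

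I expect the main obstacle to be bookkeeping of the $\omega$-factors, not anything conceptual. For example, in $[X_{ij},X_{\ell+p,q}]$ one must combine $\omega(\gamma_b,\gamma_a)$, $\omega(\gamma_a-\gamma_b,\gamma_c-\gamma_d)$ and the reordering factor $-\omega(\gamma_q,\gamma_p)$ into the stated coefficients. This uses bimultiplicativity of $\omega$, the identity $\omega(\alpha,\beta)=\omega(\beta,\alpha)^{-1}$, and $\omega(\gamma_a,\gamma_a)=1$. The vanishing of $[X_{i,\ell+j},X_{p,\ell+q}]$ and $[X_{\ell+i,j},X_{\ell+p,q}]$ follows because every term in \eqref{eq:def-rel} has a factor $\delta_{bc}$, $\delta_{da}$, $\kappa_{ac}$ or $(\kappa^{-1})_{bd}$, and each of these vanishes for these index types. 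As a cross-check, I would compare the result with the symplectic lemma: the formulas should agree after inserting the sign changes coming from $\omega(\gamma_a,\gamma_a)=+1$ instead of $-1$ and from the symmetric instead of skew form.
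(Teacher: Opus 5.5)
Your proposal is correct and follows the same route as the paper, which states the lemma as an easy consequence of Theorem~\ref{thm:osp-def}. Like the paper, you specialise \eqref{eq:Xab} to the self-inverse block form of $\kappa$, deduce the basis property from the dimension formula, and read the brackets off \eqref{eq:def-rel}, rewriting any $X_{\ell+p,\ell+j}$ or out-of-range $X_{i,\ell+j}$, $X_{\ell+i,j}$ via the $\omega$-antisymmetry. One simplification: the identity $\omega(-\gamma_j,\gamma_i)=\omega(\gamma_i,\gamma_j)$ holds for any commutative factor by \eqref{eq:cycle-1}--\eqref{eq:cycle-3}, so it does not need the appeal to $\Gamma^+$.
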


Let $\fh=\sum_{i=1}^\ell \C\ X_{i i}$, which is a homogeneous Cartan subalgebra of degree $0$ for $\fso(V;\kappa)$ (thus is an ordinary abelian Lie algebra). It follows from the lemma that 
\[
\baln
{}[X_{i i}, X_{p p}]&=0,  \\
{}[X_{i i}, X_{p q}]&= (\delta_{i p} - \delta_{i q})X_{p q},  \\
{}[X_{i i}, X_{p, \ell+ q}]&= (\delta_{i p}  + \delta_{i q})   X_{p, \ell+q}, \\
{}[X_{i i}, X_{\ell+ p, q}]&= - (\delta_{i p} +\delta_{i q}) X_{\ell+p, q}, \quad i, p, q\le \ell, \ 
p\ne q.
\ealn
\]

Let $\varepsilon_i$, for $i=1, 2, \dots, \ell$, be elements of the dual space $\fh^*$
such that $\varepsilon_i(X_{j j})=\delta_{i j}$ for all $i, j=1, 2, \dots, \ell$. 
They form a basis of $\fh^*$.  The last three of the above equations can be re-written as 
\beq
{}[X_{i i}, X_{p q}]&=& (\varepsilon_p - \varepsilon_q)(X_{i i})X_{p  q},  \label{eq:ort-1}\\
{}[X_{i i}, X_{p, \ell+ q}]&=& (\varepsilon_p + \varepsilon_q)(X_{i i})X_{p, \ell+q}, \label{eq:ort-2}\\
{}[X_{i i}, X_{\ell+ p, q}]&=& - (\varepsilon_p + \varepsilon_q)(X_{i i}) X_{\ell+p, q}, \quad i, p, q\le \ell, \  p\ne q. \label{eq:ort-3}
\eeq

The $\omega$-trace gives rise to a non-degenerate $\omega$-symmetric bilinear form  $(\ , \ ): \fso(V; \kappa)\times \fso(V; \kappa)\lra \C$ on $\fso(V; \kappa)$, which is defined by $(X, Y)=\frac{1}{2}{\rm tr}_{(\Gamma, \omega)}(XY)$ for all $X, Y\in \fsp(V; \kappa)$.  
Its restriction to $\fh$ is non-degenerate, and satisfies $(X_{i i}, X_{j j})=\delta_{i j}$.
It induces a symmetric bilinear form $(\ , \ ): \fh^*\times \fh^*\lra \C$ on $\fh^*$ in the usual way, which satisfies
$(\varepsilon_i, \varepsilon_j)=\delta_{i j}$, for all 
$i, j\in[1, \ell].$

The set of roots of $\fso(V;\kappa)$ relative to the Cartan subalgebra $\fh$ can be read off the equations \eqref{eq:rt-1} - \eqref{eq:rt-2}: 
\[
\Phi=\{\varepsilon_i-\varepsilon_j, \varepsilon_i+\varepsilon_j \mid i, j\in [1,  \ell], i\ne j\}.
\]

Each root $\varepsilon_i-\varepsilon_j$ for $i<j$ is associated with an $\fsl_(\C)$ Lie subalgebra spanned by the elements $X_{i j}$, $X_{j i}$,  and $X_{i  i}-X_{j j}$.
Similarly, each root $\varepsilon_i+\varepsilon_j$ with $i\ne j$ is associated with an $\fsl_(\C)$ Lie subalgebra spanned by the elements $X_{i, \ell+j}, X_{\ell+j, i}$,  and $\wt{H}_{i j}:=X_{i  i}+X_{j j}$, which satisfy the commutation relations. 
\[
[X_{i, \ell+j}, X_{\ell+j, i}]= \wt{H}_{i j}, \quad [\wt{H}_{i j}, X_{i, \ell+j}] = 2 X_{i, \ell+j}, 
\quad [\wt{H}_{i j}, X_{\ell+j, i}]=-2 X_{\ell+j, i}.         
\]
Thus the reflection $\sigma_\Upsilon$ defined by \eqref{eq:refl} for each root $\Upsilon\in \Phi$ 
generates the Weyl group of the corresponding $\fsl_2(\C)$ subalgebra. 
The Weyl group $W$ of $\fso(V; \kappa)$ is the subgroup of $\GL(E)$  
generated by all $\sigma_\Upsilon$, where $E=\R\Phi$.

The element $\hat\rho=\sum_{i=1}^\ell (\ell+1-i) X_{i i} \in \fh$ can again be used to define 
positive roots: $\Phi^+:=\{\Upsilon\in \Phi\mid \Upsilon(\hat\rho)>0\}$.  We have 
\[
\baln
\Phi^+&= \{\varepsilon_i-\varepsilon_j, \varepsilon_i+\varepsilon_j\mid 1\le i<j\le \ell\}.
\ealn
\] 
Then the set of simple roots is given by 
\[
\Pi=\{\Upsilon_i:=\varepsilon_i-\varepsilon_{i+1}, \Upsilon_\ell:=\varepsilon_{\ell-1}+\varepsilon_\ell\mid 1\le i<\ell\}, 
\]
and we let $\Xi=(\xi(\Upsilon_1),  \xi(\Upsilon_2),\dots, \xi(\Upsilon_\ell))$. 
The Cartan matrix is the same as that of type $D_\ell$, and the  
Dynkin diagram is as depicted in Figure \ref{fig:D}, where the $i$-th node is a circle (resp. double circle) if $\xi(\Upsilon_i)=0$ (resp. $\xi(\Upsilon_i)\ne 0$).  
\begin{figure}[h]
\setlength{\unitlength}{0.25mm}
\begin{picture}(200, 40)(330, -10)

\put(305, 0){$\typeA$}

\put(355, 10){\line(1, 0){10}}
\put(365, 0){$\typeGA$}
\put(415, 10){\line(1, 0){10}}
\put(425, 0){$\typeA$}
\put(475, 10){\line(1, 0){10}}
\put(485, 0){$\typeGA$}

\put(534, 12){\line(2, 1){20}}
\put(534, 8){\line(2, -1){20}}
\put(559, 23){\circle{10}}
\put(559, -3){\circle{10}}\put(559, -3){\circle{5}}
\end{picture}
\caption{Dynkin diagram of type $D$}
\label{fig:D}
\end{figure}

\begin{theorem}\label{thm:Serre-D}
Consider the following elements of $\fso(V; \kappa)$, 
\[
\baln
&X_i= X_{i,  i+1}, \quad Y_i= X_{i+1,  i}, \quad 
Z_i=X_{i  i} - X_{i+1,  i+1}, \quad 1\le i <\ell, \\
&X_\ell= X_{\ell-1,  2\ell}, \quad Y_\ell= X_{2\ell, \ell-1}, \quad 
Z_\ell=X_{\ell-1,  \ell-1}+X_{\ell  \ell}, 
\ealn
\]
which have $\Gamma$-degrees $d(X_j)=\xi(\Upsilon_j)$, $d(Y_j) =-\xi(\Upsilon_j)$, and $d(Z_j)=0$ respectively. They generate $\fso(V; \kappa)$, and satisfy relations formally the same as \eqref{eq:qud-1}, \eqref{eq:qud-2},
\eqref{eq:qud-3}, and \eqref{eq:Serre-cl}, but with 
$A=(A_{i j})$ being the usual Cartan matrix of $D_\ell$, and $\Xi$ as described above. 
\end{theorem}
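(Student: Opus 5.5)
Everything reduces to direct computation with the commutation relations in the preceding lemma for $\fso(V;\kappa)$. I will first settle the degrees. By the formulas for the basis, $d(X_{i,i+1})=\gamma_i-\gamma_{i+1}$ and $d(X_{\ell-1,2\ell})=\gamma_{\ell-1}+\gamma_\ell$. Moreover $d(X_{2\ell,\ell-1})=-(\gamma_{\ell-1}+\gamma_\ell)$, and $Z_j$ lies in $\fh$, which has degree $0$. These agree with $\xi(\Upsilon_j)$ as defined through \eqref{eq:xi-map}, because $X_j$ spans the root space $\fg_{\Upsilon_j}$ by \eqref{eq:ort-1}--\eqref{eq:ort-3}. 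A small point to check here is that $X_{\ell-1,2\ell}$ means $X_{\ell-1,\ell+\ell}$, which the lemma allows since $\ell-1<\ell$.

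Next I will verify the relations one at a time. Relation \eqref{eq:qud-1} holds because $\fh$ is abelian. Relation \eqref{eq:qud-2} follows from \eqref{eq:ort-1}--\eqref{eq:ort-3}: $[Z_a,X_b]=\Upsilon_b(Z_a)X_b$, and the values $\Upsilon_b(Z_a)$ can be read off from $\varepsilon_i(X_{jj})=\delta_{ij}$. These values form exactly the Cartan matrix of type $D_\ell$, because $Z_a$ is the coroot $2\Upsilon_a/(\Upsilon_a,\Upsilon_a)$ under the identification of $\fh$ with $\fh^*$ given by $(\varepsilon_i,\varepsilon_j)=\delta_{ij}$.

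For \eqref{eq:qud-3} I will use the lemma. For $i<\ell$, the relation $[X_{i,i+1},X_{i+1,i}]=X_{ii}-X_{i+1,i+1}$ comes from the first bracket formula: the $\omega$-factor is $\omega(\beta,-\beta)=\omega(\beta,\beta)^{-1}=1$, using $V=V_+$. For the last node, the $\fsl_2$ relation $[X_{\ell-1,\ell+\ell},X_{\ell+\ell,\ell-1}]=\wt H_{\ell-1,\ell}$ was already recorded above. It remains to show that $[X_a,Y_b]=0$ for $a\ne b$. This holds because $\Upsilon_a-\Upsilon_b$ is never a root, so the bracket lands in a zero weight space. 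The same weight argument gives the Serre relations \eqref{eq:Serre-cl}: $\mathrm{ad}_{X_a}^{1-A_{ab}}(X_b)$ has weight $(1-A_{ab})\Upsilon_a+\Upsilon_b$. This is not a root, since $\Phi$ coincides with the ordinary $D_\ell$ root system and the root strings are the classical ones. Each root space is one-dimensional and the Cartan action is diagonal, so the vector must vanish.

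Finally, for generation I will argue as follows. Every positive root is obtained from a simple root by successively adding simple roots while staying in $\Phi$, and at each such step the bracket of the corresponding root vectors is nonzero. One can see this explicitly from the lemma's formulas, for instance $[X_{ij},X_{jq}]=X_{iq}$ and $[X_{i,i+1},X_{i+1,\ell+q}]=X_{i,\ell+q}$ up to a nonzero $\omega$-factor. The $Y$'s generate the negative part by the same formulas, and the $Z_j$ span $\fh$. The step I expect to need the most care is this generation and nonvanishing check, in particular tracking that the $\omega$-factors in the lemma never cancel the structure constants. Since each factor is a product of nonzero scalars and each bracket has a single term, nonvanishing should follow term by term.
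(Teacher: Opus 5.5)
Your proposal is correct and is essentially the argument the paper has in mind. The paper gives no separate proof and treats the theorem as a direct check against the preceding lemma and the weight relations \eqref{eq:ort-1}--\eqref{eq:ort-3}, which is what you do: degrees from $d(\BE_{ab})=\gamma_a-\gamma_b$, the Cartan relations from the root data, and $[X_a,Y_b]=0$ and the Serre relations by the weight argument. That argument is valid because $\mathrm{ad}_h$ for $h\in\fh$ has degree $0$ and is therefore an ordinary derivation of the colour bracket. The one claim to tighten is that ``each bracket has a single term'': the lemma's formulas carry two $\delta$-terms in general, but in the positive--positive brackets used for generation (with $i<j$, $p<q$ in $X_{ij},X_{pq}$, and $p\ne q$ in $X_{p,\ell+q}$) at most one survives, so no cancellation can occur and nonvanishing reduces to that of an $\omega$-factor.
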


Order the elements of $\Gamma_R(U)$ in the same way  as in Section \ref{sect:C}. 
Let 
\beq\label{eq:k-D}
k_r=\dim U_{\alpha_r}-1, \quad r=1, 2, \dots, \aleph(U). 
\eeq
Then the Dynkin diagram is 
given by one of the two diagrams in 
Figure \ref{fig:D}, depending on whether 
$\alpha_{\aleph(U)}\in\Gamma_{\Z_2}$.
Note that if $k_{\aleph(U)}=0$, the last two nodes in the Dynkin diagram are connected to a double circle.

\[
\setlength{\unitlength}{0.25mm}
\begin{picture}(200, 90)(330, -10)

\put(305, 0){$\underbrace{\typeA}_{k_1}$}

\put(355, 10){\line(1, 0){10}}

\put(370, 10){\circle{10}}\put(370, 10){\circle{5}}

\put(375, 10){\line(1, 0){10}}
\put(385, 0){$\underbrace{\typeA}_{k_2}$}
\put(435, 10){\line(1, 0){10}}
\put(450, 10){\circle{10}}\put(450, 10){\circle{5}}
\put(455, 10){\line(1, 0){10}}
\put(465, 9){...}
\put(475, 10){\line(1, 0){10}}
\put(485, 0){$\underbrace{\typeA}_{k_{\aleph(U)}-1}$}
\put(534, 12){\line(2, 1){20}}
\put(534, 8){\line(2, -1){20}}
\put(559, 23){\circle{10}}
\put(559, -3){\circle{10}}\put(559, -3){\circle{5}}


\put(305, 60){$\underbrace{\typeA}_{k_1}$}

\put(355, 70){\line(1, 0){10}}

\put(370, 70){\circle{10}}\put(370, 70){\circle{5}}

\put(375, 70){\line(1, 0){10}}
\put(385, 60){$\underbrace{\typeA}_{k_2}$}
\put(435, 70){\line(1, 0){10}}
\put(450, 70){\circle{10}}\put(450, 70){\circle{5}}
\put(455, 70){\line(1, 0){10}}
\put(465, 69){...}
\put(475, 70){\line(1, 0){10}}
\put(485, 60){$\underbrace{\typeA}_{k_{\aleph(U)}-1}$}
\put(534, 72){\line(2, 1){20}}
\put(534, 68){\line(2, -1){20}}

\put(559, 83){\circle{10}}
\put(559, 57){\circle{10}} 

\put(585, 66){or} 

\end{picture}
\]

\subsubsection{The case with one odd dimensional $V_\eta$ for $\eta\in\Gamma^+_{R, \Z_2}(V)$}
\label{sect:B}
We assume that there is an $\eta\in \Gamma^+_{R, \Z_2}(V)$ 
such that $\dim V_\eta$ is odd, and  
$\dim V_\alpha$ is even if $\eta\ne \alpha\in \Gamma^+_{R, \Z_2}(V)$. 
Then  $V_\eta=U_\eta \oplus 
U_\eta^*\oplus E_\eta$, where $E_\eta =\C e(\eta)$ is a $1$-dimensional homogeneous subspace 
of degree $\eta$ with basis $e(\eta)$, and $U_\eta$ is a homogeneous subspace 
of degree $\eta$ with dual space $U_\eta^*$. 
Now $V= U\oplus U^*\oplus E_\eta$, where $U$ and its dual space $U^*$ are the same as in Section \ref{sect:D}, where $U_\eta\subset U$ and $U_\eta^*\subset U^*$. 
The bilinear form $\kappa$ is given by 
$\kappa(v+\ol{v}'+ a e(\eta), u + \ol{u}'+ b e(\eta)) = \ol{v}'(u)+ \ol{u}'(v) + a b$ 
for all $u, v\in U, \ol{u}', \ol{v}'\in U^*$ and $a, b\in \C.$

Denote $\ell=\dim U$, thus $\dim V=2\dim U +1=2\ell+1$.
We have the ordered bases $B(U)$ and $\ol{B}(U^*)$ for $U$ and $U^*$ respectively, as defined by \eqref{eq:BU}. Now we introduce the following ordered basis for $V$.
\beq\label{eq:B-odd}
B(V)=(e_1, e_2, \dots, e_{2\ell+1}) := (B(U),  e(\eta), \ol{B}(U^*)).
\eeq
Then the bilinear form $\kappa$ is given by
$(\kappa(e_a, e_b)) = \begin{pmatrix}
0 & 0 & I_\ell\\ 0 & 1 & 0 \\ I_\ell & 0 &0
\end{pmatrix}$. 

\begin{lemma}
The Lie colour algebra $\fso(V;\kappa)$ has a basis consisting of the elements
\[
\baln
X_{i j}&:=\BE_{i j}- \omega(\gamma_j, \gamma_i) \BE_{\ell+1+j, \ell+1+i}, \quad \forall i, j=1, 2, \dots,  \ell,  \\
X_{i, \ell+1+j}&:=\BE_{i, \ell+1+j}- \omega(\gamma_i, \gamma_j)\BE_{j, \ell+1+i}, \\
X_{\ell+1+i, j}&:=\BE_{\ell+1+i, j}- \omega(\gamma_i, \gamma_j) \BE_{\ell+1+j, i}, 
\quad \forall i, j=1, 2, \dots,  \ell, \ i < j, \\
X_{i, \ell+1}&:=\BE_{i, \ell+1}- \BE_{\ell+1, \ell+1+i}, \\
X_{\ell+1, i}&:=\BE_{\ell+1, i}- \BE_{\ell+1+i, \ell+1}, 
\quad \forall i=1, 2, \dots,  \ell, 
\ealn
\] 
which satisfy the following relations for all valid $i, j, p, q=1, 2, \dots,  \ell$. 
\[
\baln
{}[X_{i j}, X_{p q}]&= \delta_{j p} X_{i  q}- \omega(\gamma_i-\gamma_j, \gamma_p-\gamma_q)\delta_{i q}X_{p j}, \\
[X_{i j}, X_{p, \ell+1+ q}]&= \delta_{j p} X_{i, \ell+1+q} +\omega(\gamma_i-\gamma_j, \gamma_p)
\delta_{j q}   X_{p, \ell+1+i}, \\
[X_{i j}, X_{\ell+1+ p, q}]&=- \omega(\gamma_j, \gamma_i) (\delta_{i p} X_{\ell+1+j, q}+\delta_{i q} \omega(\gamma_p, \gamma_i- \gamma_j)X_{\ell+1+p, j}), \\
[X_{i, \ell+1+j}, X_{\ell+1+p, q}]
&= \delta_{j p} X_{i q}- \delta_{i p} \omega(\gamma_i, \gamma_j) X_{j q} \\
 &-  \omega(\gamma_p, \gamma_q)( \delta_{j q}  X_{i p}- \delta_{i q}\omega(\gamma_i, \gamma_j)  X_{j p}), \\
[X_{i, \ell+1+ j}, X_{p, \ell+1+ q}]&=0,  \quad [X_{\ell+1+ i, j}, X_{\ell+1+ p, q}]=0, \\
[X_{i j}, X_{p, \ell+1}]&= \delta_{j p} X_{i, \ell+1}, \quad 
[X_{i j}, X_{\ell+1, p}]= -\omega(\gamma_j, \gamma_i) \delta_{i p}X_{\ell+1, j}, \\
[X_{\ell+1+i, j}, X_{p, \ell+1}]&=- \delta_{j p}X_{\ell+1, i} + \omega(\gamma_i, \gamma_j) \delta_{i p} X_{\ell+1, j}, \\
[X_{i, \ell+1+ j} ,   X_{\ell+1, p}]&= -\delta_{j p}X_{i, \ell+1} + \omega(\gamma_i, \gamma_j) \delta_{i p}X_{j, \ell+1}, \\
[X_{i, \ell+1+ j}, X_{p, \ell+1}]&=0, \quad [X_{\ell+1+i, j}, X_{\ell+1, p}]=0, \\
[X_{i, \ell+1}, X_{\ell+1, j}]&=X_{i j}.
\ealn
\]
\end{lemma}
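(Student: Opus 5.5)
The plan is to derive the lemma directly from Theorem \ref{thm:osp-def}, exactly as was done for the even-dimensional case in Section \ref{sect:D}. We specialise formula \eqref{eq:Xab} to the basis \eqref{eq:B-odd}. Here the Gram matrix $\kappa$ is its own inverse, $(\kappa^{-1})_{ab}=\kappa_{ab}$. The pairing sends index $i\le\ell$ to $\ell+1+i$ and fixes $\ell+1$. All degrees lie in $\Gamma^+$, with $\gamma_{\ell+1+i}=-\gamma_i$ and $\gamma_{\ell+1}=\eta$.

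Substituting into \eqref{eq:Xab} gives the listed elements, up to normalisation. For $X_{i,\ell+1}$ and $X_{\ell+1,i}$ we need $\omega(\eta,\gamma_i)=1$ to obtain the stated signs. This does not follow from $\eta\in\Gamma_{\Z_2}$ alone, which only gives $\omega(\eta,\beta)=\pm1$. I would therefore first check how the relevant coefficient $\omega(\gamma_b,\gamma_a)$ enters. If a factor $\omega(\eta,\gamma_i)$ survives, the definitions must be read with that factor implicit, and it will propagate into the brackets.

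For the basis claim, Theorem \ref{thm:osp-def} says the $X_{ab}$ span $\osp(V;\kappa)$. The relation $\pi_-(e_b\ot e_a)=-\omega(\gamma_b,\gamma_a)\pi_-(e_a\ot e_b)$, together with $V=V_+$, removes the redundant index pairs. What remains is $\ell^2$ elements $X_{ij}$, $\binom{\ell}{2}$ each of $X_{i,\ell+1+j}$ and $X_{\ell+1+i,j}$ with $i<j$, and $\ell$ each of $X_{i,\ell+1}$ and $X_{\ell+1,i}$. The element $X_{\ell+1,\ell+1}$ vanishes. This gives $2\ell^2+\ell=\frac12(2\ell+1)2\ell$ elements, which matches the dimension formula with $M_+=2\ell+1$ and $M_-=0$. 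Linear independence then follows from spanning.

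The commutation relations among the $X_{ij}$, $X_{i,\ell+1+j}$ and $X_{\ell+1+i,j}$ are read off \eqref{eq:def-rel}. They coincide with the type $D$ lemma after shifting $\ell\to\ell+1$ in the second block of indices, because $\kappa$ and $\kappa^{-1}$ restricted to $U\oplus U^*$ are unchanged. The new brackets involve $X_{i,\ell+1}$ and $X_{\ell+1,i}$. For these we apply \eqref{eq:def-rel} with one index equal to $\ell+1$, using $\kappa_{\ell+1,\ell+1}=1$ and $\kappa_{\ell+1,a}=0$ for $a\ne\ell+1$. Only a few Kronecker deltas survive, and we rewrite, for example, $X_{i,\ell+1+j}$ with $i>j$ in terms of the basis element with $j<i$ using the $\pi_-$ symmetry.

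The main obstacle is bookkeeping the $\omega$-factors in these new brackets, especially $[X_{i,\ell+1},X_{\ell+1,j}]=X_{ij}$ and the mixed brackets. Here the prefactors $\omega(\gamma_a-\gamma_b,\gamma_c-\gamma_d)$ involve $\eta$, and $\omega(\eta,\eta)=1$ and $2\eta=0$ must be used to simplify them. I would verify each such bracket independently by acting on basis vectors $e_c$ with the matrix-unit expressions and \eqref{eq:CR}, as a cross-check on the specialisation of \eqref{eq:def-rel}.
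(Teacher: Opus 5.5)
Your route is exactly how the paper obtains this lemma: specialise \eqref{eq:Xab} and \eqref{eq:def-rel} to the basis \eqref{eq:B-odd}, and get the basis from the count $2\ell^2+\ell=\frac12 M_+(M_+-1)$ in Theorem \ref{thm:osp-def}. The paper gives no further argument, just as for types $C$ and $D$. Your treatment of the $U\oplus U^*$ block, including the reduction to the type $D$ relations, is correct.

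\textbf{The factor does survive.} The point you leave conditional has a definite answer. Formula \eqref{eq:Xab} gives
$X_{i,\ell+1}=\BE_{i,\ell+1}-\omega(\eta,\gamma_i)\BE_{\ell+1,\ell+1+i}$ and $X_{\ell+1,i}=\BE_{\ell+1,i}-\omega(\gamma_i,\eta)\BE_{\ell+1+i,\ell+1}$. Testing \eqref{eq:orth} on $v=e_{\ell+1}$, $v'=e_{\ell+1+i}$ shows that $\BE_{i,\ell+1}-c\,\BE_{\ell+1,\ell+1+i}$ lies in $\fso(V;\kappa)$ only for $c=\omega(\eta,\gamma_i)$.

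The value $-1$ genuinely occurs under the hypotheses of this subsection. Take $\Gamma=\Z_2\times\Z_2$, $\omega(\alpha,\beta)=(-1)^{\alpha_1\beta_2+\alpha_2\beta_1}$, $\eta=(1,0)$ and $\gamma_1=(0,1)$. So the lemma as printed holds only under the extra hypothesis $\omega(\eta,\gamma_i)=1$ for all $i$. Reading the factor as ``implicit'' does not rescue it, because the printed elements are not in $\fso(V;\kappa)$ at all. What your argument actually proves is the corrected lemma.

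\textbf{The corrected brackets.} With the corrected elements, three brackets hold exactly as printed: $[X_{ij},X_{p,\ell+1}]=\delta_{jp}X_{i,\ell+1}$, $[X_{i,\ell+1},X_{\ell+1,j}]=X_{ij}$, and the vanishing brackets. In the first two the extra signs cancel because $\omega(\eta,\gamma_i)^2=1$. Three other relations acquire factors, which I checked directly with matrix units:
\[
\begin{aligned}
[X_{ij},X_{\ell+1,p}]&=-\omega(\eta,\gamma_i-\gamma_j)\,\omega(\gamma_j,\gamma_i)\,\delta_{ip}X_{\ell+1,j},\\
[X_{\ell+1+i,j},X_{p,\ell+1}]&=-\omega(\eta,\gamma_i)\,\delta_{jp}X_{\ell+1,i}+\omega(\eta,\gamma_j)\,\omega(\gamma_i,\gamma_j)\,\delta_{ip}X_{\ell+1,j},\\
[X_{i,\ell+1+j},X_{\ell+1,p}]&=\omega(\eta,\gamma_p)\bigl(-\delta_{jp}X_{i,\ell+1}+\omega(\gamma_i,\gamma_j)\,\delta_{ip}X_{j,\ell+1}\bigr).
\end{aligned}
\]
So carry these factors explicitly in your write-up.
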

It is easy to extract the following relations from the lemma. 
\[
\baln
{}[X_{i i}, X_{p q}]&= (\delta_{i p} -\delta_{i q})X_{p  q}, \\
[X_{i i}, X_{p, \ell+1+ q}]&= (\delta_{i p} +\delta_{i q})X_{p, \ell+1+q}, \\
[X_{i i}, X_{\ell+1+ p, q}]&=-  (\delta_{i p}+\delta_{i q})X_{\ell+1+p, q}, \\
[X_{i i}, X_{p, \ell+1}]&= \delta_{i p} X_{p, \ell+1}, \\
[X_{i i}, X_{\ell+1, p}]&= -\delta_{i p}X_{\ell+1, p}, \quad \forall i, p, q=1, 2, \dots,  \ell.
\ealn
\]
Thus $\fh=\sum_{i=1}^\ell \C X_{i i}$ forms a Cartan subalgebra, 
which is homogeneous of degree $0$.

Let $\{\varepsilon_i\mid i=1, 2, \dots, \ell\}$ be a basis of the dual space $\fh^*$
such that $\varepsilon_i(X_{j j})=\delta_{i j}$ for all $i, j=1, 2, \dots, \ell$. 
It follows the above equations that the set of roots of $\fso(V; \kappa)$ with respect to the 
Cartan subalgebra $\fh$ is given by
\[
\Phi = \{\pm(\varepsilon_i-\varepsilon_j), \pm (\varepsilon_i+\varepsilon_j)\mid i, j\in [1, \ell], i\le j\}\cup 
\{\pm \varepsilon_i \mid i\in [1, \ell]\}. 
\]

The $\omega$-trace yields a non-degenerate $\omega$-symmetric bilinear form  $(\ , \ ): \fso(V; \kappa)\times \fso(V; \kappa)\lra \C$ on $\fso(V; \kappa)$, whose   
restriction to $\fh$ is non-degenerate and 
 induces a symmetric bilinear form $(\ , \ ): \fh^*\times \fh^*\lra \C$ on $\fh^*$ such that
$
(\varepsilon_i, \varepsilon_j)=\delta_{i j}$.

There is an $\fsl_2(\C)$ subalgebra corresponding to each of the roots $\varepsilon_i\pm \varepsilon_j$ for $i<j$. 
Also, corresponding to each $\varepsilon_i$, the elements $X_{i, \ell+1}$, $X_{\ell+1, i}$ and $X_{i i}$ span an $\fso_3(\C)\simeq \fsl_(\C)$ subalgebra, with the commutation relations 
\[
[X_{i, \ell+1}, X_{\ell+1, i}]=X_{i i}, \quad 
[X_{i i}, X_{i, \ell+1}]=X_{i, \ell+1}, \quad 
[X_{i i}, X_{\ell+1, i}]=-X_{\ell+1, i}.
\]
Thus the reflection $\sigma_\Upsilon$ (cf. \eqref{eq:refl}) for each $\Upsilon\in \Phi^+$ 
generates the Weyl group of the associated $\fsl_2(\C)$ subalgebra. 
The Weyl group $W$ of $\fso(V; \kappa)$ is generated by all $\sigma_\Upsilon$. 

We will take the set of simple roots 
\[
\Pi = \{ \Upsilon_i = \varepsilon_i- \varepsilon_{i+1}, \Upsilon_\ell= \varepsilon_\ell \mid 1\le i<\ell\}, 
\] 
and let $\Xi=(\xi(\Upsilon_1),  \xi(\Upsilon_2),\dots, \xi(\Upsilon_\ell))$. 
Then the Cartan matrix is the standard one for $B_\ell$, and the corresponding Dynkin diagram is as depicted in Figure \ref{fig:B}.

\begin{figure}[h]
\setlength{\unitlength}{0.25mm}
\begin{picture}(300, 20)(340, 0)

\put(305, 0){$\typeA$}

\put(355, 10){\line(1, 0){10}}

\put(370, 10){\circle{10}}\put(370, 10){\circle{5}}

\put(375, 10){\line(1, 0){10}}
\put(385, 0){$\typeA$}
\put(390, 10){\circle{5}}
\put(435, 10){\line(1, 0){10}}
\put(450, 10){\circle{10}}\put(450, 10){\circle{5}}
\put(455, 10){\line(1, 0){10}}
\put(465, 9){...}
\put(475, 10){\line(1, 0){10}}
\put(485, 0){$\typeA$}
\put(490, 10){\circle{5}}

\put(534, 12){\line(1, 0){20}}
\put(534, 8){\line(1, 0){20}}
\put(559, 10){\circle{10}}\put(559, 10){\circle{5}}
\put(540, 5){$>$}
\put(600, 5){if $\xi(\Upsilon_\ell)\ne 0$;}

\end{picture}

\vspace{5mm}

\begin{picture}(300, 20)(340, 0)

\put(305, 0){$\typeA$}

\put(355, 10){\line(1, 0){10}}

\put(370, 10){\circle{10}}\put(370, 10){\circle{5}}

\put(375, 10){\line(1, 0){10}}
\put(385, 0){$\typeA$}
\put(390, 10){\circle{5}}
\put(435, 10){\line(1, 0){10}}
\put(450, 10){\circle{10}}\put(450, 10){\circle{5}}
\put(455, 10){\line(1, 0){10}}
\put(465, 9){...}
\put(475, 10){\line(1, 0){10}}
\put(485, 0){$\typeA$}
\put(490, 10){\circle{5}}

\put(534, 12){\line(1, 0){20}}
\put(534, 8){\line(1, 0){20}}
\put(559, 10){\circle{10}}
\put(540, 5){$>$}
\put(600, 5){if $\xi(\Upsilon_\ell)= 0$.}

\end{picture}

\caption{Dynkin diagrams of type $B$}
\label{fig:B}
\end{figure}

\begin{theorem}\label{thm:Serre-B}
Consider the following elements of $\fso(V; \kappa)$, 
\[
\baln
&X_i= X_{i,  i+1}, \quad Y_i= X_{i+1,  i}, \quad 
Z_i=X_{i  i} - X_{i+1,  i+1}, \quad 1\le i <\ell, \\
&X_\ell= 2X_{\ell,  \ell+1}, \quad Y_\ell= X_{\ell+1, \ell}, \quad 
Z_\ell=X_{\ell \ell}, 
\ealn
\]
which have $\Gamma$-degrees $d(X_j)=\xi(\Upsilon_j)$, $d(Y_j) =-\xi(\Upsilon_j)$, and $d(Z_j)=0$ respectively. They generate $\fso(V; \kappa)$, and satisfy  relations formally the same  as \eqref{eq:qud-1}, \eqref{eq:qud-2},
\eqref{eq:qud-3}, and \eqref{eq:Serre-cl}, but with 
$A=(A_{i j})$ being the usual Cartan matrix of type $B_\ell$, and $\Xi$ as described above.
\end{theorem}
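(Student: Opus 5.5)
We prove Theorem \ref{thm:Serre-B} by direct computation from the preceding lemma, in the same way as Theorems \ref{thm:Serre-A}, \ref{thm:Serre-C} and \ref{thm:Serre-D}.

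\emph{Degrees.} These are immediate. We have $d(X_{i,i+1})=\gamma_i-\gamma_{i+1}=\xi(\Upsilon_i)$. Since $d(e_{\ell+1})=\eta$ and $\eta=-\eta$ is $2$-torsion, $d(X_{\ell,\ell+1})=\gamma_\ell-\eta=\xi(\varepsilon_\ell)$. The $Z_j$ are combinations of the $X_{ii}$, which have degree $0$.

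\emph{Cartan relations.} \eqref{eq:qud-1} holds because $\fh$ is abelian. For \eqref{eq:qud-2}, the relations $[X_{ii},X_{pq}]=(\delta_{ip}-\delta_{iq})X_{pq}$ and $[X_{ii},X_{p,\ell+1}]=\delta_{ip}X_{p,\ell+1}$ show that $[Z_a,X_b]=\Upsilon_b(Z_a)X_b$. So we only need $\Upsilon_b(Z_a)=A_{ab}$ for the $B_\ell$ Cartan matrix with $(\varepsilon_i,\varepsilon_j)=\delta_{ij}$. Since $Z_a$ is the element of $\fh$ corresponding to the coroot $2\Upsilon_a/(\Upsilon_a,\Upsilon_a)$ ($\varepsilon_a-\varepsilon_{a+1}$ for $a<\ell$, and $2\varepsilon_\ell$ for $a=\ell$), this holds. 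The statement for $Y_b$ follows in the same way.

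\emph{Relation \eqref{eq:qud-3}.} For $a,b<\ell$ we get $[X_{a,a+1},X_{b+1,b}]=\delta_{ab}(X_{aa}-X_{a+1,a+1})$ from the first commutator formula. This uses $\omega(\gamma_a-\gamma_{a+1},\gamma_{a+1}-\gamma_a)=1$, which holds since $V=V_+$. For $a=b=\ell$ we get $[2X_{\ell,\ell+1},X_{\ell+1,\ell}]=2X_{\ell\ell}$. This is $2Z_\ell$, not $Z_\ell$. I expect either a normalisation check, the correct scaling being $X_\ell=X_{\ell,\ell+1}$ and $Y_\ell=2X_{\ell+1,\ell}$ or similar, or a reading of $[X_{i,\ell+1},X_{\ell+1,i}]$ with the factor that the lemma suppresses. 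I would recompute that bracket directly from the matrix units, and fix the scalar so that $[X_\ell,Y_\ell]=Z_\ell$ and $[Z_\ell,X_\ell]=2X_\ell$. For $a\ne b$, the bracket $[X_a,Y_b]$ would have weight $\Upsilon_a-\Upsilon_b$, which is not a root and not $0$. Hence it vanishes, and this can also be read off the lemma (the $\delta$'s vanish).

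\emph{Serre relations and generation.} For the Serre relations \eqref{eq:Serre-cl}, $ad_{X_a}^{1-A_{ab}}(X_b)$ has weight $(1-A_{ab})\Upsilon_a+\Upsilon_b$. This is not a root of $\Phi$ and not $0$, by the standard $B_\ell$ root-string fact, because $\Phi$ coincides with the usual $B_\ell$ root system. Since the root spaces are exactly the spans given by the lemma, that element is zero. The same argument works for the $Y$'s. For generation, every positive root vector is obtained by iterated brackets of the $X_i$, as in the classical case. Concretely, $[X_{i,i+1},X_{i+1,j}]=X_{ij}$, $[X_{i,\ell+1},X_{\ell+1,j}]$-type brackets give $X_{i,\ell+1+j}$ up to nonzero factors, and $[X_{ij},X_{j,\ell+1}]=X_{i,\ell+1}$. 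Each root space is one-dimensional, and the structure constants involved are nonzero scalars (products of $\omega$-values). Negative root vectors come similarly from the $Y_i$, and $\fh$ is spanned by the $Z_i$.

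The main obstacle is bookkeeping the $\omega$-factors and the normalisation at the short root $\varepsilon_\ell$. Elsewhere everything reduces to weight arguments.
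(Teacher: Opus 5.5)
Your overall route is what the paper intends, and the paper gives no separate proof: read the brackets off the preceding lemma, dispose of $[X_a,Y_b]$ ($a\ne b$) and the Serre relations by weight arguments, and get generation from root strings. The gap is at the index $\ell$, and it is not a matter of bookkeeping.

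In your treatment of \eqref{eq:qud-2} you assert that $Z_\ell$ is the coroot $2\varepsilon_\ell$. It is not: $Z_\ell=X_{\ell\ell}$ and $\varepsilon_\ell(X_{\ell\ell})=1$, so $Z_\ell$ corresponds to $\varepsilon_\ell$. Directly from the lemma, $[Z_\ell,X_\ell]=[X_{\ell\ell},2X_{\ell,\ell+1}]=X_\ell$ and $[Z_\ell,X_{\ell-1}]=-X_{\ell-1}$. With $\Upsilon_\ell=\varepsilon_\ell$ short, Definition \ref{def:Cartan} gives $A_{\ell\ell}=2$ and $A_{\ell,\ell-1}=-2$. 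So \eqref{eq:qud-2} fails at $a=\ell$. No rescaling of $X_\ell,Y_\ell$ can repair this, because the relation depends only on $Z_\ell$. Your suggested fix for \eqref{eq:qud-3} also changes nothing: $[X_{\ell,\ell+1},2X_{\ell+1,\ell}]=2X_{\ell\ell}$, exactly as before.

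Both discrepancies have one source: the statement's $Z_\ell$ is off by a factor $2$. Take $Z_\ell=2X_{\ell\ell}$ and keep $X_\ell=2X_{\ell,\ell+1}$, $Y_\ell=X_{\ell+1,\ell}$. Then
$[X_\ell,Y_\ell]=Z_\ell$, $[Z_\ell,X_\ell]=2X_\ell$, $[Z_\ell,Y_\ell]=-2Y_\ell$, $[Z_\ell,X_{\ell-1}]=-2X_{\ell-1}$ and $[Z_{\ell-1},X_\ell]=-X_\ell$.
The rest of your argument then goes through unchanged, since the Serre and generation steps use only weights. So what can be proved is this corrected statement. The theorem as literally written is false, and your proof should say so rather than claim \eqref{eq:qud-2} for the given $Z_\ell$.

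Two smaller points. First, when you recompute from matrix units, \eqref{eq:Xab} gives $X_{i,\ell+1}=\BE_{i,\ell+1}-\omega(\eta,\gamma_i)\BE_{\ell+1,\ell+1+i}$ and $X_{\ell+1,i}=\BE_{\ell+1,i}-\omega(\gamma_i,\eta)\BE_{\ell+1+i,\ell+1}$. The lemma drops these signs, which matter when $\omega(\eta,\gamma_i)=-1$, though $[X_{i,\ell+1},X_{\ell+1,i}]=X_{ii}$ still holds. Second, in the generation step $[X_{i,\ell+1},X_{\ell+1,j}]=X_{ij}$. The vector $X_{i,\ell+1+j}$ instead comes from $[X_{i,\ell+1},X_{j,\ell+1}]$, which is a nonzero multiple of it.
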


Let us consider bases of $V$ which will lead to the least number of double circles in the Dynkin diagram. 
Order the elements of $\Gamma_R(U)=\{\alpha_1, \alpha_2, \dots, \alpha_{\aleph}\}$ so that 
\begin{enumerate}[label=\roman*)]
\item $\alpha_i<\alpha_j$ if $i<j$;   

\item $\alpha_s< \alpha_t$ if $\alpha_s\not\in\Gamma^+_{R, \Z_2}(U)$ and $\alpha_t\in\Gamma^+_{R, \Z_2}(U)$; and 

\item $\eta$ is minimal. 
\end{enumerate}
Construct bases $B(U)$ for $U$ and $\ol{B}(U^*)$ for $U^*$ as those given by \eqref{eq:BU-s} and \eqref{eq:BU-d}. We again take $B(V)= (B(U), e(\eta), \ol{B}(U^*))$. Then the Dynkin diagram corresponding 
to this basis has $\aleph$ double circles if $\eta\ne 0$, and $\aleph-1$ double circles if $\eta= 0$. Note that in the latter case, the last node in Figure \ref{fig:B} should be a circle.

\subsubsection{Failure of Cartan--Weyl paradigm} 
If there are more than one $\alpha\in\Gamma_{\Z_2}$ such that $V_\alpha$ are odd dimensional, 
 $\fso(V; \kappa)$ fails the Cartan--Weyl paradigm.  

This is very easy to see in the example below.

\begin{example}\label{eg:CW-fail}
Assume that $\dim V=r\ge 2$ and $\Gamma_R(V) =\Gamma^+_{R, \Z_2}(V)$ with $\aleph(V)=r$.  Denote by $\gamma_1, \dots, \gamma_r$ the (distinct) elements of $\Gamma^+_{R, \Z_2}(V)$. Then $V=\sum_{i=1}^r V_{\gamma_i}$ with $\dim V_{\gamma_i}=1$ for all $i$. 
The bilinear form $\kappa$ is diagonal relative to any homogeneous basis of $V$. 
It then follows \eqref{eq:Xab} that
$\fso_r(\gamma_1, \dots, \gamma_r):=\fso(V; \kappa)$ has a homogeneous basis 
$
X_{i j}= \BE_{i j} - \omega(\gamma_{j}, \gamma_{i}) \BE_{j i}
$ for $i<j$. 
It is evident that $\fso_r(\gamma_1, \dots, \gamma_r)$ 
has no non-zero homogeneous element of degree $0$.  
\end{example}

Consider the general case with elements $\gamma_i\in\Gamma^+_{R, \Z_2}(V)$, for $i=1, 2, \dots, r\ge 2$, such that $V_{\gamma_i}$ are odd dimensional. We always have $V=V^{0}\oplus V^{1}$, where $V^{1}$ is the sum of homogeneous $1$-dimensional subspaces of degrees $\gamma_i$ for $i=1, 2, \dots, r$ respectively,  
such that $V^0$ and $V^1$ are orthogonal with respect to $\kappa$.
%
Then $\fso(V; \kappa)$ decomposes into the direct sum of the subalgebra 
$\fso(V^0; \kappa|_{V^0}) \oplus \fso(V^1; \kappa|_{V^1})$ and the module
$V^0\wedge_\omega V^1$ for it. 
We can choose  homogeneous bases for  $V^0$ and $V^1$ such that 
$\kappa=\begin{pmatrix}
0 & I_\ell & 0\\ I_\ell & 0 &0\\
0 & 0 &I_r
\end{pmatrix}$ with $\ell=\frac{\dim V- r}{2}$.
Now $\fso(V^0; \kappa|_{V^0})$ fulfils the Cartan--Weyl paradigm by Section \ref{sect:D},  but 
$\fso(V^1; \kappa|_{V^1})\simeq \fso_r(\gamma_1, \dots, \gamma_r)$ fails as discussed in the example above.
This implies that $\fso(V; \kappa)$ fails the Cartan--Weyl paradigm.  

As further illustration, let us consider the $r=3$ case of Example \ref{eg:CW-fail}. 
\begin{example}\label{eg:Jordan}
For $r=3$, there is a particularly neat case with $\omega(\gamma_i, \gamma_j)=-1$ for all $i\ne j$. 
%
%
%
The universal enveloping colour algebra $\U(\fso_3(\gamma_1, \gamma_2, \gamma_3))$ is generated by $X_{12},  X_{23}, X_{1 3}$ with the following defining relations
\beq
X_{12} X_{23}+  X_{23} X_{12} &=&X_{1 3}, \label{eq:Jordan-1}\\
X_{13}X_{12}+X_{12} X_{13} &=& X_{23},  \label{eq:Jordan-2}\\
X_{13} X_{2 3} + X_{2 3} X_{13} &=&X_{1 2}. \label{eq:Jordan-3}
\eeq
Even allowing inhomogeneous basis, one still can not transform them into a form close to 
Lie (super)algebras such as  $\U(\fso_3(\C))$ or $\U(\osp_{1|2}(\C))$. 
For example, take $X_\pm :=\frac{1}{2}( X_{13} \pm X_{23})$ and $X_0:=X_{12}$,
then the relations become
\[
X_0 X_\pm + X_\pm X_0= \pm X_\pm, \quad
X_+^2 - X_-^2 = 2 X_0, 
\]
where  the last one is very different from relations of the above  Lie (super)algebras. 

In fact, one can see from equations \eqref{eq:Jordan-1}, \eqref{eq:Jordan-2} and  \eqref{eq:Jordan-3} that $\U(\fso_3(\gamma_1, \gamma_2, \gamma_3))$ is 
a special Jordan algebra with the Jordan product $X\circ Y= \frac{1}{2}(X Y + Y X)$. 
\end{example}

\begin{remark}\label{rmk:change}
For the simple Lie colour (super)algebras and their affine analogues which fail the Cartan--Weyl paradigm,  
a new conceptual framework may be required to treat their structure and representation theory. 
Example \ref{eg:Jordan} seems to indicate that Jordan algebra techniques 
could potentially come to play. 
\end {remark}

\begin{remark}\label{rmk:intricacy}
Lie colour superalgebras admit graded Casimir operators 
and central extensions of non-zero degrees as shown in \cite{AFSV}.  
This is further evidence that Lie colour superalgebras are richer 
than their cocycle twisted Lie superalgebras. 
\end {remark}

\subsection{Untwisted affine Lie colour algebras}\label{sect:loop}
We note that examples of affine Lie colour (super)algebras were studied in \cite{AFSV, AIS,  AS}. 

Consider the Laurent polynomial ring $\C[t, t^{-1}]$ in the variable $t$ as a $\Gamma$-graded vector space which is homogeneous of degree $0$. 
Given a finite dimensional Lie colour algebra $\fg$, we let $\CL(\fg)=\fg\ot\C[t, t^{-1}]$ as a $\Gamma$-graded vector space. The loop Lie colour algebra of $\fg$ is $\CL(\fg)$ with the generalised Lie bracket being the bilinear map defined by  
\beq
[X\ot f, Y\ot g]=[X, Y]\ot fg, \quad X, Y\in\fg,\  f, g\in\C[t, t^{-1}]. 
\eeq
The generalised Jacobian identity of $\CL(\fg)$ follows that of $\fg$. 

Assume that there is a $\omega$-symmetric bilinear form $\kappa$ on $\fg$, which is homogeneous of degree $0$, and is ad-invariant.
The untwisted affine Lie colour algebra $\wh\fg = \CL(\fg) +\C c$ of $\fg$ is the central extension of 
the loop algebra $\CL(\fg)$
with a generalised Lie bracket such that
for any $X, Y\in\fg$ and $f, g\in\C[t, t^{-1}]$, 
\beq
&&[X\ot f, Y\ot g]=[X, Y]\ot fg + c \kappa(X, Y) Res_0\left(\frac{dg}{d t} f\right), \\
&&\text{$c$ is central,}
\eeq
where $Res_0(F)$  is the coefficient of $t^{-1}$ in $F\in\C[t, t^{-1}]$.

To verify that this indeed defines a Lie colour (super)algebra,  we drop the tensor product sign from $X\ot f$ etc..  We have 
\[
\baln
[Yg, X f] &= [Y, X] f g + c \kappa(Y, X) Res_0(g\frac{d f}{d t})\\
&= -\omega(d Y, d X) \left([X, Y] f g + c \kappa(X, Y) Res_0(f\frac{d g}{d t})\right)\\
&= -\omega(d Y, d X) [Xf, Yg], 
\ealn
\]
proving the $\omega$-skew symmetry of the generalised Lie bracket. 

The generalised Jacobian identity can be verified in the same way as for usual affine Lie algebras.
Write
\[
\baln
\SJ&=[X   f, [Y   g, Z   h]]-[[X   f, Y   g], Z   h] \\
&-\omega(d(X), d(Y)) [Y   g, [X   f,  Z   h]].
\ealn
\]
By using the generalised Jacobian identity of $\CL(\fg)$, we obtain 
\beq\label{eq:jac-aff}
\SJ&=& c \kappa(X, [Y, Z]) Res_0\left(\frac{dgh}{d t} f\right) 
- c \kappa([X, Y], Z) Res_0\left(\frac{dh}{d t} g f\right)  \\
&& - c\kappa(Y, [X,  Z]) \omega(d(X), d(Y)) Res_0\left(\frac{df h}{d t} g\right).  \nonumber
\eeq
Now $\kappa([X, Y], Z])=\kappa(X, [Y, Z])$ by ad-invariance of $\kappa$, and 
\[
\baln
\phantom{=}&\omega(d(X), d(Y)) \kappa(Y, [X,  Z])\\
&=\omega(d(Y), d(Z)) \kappa([X,  Z], Y) &\quad& \text{by $\omega$-symmetry of $\kappa$}\\
&= \omega(d(Y), d(Z))  \kappa(X, [Z, Y])  &\quad& \text{by ad-invariance of $\kappa$}\\
&=-\kappa(X, [Y, Z]),     &\quad& \text{by $\omega$-skew symmetry of $[\ , \ ]$.}
\ealn
\] 
Hence the right hand side of \eqref{eq:jac-aff} is equal to 
\[
\baln
\phantom{=}& c \kappa(X, [Y, Z]) \left( Res_0\left(\frac{dgh}{d t} f\right) - Res_0\left(\frac{dh}{d t} g f\right) +Res_0\left(\frac{df h}{d t} g\right)\right)\\
&= c \kappa(X, [Y, Z]) Res_0\left(\frac{dfgh}{d t}\right) =0, 
\ealn
\] 
since $\frac{d F}{d t}$ does not contain a $t^{-1}$ term for any $F\in\C[t, t^{-1}]$.  
Hence $\SJ=0$, proving the generalised Jacobian identity for $\wh\fg$.

Assume that the Lie colour algebra  $\fg$ fulfils the Cartan--Weyl paradigm, thus $\fg=\fg(A; \Xi)$ for some reduced Cartan matrix of finite type. 
Denote by $\Upsilon_1, \dots, \Upsilon_\ell$ the simple roots of the Lie colour algebra $\fg(A;\Xi)$. 
There exist $k_i\in\Z_+$ such that $\theta=\sum_{i\in I} k_i\Upsilon_i$ is the highest root of  $\fg(A;\Xi)$. 
Let  
\beq\label{eq:Upsil0}
\Upsilon_0=-\sum_{i\in I} k_i\Upsilon_i, \quad \xi_0=-\sum_{i=1}^\ell k_i \xi_i, 
\eeq
and set $\wh{I}=\{0, 1, 2, \dots, \ell\}$ and $\hat{\Xi}=(\xi_i)_{i\in\wh{I}}$. Denote by $\wh{A}=(\wh{A}_{i j})_{i, j\in \wh{I}}$ the generalised Cartan matrix such that  
$\wh{A}_{0 i}=\frac{2(\Upsilon_0, \Upsilon_i)}{(\Upsilon_0, \Upsilon_0)}$, 
$\wh{A}_{i 0}=\frac{2(\Upsilon_i, \Upsilon_0)}{(\Upsilon_i, \Upsilon_i)}$ for $i\in \wh{I}$, and 
$\wh{A}_{i j} = A_{i j}$ for $i, j\in I$. 
We can choose root vectors $X_\theta$ and 
$Y_\theta$ of $\pm \theta$ respectively such that the Cartan element $h_\theta:=[X_\theta, 
Y_\theta]$ satisfies 
\[
[h_\theta, X_\theta]= 2 X_\theta, \quad 
 [h_\theta, Y_\theta]=-2Y_\theta. 
\]
Then $X_0:= Y_\theta   t$, $Y_0:= X_\theta   t^{-1}$ and $Z_0:=\kappa(Y_\theta, X_\theta)  c - h_\theta$ satisfy the relations
\[
\baln
&[Z_0, X_i]= -\Upsilon_i(h_\theta) X_i, \quad [Z_0, Y_i]= \Upsilon_i(h_\theta) Y_i, \\
&[X_i, Y_0]=0, \quad [Y_i, X_0]=0, \quad\quad \forall i=1, 2, \dots, |I|, \\
&[Z_0, X_0] =2X_0, \quad [Z_0, Y_0] = -2 Y_0, \quad [X_0, Y_0] = Z_0.
\ealn
\]

\subsection{Lie  and affine colour algebras fulfilling the Cartan--Weyl paradigm}\label{sect:present}
Investigations in the last section show that 
the classic construction pioneered by Kac \cite{K} and Moody \cite{Mr} (also see \cite{Kv-1} \cite[\S1]{Kv-2})  naturally adapts  to the colour setting to construct all Lie colour (super)ralgebras which 
fulfil the Cartan--Weyl paradigm. 
The finite dimensional simple Lie colour algebras and their untwisted affine Lie colour algebras are constructed below.

We assume that $\Gamma =\Gamma^+$.

\begin{definition}\label{def:fin}
Given a reduced Cartan matrix $A=(A_{i j})_{i, j\in I}$ of finite type with $I=\{1, 2, \dots, \ell\}$, 
and a fixed sequence $\Xi=(\xi_i)_{i\in I}$ of elements in $\Gamma$, let 
$\fg(A; \Xi)$ be the Lie colour algebra generated by the homogeneous elements 
$
X_i, Y_i, Z_i,$  for $i\in I, 
$
with $\Gamma$-degrees $d(X_i)=- d(Y_i)=\xi_i$ and $d(Z_i)=0$, 
subject to the following relations. 
\beq
&&[Z_i, Z_j]=0, \\
&&[Z_i, X_j]= A_{i j} X_i, \quad [Z_i, Y_j]= -A_{i j} Y_j, \\
&&[X_i, Y_j]=\delta_{i j} Z_i, \quad \forall i, j\le \ell, \\
&& ad_{X_i}^{1-A_{i j}}(X_j)=0, \quad ad_{Y_i}^{1-A_{i j}}(Y_j)=0, \quad i\ne j.
\eeq
\end{definition}

\begin{remark}
We can similarly construct simple Lie colour superalgebras, which fulfil the Cartan--Weyl paradigm, by using the Serre type presentations for simple Lie superalgebras fully established in \cite{Z14}.  
\end{remark}

\begin{definition}\label{def:aff}
Retain notation at the end of Section \ref{sect:loop} and Definition \ref{def:fin}. 
In particular, $\wh{A}$ is the affine Cartan matrix of the reduced finite type Cartan matrix $A$, 
and $\wh\Xi= (\xi_0, \Xi)$ with $\xi_0$ given by \eqref{eq:Upsil0}. 
Let 
$\wh\fg(\wh{A}; \wh\Xi)'$ be the Lie colour algebra generated by the homogeneous elements 
$
X_i, Y_i, Z_i, $  for $i\in \wh{I}, 
$
with $\Gamma$-degrees $d(X_i)=- d(Y_i)=\xi_i$ and $d(Z_i)=0$, 
subject to the following relations. 
\beq
&&[Z_i, Z_j]=0, \\
&&[Z_i, X_j]= \wh{A}_{i j} X_i, \quad [Z_i, Y_j]= -\wh{A}_{i j} Y_j, \\
&&[X_i, Y_j]=\delta_{i j} Z_i, \quad \forall i, j\le \ell, \\
&& ad_{X_i}^{1-\wh{A}_{i j}}(X_j)=0, \quad ad_{Y_i}^{1-\wh{A}_{i j}}(Y_j)=0, \quad i\ne j.
\eeq
Denote by $\wh\fg(\wh{A}; \wh\Xi)$ the Lie colour algebra generated by the subalgebra $\wh\fg(\wh{A}; \wh\Xi)'$ and an element $\partial$, which is homogeneous of degree $0$,  such that 
\beq
{[\partial, Z_i]}=0, \quad 
[\partial, X_i]=\delta_{i 0} X_i, \quad [\partial, Y_i]=-\delta_{i 0} Y_i, \quad \forall i\in\hat{I}. 
\eeq
We call $\wh\fg(\wh{A}; \wh\Xi)$ (and often also $\wh\fg(\wh{A}; \wh\Xi)'$)  the affine Lie colour algebra of $\fg(A; \Xi)$. 
\end{definition}

These colour algebras are the ones which we will quantise.

\section{Quantised universal enveloping colour algebras}

We construct quantised universal enveloping colour algebras of the Lie 
and affine Lie colour algebras defined in Section \ref{sect:present}, 
which fulfil the Cartan--Weyl paradigm.  
The foundation of this section lies in
the theory of Hopf $(\Gamma, \omega)$-algebras,  
which are discussed in considerable detail in Appendix \ref{sect:double}. 

We will work over the field of rational functions $\C(q)$ in the indeterminate $q$. 
Let 
$[n]_q=\frac{q^n - q^{-n}}{q-q^{-1}},$ and 
$ \begin{bmatrix}m\\ r\end{bmatrix}_q= \frac{[m]_q!}{[m-r]_q![r]_q!}.
$\\

{\bf Hereafter we assume that $\Gamma_R(\fg)\subset \Gamma^+$}.

\subsection{Definition of colour quantum groups} \label{sect:def-Uq}

We  first streamline notation for clarity. 
We will use $A=(A_{i j})_{i, j \in I}$ to denote a Cartan matrix either of finite type
or affine type, 
with $I=\{1, 2, \dots, \ell\}$ in the finite case, and $I=\{0, 1, 2, \dots, \ell\}$ in the affine case.  
We also have the sequence $\Xi=(\xi_i)_{i\in I}$ of elements of $\Gamma$ in both cases, where we recall that,  in the affine case, $\xi_0$ is defined by \eqref{eq:Upsil0}. 
We emphasise that here we only consider the case with $\xi_i\in \Gamma^+$ for all $i\in I$. 
Let $d_i$ be the smallest positive even integer such that $B=diag(d_1, \dots, d_\ell) A$ is a symmetric matrix over $\Z$.  Set  $q_i =q^{d_i/2}$.  

\begin{definition}\label{def:main}
Retain notation above. 
Let $\U_{q, \Xi}(A)$ be the unital associative $(\Gamma, \omega)$-algebra 
over $\C(q)$ generated by the  elements 
$
e_i, f_i, k_i^{\pm 1}, 
$
for $i\in I$, 
which are homogeneous in the $\Gamma$-grading with degrees 
$
d(e_i)=\xi_i, d(f_i)=-\xi_i, d(k_i^{\pm 1})=0
$
respectively,
subject to the following relations
\beq
&&k_i  k_j = k_j k_i, \quad k_i k_i^{-1}  =1, \label{eq:quad-1}\\
&&k_i e_j k_i^{-1}= q_i^{A_{i j}} e_j,  \quad k_i f_j k_i^{-1}= q_i^{-A_{i j}} f_j,   \label{eq:quad-2}\\
&& e_i f_j - \omega(\xi_j, \xi_i) f_j e_i= \delta_{i j} \frac{k_i - k_i^{-1}}{q_i-q_i^{-1}}, \quad \forall i, j,  \label{eq:quad-3}\\
&&\sum_{r=0}^{1-A_{i j}} (- \omega(\xi_i, \xi_j))^r\begin{bmatrix}1-A_{i j}\\ r\end{bmatrix}_{q_i}
e_i^{1-A_{i j}-r} e_j e_i^r=0, \quad \forall  i\ne j, \label{eq:S-1} \\
&&\sum_{r=0}^{1-A_{i j}} (- \omega(\xi_i, \xi_j))^r\begin{bmatrix}1-A_{i j}\\ r\end{bmatrix}_{q_i}
f_i^{1-A_{i j}-r} f_j f_i^r=0, \quad \forall  i\ne j. \label{eq:S-2}
\eeq
Call $\U_{q, \Xi}(A)$ the quantised universal enveloping colour algebra of the 
colour Lie algebra $\fg(A; \Xi)$ of Definition \ref{def:fin}, 
or the affine Lie colour algebra $\fg(A; \Xi)$ of Definition \eqref{def:aff}.
We will loosely refer to it as a colour quantum group.
In the case where $A$ is of affine type, we also call $\U_{q, \Xi}(A)$ a quantum affine colour algebra.
\end{definition}

The commutative factor appears explicitly in the defining relations of $\U_{q, \Xi}(A)$. 
Equations \eqref{eq:S-1} and \eqref{eq:S-2} will be called colour quantum Serre relations.  

\begin{remark}
For each fixed $i$, the elements $e_i, f_i, k_i^{\pm 1}$ generate a $\U_q(\fsl_2)$ subalgebra, since for $j=i$,  equation \eqref{eq:quad-3} reduces to the standard $\U_q(\fsl_2)$ relation $e_i f_i -  f_i e_i= \frac{k_i - k_i^{-1}}{q_i-q_i^{-1}}$ if $\omega(\xi_i, \xi_i)=1$.  The case \(\omega(\xi_i,\xi_i)=-1\) belongs to a broader colour superalgebra setting and is not considered here.
\end{remark}

For any homogeneous $X, Y\in \U_{q, \Xi}(A)$, we define 
\beq
& Ad_{e_i}(X)= e_i X -\omega(\xi_i, d(X)) k_i X k_i^{-1} e_i, \label{eq:Ade}\\
& Ad_{f_i}(Y)= f_i Y -\omega(d(Y), \xi_i)  k_i^{-1} Y k_i f_i. \label{eq:Adf}
\eeq

\begin{lemma}\label{lem:S-pm}
Retain notation above. The colour quantum Serre relations \eqref{eq:S-1} and \eqref{eq:S-2} can respectively be expressed as 
\beq
Ad_{e_i}^{1-A_{i j}} (e_j)= 0,  \quad 
Ad_{f_i}^{1-A_{i j}} (f_j)= 0, \quad \forall i\ne j.  \label{eq:S+} \label{eq:S-}
\eeq
\end{lemma}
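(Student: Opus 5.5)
We show by induction on $n\ge 0$ that, for $i\ne j$,
\[
Ad_{e_i}^{n}(e_j)=\sum_{r=0}^{n}(-\omega(\xi_i,\xi_j))^r\, q_i^{r(n-1+A_{ij})}\begin{bmatrix}n\\ r\end{bmatrix}_{q_i} e_i^{\,n-r} e_j e_i^{\,r}.
\]
Setting $n=1-A_{ij}$ makes the exponent $r(n-1+A_{ij})$ vanish, so all the extra powers of $q_i$ drop out and the expression becomes exactly the left-hand side of \eqref{eq:S-1}. The $f$-case is handled by the same argument.

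First we record two facts. For $X=e_i^{\,n-r}e_je_i^{\,r}$ the degree is $d(X)=n\xi_i+\xi_j$. By \eqref{eq:quad-2}, conjugation gives
\[
k_iXk_i^{-1}=q_i^{2n+A_{ij}}X,
\]
using $A_{ii}=2$. Next, because $\omega(\xi_i,\xi_i)=1$ (we assume $\Gamma=\Gamma^+$), bicharacter multiplicativity gives
\[
\omega(\xi_i,d(X))=\omega(\xi_i,\xi_j).
\]
Hence, on the span of such monomials,
\[
Ad_{e_i}(X)=e_iX-\omega(\xi_i,\xi_j)\,q_i^{2n+A_{ij}}Xe_i .
\]
This is a twisted commutator with a scalar twist that depends only on $n$. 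In particular, after factoring out $c=-\omega(\xi_i,\xi_j)$, the colour factor enters exactly like a sign in the super case.

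The induction step works as follows. Apply this operator to the $n$-th expression and collect the coefficient of $e_i^{\,n+1-r}e_je_i^{\,r}$. It equals
\[
c^r\Bigl(q_i^{r(n-1+A_{ij})}\begin{bmatrix}n\\ r\end{bmatrix}_{q_i}+q_i^{(r-1)(n-1+A_{ij})+2n+A_{ij}}\begin{bmatrix}n\\ r-1\end{bmatrix}_{q_i}\Bigr).
\]
We must show this equals $c^r q_i^{r(n+A_{ij})}\begin{bmatrix}n+1\\ r\end{bmatrix}_{q_i}$. After dividing by $q_i^{r(n+A_{ij})}$, the claim reduces to the standard $q$-Pascal identity
\[
\begin{bmatrix}n+1\\ r\end{bmatrix}_{q}=q^{-r}\begin{bmatrix}n\\ r\end{bmatrix}_{q}+q^{\,n+1-r}\begin{bmatrix}n\\ r-1\end{bmatrix}_{q}.
\]
Checking that the exponents match is pure bookkeeping, and we expect that to be the only delicate point. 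If the exponents come out with the opposite sign, we will instead use the mirror form of the Pascal identity, with $q\mapsto q^{-1}$. If the leftover power of $q_i$ then does not vanish at $n=1-A_{ij}$, we will adjust the normalisation of the induction hypothesis accordingly. In every case the end result is a nonzero scalar multiple of the Serre sum, so $Ad_{e_i}^{1-A_{ij}}(e_j)=0$ is equivalent to \eqref{eq:S-1}.

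For $f$, the same computation applies to
\[
Ad_{f_i}(Y)=f_iY-\omega(d(Y),\xi_i)\,k_i^{-1}Yk_if_i .
\]
Here $k_i^{-1}Yk_i=q_i^{2n+A_{ij}}Y$ for $Y=f_i^{\,n-r}f_jf_i^{\,r}$, and $\omega(d(Y),\xi_i)=\omega(-\xi_j,\xi_i)=\omega(\xi_i,\xi_j)$. The twist is therefore identical, and the induction yields \eqref{eq:S-2}.
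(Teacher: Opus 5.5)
Your proposal is correct and follows the paper's proof. Both prove the same closed formula for $Ad_{e_i}^N(e_j)$ by induction using the $q$-Pascal identity (the paper writes the coefficient as $(-\omega(\xi_i,\xi_j)q_i^{N-1+A_{ij}})^r$), specialise to $N=1-A_{ij}$, and handle the $f$-case identically.

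The exponent bookkeeping you were worried about checks out exactly: after dividing by $q_i^{r(n+A_{ij})}$, the two contributions are precisely $q_i^{-r}\begin{bmatrix}n\\ r\end{bmatrix}_{q_i}$ and $q_i^{n+1-r}\begin{bmatrix}n\\ r-1\end{bmatrix}_{q_i}$. So your contingency clauses are unnecessary, and $Ad_{e_i}^{1-A_{ij}}(e_j)$ equals the Serre sum on the nose, not merely up to a scalar.
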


\begin{proof}
By an easy induction on $N$, one can  prove the following formulae, 
\beq
Ad_{e_i}^N(e_j)= \sum_{r=0}^N \left(- \omega(\xi_i, \xi_j) q_i^{N-1+A_{i j}}\right)^r
\begin{bmatrix}N\\ r\end{bmatrix}_{q_i} e_i^{N-r} e_j e_i^r, \label{eq:Ad-e}\\
Ad_{f_i}^N(f_j)= \sum_{r=0}^N \left(- \omega(\xi_i, \xi_j) q_i^{N-1+A_{i j}}\right)^r
\begin{bmatrix}N\\ r\end{bmatrix}_{q_i} f_i^{N-r} f_j f_i^r,  \label{eq:Ad-f}
\eeq
with the help of the following relations among  $q$-binomial coefficients, 
\beq
\begin{bmatrix}m\\ r\end{bmatrix}_q &=& q^{m-r}\begin{bmatrix}m-1\\ r-1\end{bmatrix}_q + 
q^{-r}\begin{bmatrix}m-1\\ r\end{bmatrix}_q  \label{eq:q-sum}\\
& =& q^{r-m}\begin{bmatrix}m-1\\ r-1\end{bmatrix}_q + 
q^{r}\begin{bmatrix}m-1\\ r\end{bmatrix}_q. \nonumber 
\eeq
The lemma immediately follows by setting $N= 1- A_{i j}$. 
\end{proof}

\begin{example}[Colour quantum Serre relations in type $A$ case]
For type $A$, we have 
$A_{i j}=0$ if $|i-j|>1$, and $A_{i, i\pm 1}=-1$ for all valid $i, i\pm 1$. Thus 
\[
\baln
&e_i e_j -\omega(\xi_i, \xi_j)  e_j e_i =0, \quad 
f_i f_j -\omega(\xi_i, \xi_j)  f_j f_i=0,  \quad \text{if $|i-j|>1$}, \\
&e_i ^2 e_{i\pm 1} - \omega(\xi_i, \xi_{i\pm 1}) (q+q^{-1}) e_i e_{i\pm 1}  e_i 
+ \omega(2\xi_i, \xi_{i\pm 1})  e_{i\pm 1}  e_i ^2=0, \\
&f_i ^2 f_{i\pm 1} - \omega(\xi_i, \xi_{i\pm 1}) (q+q^{-1}) f_i f_{i\pm 1}  f_i 
+ \omega(2\xi_i, \xi_{i\pm 1})  f_{i\pm 1}  f_i ^2=0.
\ealn
\]
\end{example}

Note the following $(\Gamma, \omega)$-subalgebras of
$\U_{q, \Xi}(A)$: 
\beq\label{eq:subgroups}
\baln
\U^0, &\quad \text{generated by $\{ k_i^{\pm 1}\mid i\in I\}$}, \\
\U^+_{q, \Xi}(A), &\quad \text{generated by $\{e_i, k_i^{\pm 1}\mid i\in I \}$},  \\
\U^-_{q, \Xi}(A), &\quad \text{generated by $\{ f_i, k_i^{\pm 1}\mid i\in I\}$}, \\
\U^+_+, &\quad \text{generated by $\{e_i\mid i\in I \}$},  \\
\U^-_-, &\quad \text{generated by $\{ f_i\mid i\in I\}$}. 
\ealn
\eeq
It is a direct consequence of the defining relations \eqref{eq:quad-1}--\eqref{eq:quad-3} that $\U_{q, \Xi}(A)$ has the triangular decomposition 
\[
\U_{q, \Xi}(A)=\U^-_- \U^0 \U^+_+. 
\]

The following important fact will be proven in the next section.
\begin{theorem} \label{thm:Hopf}
The algebra $\U_{q, \Xi}(A)$ is
a Hopf $(\Gamma, \omega)$-algebra, with co-multiplication $\Delta: \U_{q, \Xi}(A)\lra \U_{q, \Xi}(A)\ot \U_{q, \Xi}(A)$, 
co-unit $\varepsilon: \U_{q, \Xi}(A)\lra \C(q)$, and antipode $S: \U_{q, \Xi}(A) \lra \U_{q, \Xi}(A)$, respectively defined by 
\beq
& \Delta(e_i)= e_i\ot k_i + 1\ot e_i,  \ \Delta(f_i)= f_i\ot 1 + k_i^{- 1}\ot f_i,  \label{eq:co-mult}\\
&  \Delta(k_i^{\pm 1})=k_i^{\pm 1}\ot k_i^{\pm 1},  \nonumber\\
& \varepsilon(e_i)=0, \  \varepsilon(f_i)=0, \ \varepsilon(k_i^{\pm 1})=1,  \label{eq:co-unit} \\
& S(e_i)=  - e_i k_i^{-1}, \  S(f_i)=  -  k_i f_i, \ S(k_i^{\pm 1})=k_i^{\mp 1}, \quad \forall i\in I.
\label{eq:antipode}
\eeq
\end{theorem}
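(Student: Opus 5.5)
The plan is to work in the braided monoidal category $\Vect$ throughout. The tensor product $\U_{q,\Xi}(A)\ot\U_{q,\Xi}(A)$ carries the braided multiplication
\[
(a\ot b)(c\ot d)=\omega(d(b),d(c))\,ac\ot bd .
\]
The proof has three steps.

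\textbf{Step 1: the free algebra.} Let $\wt\U$ be the $(\Gamma,\omega)$-algebra generated by $e_i,f_i,k_i^{\pm1}$ subject only to \eqref{eq:quad-1}--\eqref{eq:quad-3}. I first show that \eqref{eq:co-mult}--\eqref{eq:antipode} define a Hopf $(\Gamma,\omega)$-algebra structure on $\wt\U$.

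All the formulas are homogeneous of degree $0$, so $\Delta$ and $\varepsilon$ extend to algebra maps from the free $(\Gamma,\omega)$-algebra. $S$ extends as a graded anti-homomorphism, meaning $S(xy)=\omega(d(x),d(y))S(y)S(x)$.

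It then suffices to check that each relation is respected.
\begin{itemize}
\item \eqref{eq:quad-1} and \eqref{eq:quad-2} are routine, because $k_i$ has degree $0$.
\item For \eqref{eq:quad-3} I expand $\Delta(e_i)\Delta(f_j)-\omega(\xi_j,\xi_i)\Delta(f_j)\Delta(e_i)$ using the braided product.
\item The cross terms $e_ik_j^{-1}\ot k_if_j$ and $k_j^{-1}e_i\ot f_jk_i$ cancel. This uses $k_j^{-1}e_ik_j=q_j^{-A_{ji}}e_i$, $k_if_jk_i^{-1}=q_i^{-A_{ij}}f_j$ and the symmetry $d_iA_{ij}=d_jA_{ji}$. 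The $\omega$-factors $\omega(\xi_i,-\xi_j)$ and $\omega(\xi_j,\xi_i)$ agree by \eqref{eq:cycle-1}.
\item The remaining terms give $\delta_{ij}\Delta\big(\tfrac{k_i-k_i^{-1}}{q_i-q_i^{-1}}\big)$.
\end{itemize}

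Coassociativity and the counit axiom need only be checked on generators. The antipode axiom $m(S\ot\id)\Delta=\eta\varepsilon=m(\id\ot S)\Delta$ likewise holds on generators. It extends to products by the standard argument, which carries over to the braided setting because $\Delta$ is an algebra map into the braided tensor product; I will cite the appendix for this. A separate check shows that $S$ respects \eqref{eq:quad-3}.

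\textbf{Step 2: the Serre elements.} Write $u^+_{ij}=Ad_{e_i}^{1-A_{ij}}(e_j)$ and $u^-_{ij}=Ad_{f_i}^{1-A_{ij}}(f_j)$. By Lemma \ref{lem:S-pm} these are the left-hand sides of \eqref{eq:S-1} and \eqref{eq:S-2}. The goal is
\[
\Delta(u^+_{ij})=u^+_{ij}\ot k_i^{1-A_{ij}}k_j+1\ot u^+_{ij},
\]
together with the analogous skew-primitive formula for $u^-_{ij}$. These formulas give $\varepsilon(u^\pm_{ij})=0$, and $S(u^\pm_{ij})$ lies in the ideal the $u^\pm_{ij}$ generate. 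Hence that ideal $\CJ$ is a Hopf ideal, and $\U_{q,\Xi}(A)=\wt\U/\CJ$ inherits the Hopf structure.

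To get the coproduct formula, I compute $\Delta$ on $x=e_i^{N-r}e_je_i^r$ inside the subalgebra generated by $e_i,e_j,k_i,k_j$. I use the braided $q$-binomial theorem:
\[
(e_i\ot k_i+1\ot e_i)^n=\sum_s \begin{bmatrix}n\\ s\end{bmatrix}_{q_i} q_i^{s(n-s)}\,e_i^{s}\ot k_i^{s}e_i^{n-s}.
\]
This holds because the two summands $q_i^2$-commute, and $\omega(\xi_i,\xi_i)=1$.

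Then I collect the middle terms of $\Delta(u^+_{ij})$. Their coefficients are sums of the form
\[
\sum_r(-\omega(\xi_i,\xi_j))^r\begin{bmatrix}N\\ r\end{bmatrix}_{q_i}(\ldots),
\]
and I show that these vanish for $N=1-A_{ij}$ by the identity $\sum_r(-1)^r q^{r(N-1)}\begin{bmatrix}N\\ r\end{bmatrix}_q=0$.

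The $\omega$-factors arise from moving $e_j$ past $e_i$ in the braided product, and each such move produces one factor $\omega(\xi_i,\xi_j)$. So they combine exactly with the $(-\omega(\xi_i,\xi_j))^r$ already present.

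\textbf{Expected main obstacle.} The delicate step is this bookkeeping of $\omega$- and $q$-powers in the expansion of $\Delta(u^+_{ij})$. What I would try first is the cleaner $Ad$ formulation. I would prove a lemma that $\Delta(Ad_{e_i}(X))=Ad_{e_i}(X)\ot k_i\,k_X+1\ot Ad_{e_i}(X)$ whenever $\Delta(X)=X\ot k_X+1\ot X$ with $X$ homogeneous, and then induct on $N$.

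That lemma alone is insufficient, since $Ad_{e_i}^N(e_j)$ for $N<1-A_{ij}$ is not skew-primitive. Because of this, I would fall back on the direct expansion above, or alternatively verify that $u^\pm_{ij}$ lie in the radical of the Hopf pairing. This second route anticipates the approach the paper outlines in Section \ref{sect:ACQG}, where $\CJ$ is identified as generated by the radicals and is automatically a Hopf ideal.
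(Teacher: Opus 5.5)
Your overall architecture is the paper's. You make the auxiliary algebra $\wt\U_{q,\Xi}(A)$, defined by \eqref{eq:quad-1}--\eqref{eq:quad-3} only, a Hopf $(\Gamma,\omega)$-algebra. You then show the colour quantum Serre elements are skew primitive, deduce that the ideal $\CJ$ they generate is a Hopf ideal, and pass to the quotient.

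The difference is in how skew-primitivity (Lemma \ref{lem:co-ideal}) is proved. The paper keeps the $Ad$ formulation you wanted to try first, but strengthens the induction hypothesis to a closed formula for $\Delta(Ad_{e_i}^n(e_j))$ valid for every $n$ (Lemma \ref{lem:co-prods}). Its middle terms carry the factor $\prod_{r=s}^{n-1}(1-q_i^{2r}q_i^{2A_{ij}})$. This is exactly what overcomes the obstacle you noticed, that the intermediate $Ad_{e_i}^n(e_j)$ are not skew primitive. It also explains why $n=1-A_{ij}$ is the first exponent that works, and the $f$-side follows from the automorphism $\theta$ of Lemma \ref{lem:auto}.

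Your direct expansion is a valid non-inductive alternative, and the $\omega$-bookkeeping does close up. Write $N=1-A_{ij}$.
\begin{itemize}
\item The terms $e_i^ae_je_i^b\ot e_i^Mk_i^{a+b}k_j$ with $M=N-a-b\ge1$ receive contributions from $r=b+c$, $0\le c\le M$. These carry the $r$-independent colour factor $\omega(\xi_i,\xi_j)^{N-a}$ and $q$-coefficients proportional to $(-1)^c\begin{bmatrix}M\\ c\end{bmatrix}_{q_i}q_i^{-c(M-1)}$.
\item The terms $e_i^m\ot e_i^pe_je_i^tk_i^m$ with $m\ge1$ similarly have colour factor $\omega(\xi_i,\xi_j)^t$ and coefficients proportional to $(-1)^a\begin{bmatrix}m\\ a\end{bmatrix}_{q_i}q_i^{-a(m-1)}$.
\end{itemize}
Both families cancel by \eqref{eq:0-q-sum}, and $N=1-A_{ij}$ is used precisely to bring the exponents into this form. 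So your route needs only the single identity the paper already uses for Lemma \ref{lem:ideal}. The cost is a one-shot computation instead of a reusable coproduct formula for all $n$.

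There are two corrections.
\begin{itemize}
\item Your braided $q$-binomial formula has the wrong sign in the exponent. With the right factor ordered as $k_i^se_i^{n-s}$, the coefficient is $q_i^{-s(n-s)}\begin{bmatrix}n\\ s\end{bmatrix}_{q_i}$; equivalently it is $q_i^{s(n-s)}$ with $e_i^{n-s}k_i^s$, as in \eqref{eq:Den}. With your sign the exponents do not reduce as above and the cancellation fails.
\item Your fallback via the radical of the Hopf pairing runs the paper's logic backwards. In the paper, skew-primitivity is what puts $S^+_{ij}$ into the radical (Lemma \ref{lem:skew}). Membership in the radical alone does not show that the ideal generated by the Serre elements is a Hopf ideal. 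For that you would also need this ideal to be the whole radical, which is Lemma \ref{lem:Nichols}, imported from Rosso and Lusztig. Keep the direct expansion.
\end{itemize}
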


It follows from Theorem \ref{thm:Hopf} that $\U^\pm_{q, \Xi}(\fg)$ are 
Hopf $(\Gamma, \omega)$-subalgebras.

\begin{remark}\label{rmk:AAB-1}
A class of Hopf algebras $\U(\mathscr{E})$ was constructed from Nichols algebras in \cite[\S3.2]{AAB}. These algebras are expected to be closely related to the bosonisation of our Hopf colour algebras $\U_{q, \Xi}(A)$ with respect to the subgroups of $\Gamma$ generated by $\Xi$. 
However, to compare $\U(\mathscr{E})$ and the bosonised $\U_{q, \Xi}(A)$ in precise terms, 
one must first determine the explicit quantum Serre relations for $\U(\mathscr{E})$. 
We also note a structural variation between our relation \eqref{eq:quad-3} and \cite[equation (3.10)]{AAB}.
\end{remark}

\subsection{The auxiliary colour quantum  groups and Hopf structure}\label{sect:ACQG}
In this section, we develop a construction for the colour quantum groups as Hopf colour algebras. 
The Hopf structure in Theorem \ref{thm:Hopf} will then follow naturally from this construction.

While our formulation treats the colour quantum group as a unified whole,  
it implicitly encapsulates elements of the Nichols algebra approach \cite{HS20}, 
which will manifest explicitly in Section \ref{sect:QT} when establishing 
the quasi-triangular structure via the quantum double construction.

Let us introduce the following auxiliary Hopf colour algebra associated with $\U_{q, \Xi}(A)$.
\begin{definition}\label{def:U-aux}
Let $\wt\U_{q, \Xi}(A)$ be the unital associative $(\Gamma, \omega)$-algebra
generated by the homogeneous generators 
$
e_i, f_i, k_i^{\pm 1} 
$
for $i\in I$, of degrees $d(e_i)=\xi_i$, $d(f_i)=-\xi_i$ and $d(k_i^{\pm 1})=0$, 
with defining relations \eqref{eq:quad-1}, \eqref{eq:quad-2}  
and \eqref{eq:quad-3} only, i.e., without the colour quantum Serre relations.
\end{definition}
We call $\wt\U_{q, \Xi}(A)$ an auxiliary colour quantum  group.

\begin{remark}
We abused notation by denoting the generators of $\wt\U_{q, \Xi}(A)$ with the same symbols as those for $\U_{q, \Xi}(A)$, but we do not expect this to cause confusion.
\end{remark}

There are the following $(\Gamma, \omega)$-subalgebras of $\wt\U_{q, \Xi}(A)$.
\beq\label{eq:aux-subgp}
\baln
\wt\U^0, &\quad \text{generated by $\{ k_i^{\pm 1}\mid i\in I\}$}, \\
\wt\U^+_{q, \Xi}(A), &\quad \text{generated by $\{e_i, k_i^{\pm 1}\mid i\in I \}$},  \\
\wt\U^-_{q, \Xi}(A), &\quad \text{generated by $\{ f_i, k_i^{\pm 1}\mid i\in I\}$}, \\
\wt\U^+_+, &\quad \text{generated by $\{e_i\mid i\in I \}$},  \\
\wt\U^-_-, &\quad \text{generated by $\{ f_i\mid i\in I\}$}. 
\ealn
\eeq
Clearly $\wt\U^+_{q, \Xi}(A)\supset \wt\U^+_+$ and $\wt\U^-_{q, \Xi}(A)\supset \wt\U^-_-$. 

The following fact is easy to see.
\begin{lemma} 
The algebra $\wt\U_{q, \Xi}(A)$ has the structure of 
a Hopf $(\Gamma, \omega)$-algebra, with co-multiplication 
$\Delta: \wt\U_{q, \Xi}(A)\lra \wt\U_{q, \Xi}(A)\ot \wt\U_{q, \Xi}(A)$, 
co-unit $\varepsilon: \wt\U_{q, \Xi}(A)\lra \C(q)$, and 
antipode $S: \wt\U_{q, \Xi}(A) \lra \wt\U_{q, \Xi}(A)$, 
which are defined by the same formulae \eqref{eq:co-mult},  \eqref{eq:co-unit} and \eqref{eq:antipode} respectively, but interpreted as maps for $\wt\U_{q, \Xi}(A)$. 
\end{lemma}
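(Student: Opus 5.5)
Since $\wt\U_{q,\Xi}(A)$ is presented by generators and relations, I would define $\Delta$, $\varepsilon$ on the free unital $(\Gamma,\omega)$-algebra $F$ on $e_i,f_i,k_i^{\pm1}$. The target $\wt\U\ot\wt\U$ carries the braided multiplication $(a\ot b)(c\ot d)=\omega(d(b),d(c))\,ac\ot bd$ of the category $\Vect$. The formulae \eqref{eq:co-mult}, \eqref{eq:co-unit} are homogeneous of degree $0$. Hence they extend uniquely to algebra maps $F\to\wt\U\ot\wt\U$ and $F\to\C(q)$. One then checks that the images of the relations \eqref{eq:quad-1}--\eqref{eq:quad-3} vanish, so that $\Delta$ and $\varepsilon$ descend to $\wt\U_{q,\Xi}(A)$.

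For \eqref{eq:quad-1} and \eqref{eq:quad-2} the check is immediate, since the $k_i$ are group-like of degree $0$. For instance,
$\Delta(k_i)\Delta(e_j)\Delta(k_i)^{-1}=k_ie_jk_i^{-1}\ot k_j+1\ot k_ie_jk_i^{-1}=q_i^{A_{ij}}\Delta(e_j)$.

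The main computation is \eqref{eq:quad-3}. Expand $\Delta(e_i)\Delta(f_j)$ and $\Delta(f_j)\Delta(e_i)$ with the braided product:
\[
\Delta(e_i)\Delta(f_j)=e_if_j\ot k_i+\omega(\xi_i,-\xi_j)\,e_ik_j^{-1}\ot k_if_j+f_j\ot e_i+k_j^{-1}\ot e_if_j,
\]
\[
\Delta(f_j)\Delta(e_i)=f_je_i\ot k_i+f_j\ot e_i+k_j^{-1}e_i\ot f_jk_i+\omega(-\xi_j,\xi_i)\,k_j^{-1}\ot f_je_i .
\]
Form $\Delta(e_i)\Delta(f_j)-\omega(\xi_j,\xi_i)\Delta(f_j)\Delta(e_i)$. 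The terms $f_j\ot e_i$ cancel only after tracking the signs; the factor $\omega(\xi_j,\xi_i)$ in the second product multiplies $f_j\ot e_i$. So I would redo the expansion carefully with the convention of Remark \ref{rmk:conven}, where the braiding factor arises when $e_i$ passes $f_j$ in the middle.

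In fact the correct expansion of $\Delta(e_i)\Delta(f_j)$ gives the term $\omega(\xi_i,-\xi_j)\,f_j\ot e_i$ in place of $f_j\ot e_i$. This matches $\omega(\xi_j,\xi_i)f_j\ot e_i$, since $\omega(\xi_i,-\xi_j)=\omega(\xi_j,\xi_i)$ by \eqref{eq:cycle-1}--\eqref{eq:cycle-2}. The terms $e_ik_j^{-1}\ot k_if_j$ and $k_j^{-1}e_i\ot f_jk_i$ cancel using \eqref{eq:quad-2} and $q_i^{A_{ij}}=q_j^{A_{ji}}$, which holds because $B$ is symmetric. What remains is $(e_if_j-\omega f_je_i)\ot k_i+k_j^{-1}\ot(e_if_j-\omega f_je_i)$. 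This equals $\delta_{ij}\Delta\bigl((k_i-k_i^{-1})/(q_i-q_i^{-1})\bigr)$, as required. This bookkeeping of $\omega$-factors is the main obstacle.

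Coassociativity and the counit axioms need only be checked on generators, because all maps involved are algebra maps in $\Vect$; both are immediate on generators. For the antipode, I would define $S$ on $F$ as a braided anti-homomorphism, $S(ab)=\omega(d(a),d(b))S(b)S(a)$, using \eqref{eq:antipode}. One then verifies that $S$ kills the relations. For \eqref{eq:quad-3} this is a short computation with the same cancellations as above.

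Finally, the antipode identities $m(S\ot\id)\Delta=\varepsilon=m(\id\ot S)\Delta$ hold on generators; for example $S(e_i)k_i+e_i=0$. They extend to products by the standard argument that the set where they hold is a subalgebra, using the braided anti-multiplicativity of $S$ and the results on Hopf $(\Gamma,\omega)$-algebras in Appendix \ref{sect:double}.
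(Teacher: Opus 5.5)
Your approach is the same as the paper's: check that $\varepsilon$, $\Delta$ and the braided anti-homomorphism $S$ respect \eqref{eq:quad-1}--\eqref{eq:quad-3}, where the only real work is the $\omega$-bookkeeping for \eqref{eq:quad-3}, and then verify the Hopf axioms on generators; you also spell out the standard extension-from-generators argument that the paper leaves implicit. One correction to your displayed expansions: with $(a\ot b)(c\ot d)=\omega(d(b),d(c))\,ac\ot bd$, the only nontrivial factors are $\omega(\xi_i,-\xi_j)$ on $f_j\ot e_i$ in $\Delta(e_i)\Delta(f_j)$ (none on $e_ik_j^{-1}\ot k_if_j$) and $\omega(-\xi_j,\xi_i)$ on $k_j^{-1}e_i\ot f_jk_i$ in $\Delta(f_j)\Delta(e_i)$ (none on $k_j^{-1}\ot f_je_i$; as you wrote it, the last terms would combine to $k_j^{-1}\ot(e_if_j-f_je_i)$), and with these placements the cancellations and the remainder you state are exactly those in the paper.
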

\begin{proof}
It is evident that $\varepsilon$ is a $(\Gamma, \omega)$-algebra homomorphism. 
To show that $S$ is a $(\Gamma, \omega)$-algebra anti-homomorphism, 
one verifies directly that it preserves the relations \eqref{eq:quad-1} and \eqref{eq:quad-2}. 
Now we  prove that $S$ also preserves \eqref{eq:quad-3}. We have 
\[
\baln
S(e_i f_j - \omega(\xi_j, \xi_i) f_j e_i) 
&= \omega(\xi_j, \xi_i) S(f_j) S(e_i) 
- \omega(\xi_j, \xi_i)  \omega(\xi_i, \xi_j)S(e_i)S(f_j)\\
&= \omega(\xi_j, \xi_i) k_j f_j  e_i k_i^{-1} - e_i k_i^{-1} k_j f_j. 
\ealn
\]
Let us denote the right hand side by $RHS$ and manipulate  it as follows. 
\[
\baln
RHS&=\omega(\xi_j, \xi_i) k_j f_j  e_i k_i^{-1} - q_j^{-A_{j i}} q_i^{A_{i j}}k_j e_i   f_j k_i^{-1}\\
&= - k_j (e_i   f_j  - \omega(\xi_j, \xi_i)  f_j  e_i ) k_i^{-1} \quad \text{(using $q_j^{A_{j i}}=q_i^{A_{i j}}$)}\\
&= -  \delta_{i j} \frac{k_i - k_i^{-1}}{q_i-q_i^{-1}} = \delta_{i j} \frac{S(k_i) - S(k_i^{-1})}{q_i-q_i^{-1}}.  
\ealn
\]
This proves that $S$ preserves \eqref{eq:quad-3}. 

Let us now show that the map $\Delta$ is a $(\Gamma, \omega)$-algebra homomorphism. 
Again it is easy to see that $\Delta$ respects \eqref{eq:quad-1} and \eqref{eq:quad-2}. To show that it also preserves \eqref{eq:quad-3}, we note that 
\[
\baln
\Delta(e_i f_j) 
&= e_i f_j \ot k_i  + k_j^{-1}\ot e_i f_j + \omega(\xi_j, \xi_i) f_j\ot e_i + e_i k_j^{-1} \ot k_i f_j, \\
\Delta( f_j e_i) 
&=f_j e_i\ot k_i + k_j^{- 1}\ot f_j e_i + \omega(\xi_i, \xi_j) e_i k_j^{- 1}\ot  k_i f_j + f_j\ot e_i, 
\ealn
\]
where we have used the relation $\omega(-\alpha, \beta)=\omega(\beta, \alpha)$ for all $\alpha, \beta$, 
and  also the fact that $k_j^{- 1}e_i\ot f_j k_i = e_i k_j^{- 1}\ot  k_i f_j$, which is a consequence of the 
relations in \eqref{eq:quad-2} and the fact that $q_j^{A_{j i}}=q_i^{A_{i j}}$. 
Since $\omega(\xi_i, \xi_j) \omega(\xi_j, \xi_i)=1$, we obtain 
\[
\baln
\Delta(e_i f_j- \omega(\xi_j, \xi_i)  f_j e_i)
&= (e_i f_j- \omega(\xi_j, \xi_i)  f_j e_i)\ot k_i\\
&+ k_j^{-1}\ot (e_i f_j- \omega(\xi_j, \xi_i)  f_j e_i).
\ealn
\]
We can simplify the right hand side by using \eqref{eq:quad-3}, to obtain
\[
\baln
&\delta_{i j} \frac{k_i - k_i^{-1}}{q_i-q_i^{-1}}\ot k_i 
+ \delta_{i j} k_j^{-1}\ot  \frac{k_i - k_i^{-1}}{q_i-q_i^{-1}}
=\delta_{i j}\Delta\left(\frac{k_i - k_i^{-1}}{q_i-q_i^{-1}}\right), 
\ealn
\]
showing  that $\Delta$ preserves \eqref{eq:quad-3}.

Finally, a direct verification on the generators shows that  the maps $\Delta, \varepsilon$ and $S$ of the algebra $\wt\U_{q, \Xi}(A)$ satisfy the defining relations of co-multiplication, co-unit and  antipode, namely, 
$(\varepsilon\ot \id)\Delta=\id$, $(\id\ot \varepsilon)\Delta=\id$, and $\mu(S\ot \id)\Delta= \mu(\id \ot S)\Delta=\varepsilon$, where $\mu$ denotes the multiplication of $\wt\U_{q, \Xi}(A)$.
\end{proof}

Define the following elements of $\wt\U_{q, \Xi}(A)$:
\beq
S^+_{i j} = Ad_{e_i}^{1-A_{i j}} (e_j), \quad S^-_{i j}=Ad_{f_i}^{1-A_{i j}} (f_j), \quad i\ne j,
\eeq 
where $ Ad_{e_i}$ and $ Ad_{f_i}$ are defined similarly as in \eqref{eq:Ade}  and \eqref{eq:Adf}.
We call $S^\pm_{i j}$ the colour quantum Serre elements of $\wt\U_{q, \Xi}(A)$. 
Adapting the proof of Lemma \ref{lem:S-pm}, we obtain
\beq
S_{i j}^+=\sum_{r=0}^{1-A_{i j}} (- \omega(\xi_i, \xi_j))^r\begin{bmatrix}1-A_{i j}\\ r\end{bmatrix}_{q_i}
e_i^{1-A_{i j}-r} e_j e_i^r,  \label{eq:def-S+}\\
 S_{i j}^-=
\sum_{r=0}^{1-A_{i j}} (- \omega(\xi_i, \xi_j))^r\begin{bmatrix}1-A_{i j}\\ r\end{bmatrix}_{q_i}
f_i^{1-A_{i j}-r} f_j f_i^r.  \label{eq:def-S-}
\eeq

The colour quantum Serre elements have the following key properties.
\begin{lemma} \label{lem:ideal}
The elements $S^\pm_{i j}$, for $i\ne j$, satisfy the relations
\beq
&\quad& {[f_p, S^+_{i j}]_\omega}=0, \quad {[e_p, S^-_{i j}]_\omega}=0,\quad \forall p, \label{eq:ideal}
\eeq
where $[\ , \ ]_\omega$ is the $\omega$-commutator defined by  
$[X, Y]_\omega= X Y - \omega(d(X), d(Y)) Y X$. 
\end{lemma}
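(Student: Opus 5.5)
We prove the first relation in \eqref{eq:ideal}; the second follows by the symmetric argument, or via an anti-automorphism exchanging $e_i\leftrightarrow f_i$, $k_i\leftrightarrow k_i^{-1}$ that respects the degree data. The strategy is the standard one for Drinfeld--Jimbo quantum groups. We treat $[f_p,\cdot]_\omega$ as an $\omega$-twisted derivation on $\wt\U^+_+$ and reduce to a $q$-binomial identity.

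\emph{Step 1.} First compute the commutator with a single generator. By \eqref{eq:quad-3}, $[f_p, e_i]_\omega = f_p e_i - \omega(-\xi_p,\xi_i) e_i f_p$. Since $\omega(-\xi_p,\xi_i)=\omega(\xi_i,\xi_p)=\omega(\xi_p,\xi_i)^{-1}$, this gives
\[
[f_p,e_i]_\omega=-\omega(\xi_i,\xi_p)\,\delta_{pi}\frac{k_i-k_i^{-1}}{q_i-q_i^{-1}}.
\]
Because $\xi_i\in\Gamma^+$, when $p=i$ the factor $\omega(\xi_i,\xi_i)$ equals $1$.

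\emph{Step 2.} Next use the twisted Leibniz rule: for homogeneous $X,Y$,
\[
[f_p, XY]_\omega=[f_p,X]_\omega Y+\omega(-\xi_p,d(X))\,X[f_p,Y]_\omega .
\]
Apply it to each monomial $e_i^{N-r}e_je_i^r$ in \eqref{eq:def-S+}, with $N=1-A_{ij}$.

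If $p\ne i,j$, every term vanishes and we are done.

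If $p=j\ne i$, only the middle factor contributes. Moving $k_j^{\pm1}$ to the left across $e_i^{N-r}$ produces factors $q_j^{\mp A_{ji}(N-r)}$. Collecting terms, the coefficients of $k_j e_i^N$ and of $k_j^{-1}e_i^N$ each become, up to an overall scalar, a sum of the form $\sum_r(-1)^r q_i^{\pm r(\cdots)}\begin{bmatrix}N\\ r\end{bmatrix}_{q_i}$. To handle the colour factors, note that $\omega(-\xi_j,\xi_i)^{N-r}$ combines with the prefactor $(-\omega(\xi_i,\xi_j))^r$ to give $\omega(\xi_j,\xi_i)^N(-1)^r$. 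So the colour dependence factors out completely and the sum reduces to the classical one. By the identity $\sum_{r=0}^N(-1)^r q^{r(N-1)}\begin{bmatrix}N\\ r\end{bmatrix}_q=0$ (and its $q\mapsto q^{-1}$ version), together with $q_j^{A_{ji}}=q_i^{A_{ij}}$, both coefficients vanish.

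If $p=i$, write $[f_i, e_i^m]_\omega$ as $e_i^{m-1}$ times a combination of $k_i$ and $k_i^{-1}$. This is the usual $\U_q(\fsl_2)$ formula $-[m]_{q_i}e_i^{m-1}\frac{q_i^{1-m}k_i-q_i^{m-1}k_i^{-1}}{q_i-q_i^{-1}}$, which holds here unchanged because $\omega(\xi_i,\xi_i)=1$. Again the colour factors collect into a global scalar times $(-1)^r$. What remains is a sum over $r$ of terms $e_i^{N-1-s}e_je_i^{s}k_i^{\pm1}$. Regroup by the position of $e_j$, using $\begin{bmatrix}N\\ r\end{bmatrix}[N-r]=[N]\begin{bmatrix}N-1\\ r\end{bmatrix}$, and move the $k_i^{\pm1}$ to the right. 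The coefficient of each monomial is then a combination whose vanishing is equivalent to $q_i^{\pm(N-1+A_{ij})}=1$, i.e.\ it holds precisely because $N=1-A_{ij}$.

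\emph{Main obstacle.} The $p=i$ case is the step I expect to be hardest, specifically the careful bookkeeping of the $q$-powers. I would organise it using the $Ad$ formulation of Lemma \ref{lem:S-pm}. One checks by induction on $N$ that
\[
[f_i, Ad_{e_i}^N(e_j)]_\omega = c_N\, Ad_{e_i}^{N-1}(e_j)\,k_i^{-1}
\]
(up to harmless reordering), with $c_N$ proportional to $[N]_{q_i}[N-1+A_{ij}]_{q_i}$. This is zero at $N=1-A_{ij}$. The colour factors appear only as global multiples of $\omega(\xi_i,\xi_j)$, so the computation reduces to the classical one.
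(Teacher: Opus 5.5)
Your proof is correct and is essentially the paper's argument: the $\omega$-derivation property reduces everything to $p\in\{i,j\}$. The case $p=j$ then follows from \eqref{eq:0-q-sum} once the colour factors collapse to an $r$-independent constant (that constant is $\omega(\xi_i,\xi_j)^{N}$, not $\omega(\xi_j,\xi_i)^{N}$, but this is harmless). The case $p=i$ follows from the rank-one commutator formula together with $\begin{bmatrix}N\\ r\end{bmatrix}_{q_i}[N-r]_{q_i}=\begin{bmatrix}N\\ r+1\end{bmatrix}_{q_i}[r+1]_{q_i}$ and $q_i^{N-1+A_{ij}}=1$. The only structural difference is that the paper computes $[e_p,S^-_{ij}]_\omega$ directly and transfers it to $S^+_{ij}$ via the automorphism $\theta$ of Lemma \ref{lem:auto}; note that the map $e_i\leftrightarrow f_i$, $k_i\mapsto k_i^{-1}$ is an automorphism, not an anti-automorphism as you wrote.

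One slip needs fixing: with $e_i^{m-1}$ on the left, the rank-one formula is \eqref{eq:f-en}, namely $[f_i,e_i^m]_\omega=-[m]_{q_i}e_i^{m-1}\frac{q_i^{m-1}k_i-q_i^{1-m}k_i^{-1}}{q_i-q_i^{-1}}$. Your exponents correspond to placing $k_i^{\pm1}$ on the left, and if used as written the $q$-powers in the regrouping would not cancel. Your $Ad$-induction alternative is sound and cleaner than the paper's regrouping: one finds $c_{N+1}=c_N-[2N+A_{ij}]_{q_i}$, hence exactly $c_N=-[N]_{q_i}[N-1+A_{ij}]_{q_i}$, with no colour factor at all.
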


\begin{lemma}\label{lem:co-ideal}
The elements $S^\pm_{i j}$, for $i\ne j$, are skew primitive in the sense that 
\beq
&&\Delta(S^+_{i j}) = S^+_{i j}\ot k_i^{1-A_{i j}} k_j + 1 \ot S^+_{i j}, \label{eq:co-ideal}\\
&&
 \Delta(S^-_{i j}) = S^-_{i j}\ot 1 + k_i^{-1+A_{i j}} k_j^{-1}  \ot S^-_{i j}, \label{eq:co-ideal-1}
\eeq
and hence 
\beq
S(S^+_{i j}) = - S^+_{i j}\ot k_i^{-1+A_{i j}} k_j^{-1}, \quad 
 S(S^-_{i j}) = -  k_i^{1-A_{i j}} k_j  S^-_{i j}.
\eeq
\end{lemma}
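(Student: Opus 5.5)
We prove the formula for $\Delta(S^+_{ij})$ by induction, then obtain the $f$-case by the mirror argument and the antipode formulae from the Hopf axioms.

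\emph{Setup.} Put $N=1-A_{ij}$ and $x_n=Ad_{e_i}^n(e_j)$, so $x_0=e_j$ and $S^+_{ij}=x_N$. The element $x_n$ is homogeneous of $\Gamma$-degree $n\xi_i+\xi_j$. It also has $\U^0$-weight $n\Upsilon_i+\Upsilon_j$, meaning
\[
k_p x_n k_p^{-1}=q_p^{nA_{pi}+A_{pj}}x_n .
\]
Products in $\wt\U_{q,\Xi}(A)\ot\wt\U_{q,\Xi}(A)$ use the braided rule $(a\ot b)(c\ot d)=\omega(d(b),d(c))\,ac\ot bd$.

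\emph{Inductive claim for $\Delta(S^+_{ij})$.} I would prove by induction on $n$ that
\[
\Delta(x_n)=x_n\ot k_i^nk_j+\sum_{r=0}^{n-1}c_{n,r}\,x_r\, e_i^{\,n-r}\ot (\text{stuff}) + 1\ot x_n ,
\]
where the middle coefficients $c_{n,r}$ are explicit $q$-numbers times $\omega$-factors. It is cleaner to work directly with the expansion \eqref{eq:def-S+}. Expand $\Delta(e_i^{N-r}e_je_i^r)$ using the $q$-binomial formula for $\Delta(e_i)^m$. The two summands $e_i\ot k_i$ and $1\ot e_i$ $q$-commute up to an $\omega$-factor:
\[
(1\ot e_i)(e_i\ot k_i)=\omega(\xi_i,\xi_i)\, e_i\ot e_ik_i=q_i^{2}(e_i\ot k_i)(1\ot e_i),
\]
using $\omega(\xi_i,\xi_i)=1$ and $k_ie_ik_i^{-1}=q_i^{2}e_i$. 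Hence
\[
\Delta(e_i)^m=\sum_s \begin{bmatrix}m\\ s\end{bmatrix}_{q_i}q_i^{s(m-s)}\, e_i^{s}\ot k_i^{s}e_i^{m-s}
\]
(or similar). Substitute this into $\sum_r(-\omega(\xi_i,\xi_j))^r\begin{bmatrix}N\\ r\end{bmatrix}_{q_i}\Delta(e_i)^{N-r}\Delta(e_j)\Delta(e_i)^r$. Then collect the terms of the form $e_i^{a}e_je_i^{b}\ot k\,e_i^{c}$ with $0<a+b+1<N+1$, i.e.\ mixed tensor degree. The commutation factors arising are of two kinds: powers of $q_i$ from $k$'s passing through $e$'s, and $\omega(\xi_i,\xi_j)^{\pm1}$ from the braiding. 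For each fixed mixed bidegree $(a+b+1,\,c)$ with $c=N-a-b\ge1$, the coefficient reduces to a sum of the shape $\sum_t(-1)^tq_i^{t(N-1)}\begin{bmatrix}N-a-b\\ t\end{bmatrix}_{q_i}\cdots$. The identity $\sum_{t=0}^{m}(-1)^tq^{t(m-1)}\begin{bmatrix}m\\ t\end{bmatrix}_q=0$ for $m\ge1$ then gives vanishing, precisely because $N=1-A_{ij}$ makes the exponent of $q_i$ match: the $k_j$-twist contributes $q_i^{A_{ij}}$ per $e_i$. This is the same mechanism as in the classical proof (Jantzen/Lusztig), and the $\omega$-factors factor out uniformly, since every term in a given bidegree picks up the same $\omega(\xi_i,\xi_j)$-power.

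\emph{Main obstacle and the $f$-case.} The main obstacle is the $\omega$ bookkeeping in this middle-term cancellation. My plan is to first check that the coefficient $(-\omega(\xi_i,\xi_j))^r$ combines with the braiding factor $\omega(\xi_i,\xi_j)^{-(\text{number of } e_i \text{ in right factor passed over } e_j)}$ into a sign times a pure $q$-power. Only after that is established would I apply the $q$-binomial vanishing identity, making the reduction to the classical case explicit. The surviving extreme terms are then $S^+_{ij}\ot k_i^{N}k_j$ and $1\ot S^+_{ij}$. The formula \eqref{eq:co-ideal-1} for $S^-_{ij}$ follows by the identical argument with $\Delta(f_i)=f_i\ot1+k_i^{-1}\ot f_i$. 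Alternatively, it can be obtained by transporting \eqref{eq:co-ideal} along a colour anti-automorphism/Cartan-type involution exchanging $e_i\leftrightarrow f_i$, $k_i\leftrightarrow k_i^{-1}$, but verifying that such a map intertwines coproducts would itself take work, so I would simply repeat the computation.

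\emph{Antipode formulae.} These follow from the skew-primitivity by applying $\mu(S\ot\id)\Delta=\varepsilon$, with $\varepsilon(S^\pm_{ij})=0$. For $S^+_{ij}$ this gives $S(S^+_{ij})\,k_i^{N}k_j+S^+_{ij}=0$, so $S(S^+_{ij})=-S^+_{ij}k_i^{-N}k_j^{-1}$. Similarly $S^-_{ij}+S(k_i^{-N}k_j^{-1})S^-_{ij}=0$ gives $S(S^-_{ij})=-k_i^{N}k_jS^-_{ij}$. (The ``$\ot$'' in the displayed statement of $S(S^+_{ij})$ should read as a product.)
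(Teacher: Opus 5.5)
Your argument is a valid alternative to the paper's, but it follows a different route. The paper first proves Lemma \ref{lem:co-prods}, a closed formula for $\Delta(Ad_{e_i}^n(e_j))$ for \emph{every} $n$, by a fairly heavy induction. There the coefficient of $Ad_{e_i}^s(e_j)\ot e_i^{n-s}k_i^sk_j$ carries $\prod_{r=s}^{n-1}(1-q_i^{2r+2A_{ij}})$, and at $n=1-A_{ij}$ the factor $r=-A_{ij}$ kills every term with $s<n$. The $f$-case is transported by the automorphism $\theta$ of Lemma \ref{lem:auto}, and the antipode formulae are read off exactly as you do. Your direct expansion of \eqref{eq:def-S+} avoids that induction and needs only \eqref{eq:Den} and the identity \eqref{eq:0-q-sum}, which the paper also uses for Lemma \ref{lem:ideal}. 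The price is that it gives no information about $Ad_{e_i}^n(e_j)$ for $n\neq 1-A_{ij}$.

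As written, however, your collection of mixed terms is incomplete and several exponents are off. Put $N=1-A_{ij}$.

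\emph{First family.} Choosing $e_j\ot k_j$ from $\Delta(e_j)$ gives the monomials $e_i^ae_je_i^b\ot e_i^{c}k_i^{a+b}k_j$ with $c=N-a-b\ge 1$, which are the ones you display. Using \eqref{eq:Den}, their coefficient is
\[
(-1)^b\,\omega(\xi_i,\xi_j)^{N-a}\,\frac{[N]_{q_i}!}{[a]_{q_i}!\,[b]_{q_i}!\,[c]_{q_i}!}\,q_i^{ac}\sum_{t=0}^{c}(-1)^t q_i^{-t(c-1)}\begin{bmatrix}c\\ t\end{bmatrix}_{q_i}.
\]
So the exponent is $-t(c-1)$, not $t(N-1)$; with $t(N-1)$ the identity would not apply once $a+b>0$.

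\emph{Second family.} Choosing $1\ot e_j$ gives monomials $e_i^{p}\ot e_i^{u}e_je_i^{v}k_i^{p}$ with $p\ge 1$ and $u+v=N-p$. These are also of mixed degree but are not of your displayed form, and you must cancel them separately. Their coefficient is
\[
(-1)^{p+v}\,\omega(\xi_i,\xi_j)^{v}\,q_i^{pv}\,\frac{[N]_{q_i}!}{[p]_{q_i}!\,[u]_{q_i}!\,[v]_{q_i}!}\sum_{a=0}^{p}(-1)^a q_i^{-a(p-1)}\begin{bmatrix}p\\ a\end{bmatrix}_{q_i},
\]
where the sum runs over the splitting $a+b=p$ of the left-factor $e_i$'s. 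It vanishes by \eqref{eq:0-q-sum} because $u+v+A_{ij}=1-p$. Consequently the $\omega$-factor is uniform per monomial, not per total bidegree: the two families carry $\omega(\xi_i,\xi_j)^{N-a}$ and $\omega(\xi_i,\xi_j)^{v}$ respectively.

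\emph{Convention slips.} Multiplying $e_j\ot k_j$ on the left by $e_i^a\ot e_i^{N-r-a}k_i^a$ produces the braiding factor $\omega(\xi_i,\xi_j)^{+(N-r-a)}$. This positive sign is exactly what makes $(-\omega(\xi_i,\xi_j))^r\,\omega(\xi_i,\xi_j)^{N-r-a}$ independent of $r$; with the negative exponent you wrote, it would not be. Also, $(e_i\ot k_i)(1\ot e_i)=q_i^{2}(1\ot e_i)(e_i\ot k_i)$, not the reverse. Hence $\Delta(e_i)^m=\sum_s q_i^{s(m-s)}\begin{bmatrix}m\\ s\end{bmatrix}_{q_i}e_i^s\ot e_i^{m-s}k_i^s$, as in \eqref{eq:Den}.

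With these corrections your computation is complete. The $f$-case by repeating the argument, and your antipode derivation, are correct.
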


The two lemmas above play key roles in our construction of colour quantum groups. Their proofs will be given in Section \ref{sect:tech-lems}. 

\begin{definition}\label{def:Serre-ideal}
Let $\CJ_+^+$ be the two-sided ideal of $\wt{\U}_+^+$ generated by $S^+_{i j}$ for all $i\ne j$,
and let $\CJ_-^-$ be the two-sided ideal of $\wt{\U}_-^-$ generated by $S^-_{i j}$ for all $i\ne j$. Denote by $\CJ$ the two-sided ideal of $\wt\U_{q, \Xi}(A)$ generated $\CJ_+^+$ and $\CJ_-^-$. 
\end{definition}

We have the following result. 
\begin{corollary}\label{cor:intersect}
The two-sided ideal $\CJ$ has the triangular decomposition 
\[\CJ= \CJ_-^- \wt{\U}^0 \CJ_+^+.\]
Moreover, it intersects the subspace $\wt{Gen}:=\sum_i \wt\U^0 e_i + \wt\U^0 + \sum_i \wt\U^0f_i$ trivially.  
\end{corollary}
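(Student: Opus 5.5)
The argument rests on the triangular decomposition of $\wt\U_{q,\Xi}(A)$ together with Lemma \ref{lem:ideal}. As for ordinary quantum groups, multiplication induces a linear isomorphism $\wt\U^-_-\ot\wt\U^0\ot\wt\U^+_+\to\wt\U_{q,\Xi}(A)$, with $\wt\U^\pm_\pm$ free on the $e_i$ (resp.\ $f_i$) and $\wt\U^0$ the Laurent polynomial algebra in the $k_i$. I would establish this in the standard way: let the free product act on $\wt\U^-_-\ot\wt\U^0\ot\wt\U^+_+$ (a Verma-type construction), check that \eqref{eq:quad-1}--\eqref{eq:quad-3} hold there, and use that the colour signs $\omega(\xi_j,\xi_i)$ are scalars that do not affect the combinatorics. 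The same plan then has two parts: first show that $\CJ':=\CJ_-^-\wt\U^0\CJ_+^+$ equals $\CJ$, then read off the intersection with $\wt{Gen}$ from degrees.

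For the first part, note that $\CJ$ is generated by $\CJ^\pm_\pm$, so in particular it contains $\CJ_-^-\wt\U^0\CJ_+^+$. The claim to check is that the larger space $\CJ'':=\CJ_-^-\wt\U^0\wt\U^+_+ + \wt\U^-_-\wt\U^0\CJ_+^+$ is a two-sided ideal; I expect this, rather than the literal product $\CJ_-^-\wt\U^0\CJ_+^+$, to be the intended meaning of the decomposition. Stability under left multiplication by $f_p$ and $k_i^{\pm1}$, and under right multiplication by $e_p$ and $k_i^{\pm1}$, is immediate. This uses that $\CJ^\pm_\pm$ are ideals in $\wt\U^\pm_\pm$, and that conjugation by $k_i$ rescales each $S^\pm_{ij}$, so $\wt\U^0$ normalises $\CJ^\pm_\pm$.

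The remaining cases, left multiplication by $e_p$ and right multiplication by $f_p$, are the main obstacle. For a term $y\,k\,x$ with $y\in\CJ^-_-$, I would move $e_p$ past $y$ using \eqref{eq:quad-3}. This is done by induction on the length of words, writing $y$ as a sum of terms $a\,S^-_{ij}\,b$ with $a,b\in\wt\U^-_-$. When $e_p$ meets $S^-_{ij}$, Lemma \ref{lem:ideal} gives $e_pS^-_{ij}=\omega(\xi_p,d(S^-_{ij}))S^-_{ij}e_p$, so no new terms arise. When $e_p$ commutes past a single $f$ in $a$ or $b$, it produces a factor $(k-k^{-1})/(q-q^{-1})$; these factors are moved to the middle using the $k$-conjugation relations, while the Serre factor $S^-_{ij}$ stays intact. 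Every resulting term therefore still lies in $\CJ_-^-\wt\U^0\wt\U^+_+$ or in $\wt\U^-_-\wt\U^0\CJ^+_+$. The case $y\in\wt\U^-_-$, $x\in\CJ^+_+$ is the mirror image and uses $[f_p,S^+_{ij}]_\omega=0$.

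For the intersection statement, combine the triangular isomorphism with the $\Z_+^I$-grading by root-lattice height ($\deg e_i=\Upsilon_i$, $\deg f_i=-\Upsilon_i$). Each $S^\pm_{ij}$ has height $2-A_{ij}\ge2$, so $\CJ_\pm^\pm$ is concentrated in heights $\ge2$. Through the isomorphism, every element of $\CJ$ has, in each tensor component, either a $\wt\U^-_-$-part of height $\ge2$ or a $\wt\U^+_+$-part of height $\ge2$. By contrast, $\wt{Gen}$ corresponds to $\wt\U^0\oplus\bigoplus_i(\C f_i\ot\wt\U^0)\oplus\bigoplus_i(\wt\U^0\ot\C e_i)$, where both the $\wt\U^-_-$- and $\wt\U^+_+$-parts have height $\le1$. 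Since these bigraded pieces are independent, $\CJ\cap\wt{Gen}=0$.
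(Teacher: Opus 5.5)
Your proposal is correct and follows the same route as the paper's proof: Lemma \ref{lem:ideal} together with the $k_i$-weights of the Serre elements gives the triangular form of $\CJ$, and the fact that $S^\pm_{ij}$ have height $\ge 2$ gives $\CJ\cap\wt{Gen}=0$. You also improve on the paper in two ways. First, you rightly note that the literal product $\CJ_-^-\wt\U^0\CJ_+^+$ cannot equal $\CJ$, since it does not contain the $S^\pm_{ij}$ themselves; the decomposition must be read as $\CJ_-^-\wt\U^0\wt\U^+_+ + \wt\U^-_-\wt\U^0\CJ_+^+$. Second, you supply the PBW-type isomorphism $\wt\U^-_-\ot\wt\U^0\ot\wt\U^+_+\cong\wt\U_{q,\Xi}(A)$, which the paper's height argument tacitly needs because $e$-degree is only a filtration, not a grading, on $\wt\U_{q,\Xi}(A)$.
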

\begin{proof}
By Lemma \ref{lem:ideal}, $S^+_{i j}$ (resp. $S^-_{i j}$) $\omega$-commute with $e_k$ (resp. $f_k$) for all $k$. Also all colour quantum Serre elements $q$-commute with $k_k^{\pm 1}$, since they have definite weights. These observations enable us to give $\CJ$ a triangular decomposition. 
As all $S^+_{i j}$ (resp. $S^-_{i j}$) are of degrees greater than $1$ in $e_k$'s (resp. $f_k$'s),  
we have $\CJ\cap \wt{Gen}=0$. 
\end{proof}

We further note that Lemma \ref{lem:co-ideal} and Corollary \ref{cor:intersect} imply the following fact.
\begin{corollary}\label{cor:J}
The two-sided ideal $\CJ$ is a homogeneous Hopf ideal of $\wt\U_{q, \Xi}(A)$.
\end{corollary}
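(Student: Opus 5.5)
We must show $\CJ$ is homogeneous and that $\Delta(\CJ)\subset \CJ\ot\wt\U_{q,\Xi}(A)+\wt\U_{q,\Xi}(A)\ot\CJ$, $\varepsilon(\CJ)=0$ and $S(\CJ)\subset\CJ$. Since $\CJ$ is a two-sided ideal generated by the colour quantum Serre elements $S^\pm_{ij}$, $i\ne j$, and $\Delta$, $\varepsilon$ are $(\Gamma,\omega)$-algebra homomorphisms while $S$ is a $(\Gamma,\omega)$-algebra anti-homomorphism (by the preceding lemma on the Hopf structure of $\wt\U_{q,\Xi}(A)$), it suffices to check these conditions on the generators $S^\pm_{ij}$.

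\emph{Homogeneity.} By \eqref{eq:def-S+} and \eqref{eq:def-S-}, $S^+_{ij}$ is a linear combination of monomials all of $\Gamma$-degree $(1-A_{ij})\xi_i+\xi_j$, and $S^-_{ij}$ of degree $-(1-A_{ij})\xi_i-\xi_j$. So the generators are homogeneous, and the ideal they generate is homogeneous because products of homogeneous elements are homogeneous.

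\emph{Coideal property.} Lemma \ref{lem:co-ideal} gives $\Delta(S^+_{ij})=S^+_{ij}\ot k_i^{1-A_{ij}}k_j+1\ot S^+_{ij}$, which lies in $\CJ\ot\wt\U+\wt\U\ot\CJ$, and similarly for $S^-_{ij}$. For a general element $aSb$ with $S$ a generator, note that $\Delta$ is multiplicative in the braided sense: $\Delta(xy)=\Delta(x)\Delta(y)$ with the product on $\wt\U\ot\wt\U$ twisted by $\omega$. The sign factors are nonzero scalars, and $\CJ\ot\wt\U+\wt\U\ot\CJ$ is a two-sided ideal of the braided tensor product algebra. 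Hence $\Delta(aSb)$ stays in it.

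\emph{Counit and antipode.} $\varepsilon(S^\pm_{ij})=0$ since every term contains an $e$ or $f$ factor, so $\varepsilon(\CJ)=0$. Lemma \ref{lem:co-ideal} gives $S(S^+_{ij})=-S^+_{ij}k_i^{-1+A_{ij}}k_j^{-1}\in\CJ$ and $S(S^-_{ij})=-k_i^{1-A_{ij}}k_jS^-_{ij}\in\CJ$. Since $S$ is a braided anti-homomorphism, $S(aSb)=\pm\,\omega\text{-factor}\cdot S(b)S(S)S(a)\in\CJ$.

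\emph{Properness.} Finally, Corollary \ref{cor:intersect} shows $\CJ$ meets $\wt{Gen}$ trivially; in particular $1\notin\CJ$, so the ideal is proper and the quotient is a nonzero Hopf colour algebra.

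The only substantive input is the skew-primitivity in Lemma \ref{lem:co-ideal}, which is assumed here. Beyond that, the main point requiring care is confirming that the $\omega$-twisted multiplication on the tensor product preserves $\CJ\ot\wt\U+\wt\U\ot\CJ$. This holds because $\CJ$ is homogeneous, so the braiding only introduces nonzero scalars.
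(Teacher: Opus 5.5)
Your proof is correct and follows essentially the same route as the paper. Both arguments rest on three facts about the Serre elements: they are homogeneous, they satisfy the skew-primitivity and antipode formulas of Lemma \ref{lem:co-ideal}, and $\varepsilon(S^\pm_{ij})=0$; these are then extended to all of $\CJ$ by the (anti-)multiplicativity of the structure maps. Your explicit check that $\CJ\ot\wt\U_{q,\Xi}(A)+\wt\U_{q,\Xi}(A)\ot\CJ$ is an ideal of the $\omega$-twisted tensor product (using homogeneity of $\CJ$) fills in a step the paper leaves implicit, and it makes the paper's appeal to the triangular decomposition of Corollary \ref{cor:intersect} unnecessary.
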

\begin{proof}
Indeed, Corollary \ref{cor:intersect} gives the triangular decomposition
$
\CJ=\CJ_-^-\wt\U^0\CJ_+^+.
$
On the other hand, Lemma \ref{lem:co-ideal} shows that the colour quantum Serre elements are skew primitive and that their antipodes again lie in $\CJ$. It follows that
\[
\Delta(\CJ)\subseteq \CJ\otimes\widetilde U+\widetilde U\otimes\CJ,
\qquad
S(\CJ)\subseteq\CJ,
\]
while $\varepsilon(\CJ)=0$ since the co-unit maps all Serre elements to $0$. Hence $\CJ$ is a homogeneous Hopf ideal.
\end{proof}


We can now prove Theorem \ref{thm:Hopf}. 

\begin{proof}[Proof of Theorem \ref{thm:Hopf}]
Consider the quotient algebra
\[
{\mathbb U}=\wt\U_{q, \Xi}(A)/\CJ.
\]
Since the quantum Serre elements $S^\pm_{i j}$ are homogeneous, $\CJ$ is a homogeneous
 two-sided ideal. Hence  ${\mathbb U}$ is $\Gamma$-graded. 

Since the two-sided ideal $\CJ$ is generated by the colour quantum Serre elements $S^\pm_{i j}$ for all $i\ne j$, 
we have the associative $(\Gamma, \omega)$-algebra isomorphism $\U_{q, \Xi}(A)\simeq {\mathbb U}$ by construction. 
However, it is important to observe that Corollary \ref{cor:intersect} is crucial in guaranteeing that the resulting algebra ${\mathbb U}$ does not collapse to something  nonsensical. In particular, 
the fact $\CJ\cap \wt{Gen}=0$ ensures that taking the quotient does not annihilate 
any of the generators $e_i, f_i$, or impose relations among the generators $k_i^{\pm}$. 

Corollary \ref{cor:J} shows that $\CJ$ is a homogeneous Hopf ideal of $\wt\U_{q, \Xi}(A)$, hence 
${\mathbb U}$ has the structure of a Hopf $(\Gamma, \omega)$-algebra. 
%
%
By inspecting the definition of $\wt\U_{q, \Xi}(A)$, we see that the structure maps of ${\mathbb U}$ agree with those described by Theorem \ref{thm:Hopf}. Hence $\U_{q, \Xi}(A) \simeq {\mathbb U}$ as Hopf $(\Gamma, \omega)$-algebra. 
\end{proof}

\subsection{Key lemmas - their proofs and structural connections}\label{sect:tech-lems}
We prove Lemmas \ref{lem:ideal} and \ref{lem:co-ideal}
and investigate their connections  in this section.  

\subsubsection{Some useful formulae} 
We collect here some formulae to be used later. 
The following relations hold, which are familiar from ordinary quantum groups. 
\beq
[e_i, f_i^r] &=&[r]_{q_i} f_i^{r-1}  \frac{k_i q_i^{-r +1}  -   k_i^{-1}q_i^{ r-1}}{q_i-q_i^{-1}}, \label{eq:e-fn} \\ 
{}[f_i, e_i^r] &=&-[r]_{q_i} e_i^{r-1}  \frac{k_i q_i^{ r-1}- k_i^{-1} q_i^{-r +1} }{q_i-q_i^{-1}};
\label{eq:f-en}\\
\Delta(e_i^n) &=& \sum_{s=0}^n q_i^{s(n-s)}\begin{bmatrix}n \\ s\end{bmatrix}_{q_i} e_i^{n-s} \ot e_i^s k_i^{n-s},\label{eq:Den}\\
\Delta(f_i^n) &=& \sum_{s=0}^n q_i^{s(n-s)}\begin{bmatrix}n \\ s\end{bmatrix}_{q_i}  f_i^s k_i^{-n+s}\ot f_i^{n-s}. \label{eq:Dfn}
\eeq

The first and third formulae can be proven by inductions. 
One can deduce 
the second relation from the first, and the fourth relation from the third, by using 
the following algebra automorphism.  
  
\begin{lemma}\label{lem:auto}
There is a $(\Gamma, \omega)$-algebra automorphism $\theta: \wt\U_{q, \Xi}(A)\lra \wt\U_{q, \Xi}(A)$ 
defined by extending the map    
\[
e_i\mapsto f_i, \quad f_i\mapsto e_i, \quad k_i^{\pm 1} \mapsto k_i^{\mp 1}, \quad \forall i,
\]
which satisfies 
$
 (\theta\ot\theta)\Delta =\Delta'\theta. 
$
Here \(\Delta'\) denotes the braided opposite co-product defined in Appendix \(\ref{sect:Hopf-dual}\).
\end{lemma}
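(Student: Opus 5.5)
The plan is to construct $\theta$ on the free $(\Gamma,\omega)$-algebra generated by $e_i, f_i, k_i^{\pm1}$, check that it respects the defining relations \eqref{eq:quad-1}--\eqref{eq:quad-3} of $\wt\U_{q,\Xi}(A)$ (there are no Serre relations to worry about), and then verify the co-product identity on generators only.

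\emph{Grading.} The map $\theta$ is not of degree $0$: it sends $e_i$, of degree $\xi_i$, to $f_i$, of degree $-\xi_i$. So I would interpret ``$(\Gamma,\omega)$-algebra automorphism'' as an algebra automorphism that is homogeneous with respect to the group automorphism $\alpha\mapsto-\alpha$ of $\Gamma$, i.e.\ $d(\theta(X))=-d(X)$. This is compatible with the colour structure because
\[
\omega(-\alpha,-\beta)=\omega(\alpha,\beta)
\]
by \eqref{eq:cycle-2} and \eqref{eq:cycle-3}. Define $\theta$ on the free algebra by the stated assignment; it descends to $\wt\U_{q,\Xi}(A)$ once the relations are preserved.

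\emph{Relations.} Relation \eqref{eq:quad-1} is clearly preserved. For \eqref{eq:quad-2}, $\theta(k_ie_jk_i^{-1})=k_i^{-1}f_jk_i$, which equals $q_i^{A_{ij}}f_j$ by the second relation in \eqref{eq:quad-2}; the $f$-relation is handled in the same way. For \eqref{eq:quad-3}, the image of the left-hand side is
\[
f_ie_j-\omega(\xi_j,\xi_i)e_jf_i=-\omega(\xi_j,\xi_i)\bigl(e_jf_i-\omega(\xi_i,\xi_j)f_ie_j\bigr).
\]
Applying \eqref{eq:quad-3} with $i,j$ swapped gives $-\omega(\xi_j,\xi_i)\,\delta_{ij}(k_i-k_i^{-1})/(q_i-q_i^{-1})$. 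Since $\omega(\xi_i,\xi_i)=1$ (recall $\xi_i\in\Gamma^+$), this equals $\delta_{ij}(k_i^{-1}-k_i)/(q_i-q_i^{-1})$, which is exactly $\theta$ applied to the right-hand side. Hence $\theta$ is a well-defined algebra endomorphism. It satisfies $\theta^2=\id$ on generators, hence everywhere, so it is an automorphism.

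\emph{Co-product identity.} Here I take $\Delta'=\tau\circ\Delta$, with $\tau$ the braiding \eqref{eq:tau}. Both $(\theta\ot\theta)\Delta$ and $\Delta'\theta$ are algebra homomorphisms into the braided tensor product $\wt\U\ot\wt\U$:
\begin{itemize}
\item $\Delta'$ is one because $\tau$ is a braiding satisfying \eqref{eq:invol} and \eqref{eq:braid}.
\item $\theta\ot\theta$ respects the braided multiplication $(a\ot b)(c\ot d)=\omega(d(b),d(c))\,ac\ot bd$, because $\omega(-d(b),-d(c))=\omega(d(b),d(c))$.
\end{itemize}
It therefore suffices to compare the two maps on generators:
\begin{align*}
(\theta\ot\theta)\Delta(e_i)&=f_i\ot k_i^{-1}+1\ot f_i,\\
\Delta'(f_i)&=\tau(f_i\ot 1+k_i^{-1}\ot f_i)=1\ot f_i+f_i\ot k_i^{-1},
\end{align*}
and these agree. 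The generators $f_i$ and $k_i^{\pm1}$ are checked in the same way; for the $k_i^{\pm1}$ the braiding factors are trivial because these elements have degree $0$.

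The only delicate point is the first item: checking that the braided multiplication on $\wt\U\ot\wt\U$ is compatible with the degree-reversing map $\theta\ot\theta$. This reduces to the identity $\omega(-\alpha,-\beta)=\omega(\alpha,\beta)$, together with confirming that the convention for $\Delta'$ in Appendix \ref{sect:Hopf-dual} is indeed $\tau\circ\Delta$. If that convention instead uses $\tau^{-1}=\tau$, the computation is unchanged, by \eqref{eq:invol}.
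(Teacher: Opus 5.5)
Your proposal is correct. The paper states this lemma without proof, and your argument is exactly the routine verification it leaves implicit: you check that $\theta$ preserves \eqref{eq:quad-1}--\eqref{eq:quad-3}, use $\theta^2=\id$ to get invertibility, and compare the two algebra homomorphisms $(\theta\ot\theta)\Delta$ and $\Delta'\theta$ into the braided tensor square on generators. You also correctly handle a point the paper glosses over: $\theta$ sends degree $\alpha$ to $-\alpha$, so ``$(\Gamma,\omega)$-algebra automorphism'' has to be read relative to $\alpha\mapsto-\alpha$, which is harmless because $\omega(-\alpha,-\beta)=\omega(\alpha,\beta)$.
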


Finally, we have the following formulae. 
\begin{lemma}\label{lem:co-prods}
The co-products of the elements $Ad_{e_i}^n(e_j)$ and $Ad_{f_i}^n(f_j)$ of $\wt\U_{q, \Xi}(A)$, with $i\ne j$, are given by  
\[
\baln
\Delta\left(Ad_{e_i}^n(e_j)\right) &=1\ot Ad_{e_i}^n(e_j)  \\
&+ \sum_{s=0}^{n} \omega(\xi_i, \xi_j)^{n-s} q_i^{s(n-s)}\begin{bmatrix}n \\ s\end{bmatrix}_{q_i}  \left(\prod_{r=s}^{n-1}(1- q_i^{2r} q_i^{2A_{i j}})\right)\\
&\times  Ad_{e_i}^{s}(e_j) \ot e_i^{n-s} k_i^{s}k_j,  \\
\Delta\left(Ad_{f_i}^n(f_j)\right) &=Ad_{f_i}^n(f_j)\ot 1  \\
&+ \sum_{s=0}^{n} q_i^{s(n-s)}\begin{bmatrix}n \\ s\end{bmatrix}_{q_i}  \left(\prod_{r=s}^{n-1}(1- q_i^{2r} q_i^{2A_{i j}})\right) \\
&\times  f_i^{n-s} k_i^{-s}k_j^{-1} \ot  Ad_{f_i}^{s}(f_j),  
\ealn
\]
where for $s=n$, the empty product $\prod_{r=s}^{n-1}(1- q_i^{2r} q_i^{2A_{i j}})$ is taken to be $1$. 
\end{lemma}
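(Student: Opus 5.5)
Induction on $n$ is the natural route, using the recursion $Ad_{e_i}^{n+1}(e_j)=Ad_{e_i}(X_n)$ with $X_n:=Ad_{e_i}^n(e_j)$. Since $X_n$ has $\Gamma$-degree $n\xi_i+\xi_j$ and $k_iX_nk_i^{-1}=q_i^{2n+A_{ij}}X_n$, the recursion reads
\[
X_{n+1}=e_iX_n-\omega(\xi_i,n\xi_i+\xi_j)\,q_i^{2n+A_{ij}}X_ne_i .
\]
Because $\Gamma=\Gamma^+$ on the relevant degrees, $\omega(\xi_i,\xi_i)=1$ and hence $\omega(\xi_i,n\xi_i+\xi_j)=\omega(\xi_i,\xi_j)$. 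The base case $n=0$ is $\Delta(e_j)=e_j\ot k_j+1\ot e_j$, which matches the formula: the sum has only the $s=0$ term, with empty product equal to $1$.

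For the inductive step I apply $\Delta$ to the recursion. Since $\Delta$ is an algebra map into the braided tensor product, $(a\ot b)(c\ot d)=\omega(d(b),d(c))\,ac\ot bd$. I expand
\[
\Delta(e_i)\Delta(X_n)-\omega(\xi_i,\xi_j)\,q_i^{2n+A_{ij}}\,\Delta(X_n)\Delta(e_i).
\]
The terms fall into three groups.
\begin{enumerate}
\item The term $1\ot X_n$ combined with $1\ot e_i$ gives $1\ot X_{n+1}$.
\item The cross terms $e_i\ot k_i$ against $1\ot X_n$, and $1\ot X_n$ against $e_i\ot k_i$, give multiples of $e_i\ot k_iX_n$ and $e_i\ot X_nk_i$. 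After commuting $k_i$ past $X_n$, the coefficients cancel exactly, using the $\omega$ signs from the braiding. This cancellation is what makes $1\ot X_{n+1}$ the only term with $1$ on the left.
\item The terms $X_s\ot e_i^{n-s}k_i^sk_j$ from the induction hypothesis, multiplied by $e_i\ot k_i$ on either side, produce $X_{s+1}\ot e_i^{n-s}k_i^{s+1}k_j$ plus a leftover multiple of $X_se_i\ot\cdots$. Multiplied by $1\ot e_i$ on either side, they produce $X_s\ot[\text{$q$-commutator of $e_i$ with } e_i^{n-s}k_i^sk_j]$. That commutator is a multiple of $X_s\ot e_i^{n+1-s}k_i^sk_j$ with coefficient $(1-q_i^{2(n+s)+\ldots})$-type factors.
\end{enumerate}

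The leftover $X_se_i$ pieces must be re-expressed via $X_se_i=\omega(\xi_i,\xi_j)^{-1}q_i^{-2s-A_{ij}}(e_iX_s-X_{s+1})$. Alternatively, and more cleanly, I would organise the computation so that $e_i\ot k_i$ acts through $Ad_{e_i}\ot$(conjugation) plus a correction. The clean identity to use is
\[
\Delta(e_i)Y-\omega\cdot(\ldots)\,Y\Delta(e_i),
\]
evaluated on pure tensors $Y=X_s\ot B$ with $B=e_i^{n-s}k_i^sk_j$. Because $k_iB=q_i^{2(n-s)}Bk_i$, this equals
\[
X_{s+1}\ot k_iB\cdot(\text{scalar}) \;+\; X_s\ot\bigl(e_iB-c\,Be_i\bigr),
\]
for an explicit scalar $c$. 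Then $e_iB-cBe_i=(1-c\,q_i^{2s+A_{ij}}\cdots)e_i^{n-s+1}k_i^sk_j$, which is a single monomial times a factor of the form $(1-q_i^{2r}q_i^{2A_{ij}})$.

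Collecting coefficients of $X_s\ot e_i^{n+1-s}k_i^sk_j$ gives a two-term recursion for the new coefficient. One term comes from $s-1$ via the $X_{s}$-creation, and the other comes from $s$ via the factor $(1-q_i^{2(n)}q_i^{2A_{ij}})$-type multiplier. The powers $\omega(\xi_i,\xi_j)^{n+1-s}$ track automatically, since each step that keeps $X_s$ but adds an $e_i$ on the right tensor factor contributes one $\omega(\xi_i,\xi_j)$.

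The main obstacle is verifying that this two-term recursion closes to
\[
q_i^{s(n+1-s)}\begin{bmatrix}n+1\\ s\end{bmatrix}_{q_i}\prod_{r=s}^{n}\bigl(1-q_i^{2r+2A_{ij}}\bigr).
\]
I would factor out $\prod_{r=s}^{n-1}(1-q_i^{2r+2A_{ij}})$ and reduce the claim to the $q$-Pascal identity \eqref{eq:q-sum}, carefully tracking the powers of $q_i$ arising from moving $k_i^s k_j$ past $e_i$.

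For the $f$-formula, I would apply the automorphism $\theta$ of Lemma~\ref{lem:auto}, with $(\theta\ot\theta)\Delta=\Delta'\theta$. One first checks that $\theta(Ad_{e_i}^n(e_j))=Ad_{f_i}^n(f_j)$ up to the $\omega$ conventions in \eqref{eq:Adf}; comparing \eqref{eq:Ad-e} with \eqref{eq:Ad-f} shows that the expansions agree. Then applying the braided flip converts the first formula into the second. The braiding sign $\omega$ from $\Delta'$ cancels the factor $\omega(\xi_i,\xi_j)^{n-s}$, which explains why that factor is absent in the second formula.
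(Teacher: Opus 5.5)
Your proposal is correct and follows the paper's proof: induction on $n$ via $X_{n+1}=e_iX_n-\omega(\xi_i,\xi_j)q_i^{2n+A_{ij}}X_ne_i$, with the $e_i\ot k_i$ part of $\Delta(e_i)$ raising $s$, the $1\ot e_i$ part contributing the factor $(1-q_i^{2n+2s+2A_{ij}})$, the two contributions combined by the $q$-Pascal identities \eqref{eq:q-sum}, and the $f$-formula obtained by applying $\theta$ and the braided flip. One small correction: there is no leftover $X_se_i$ term, because the $e_i\ot k_i$ contributions combine exactly into $q_i^{2(n-s)}X_{s+1}\ot e_i^{n-s}k_i^{s+1}k_j$, as your ``cleaner'' formulation already shows.
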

\begin{proof}
We first prove the formula for the co-product of $Ad_{e_i}^n(e_j)$ by induction on $n$. This is in principle straightforward, but requires a lot of careful book keeping,  thus we spell out some of the details. 

Let us write $\omega_{i j}=\omega(\xi_i,  \xi_j)$. 
By direct calculations, we can verify that 
\beq
\Delta(Ad_{e_i}(e_j)) &=& 1\ot Ad_{e_i}(e_j) + Ad_{e_i}(e_j)\ot k_i k_j\\
&&+ \omega_{i j}(1- q_i^{2 A_{i j}} ) e_j\ot    e_i k_j,  \nonumber\\
\Delta(Ad_{e_i}^2(e_j))&=& 1\ot Ad_{e_i}^2(e_j)+Ad_{e_i}^2(e_j)\ot k_i^2 k_j \\
&&+ \omega_{i j}q_i (q_i+q_i^{-1})(1- q_i^{2+ 2 A_{i j}} )  Ad_{e_i}(e_j)\ot    e_i k_i k_j \nonumber\\
&&+ \omega_{i j}^2 (1- q_i^{2+ 2 A_{i j}} ) (1- q_i^{2 A_{i j}} )e_j\ot    e_i^2 k_j. \nonumber
\eeq

Now we turn to the proof of the general case.  Note that 
\[
\baln
\Delta(Ad_{e_i}^{n+1}(e_j))&= \Delta(e_i) \Delta(Ad_{e_i}^n(e_j))  
- \omega_{i j} q_i^{2n + A_{i j}}\Delta(Ad_{e_i}^n(e_j)) \Delta(e_i), 
\ealn
\]
where we have used  $\omega(\xi_i,  \xi_i)=1$, which follows from the standing assumption $\Gamma_R(\fg)\subset\Gamma^+$.
We have 
\[
\baln
&\Delta(e_i) (1\ot Ad_{e_i}^n(e_j))   
- \omega_{i j} q_i^{2n + A_{i j}}(1\ot Ad_{e_i}^n(e_j))  \Delta(e_i)
= 1\ot Ad_{e_i}^{n+1}(e_j).
\ealn
\]
By induction hypothesis, 
\[
\baln
&\Delta(Ad_{e_i}^{n+1}(e_j))- 1\ot Ad_{e_i}^{n+1}(e_j)\\
&= \sum_{s=0}^{n} \omega_{i j}^{n-s} q_i^{s(n-s)}\begin{bmatrix}n \\ s\end{bmatrix}_{q_i} \left(\prod_{r=s}^{n-1}(1- q_i^{2r} q_i^{2A_{i j}})\right) \\
&\times \left(\Delta(e_i)  Ad_{e_i}^{s}(e_j) \ot e_i^{n-s} k_i^{s}k_j 
-  \omega_{i j} q_i^{2n + A_{i j}} Ad_{e_i}^{s}(e_j) \ot e_i^{n-s} k_i^{s}k_j  \Delta(e_i)\right).
\ealn
\]
The right hand side can be re-written as 
\[
\baln
&\sum_{s=0}^{n} \omega_{i j}^{n+1-s} q_i^{s(n-s)}(1- q_i^{2n +2s + 2A_{i j}}) \begin{bmatrix}n \\ s\end{bmatrix}_{q_i} \\
&\times \left(\prod_{r=s}^{n-1}(1- q_i^{2r} q_i^{2A_{i j}})\right) Ad_{e_i}^{s}(e_j) \ot e_i^{n+1-s} k_i^{s}k_j \\
&+ \sum_{s=0}^{n} \omega_{i j}^{n-s} q_i^{s(n-s)}\begin{bmatrix}n \\ s\end{bmatrix}_{q_i} \left(\prod_{r=s}^{n-1}(1- q_i^{2r} q_i^{2A_{i j}})\right) \\
&\times \left((e_i\ot k_i)  Ad_{e_i}^{s}(e_j) \ot e_i^{n-s} k_i^{s}k_j 
- \omega_{i j} q_i^{2n + A_{i j}} Ad_{e_i}^{s}(e_j) \ot e_i^{n-s} k_i^{s}k_j  (e_i\ot k_i)\right), 
\ealn
\]
where the second sum can be simplified to 
\[
\baln
\sum_{s=0}^{n} \omega_{i j}^{n-s} q_i^{s(n-s)} q_i^{2n - 2s}
\begin{bmatrix}n \\ s\end{bmatrix}_{q_i} \left(\prod_{r=s}^{n-1}(1- q_i^{2r} q_i^{2A_{i j}})\right) Ad_{e_i}^{s+1}(e_j)  \ot e_i^{n-s} k_i^{s+1}k_j. 
\ealn
\]
Hence 
\[
\baln
&\Delta(Ad_{e_i}^{n+1}(e_j))- 1\ot Ad_{e_i}^{n+1}(e_j)\\
&=  \sum_{s=0}^{n} \omega_{i j}^{n+1-s} q_i^{s(n+1-s)} q_i^{-s}(1- q_i^{2n +2s + 2A_{i j}}) \begin{bmatrix}n \\ s\end{bmatrix}_{q_i} \\
&\times \left(\prod_{r=s}^{n-1}(1- q_i^{2r} q_i^{2A_{i j}})\right) Ad_{e_i}^{s}(e_j) \ot e_i^{n+1-s} k_i^{s}k_j \\
&+  \sum_{s=1}^{n+1} \omega_{i j}^{n+1-s} q_i^{s(n+1-s)} q_i^{n +1 - s}
\begin{bmatrix}n \\ s-1\end{bmatrix}_{q_i} \\
&\times \left(\prod_{r=s-1}^{n-1}(1- q_i^{2r} q_i^{2A_{i j}})\right) Ad_{e_i}^{s}(e_j)  \ot e_i^{n+1-s} k_i^{s}k_j.
\ealn
\]

Let us regroup the terms on the right hand side. 
Denote by  $Q_n$ the  sum of 
the $s=0$ term in the first sum and the $s=n+1$ term in the second sum, and 
the rest of the right hand side by $R_n$. 
Then  
\beq
\Delta(Ad_{e_i}^{n+1}(e_j))- 1\ot Ad_{e_i}^{n+1}(e_j) =Q_n+R_n,  \label{eq:QR}
\eeq
with
\[
Q_n=\omega_{i j}^{n+1} \prod_{r=0}^{n}(1- q_i^{2r} q_i^{2A_{i j}}) e_j \ot e_i^{n+1} k_j
+ Ad_{e_i}^{n+1}(e_j)\ot k_i^{n+1}k_j,  
\] 
\[
\baln
R_n=&\sum_{s=1}^{n} \omega_{i j}^{n+1-s} q_i^{s(n+1-s)} q_i^{-s}(1- q_i^{2n +2s + 2A_{i j}}) \begin{bmatrix}n \\ s\end{bmatrix}_{q_i} \\
&\times \left(\prod_{r=s}^{n-1}(1- q_i^{2r} q_i^{2A_{i j}})\right) Ad_{e_i}^{s}(e_j) \ot e_i^{n+1-s} k_i^{s}k_j \\
&+  \sum_{s=1}^{n} \omega_{i j}^{n+1-s} q_i^{s(n+1-s)} q_i^{n +1 - s}
\begin{bmatrix}n \\ s-1\end{bmatrix}_{q_i} \\
&\times \left(\prod_{r=s-1}^{n-1}(1- q_i^{2r} q_i^{2A_{i j}})\right) Ad_{e_i}^{s}(e_j)  \ot e_i^{n+1-s} k_i^{s}k_j \\
&=
\sum_{s=1}^{n} \omega_{i j}^{n+1-s} C_s q_i^{s(n+1-s)}  \begin{bmatrix}n+1 \\ s\end{bmatrix}_{q_i} \\
&\times \left(\prod_{r=s}^{n}(1- q_i^{2r} q_i^{2A_{i j}})\right) Ad_{e_i}^{s}(e_j) \ot e_i^{n+1-s} k_i^{s}k_j, 
\ealn
\]
where $C_s=\frac{q_i^{-s}}{1- q_i^{2n+2A_{i j}}}  B_s$, and
\[
\baln
B_s:=& (1- q_i^{2n +2s + 2A_{i j}})\frac{[n+1-s]_{q_i}}{[n+1]_{q_i}} +
q_i^{n +1} (1- q_i^{2s -2+2A_{i j}}) \frac{[s]_{q_i}}{[n+1]_{q_i}}. 
\ealn
\]
It is easy to show that 
\[
\baln
(q_i^{n+1} - q_i^{-n-1}) B_s
&=   q_i^{s} (q_i^{n+1} - q_i^{-n-1})(1-q_i^{2n + 2A_{i j}}). 
\ealn
\]
Hence 
\[
\baln
Q_n+R_n
&=\omega_{i j}^{n+1}\prod_{r=0}^{n}(1- q_i^{2r} q_i^{2A_{i j}}) e_j \ot e_i^{n+1} k_j
+ Ad_{e_i}^{n+1}\ot k_i^{n+1}k_j\\
&+\sum_{s=1}^{n} \omega_{i j}^{n+1-s} q_i^{s(n+1-s)}  \begin{bmatrix}n+1 \\ s\end{bmatrix}_{q_i} \left(\prod_{r=s}^{n}(1- q_i^{2r} q_i^{2A_{i j}})\right) Ad_{e_i}^{s}(e_j) \ot e_i^{n+1-s} k_i^{s}k_j \\
&=\sum_{s=0}^{n+1} \omega_{i j}^{n+1-s} q_i^{s(n+1-s)}  \begin{bmatrix}n+1 \\ s\end{bmatrix}_{q_i} \left(\prod_{r=s}^{n}(1- q_i^{2r} q_i^{2A_{i j}})\right) Ad_{e_i}^{s}(e_j) \ot e_i^{n+1-s} k_i^{s}k_j \\
\ealn
\]
Using this in \eqref{eq:QR}, we obtain the claimed formula for $\Delta(Ad_{e_i}^{n+1}(e_j))$. 

The co-product $\Delta(Ad_{f_i}^n(f_j))$ can be obtained from $\Delta(Ad_{e_i}^n(e_j))$ by noting that the automorphism $\theta$ maps  $Ad_{e_i}^n(e_j)$ to $Ad_{f_i}^n(f_j)$.
This completes the proof. 
\end{proof}

\subsubsection{Proof of Lemmas \ref{lem:ideal} and \ref{lem:co-ideal}}

\begin{proof}[Proof of Lemma \ref{lem:ideal}]
Recall from \cite{Z25} that the $\omega$-commutator 
has the property of a $(\Gamma, \omega)$-derivation, i.e., 
$
[X, Y Z]_\omega=[X, Y]_\omega Z + \omega(d(X), d(Y)) Y [X, Z]_\omega.
$
Thus the only cases $p=i, j$ of \eqref{eq:ideal} require proof. 

Consider the second relation. For $p=j$, 
we have 
\[
\baln
[e_j, S_{i j}^-]_\omega &= \sum_{r=0}^{1-A_{i j}} (- \omega(\xi_i, \xi_j))^r  \omega(\xi_j, -\xi_i)^{1-A_{i j} -r}
\begin{bmatrix}1-A_{i j}\\ r\end{bmatrix}_{q_i}
f_i^{1-A_{i j}-r}\frac{ k_j - k_j^{-1}}{q_j-q_j^{-1}} f_i^r\\
&=\omega(\xi_i, \xi_j)^{1-A_{i j}} f_i^{1-A_{i j}}
\sum_{r=0}^{1-A_{i j}} (- 1)^r
\begin{bmatrix}1-A_{i j}\\ r\end{bmatrix}_{q_i}
\frac{ k_j q_i^{-r A_{ij}} - k_j^{-1} q_i^{r A_{ij}} }{q_j-q_j^{-1}}.
\ealn
\]
It follows the following relations among  $q$-binomial coefficients  
\beq
\sum_{i=0}^m (-1)^i q^{\pm i(m-1)} \begin{bmatrix}m\\ i\end{bmatrix}_q =0, 
 \label{eq:0-q-sum}
\eeq
that the right hand side vanishes identically. 
Hence  $[e_j, S_{i j}^-]=0$.

Now consider $[e_i, S_{i j}^-]$. By  using \eqref{eq:e-fn}, we obtain
\[
\baln
[e_i, S_{i j}^-]_\omega &= \sum_{r=0}^{1-A_{i j}} (- \omega(\xi_i, \xi_j))^r\begin{bmatrix}1-A_{i j}\\ r\end{bmatrix}_{q_i}[1-A_{i j}-r]_{q_i}\\
&\times 
f_i^{-A_{i j}-r}  \frac{k_i q_i^{r+A_{i j}}  -   k_i^{-1}q_i^{ -A_{i j}-r}}{q_i-q_i^{-1}} f_j f_i^r\\
&+ \omega(\xi_i, -\xi_j) \sum_{r=0}^{1-A_{i j}} (- \omega(\xi_i, \xi_j))^r\begin{bmatrix}1-A_{i j}\\ r\end{bmatrix}_{q_i} [r]_{q_i}\\
&\times 
f_i^{1-A_{i j}-r} f_j f_i^{r-1} \frac{k_i q_i^{-r +1}  -   k_i^{-1}q_i^{ r-1}}{q_i-q_i^{-1}}.
\ealn
\]
We can re-write the right hand side as
\[
\baln
&\sum_{r=0}^{1-A_{i j}} (- \omega(\xi_i, \xi_j))^r\begin{bmatrix}1-A_{i j}\\ r\end{bmatrix}_{q_i}[1-A_{i j}-r]_{q_i}\\
&\times 
f_i^{-A_{i j}-r}  f_j f_i^r \frac{k_i q_i^{-r }  -   k_i^{-1}q_i^r}{q_i-q_i^{-1}}\\
&+ \omega(\xi_i, -\xi_j) \sum_{r=0}^{1-A_{i j}} (- \omega(\xi_i, \xi_j))^r\begin{bmatrix}1-A_{i j}\\ r\end{bmatrix}_{q_i} [r]_{q_i}\\
&\times 
f_i^{1-A_{i j}-r} f_j f_i^{r-1} \frac{k_i q_i^{-r +1}  -   k_i^{-1}q_i^{ r-1}}{q_i-q_i^{-1}}.
\ealn
\]
This immediately leads to 
\[
\baln
[e_i, S_{i j}^-]_\omega 
&= \sum_{r=0}^{-A_{i j}} (- \omega(\xi_i, \xi_j))^r f_i^{-A_{i j}-r}  f_j f_i^r \frac{k_i q_i^{-r}  -   k_i^{-1}q_i^r}{q_i-q_i^{-1}}\\
&\times \left( \begin{bmatrix}1-A_{i j}\\ r\end{bmatrix}_{q_i}[1-A_{i j}-r]_{q_i}- \begin{bmatrix}1-A_{i j}\\ r+1\end{bmatrix}_{q_i} [r+1]_{q_i}\right).
\ealn
\]
The expression in the second line vanishes for each $r$. Thus 
$[e_i, S_{i j}^-]_\omega =0$,  
proving the second relation of \eqref{eq:ideal}. 

The first relation of \eqref{eq:ideal} can be obtained from the second by applying the automorphism in Lemma \ref{lem:auto}. 
This completes the proof of Lemma \ref{lem:ideal}.
\end{proof}

\begin{proof}[Proof of Lemma \ref{lem:co-ideal}]
The $n= 1- A_{i j}$ case of Lemma \ref{lem:co-prods}
yields 
\[
\baln
\Delta\left(Ad_{e_i}^{1- A_{i j}}(e_j)\right) &=1\ot Ad_{e_i}^{1- A_{i j}}(e_j)  
+ Ad_{e_i}^{1- A_{i j}}(e_j) \ot  k_i^{1- A_{i j}}k_j,  \\
\Delta\left(Ad_{f_i}^{1- A_{i j}}(f_j)\right) &=Ad_{f_i}^{1- A_{i j}}(f_j) \ot 1 +   k_i^{-1+ A_{i j}}k_j^{-1} \ot  Ad_{f_i}^{1- A_{i j}}(f_j).   
\ealn
\]
These are precisely the formulae \eqref{eq:co-ideal} and \eqref{eq:co-ideal-1} for the 
co-products of $S^+_{i j}$ and $S^-_{i j}$. 
The rest of the lemma easily follows from these formulae. 
\end{proof}

\subsubsection{Structural connections between key lemmas}\label{sect:primit}
We investigate the connection between Lemmas \ref{lem:ideal} and \ref{lem:co-ideal}.

Let us introduce some notation. 
If $\nu=\sum_i \ell_i \Upsilon_i$ in $\Z_+\Pi$, we let $|\nu|=\sum_i \ell_i$, and call it the height of $\nu$. 
For any $\mu\in \Z\Pi$, we let $k_\mu$ be the product of $k_i^{\pm 1}$'s such that 
$k_\nu e_i k_\nu^{-1}=q^{(\Upsilon_i, \nu)} e_i$ for all $i$.

Now the $(\Gamma, \omega)$-subalgebra $\wt\U_+^+$ is also $\Z_+\Pi$-graded, with
$\wt\U_+^+=\sum_{\mu\in \Z_+\Pi}(\wt\U_+^+)_\mu$, 
where $(\wt\U_+^+)_\mu$ will be called the weight $\mu$-subspace. 
Let $E({\bf j})= e_{j_1}e_{j_2}\dots e_{j_n}$,  where ${\bf j}=(j_1, \dots, j_n)$ $\in$ $ I^n$ and $n\in\Z_+$. 
[If $n=0$, we set $E({\bf j})=1$.]  
The weight of the element is $\mu({\bf j}) =\sum_{t=1}^n \Upsilon_{j_t}$. We call 
$n$ the length of $E({\bf j})$, which is equal to the height of $\mu({\bf j})$.  
The elements $E({\bf j})$ form a basis of $\wt\U_+^+$.  
Note that any $E_\mu\in (\wt\U_+^+)_\mu$ with $\mu=\sum_{i\in I} \mu_i\Upsilon_i$ is spanned by $E({\bf j})$'s of length $|\mu|=\sum_i \mu_i$ and weight
$\mu=\mu({\bf j})$. 
For any $E({\bf j})$, we define 
\[
\baln
\hat{E}({\bf j)}_{; i}^{R, \pm}&= \sum_{t=1}^n \delta_{i, j_t}\omega\big(\xi_i, \sum_{s>t} \xi_{j_s}\big) q^{\pm(\Upsilon_i, \sum_{s<t} \Upsilon_{j_s})}  e_{j_1} e_{j_2}\dots  e_{j_{t-1}}e_{j_{t+1}} \dots  e_{j_n}, \\
\hat{E}({\bf j)}_{; i}^{L, \pm}&= \sum_{t=1}^n \delta_{i, j_t} \omega\big(\sum_{s<t} \xi_{j_s}, \xi_i\big) q^{\pm (\Upsilon_i, \sum_{s<t} \Upsilon_{j_s})}  e_{j_1} e_{j_2}\dots   e_{j_{t-1}} e_{j_{t+1}} \dots  e_{j_n }\\
\ealn
\]
and generalise this linearly to   
any $E= \sum_{\bf j} c_{\bf j}E({\bf j})\in (\wt\U_+^+)_\mu$ for $c_{\bf j}\in\K$, with
\beq\label{eq:deriv-i}
\hat{E}_{; i}^{R, \pm}  = \sum c_{\bf j} \hat{E}({\bf j})_{;i}^{R, \pm}, \quad 
\hat{E}_{; i}^{L, \pm}  = \sum c_{\bf j} \hat{E}({\bf j})_{;i}^{L, \pm}.
\eeq
Note that 
\beq\label{eq:lr-der}
\hat{E}_{; i}^{R, \pm}= \omega\big(\xi_i, d E -\xi_i\big) 
\hat{E}_{; i}^{L, \pm}.
\eeq

We have the following easy facts. 
\begin{lemma}\label{lem:primit}
Retain notation above. For any $E\in (\wt\U_+^+)_\mu$ with $\mu\ge 2$, 
\[
{}
[f_i, E]_\omega= - \omega(\xi_i, \xi_i) \frac{k_i \hat{E}_{;i}^{L, -} - k_i^{-1} \hat{E}_{;i}^{L, +}}{q_i-q_i^{-1}}, \quad \forall i, 
\]
\[
\baln
\Delta(E)&=E\ot k_\mu +\sum _i\hat{E}_{;i}^{R, +} \ot e_i  k_{\mu-\Upsilon_i}+\dots +  \sum _i e_i\ot k_i \hat{E}_{;i}^{L, -}  + 1\ot E,
\ealn
\]
where  the dots represent terms of the form $X_\nu \ot \wt{X}_{\mu-\nu}\in \wt\U_0(\wt\U_+^+)_\nu\ot  \wt\U_0(\wt\U_+^+)_{\mu-\nu}$ with both $|\nu|$, $|\mu-\nu|$  $\ge 2$. 
\end{lemma}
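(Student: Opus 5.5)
By linearity it suffices to prove both formulas for a single monomial $E=E({\bf j})=e_{j_1}e_{j_2}\cdots e_{j_n}$ with $\mu=\mu({\bf j})$ and $n=|\mu|\ge 2$. The operators $E\mapsto[f_i,E]_\omega$, $E\mapsto\Delta(E)$ and $E\mapsto\hat E^{L,\pm}_{;i},\hat E^{R,\pm}_{;i}$ are all linear, and \eqref{eq:deriv-i} extends the hatted operators linearly. So the lemma reduces to a monomial computation. In each formula the sum over $t$ records where a single factor $e_{j_t}$ is removed or singled out.

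For the first formula I would use the $(\Gamma,\omega)$-derivation property of the $\omega$-commutator, already used in the proof of Lemma \ref{lem:ideal}, with $X=f_i$. Expanding,
\[
[f_i,E]_\omega=\sum_{t=1}^n \omega\big(-\xi_i,\textstyle\sum_{s<t}\xi_{j_s}\big)\, e_{j_1}\cdots e_{j_{t-1}}\,[f_i,e_{j_t}]_\omega\, e_{j_{t+1}}\cdots e_{j_n}.
\]
By \eqref{eq:quad-3}, $[f_i,e_{j_t}]_\omega=-\omega(-\xi_i,\xi_{j_t})\,\delta_{i j_t}\,(k_i-k_i^{-1})/(q_i-q_i^{-1})$. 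Next I move $k_i^{\pm1}$ to the far left past $e_{j_1}\cdots e_{j_{t-1}}$ using \eqref{eq:quad-2}, which produces $q_i^{\mp\sum_{s<t}A_{ij_s}}=q^{\mp(\Upsilon_i,\sum_{s<t}\Upsilon_{j_s})}$. Then I collect the $\omega$-factors, using $\omega(-\alpha,\beta)=\omega(\beta,\alpha)$ and $\delta_{ij_t}$. This should give precisely $\omega(\sum_{s<t}\xi_{j_s},\xi_i)$ times $-\omega(\xi_i,\xi_i)$, which matches the stated formula with $\hat E^{L,\mp}_{;i}$. Under the standing assumption the extra sign $\omega(\xi_i,\xi_i)$ equals $1$, but I would keep it for bookkeeping.

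For the coproduct, $\Delta$ is a $(\Gamma,\omega)$-algebra map, so $\Delta(E)=\prod_t(e_{j_t}\ot k_{j_t}+1\ot e_{j_t})$, with the product taken in the braided tensor product $(a\ot b)(c\ot d)=\omega(d(b),d(c))\,ac\ot bd$. Expanding, each term corresponds to a subset $T$ of positions: the factors $e_{j_t}$ with $t\in T$ go to the left tensor leg, and the factors $e_{j_s}$ with $s\notin T$ go to the right leg together with the $k$'s. I sort the terms by $|T|$.
\begin{itemize}
\item $|T|=n$ gives $E\ot k_\mu$.
\item $|T|=0$ gives $1\ot E$.
\item $|T|=n-1$: the position omitted is some $t$ with $j_t=i$. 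The right leg is $k_{j_1}\cdots k_{j_{t-1}}\,e_i\,k_{j_{t+1}}\cdots k_{j_n}$. Commuting $e_i$ to the left of the later $k$'s gives $e_i k_{\mu-\Upsilon_i}$ times the factor $q^{(\Upsilon_i,\sum_{s<t}\Upsilon_{j_s})}$ coming from the earlier $k$'s. The braiding sign is $\omega(\xi_i,\sum_{s>t}\xi_{j_s})$. Together these reproduce $\hat E^{R,+}_{;i}\ot e_ik_{\mu-\Upsilon_i}$.
\item $|T|=1$: the single $e_{j_t}\ot k_{j_t}$ is taken. The braiding sign is $\omega(\sum_{s<t}\xi_{j_s},\xi_i)$, and moving $k_i$ to the left through $e_{j_1}\cdots e_{j_{t-1}}$ in the right leg gives $q^{-(\Upsilon_i,\sum_{s<t}\Upsilon_{j_s})}$. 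Together these give $e_i\ot k_i\hat E^{L,-}_{;i}$.
\item All remaining $T$ have $2\le|T|\le n-2$. These are the dotted terms, which lie in $\wt\U^0(\wt\U^+_+)_\nu\ot\wt\U^0(\wt\U^+_+)_{\mu-\nu}$ with both heights at least $2$.
\end{itemize}
When $n=2$ or $n=3$ some of these classes coincide or are empty. For those cases I would note that the hypothesis $\mu\ge2$ still makes the four displayed groups distinct, so the formula holds with an empty dotted sum.

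The main obstacle is sign and exponent bookkeeping rather than any conceptual difficulty. I need to confirm that the braided product convention yields $\omega(\xi_i,\sum_{s>t}\xi_{j_s})$ in the $R$ case and $\omega(\sum_{s<t}\xi_{j_s},\xi_i)$ in the $L$ case, and that the $q$-exponents carry the signs $\pm$ as defined. I would first check $E=e_je_i$ by hand against the $n=1$ case of Lemma \ref{lem:co-prods} to fix the conventions, and then run the general expansion.
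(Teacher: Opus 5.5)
Your argument is the paper's argument. You reduce to monomials $E({\bf j})$, expand $[f_i,E]_\omega$ by the $(\Gamma,\omega)$-derivation rule, and move $k_i^{\pm1}$ to the left. You then expand $\Delta(E)$ in the braided tensor product according to which factors go to the left leg. Your sign and exponent bookkeeping is correct. In particular, the factor $\omega(-\xi_i,\sum_{s<t}\xi_{j_s})=\omega(\sum_{s<t}\xi_{j_s},\xi_i)$ is exactly what $\hat E^{L,\pm}_{;i}$ requires. The paper's intermediate display writes $\omega(\xi_i,\sum_{s<r}\xi_{j_s})$ there, which is the inverse.

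The step that fails is your treatment of $n=2$. There $|T|=1=n-1$. So the class you match with $\sum_i\hat E^{R,+}_{;i}\ot e_ik_{\mu-\Upsilon_i}$ and the class you match with $\sum_i e_i\ot k_i\hat E^{L,-}_{;i}$ are the same set of terms, and the displayed groups are not distinct. For example, for $i\ne j$,
\[
\Delta(e_je_i)=e_je_i\ot k_jk_i+e_j\ot k_je_i+\omega(\xi_j,\xi_i)\,e_i\ot e_jk_i+1\ot e_je_i,
\]
and each of the two sums separately equals the two middle terms, so adding both double counts. Hence for $|\mu|=2$ the identity as displayed is false, and no argument can establish it. It holds literally only for $|\mu|\ge 3$. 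In that range your classes $|T|=n-1$, $|T|=1$ and $2\le|T|\le n-2$ are disjoint, the last being empty when $n=3$.

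What your expansion actually proves for $|\mu|=2$ is that $\Delta(E)-E\ot k_\mu-1\ot E$ equals $\sum_i\hat E^{R,+}_{;i}\ot e_ik_{\mu-\Upsilon_i}$, and also equals $\sum_i e_i\ot k_i\hat E^{L,-}_{;i}$. You should state it that way rather than assert distinctness. This reading is all that Lemma \ref{lem:primit-singular} uses: skew primitivity still forces $\hat E^{R,+}_{;i}=0=\hat E^{L,-}_{;i}$. The paper's own proof passes over the same point.
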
 
\begin{proof}
The formulae immediately follows from the definitions of $\hat{E}_{; i}^{R, \pm}, 
\hat{E}_{; i}^{L, \pm}$. To prove them, we may take $E$ to be $E({\bf j})= e_{j_1}e_{j_2}\dots e_{j_n}$. Then 
\begin{align*}
[f_i,  E]_\omega 
&=\frac{-\omega(\xi_i, \xi_i)}{q_i-q_i^{-1}}  \sum_{r=1}^n \delta_{i j_r} \omega\big(\xi_i, \sum_{s<r} \xi_{j_s}\big)   e_{j_1}  e_{j_{r-1}} (k_i - k_i^{-1}) e_{j_{r+1}} \dots e_{j_n}\\
&=\frac{-\omega(\xi_i, \xi_i)}{q_i-q_i^{-1}} k_i \sum_{r=1}^n \delta_{i j_r} \omega\big(\xi_i, \sum_{s<r} \xi_{j_s}\big)   q^{-(\Upsilon_i, \sum_{s<r} \Upsilon_{j_s})}e_{j_1}  e_{j_{r-1}} e_{j_{r+1}} \dots e_{j_n} \\
&- \frac{-\omega(\xi_i, \xi_i)}{q_i-q_i^{-1}}  k_i^{-1} \sum_{r=1}^n \delta_{i j_r} \omega\big(\xi_i, \sum_{s<r} \xi_{j_s}\big)   q^{(\Upsilon_i, \sum_{s<r} \Upsilon_{j_s})} e_{j_1}  e_{j_{r-1}} e_{j_{r+1}} \dots e_{j_n}. 
\end{align*}
Using the definitions of $\hat{E}_{;i}^{L, \pm}$, we obtain
\begin{align*}
[f_i,  E]_\omega 
&= -\omega(\xi_i, \xi_i) \frac{ k_i \hat{E}_{;i}^{L, -} -  k_i^{-1} \hat{E}_{;i}^{L, +} }{q_i-q_i^{-1}}.
\end{align*}
Note that this is equivalent to 
\beq
[E_\mu, f_i]_\omega= \frac{k_i \hat{E}_{;i}^{R, -} - k_i^{-1}  \hat{E}_{;i}^{R, +}}  
{q_i-q_i^{-1}}.
\eeq

To prove the second formula, note that the co-product of $E$ can always be written as $\Delta(E=\sum X_\nu \ot \wt{X}_{\mu-\nu}$ where $\mu=\mu({\bf j})$. Clearly the term with $|\mu-\nu|=0$ is $E\ot k_\mu$, and that with $|\nu|=0$ is $1\ot E$. The sum of the terms with $|{\mu-\nu}|=1$ is given by 
\begin{align*}
&\sum_{r=1}^n \omega\big(\xi_{j_r}, \sum_{s>r} \xi_{j_s}\big)   e_{j_1}\dots e_{j_{r-1}} e_{j_{r+1}} \dots e_{j_r}\ot k_{j_1}\dots k_{j_{r-1}} e_{j_r} k_{j_{r+1}} \dots k_{j_r}\\
&=  \sum_{r=1}^n \omega\big(\xi_{j_r}, \sum_{s>r} \xi_{j_s}\big)   q^{(\Upsilon_{j_r}, \sum_{s<r} \Upsilon_{j_s})}e_{j_1}\dots e_{j_{r-1}} e_{j_{r+1}} \dots e_{j_n}\ot e_{j_r} k_{\mu-\Upsilon_{j_r}} \\
&= \sum_i \sum_{r=1}^n \delta_{i j_r}\omega\big(\xi_i, \sum_{s>r} \xi_{j_s}\big)   q^{(\Upsilon_i, \sum_{s<r} \Upsilon_{j_s})}e_{j_1}\dots e_{j_{r-1}} e_{j_{r+1}} \dots e_{j_n}\ot e_i k_{\mu-\Upsilon_i} \\
&=\sum_i \hat{E}_{;i}^{R, +}\ot e_i k_{\mu-\Upsilon_i}. 
\end{align*}
Similar calculations show that the sum of the terms with $|\nu|=1$ is that given in the claimed formula.
\end{proof}

\begin{remark}\label{rmk:wtU-}
We can  analyse  $\wt\U_-^-$ similarly to obtain an analogue of Lemma \ref{lem:primit} for 
any $F\in (\wt\U_-^-)_{-\mu}$ with $\mu\ge 2$. 
\end{remark}

We call homogeneous elements $E\in (\wt\U_+^+)_\mu$ and $F\in (\wt\U_-^-)_{-\mu}$ skew primitive if 
\[
\Delta(E)=E\ot k_\mu + 1\ot E, \quad \Delta(F)=F\ot 1+  k_\mu^{-1}\ot F. 
\]
We will say that homogeneous elements $E\in (\wt\U_+^+)_\mu$ and $F\in (\wt\U_-^-)_{-\mu}$  are singular if 
\beq\label{eq:sing}
[f_i, E]_\omega=0, \quad [e_i, F]_\omega=0, \quad \forall i.
\eeq

\begin{lemma}\label{lem:primit-singular}
If $E\in (\wt\U_+^+)_\mu$ and $F\in (\wt\U_-^-)_{-\mu}$ with $|\mu|\ge 2$ are skew primitive, then they are singular. 
\end{lemma}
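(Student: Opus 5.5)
The argument rests on Lemma \ref{lem:primit}, which links the degree-one parts of the coproduct of $E$ to the $\omega$-commutators $[f_i,E]_\omega$ through the "partial derivatives" $\hat{E}_{;i}^{R,\pm}$ and $\hat{E}_{;i}^{L,\pm}$. We treat $E$ first; $F$ then follows either by the analogue in Remark \ref{rmk:wtU-} or by applying the automorphism $\theta$ of Lemma \ref{lem:auto}, which exchanges $\wt\U_+^+$ and $\wt\U_-^-$ and intertwines $\Delta$ with the braided opposite coproduct $\Delta'$. The two slots of $\Delta'$ are interchanged, so skew primitivity of $F$ becomes skew primitivity of $\theta(F)$ up to the harmless group-likes $k$.

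\textbf{Step 1: the derivatives of $E$ vanish.} Suppose $E\in(\wt\U_+^+)_\mu$ with $|\mu|\ge 2$ is skew primitive, so $\Delta(E)=E\ot k_\mu+1\ot E$. Compare this with the expansion in Lemma \ref{lem:primit}. The terms of type $(\wt\U_+^+)_{\mu-\Upsilon_i}\ot e_ik_{\mu-\Upsilon_i}$ carry bi-weight $(\mu-\Upsilon_i,\Upsilon_i)$. Because $|\mu|\ge 2$, this bi-weight differs from $(\mu,0)$, from $(0,\mu)$, and from every bi-weight $(\nu,\mu-\nu)$ with $|\nu|,|\mu-\nu|\ge 2$. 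The terms of type $e_i\ot k_i\hat{E}_{;i}^{L,-}$ have bi-weight $(\Upsilon_i,\mu-\Upsilon_i)$. When $|\mu|=2$, $\mu=\Upsilon_i+\Upsilon_j$, this coincides with the bi-weight of the $\hat E_{;j}^{R,+}\ot e_j$ term. There the two families are identified: each is $e_j\ot e_i(\cdots)$, and the two expressions must be consistent.

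We therefore decompose $\wt\U\ot\wt\U$ by bi-weight with respect to the $\Z_+\Pi$-grading, and further by the $\wt\U^0$ factors. Using the PBW-type freeness $\wt\U^0\ot\wt\U_+^+\cong\wt\U^0\wt\U_+^+$, which is implied by the triangular decomposition, skew primitivity forces $\sum_i\hat E_{;i}^{R,+}\ot e_ik_{\mu-\Upsilon_i}=0$. Since the $e_i$ are linearly independent and the $k$ factors are invertible, this gives $\hat E_{;i}^{R,+}=0$ for every $i$. The same argument applied to the other family gives $\hat E_{;i}^{L,-}=0$.

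\textbf{Step 2: pass to the remaining derivatives.} By \eqref{eq:lr-der}, $\hat E_{;i}^{R,\pm}$ and $\hat E_{;i}^{L,\pm}$ differ only by a nonzero scalar, so $\hat E_{;i}^{L,+}=0$ as well. The formula for $[f_i,E]_\omega$ in Lemma \ref{lem:primit} then gives $[f_i,E]_\omega=0$ for all $i$, which is the singularity condition \eqref{eq:sing} for $E$.

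\textbf{Main obstacle.} The difficult point is that Step 1 produces only the $+$ derivative on the right and the $-$ derivative on the left, and these are not a priori related. To close the gap I would use a symmetry. The definition of $\hat E^{R,\pm}$ is linear in $E$, and on monomials $q\mapsto q^{-1}$ exchanges the $\pm$ versions. However, $q\mapsto q^{-1}$ is not an automorphism of $\wt\U_+^+$ over $\C(q)$ compatible with $E$. My first attempt is therefore the direct approach: show that $\hat E_{;i}^{L,-}=0$ for all $i$ already determines $[f_i,E]_\omega$ up to the $k_i^{-1}$ term. From $[f_i,E]_\omega=\frac{k_i^{-1}\hat E^{L,+}_{;i}}{q_i-q_i^{-1}}$ (up to sign), compute the coproduct of both sides. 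The left side $[f_i,E]_\omega$, with $E$ skew primitive and $f_i$ having $\Delta(f_i)=f_i\ot1+k_i^{-1}\ot f_i$, has a coproduct of the form $[f_i,E]_\omega\ot k_\mu+k_i^{-1}\ot[f_i,E]_\omega$ plus cross terms. The cross terms reduce, after using \eqref{eq:quad-3}, to $E\ot(k_i-k_i^{-1})$-type pieces which cancel. Comparing this with the coproduct of $k_i^{-1}\hat E^{L,+}_{;i}$ in the bi-weight $(0,\mu-\Upsilon_i)$ component forces $\hat E^{L,+}_{;i}$ to vanish, by induction on $|\mu|$. Should this fail, the fallback is to treat the lowest case $|\mu|=2$ by hand and run the induction from there.
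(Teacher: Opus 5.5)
Your Steps 1 and 2 are exactly the paper's proof: skew primitivity kills $\hat E^{R,+}_{;i}$ and $\hat E^{L,-}_{;i}$, \eqref{eq:lr-der} turns $\hat E^{R,+}_{;i}=0$ into $\hat E^{L,+}_{;i}=0$, and the formula for $[f_i,E]_\omega$ in Lemma \ref{lem:primit}, which involves only $\hat E^{L,\pm}_{;i}$, then vanishes. Your treatment of the $|\mu|=2$ overlap and your use of $\theta$ for $F$ are sound, and somewhat more careful than the paper's text.

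The ``main obstacle'' you raise afterwards does not exist. The relation \eqref{eq:lr-der} links derivatives of the same sign, and no relation between $\hat E^{R,+}_{;i}$ and $\hat E^{L,-}_{;i}$ is ever needed. Your own Step 2 already closes the argument, so the coproduct/induction fallback should simply be deleted.
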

\begin{proof}
It follows from the second relation in Lemma \ref{lem:primit} that $\hat{E}_{;i}^{R, +}=0$ and $\hat{E}_{;i}^{L, -}=0$ for all $i$. By \eqref{eq:lr-der}, we have $\hat{E}_{;i}^{R, \pm}=\hat{E}_{;i}^{L, \pm}=0$ for all $i$. Then the first formula of Lemma \ref{lem:primit} implies that $[f_i, E]=0$ for all $i$. Hence $E$ is singular. 

To prove the singularity of $F$, we need to appeal to Remark \ref{rmk:wtU-}. We omit the details.  
\end{proof}

Now we apply the lemma to the quantum Serre elements $S^\pm_{i j}$ for $i\ne j$. 
Lemma \ref{lem:ideal} states that $S^\pm_{i j}$'s are singular, and Lemma \ref{lem:co-ideal} shows that they are skew primitive. Hence it immediately follows from Lemma \ref{lem:primit-singular} that
\begin{corollary}
Lemma \ref{lem:co-ideal} implies Lemma \ref{lem:ideal}. 
\end{corollary}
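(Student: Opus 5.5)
The plan is to deduce the singularity of the colour quantum Serre elements purely from their skew primitivity, so that Lemma \ref{lem:ideal} requires no independent computation once Lemma \ref{lem:co-ideal} is in hand. I would apply Lemma \ref{lem:primit-singular} with $E=S^+_{i j}$ and $F=S^-_{i j}$, taking $\mu=(1-A_{i j})\Upsilon_i+\Upsilon_j$ for the weight.

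The first step is to check the hypotheses of Lemma \ref{lem:primit-singular}. By \eqref{eq:def-S+} and \eqref{eq:def-S-}, $S^+_{i j}\in(\wt\U_+^+)_\mu$ and $S^-_{i j}\in(\wt\U_-^-)_{-\mu}$. Both elements are $\Gamma$-homogeneous of degrees $\pm\big((1-A_{i j})\xi_i+\xi_j\big)$. The height satisfies $|\mu|=2-A_{i j}\ge 2$, since $A_{i j}\le 0$ for $i\ne j$.

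The second step is to confirm skew primitivity in the exact sense used before Lemma \ref{lem:primit-singular}. Lemma \ref{lem:co-ideal} gives the coproducts \eqref{eq:co-ideal} and \eqref{eq:co-ideal-1}. By the convention defining $k_\mu$, we have $k_\mu=k_i^{1-A_{i j}}k_j$, because $k_i$ and $k_j$ act on each $e_p$ by $q^{(\Upsilon_i,\Upsilon_p)}$ and $q^{(\Upsilon_j,\Upsilon_p)}$, with $q_i^{A_{i p}}=q^{(\Upsilon_i,\Upsilon_p)}$ under the normalisation $B=\mathrm{diag}(d)A$. So these coproducts match the definition $\Delta(E)=E\ot k_\mu+1\ot E$ and $\Delta(F)=F\ot 1+k_\mu^{-1}\ot F$.

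The final step is to invoke Lemma \ref{lem:primit-singular}. It then yields $[f_p,S^+_{i j}]_\omega=0$ and $[e_p,S^-_{i j}]_\omega=0$ for all $p$, which is exactly \eqref{eq:ideal}.

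The main care point is making sure there is no circularity. Lemma \ref{lem:co-ideal} was proved via Lemma \ref{lem:co-prods}, which uses only the Hopf structure of $\wt\U_{q,\Xi}(A)$ and never Lemma \ref{lem:ideal}. Lemma \ref{lem:primit-singular} depends only on Lemma \ref{lem:primit}, which is a direct computation on monomials.

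The one gap is the $F$ half of Lemma \ref{lem:primit-singular}, which the paper left as ``omitted''. I would close it by applying the automorphism $\theta$ of Lemma \ref{lem:auto}. It sends $S^+_{i j}$ to $S^-_{i j}$ and $f_p$ to $e_p$, and it preserves $\omega$-commutators since degrees are merely negated and $\omega(-\alpha,-\beta)=\omega(\alpha,\beta)$. Hence the identity $[e_p,S^-_{i j}]_\omega=0$ is the image under $\theta$ of $[f_p,S^+_{i j}]_\omega=0$. With that, the corollary follows.
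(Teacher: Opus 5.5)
Your proposal is correct and follows the paper's argument: you apply Lemma~\ref{lem:primit-singular} to the skew primitive Serre elements $S^\pm_{i j}$, whose weights have height $2-A_{i j}\ge 2$, after matching $k_\mu=k_i^{1-A_{i j}}k_j$ with the coproducts in Lemma~\ref{lem:co-ideal}. Your use of the automorphism $\theta$ of Lemma~\ref{lem:auto} to supply the $F$-half that the paper omits is valid, since $\theta$ maps $S^+_{i j}$ to $S^-_{i j}$ and $f_p$ to $e_p$, and it preserves $\omega$-commutators because it only negates $\Gamma$-degrees.
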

This clarifies the relationship between the  two lemmas. 

It is easy to see that a singular element in $(\wt\U_+^+)_\mu$ is not necessarily skew primitive.  
For example, the square of any homogeneous skew primitive element 
$E$ in $\wt\U_+^+$ is not skew primitive,  
but it is singular as $[f_i, E^2]_\omega = [f_i, E]_\omega E + \omega(d f_i, d E) E [f_i, E]_\omega$ vanishes for all $i$,  since $[f_i, E]_\omega=0$ by Lemma \ref{lem:primit-singular}. 

However, we have the following comments. [{\bf No later result depends on these comments}.]
 Call a homogeneous singular element of $(\wt\U_+^+)_\mu$ elementary if it can not be expressed as a polynomial of homogeneous singular elements in $(\wt\U_+^+)_\nu$, for  all $\nu$  with heights $|\nu|<|\mu|$. We expect that 
elementary homogeneous singular elements in $(\wt\U_+^+)_\mu$ with
$|\mu|\ge 2$ are skew primitive. This is true if the height of $\mu$ is  $2$.

\subsection{Comments on the construction of colour quantum groups}\label{sect:construc}

The construction of the colour quantum group $\U_{q, \Xi}(A)$ 
as a Hopf quotient of the auxiliary colour quantum group
$\wt\U_{q, \Xi}(A)$ (see Definition \ref{def:U-aux})
has several structural consequences.

\smallskip
\noindent{1).}
Let $z^+_r \in (\wt\U^+_+)_\mu$ and $z^-_r \in (\wt\U^-_-)_{-\mu}$ with $|\mu|\ge 2$, for $r=1, 2, \dots, N$, 
be homogeneous elements in both the $\Gamma$-grading and $\Z\Pi$-grading. 
Assume that the elements are skew primitive. It follows from Lemma \ref{lem:primit-singular} that 
they are singular, that is, 
\beq\label{eq:no-intersect}
[f_i, z^+_r]_\omega=[e_i, z^-_r]_\omega=0, \quad \forall i. 
\eeq
The two-sided ideal $\wt\CJ$ generated by these elements 
is a homogeneous Hopf ideal, and consequently 
\[
\U^{intmd}_{q, \Xi}(A) =\wt\U_{q, \Xi}(A)/\wt\CJ
\]
is a Hopf $(\Gamma, \omega)$-algebra. 
Note that $\wt\CJ$ admits a triangular decomposition and intersects $\wt{Gen}=\sum_i \wt\U^0 e_i + \wt\U^0 + \sum_i \wt\U^0f_i$ trivially.    
The quotient retains the generators of $\wt\U_{q, \Xi}(A)$.

\smallskip
\noindent{2).}
For every simple highest or lowest weight $\wt\U_{q, \Xi}(A)$-module $M$ over $\C(q)$, the submodule $\wt\CJ M$ vanishes. 
To explain this, we assume that $M$ is a simple  highest  weight module generated by the highest weight vector $v_\lambda$. When \(\wt\CJ\) acts on \(v_\lambda\), the factor \(\wt\CJ^+ = \wt\CJ\cap\wt\U^+_+\) annihilates \(v_\lambda\), while \(\wt\U^0\) acts by scalars. Hence \(\wt\CJ v_\lambda\) is contained in \(\wt\CJ^-v_\lambda\), 
where $\wt\CJ^- = \wt\CJ\cap\wt\U^-_-$. Since \(\wt\CJ^-\) is generated by the elements \(z_i^-\), all of which have strictly negative weight, every homogeneous element of \(\wt\CJ^-\) has strictly negative weight. Consequently,
\[
\wt\CJ v_\lambda\subseteq\bigoplus_{\mu>0}M_{\lambda-\mu}, 
\]
and in particular \(v_\lambda\notin\wt\CJ M\). Since \(\wt\CJ M\) is a submodule of the irreducible module \(M\), it follows that \(\wt\CJ M=0\). The proof for a simple lowest weight module is similar. 
It follows that 
\begin{lemma}
Any irreducible highest or lowest weight representation of $\wt\U_{q, \Xi}(A)$ factors through $\U^{intmd}_{q, \Xi}(A)$. 
\end{lemma}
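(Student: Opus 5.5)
The lemma asserts that $\wt\CJ$ acts as zero on every simple highest or lowest weight $\wt\U_{q, \Xi}(A)$-module $M$. Since $\U^{intmd}_{q, \Xi}(A)=\wt\U_{q, \Xi}(A)/\wt\CJ$, an action factors through the quotient exactly when $\wt\CJ M=0$. So the plan is to reduce to the highest weight case, establish the weight bound $\wt\CJ v_\lambda\subseteq\bigoplus_{\mu>0}M_{\lambda-\mu}$, and then use simplicity.

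Let $M$ be simple of highest weight $\lambda$, generated by $v_\lambda$, with $e_iv_\lambda=0$ and $k_i$ acting on $v_\lambda$ by scalars.

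\textbf{Step 1: triangular decomposition of $\wt\CJ$.} I would show $\wt\CJ=\wt\CJ^-\,\wt\U^0\,\wt\CJ^+$. Here $\wt\CJ^+$ is the two-sided ideal of $\wt\U^+_+$ generated by the $z^+_r$, and $\wt\CJ^-$ is the ideal of $\wt\U^-_-$ generated by the $z^-_r$. This is the argument of Corollary~\ref{cor:intersect}, with the colour quantum Serre elements replaced by the $z^\pm_r$. By \eqref{eq:no-intersect}, $f_i z^+_r=\omega(-\xi_i,d z^+_r)z^+_rf_i$, so $z^+_r$ can be moved past $f_i$ up to a scalar; the symmetric statement holds for $z^-_r$ and $e_i$. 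Each $z^\pm_r$ has a definite $\Z\Pi$-weight, so it $q$-commutes with every $k_i^{\pm1}$. Together with the triangular decomposition $\wt\U=\wt\U^-_-\wt\U^0\wt\U^+_+$, this gives $\wt\U z^+_r\wt\U=\wt\U^-_-\wt\U^0\,\wt\U^+_+z^+_r\wt\U^+_+$. Combining the analogous statement for $z^-_r$ yields the claimed decomposition.

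\textbf{Step 2: the weight bound.} Take $x=F\,h\,E$ with $F\in\wt\CJ^-$, $h\in\wt\U^0$ and $E\in\wt\CJ^+$. Every element of $\wt\CJ^+$ has weight of height at least $|\mu|\ge 2$, so $Ev_\lambda=0$. Hence the only contributions to $\wt\CJ v_\lambda$ come from $x=F h$ with $E$ a scalar. In that case $x v_\lambda\in\wt\CJ^-v_\lambda$, and homogeneous elements of $\wt\CJ^-$ have strictly negative weight. This gives $\wt\CJ v_\lambda\subseteq\bigoplus_{\mu>0}M_{\lambda-\mu}$.

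\textbf{Step 3: simplicity.} Because $\wt\CJ$ is a two-sided ideal, $\wt\CJ M$ is a submodule. Write $M=\wt\U v_\lambda$; then $\wt\CJ M=\wt\CJ\,\wt\U v_\lambda=\wt\CJ v_\lambda$, using $\wt\CJ\wt\U\subseteq\wt\CJ$. By Step 2 this lies in $\bigoplus_{\mu>0}M_{\lambda-\mu}$, so it misses the $\lambda$-weight space. A proper submodule of the simple module $M$ must be zero, so $\wt\CJ M=0$.

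\textbf{Step 4: lowest weight modules.} For a simple lowest weight module, either repeat the argument with the roles of $e$ and $f$ interchanged, or twist by the automorphism $\theta$ of Lemma~\ref{lem:auto}. This requires that $\theta$ preserves $\wt\CJ$; that holds if the set $\{z^\pm_r\}$ is $\theta$-stable, and otherwise the direct repetition works.

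The main obstacle is Step 1. One must check that the commutation scalars coming from $\omega$ and from $q$ are harmless, and that pushing $z^+_r$ leftward through $f_i$ never produces correction terms. The latter is exactly what the singularity relation \eqref{eq:no-intersect} guarantees, via Lemma~\ref{lem:primit-singular}.
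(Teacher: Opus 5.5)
Your proposal is correct and follows the paper's own argument. The singularity relation \eqref{eq:no-intersect} gives a triangular decomposition of $\wt\CJ$, whence $\wt\CJ M=\wt\CJ v_\lambda\subseteq\wt\CJ^-v_\lambda\subseteq\bigoplus_{\mu>0}M_{\lambda-\mu}$ is a proper submodule and hence zero by simplicity, and the lowest weight case is handled symmetrically. One correction: what your Step 1 computation actually yields, and what Step 2 uses when it allows ``$E$ a scalar'', is $\wt\CJ=\wt\CJ^-\wt\U^0\wt\U^+_+ +\wt\U^-_-\wt\U^0\wt\CJ^+$; the product $\wt\CJ^-\wt\U^0\wt\CJ^+$ does not even contain the generators $z^\pm_r$.
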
Thus, the passage from $\wt\U_{q, \Xi}(A)$ to $\U^{intmd}_{q, \Xi}(A)$ leaves the category of irreducible highest and lowest weight representations unchanged, even though $\wt\U_{q, \Xi}(A)$ is strictly larger as an algebra.

\smallskip
\noindent{3).}
It is worth emphasising that in the construction of $\U_{q, \Xi}(A)$, the quantum Serre relations are not imposed \textit{a priori}. Instead, they are determined intrinsically by the input data $A$ and $\Xi$.
As we shall see in Section \ref{sect:U+}, 
skew primitive elements of heights greater than $1$  induce degeneracy of the Hopf pairing between the two auxiliary quantised Borel subalgebras $\wt\U^\pm_{q, \Xi}(A)$ of $\wt\U_{q, \Xi}(A)$.
As we will prove in Lemma \ref{lem:Nichols} and the analogous result for \(\wt\U^-_{q,\Xi}(\fg)\), the homogeneous Hopf ideals generated by the quantum Serre elements \(S^+_{ij}\) and \(S^-_{ij}\) are precisely the right and left radicals of the Hopf pairing. 
Consequently, the homogeneous skew primitive elements \(z_r^\pm\) belong to $\CJ$, 
and hence the homogeneous Hopf ideal $\wt\CJ$ generated by them is contained in $\CJ$. 
Therefore, we have the following fact.
\begin{lemma}
The canonical Hopf $(\Gamma, \omega)$-algebra surjection 
$\wt\U_{q, \Xi}(A)\lra \U_{q, \Xi}(A)$ factors through every intermediate colour quantum group 
$\wt\U^{intmd}_{q, \Xi}(A)$. 
\end{lemma}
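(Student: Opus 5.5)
The plan is to show that $\wt\CJ\subseteq\CJ$. Granting this, the canonical surjection $\pi:\wt\U_{q,\Xi}(A)\lra\U_{q,\Xi}(A)\simeq\wt\U_{q,\Xi}(A)/\CJ$ (from the proof of Theorem \ref{thm:Hopf}) vanishes on $\wt\CJ$. It therefore descends to a unique map $\bar\pi:\U^{intmd}_{q,\Xi}(A)=\wt\U_{q,\Xi}(A)/\wt\CJ\lra\U_{q,\Xi}(A)$ with $\pi=\bar\pi\circ p$, where $p$ is the quotient map onto $\U^{intmd}_{q,\Xi}(A)$. Both $\wt\CJ$ and $\CJ$ are homogeneous Hopf ideals (comment 1) and Corollary \ref{cor:J}). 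Hence $\bar\pi$ is a morphism of $\Gamma$-graded algebras that intertwines coproduct, counit and antipode. This is checked on the generators $e_i,f_i,k_i^{\pm1}$, which span the images of $\wt{Gen}$, and these images are untouched in both quotients.

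The key step is $\wt\CJ\subseteq\CJ$. Since $\CJ$ is a two-sided ideal, it suffices to show that each generator $z_r^\pm$ lies in $\CJ$. By the triangular decomposition in Corollary \ref{cor:intersect}, this reduces to $z_r^+\in\CJ^+_+$ and $z_r^-\in\CJ^-_-$. I would prove this using the Hopf pairing $\langle\ ,\ \rangle$ between $\wt\U^{\le}=\wt\U^-_{q,\Xi}(A)$ and $\wt\U^{\ge}=\wt\U^+_{q,\Xi}(A)$ set up in Section \ref{sect:U+}.

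First, a skew primitive $z^+\in(\wt\U^+_+)_\mu$ with $|\mu|\ge2$ lies in the radical. The pairing has the Hopf property $\langle y,z^+\rangle=\langle\Delta(y),z^+\otimes\cdots\rangle$, with the appropriate braided ordering. Take $y\in(\wt\U^-_-)_{-\mu}$ and write it as a sum of products $y=y'y''$ of strictly smaller heights. Then $\langle y'y'',z^+\rangle$ expands via $\Delta(z^+)=z^+\otimes k_\mu+1\otimes z^+$. Each term then pairs a nonconstant element of negative weight with $1$ or with $k_\mu$, and such pairings vanish by weight considerations. The pairing is zero between different weight spaces, so $z^+$ pairs to zero with everything.

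Second, Lemma \ref{lem:Nichols} identifies this radical with $\CJ^+_+$. Together these give $z^+_r\in\CJ^+_+$, and the case of $z^-_r$ follows either from the analogous statement or by applying the automorphism $\theta$ of Lemma \ref{lem:auto}.

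The main obstacle is the radical identification, Lemma \ref{lem:Nichols}. Its content is that the Serre ideal is the whole radical, which amounts to nondegeneracy of the induced pairing on $\U^\pm_{q,\Xi}(A)$. I would invoke it as stated. If I had to verify it, I would try to transport the classical Lusztig/Kac argument through the $\omega$-twisted derivations $\hat E^{L,\pm}_{;i}$ of Lemma \ref{lem:primit}. The argument would show that an element annihilated by all these derivations and lying in the radical must be in the Serre ideal, by induction on height. Lemma \ref{lem:primit-singular} would supply the base input.
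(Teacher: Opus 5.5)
Your proposal is correct and follows the paper's own argument. Both reduce the factorisation to $\wt\CJ\subseteq\CJ$. This inclusion is obtained by placing the skew-primitive generators $z_r^\pm$ in the radicals of the Hopf pairing between the auxiliary Borel parts (the content of Lemma \ref{lem:skew}), and then invoking Lemma \ref{lem:Nichols} to identify those radicals with the ideals generated by the colour quantum Serre elements.

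Your extra bookkeeping is consistent with the paper and spells out what it leaves implicit. This covers the descent of the Hopf structure to the quotient map and the use of $\theta$ for the $z_r^-$ case.
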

In this sense, the Serre quotient  $\U_{q, \Xi}(A)$ is canonical. 

\smallskip
\noindent{4).}
The intermediate colour quantum group $\wt\U^{intmd}_{q, \Xi}(A)$ (or its specialisation to $\C((\hbar))$) does not admit a universal $R$-matrix in the strict sense. 
Nevertheless, in the category ${\wt\U^{intmd}_{q, \Xi}(A)}$-${\rm\bf Mod}_{\U_{q, \Xi}(A)}$
of $\wt\U^{intmd}_{q, \Xi}(A)$-modules which are pullbacks of
$\U_{q, \Xi}(A)$-modules, 
the universal \(R\)-matrix of \(\U_{q,\Xi}(\fg)\) obtained in Theorem \ref{thm:QD}  give rise to an  \(R\)-matrix, which is functorial.  Therefore we have the following result, granting Theorem \ref{thm:QD}. 
\begin{lemma}
There exists a  functorial \(R\)-matrix on the $\wt\U^{intmd}_{q, \Xi}(A)$-module category ${\wt\U^{intmd}_{q, \Xi}(A)}$-${\rm\bf Mod}_{\U_{q, \Xi}(A)}$.
\end{lemma}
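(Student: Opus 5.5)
The plan is to reduce everything to the surjection of Hopf $(\Gamma,\omega)$-algebras
\[
\pi:\U^{intmd}_{q,\Xi}(A)=\wt\U_{q,\Xi}(A)/\wt\CJ\lra \U_{q,\Xi}(A)=\wt\U_{q,\Xi}(A)/\CJ ,
\]
which exists because $\wt\CJ\subseteq\CJ$. This is the factorisation lemma in item 3) of Section \ref{sect:construc}. The first step is to describe the category ${\wt\U^{intmd}_{q,\Xi}(A)}$-${\rm\bf Mod}_{\U_{q,\Xi}(A)}$ intrinsically.

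Its objects are the $\U^{intmd}_{q,\Xi}(A)$-modules $M$ that are annihilated by $\ker\pi=\CJ/\wt\CJ$. Any such module is canonically the pullback $\pi^*M$ of a unique $\U_{q,\Xi}(A)$-module. Its morphisms are arbitrary $\U^{intmd}_{q,\Xi}(A)$-module maps. Since $\pi$ is surjective, a linear map between two such modules commutes with $\U^{intmd}_{q,\Xi}(A)$ if and only if it commutes with $\U_{q,\Xi}(A)$. Hence $\pi^*$ is a full, faithful and essentially surjective functor from $\U_{q,\Xi}(A)$-modules (the weight modules on which the universal $R$-matrix of Theorem \ref{thm:QD} acts) onto this category.

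The second step is to check that the category is monoidal and that $\pi^*$ is a tensor functor. Here the key input is that $\pi$ is a Hopf map, so
\[
(\pi\ot\pi)\Delta=\Delta\pi .
\]
Consequently, for $x\in\ker\pi$ we have $\Delta(x)\in\ker(\pi\ot\pi)$, and this element acts as zero on $M\ot N$ whenever $\ker\pi$ kills both $M$ and $N$. So the category is closed under tensor products, and $\pi^*(M\ot N)=\pi^*M\ot\pi^*N$ as $\U^{intmd}_{q,\Xi}(A)$-modules, with the same colour braiding $\tau_{M,N}$ on underlying spaces. The unit object and $\varepsilon$ are handled in the same way.

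The third step defines, for objects $M$ and $N$,
\[
\check R_{M,N}=\tau_{M,N}\circ R|_{M\ot N},
\]
where $R$ is the universal $R$-matrix of $\U_{q,\Xi}(A)$ from Theorem \ref{thm:QD}, acting through the $\U_{q,\Xi}(A)$-structure. The required properties are then checked as follows.
\begin{itemize}
\item \emph{Module map.} For $x\in\U^{intmd}_{q,\Xi}(A)$, $x$ acts on $M\ot N$ by $\Delta(\pi(x))$. The identity $R\Delta(y)=\Delta'(y)R$ in $\U_{q,\Xi}(A)$, combined with $\tau\Delta'=\Delta\tau$ in the $(\Gamma,\omega)$ convention, shows that $\check R_{M,N}$ intertwines the two actions.
\item \emph{Naturality.} Morphisms are exactly $\U_{q,\Xi}(A)$-maps, and $R$ acts through $\U_{q,\Xi}(A)\ot\U_{q,\Xi}(A)$. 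So $\check R$ is natural in both variables. This is the point where fullness of $\pi^*$ is essential.
\item \emph{Invertibility.} This follows from the invertibility of $R$.
\item \emph{Hexagon axioms.} These follow from $(\Delta\ot\id)R=R_{13}R_{23}$ and $(\id\ot\Delta)R=R_{13}R_{12}$, again transported through $\pi$.
\end{itemize}

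The main obstacle is making precise where $R$ lives. It is an element of a completion of $\U^-_{q,\Xi}(A)\ot\U^+_{q,\Xi}(A)$, together with a Cartan factor, and it only acts on suitable weight modules, for example those with locally nilpotent $e_i$ or weights bounded above. So the category must be restricted to pullbacks of exactly the class of $\U_{q,\Xi}(A)$-modules on which Theorem \ref{thm:QD} gives a convergent action. One must also check that this class is closed under $\ot$, so that the second step still applies.

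I would handle this by defining the action of $R$ weight space by weight space, where only finitely many terms survive. I would then verify that the colour factors $\omega$ arising from the graded flip are absorbed exactly as in the $(\Gamma,\omega)$ quasi-triangularity identities of the appendix.
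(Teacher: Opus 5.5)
Your proposal is correct and is the same argument as the paper's. The paper simply observes that the objects of this category are pullbacks along the surjection $\U^{intmd}_{q,\Xi}(A)\to\U_{q,\Xi}(A)$, which exists because $\wt\CJ\subseteq\CJ$, and transports the universal $R$-matrix of Theorem \ref{thm:QD}; your checks of tensor closure, naturality via fullness of $\pi^*$, the intertwining property and the hexagon identities, together with your restriction to modules on which $R$ converges, supply the details the paper leaves implicit. One harmless slip: with the paper's conventions $R=\sum_i\iota(x_i)\ot\iota_0(f_i)$ lies in a completion of $\U^+_\hbar(A;\Xi)\hat\ot\U^-_\hbar(A;\Xi)$ over $\C((\hbar))$, with the positive Borel in the first factor and reached via $q\mapsto\exp(\hbar)$, rather than of $\U^-\ot\U^+$. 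This does not affect your argument, since you impose the convergence condition on every object.
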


We note in particular that if one is interested in irreducible representations of colour quantum groups and related $R$-matrices, 
it is legitimate to work with $\wt\U_{q, \Xi}(A)$ or any intermediate colour quantum group $\U^{intmd}_{q, \Xi}(A)$.

\begin{remark} Our construction of $\U_{q, \Xi}(A)$ is a direct generalisation to the present setting of that in \cite[\S1.2]{Kv-2} for Lie algebras admitting a Cartan-Weyl description.  
The same construction of $\U_q(\gl_{m|n})$ as the Hopf quotient of a corresponding auxiliary quantum supergroup was developed long ago in \cite{Z93}. 
\end{remark}

\section{Quasi-triangularity of colour quantum groups}\label{sect:QT}
The theory of quasi-triangular Hopf algebras \cite{D2, D3} and superalgebras \cite{GZB93}, 
including the quantum double construction, 
naturally generalises to Hopf $(\Gamma, \omega)$-algebras (see Appendix \ref{sect:double}). 
We will prove the quasi-triangular Hopf colour algebraic structure of 
the quantised universal enveloping colour algebra $\U_{q, \Xi}(A)$. 

In this section, we shall work over the formal Laurent series ring $\C((\hbar))$ in
a category where the topological datum is controlled by an underlying $\C[[\hbar]]$-lattice. 
The general setup of topological Hopf $(\Gamma, \omega)$-algebras suitable for our purpose 
is discussed in Section \ref{sect:topo}. 
We will freely use material in Appendix \ref{sect:double}. 

The necessity of working over $\C((\hbar))$ instead of working over $\C[[\hbar]]$ then extnding scalars
is explained in Remark \ref{rmk:necessity}.

Denote $R=\C[[\hbar]]$ and $\K=\C((\hbar))$ throughout this section.  For convenience, we write $q=\exp(\hbar)$ $:=\sum_{i\ge 0} \frac{\hbar^i}{i!}$ 
and $q_i= \exp(\hbar d_i/2)$, for all $i\in I$. 

\subsection{Colour quantum groups over Laurent series ring}

We adopt the convention and notation for finite and affine root systems in Section \ref{sect:def-Uq}.
In particular, we use $A=(A_{i j})_{i, j \in I}$ to denote either a finite type Cartan matrix
with $I=\{1, 2, \dots, \ell\}$
or affine type Cartan matrix with $I=\{0, 1, 2, \dots, \ell\}$.  
We also have the sequence $\Xi=(\xi_i)_{i\in I}$ in each case, where we recall that $\xi_0$ is defined by \eqref{eq:Upsil0} in the affine case. Let $(\Phi, \Pi, \Xi)$ be the root datum associated with $A$, 
where $\Phi$ is the set of roots, and $\Pi=\{\Upsilon_i\mid i\in I\}$ is the set of simple roots. 

\begin{definition}
Let  $\U_\hbar(A; \Xi)_R$ be the topological Hopf $(\Gamma, \omega)$-algebra over $R$ generated by $e_i, f_i, h_i$, for $i\in I$, subject to the defining relations which are formally the same as  
those given in Definition \ref{def:main} but with 
\begin{align}\label{eq:ks}
&k_i^{\pm 1}= \exp(\pm \frac{\hbar  d_i}{2} h_i)
	:=\sum_{n=0}^\infty \frac{\hbar^n}{ n!} \left(\frac{\pm h_i  d_i}{2}\right)^n,   \\
&\frac{k_i-k_i^{-1}}{q_i-q_i^{-1}}=h_i\frac{\sum_{n=0}^\infty \frac{\hbar^{2n}}{ (2n+1)!} \left(\frac{h_i  d_i}{2}\right)^{2n}}{\sum_{n=0}^\infty \frac{\hbar^{2n}}{ (2n+1)!} \left(\frac{d_i}{2}\right)^{2n}}, 
\end{align} 
and the Hopf algebra structure is as described in Theorem \ref{thm:Hopf}.    
\end{definition}

It then follows from \eqref{eq:ks} that the elements $h_i$ commute among themselves and satisfy the following relations with the elements $e_i, f_i$:
\beq
h_i e_j -  e_j h_i= A_{i j} e_j,  , \quad h_i f_j -  f_j h_i=- A_{i j} f_j.  
\eeq
Their co-products are given by 
\beq
\Delta(h_i)=h_i\ot 1+ 1\ot h_i.
\eeq

\begin{definition}\label{def:special}
Retain notation above and set
\[
\U_\hbar(A;\Xi)
:=
\U_\hbar(A;\Xi)_R\ot_R\K.
\]
We regard $\U_\hbar(A;\Xi)$ as a lattice equipped topological Hopf
$(\Gamma,\omega)$-algebra using the distinguished lattice
$\U_\hbar(A;\Xi)_R$ and the lattice topology of Definition \ref{def:latt}.
\end{definition}

We will show that $\U_\hbar(A; \Xi)$ has the structure of a quasi-triangular topological Hopf $(\Gamma, \omega)$-algebra, by reconstructing it from the quantum double (see Definition \ref{def:D(H)}) of a quotient of 
the topological Hopf $(\Gamma, \omega)$-algebra $\wt\U^+_\hbar(A;\Xi)$ 
to be defined presently.

To understand the relationship of these Hopf $(\Gamma, \omega)$-algebras with the colour quantum group of Definition \ref{def:main}, we fix the ring homomorphism 
\beq
\psi: \C(q)\lra \K, \quad q\mapsto \exp(\hbar):=\sum_{i\ge 0} \frac{\hbar^i}{i!},  
\eeq
and consider the specialisation 
$
\U_{q, \Xi}(A)\ot_\psi \K
$
of the Hopf $(\Gamma, \omega)$-algebra $\U_{q, \Xi}(A)$ with respect to $\psi$.  
We introduce extra generators $h_i$ (for $i\in I$), and impose the relations
\begin{align*}
k_i^{\pm 1}\ot_\psi 1= \sum_{n=0}^\infty \frac{\hbar^n}{ n!} \left(\frac{\pm h_i  d_i}{2}\right)^n, 
\quad \forall i.
\end{align*}
Then the topological Hopf $(\Gamma, \omega)$-algebra generated by the specialisation 
$
\U_{q, \Xi}(A)\ot_\psi \K
$ 
and the elements $h_i$ is isomorphic to $\U_\hbar(A; \Xi)$.

\subsection{Auxiliary quantised colour Borel subalgebras}\label{sect:U+}

Let us introduce the following topological $(\Gamma, \omega)$-algebra.
\begin{definition} \label{def:wtU+}
Let  $\wt\U^+_\hbar(A;\Xi)_R$ be the topological Hopf $(\Gamma, \omega)$-algebra over $R$, 
which is generated 
by homogeneous elements $e_i, h_i$, for $i\in I$, 
with $d(e_i)=\xi_i$ and $d(h_i)=0$, subject to the relations 
\beq
&&h_i h_j -  h_j h_i =0, \quad h_i e_j -  e_j h_i= A_{i j} e_j,  \quad \forall i\in I, 
\eeq
and has co-multiplication $\Delta$,  co-unit $\varepsilon$ and antipode $S$,  given by 
\beq
&\Delta(h_i)=h_i\ot 1+ 1\ot h_i, \quad \Delta(e_i)= e_i\ot k_i + 1\ot e_i; \\
& \varepsilon(h_i)=0, \quad \varepsilon(e_i)=0; \\
&S(h_i)=-h_i, \quad S(e_i)=  - e_i k_i^{-1}, \quad \forall i\in I, 
\eeq
where $k_i^{\pm 1}$ are formally defined by the same equation as \eqref{eq:ks} 
but interpreted in the present context. Define the following Hopf $(\Gamma, \omega)$-algebra over $\K$.
\beq
\wt\U^+_\hbar(A;\Xi):= \wt\U^+_\hbar(A;\Xi)_R\ot_R\K 
\eeq
equipped with the lattice topology of Definition \ref{def:latt}.
\end{definition}

Consider the subalgebras $\wt\U_0=\langle h_i\mid i\in I\rangle$ 
and $\wt\U_+=\langle e_i\mid i\in I\rangle$ of $\wt\U^+_\hbar(A;\Xi)$, 
both of which  inherit the lattice topology. 
For any ${\bf m}=(m_i)_{i\in I}\in\Z_+^{|I|}$, let 
$h^{\bf m}=\prod_{i\in I} h_i^{m_i}$. Then $\wt\U_0$
has the monomial  basis $\{h^{\bf m}\mid {\bf m}\in\Z_+^{|I|}\}$.
Here the displayed monomials form a \(\K\)-basis of the underlying algebra; the \(\hbar\)-adic completion is understood coefficient-wise.

We also adapt some notion introduced in Section \ref{sect:primit} to the subalgebra $\wt\U_+$, 
in particular, 
its $\Z_+\Pi$-gradation $\wt\U_+=\sum_{\mu\in \Z_+\Pi}(\wt\U_+)_\mu$.  
For any ${\bf j}=(j_1, \dots, j_n)$ $\in$ $ I^n$ and $n\in\Z_+$, we again denote 
$E({\bf j})= e_{j_1}e_{j_2}\dots e_{j_n}$. 
Furthermore, we have the following basis for $\wt\U^+_\hbar(A;\Xi)$.
\beq
h^{\bf m} E({\bf j}), \quad \text{for   ${\bf m}\in\Z_+^{|I|}$,  ${\bf j}\in I^n$, $n\in\Z_+$}.
\eeq

Note the following formula for the co-product of $h^{\bf m}$.
\[
\Delta(h^{\bf m}) =\Delta'(h^{\bf m})=\sum_{\bf r} 
\begin{pmatrix}
{\bf m}\\ {\bf r}
\end{pmatrix}
h^{{\bf m}-{\bf r}}\ot h^{\bf r}, 
\]
where 
$
\begin{pmatrix}
{\bf m}\\ {\bf r}
\end{pmatrix}=\prod_{i\in I} \begin{pmatrix}
m_i\\  r_i
\end{pmatrix}.
$
For later use, we also set ${\bf m}!= \prod_{i\in I}m_i!$.

The  lattice continuous dual  space $\wt\U^+_\hbar(A;\Xi)^\vee$ of $\U^+_\hbar(A;\Xi)$ (i.e., the space of lattice continuous $\K$-linear functions, see \eqref{eq:V-vee})  is 
a $(\Gamma, \omega)$-algebra with the multiplication 
$
\Delta^*: \wt\U^+_\hbar(A;\Xi)^\vee\wh\ot \wt\U^+_\hbar(A;\Xi)^\vee\lra \wt\U^+_\hbar(A;\Xi)^\vee.
$

Denote by $\wt\U^+_\hbar(A;\Xi)'$ the topological $(\Gamma, \omega)$-subalgebra of $\wt\U^+_\hbar(A;\Xi)^\vee$ 
generated by the homogeneous elements $f_i, \wt{H}_i$ ($ i\in I$) of $\wt\U^+_\hbar(A;\Xi)^\vee$, 
with $\Gamma$-degrees 
$d(f_i)=-\xi_i$ and $d(\wt{H}_i)=0$ respectively,  such that
\[
\varphi_i =(q_i-q_i^{-1}) f_i, \quad \rho_i=\hbar \frac{ d_i}{2} \wt{H}_i, \quad i\in I, 
\]
satisfy the following relations for any $b_\mu\in (\wt\U_+)_\mu$:
\beq\label{eq:pair-def}
\phantom{XXX}
\left\langle \rho_i,  h^{\bf n} b_\mu\right\rangle
=\delta_{\mu 0} \delta_{{\bf n} {\bf 1}_i}, 
\quad \langle \varphi_i,  h^{\bf n} e_j\rangle=\delta_{i j} \delta_{\bf n 0},  
\quad
\langle \varphi_i,  h^{\bf n} b_\mu\rangle=0  \text{ if }  \mu\not\in\Pi, 
\eeq
where $\delta_{\bf n m}=\prod_{r\in I} \delta_{n_r, m_r}$, and 
${\bf 1}_i=(0, \dots, 0, \underbrace{1}_i, 0, \dots, 0)\in \Z_+^{|I|}$.

\begin{lemma}\label{lem:dU-dU}
The following relations hold in $\wt\U^+_\hbar(A;\Xi)'$. 
\beq
&&	\rho_i \rho_j -  \rho_j \rho_i=0, \label{eq:rho-rho}\\
&& 	\rho_i \varphi_j -  \varphi_j \rho_i = - \delta_{i j} \frac{\hbar d_i}{2} \varphi_j,  
		\quad \forall i, j\in I. \label{eq:rho-phi}
\eeq
\end{lemma}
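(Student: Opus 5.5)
The plan is to verify both identities by pairing each side against the spanning set $h^{\bf n}b_\mu$ of $\wt\U^+_\hbar(A;\Xi)$. The product on $\wt\U^+_\hbar(A;\Xi)^\vee$ is dual to the co-product, so $\langle xy, u\rangle=\langle x\ot y,\Delta(u)\rangle$, with the usual sign $\omega$ from the braiding when degrees are moved past each other. Since $\rho_i$ has degree $0$, these signs are trivial in every product involving a $\rho$, so the colour structure plays essentially no role. The tools are the formula $\Delta(h^{\bf m})=\sum_{\bf r}\binom{{\bf m}}{{\bf r}}h^{{\bf m}-{\bf r}}\ot h^{\bf r}$ together with the co-product on the $E({\bf j})$. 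The latter is $\Delta(E)=E\ot k_\mu+\dots+1\ot E$, whose terms are organised by height as in Lemma \ref{lem:primit}. I will also use $\Delta(h^{\bf n}b_\mu)=\Delta(h^{\bf n})\Delta(b_\mu)$.

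For \eqref{eq:rho-rho} I evaluate $\rho_i\rho_j$ on $h^{\bf n}b_\mu$. Since $\rho_i$ and $\rho_j$ kill everything with $\mu\neq0$, only the weight-zero parts of both tensor factors contribute. Every weight-$(\nu,\mu-\nu)$ component of $\Delta(b_\mu)$ carries an $e$-part of positive height in one factor unless $\nu=0$ or $\nu=\mu$. So for $\mu\ne0$ the pairing vanishes, because some factor always has positive weight: in the extremal terms $b_\mu\ot k_\mu$ and $1\ot b_\mu$, the factor $b_\mu$ has weight $\mu$. Hence only $\mu=0$ survives. There $\Delta(h^{\bf n})$ is cocommutative, and the coefficient of $h^{{\bf 1}_i}\ot h^{{\bf 1}_j}$ and of $h^{{\bf 1}_j}\ot h^{{\bf 1}_i}$ in it are equal ($\Delta=\Delta'$ on $\wt\U_0$). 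Thus $\langle\rho_i\rho_j,\cdot\rangle=\langle\rho_j\rho_i,\cdot\rangle$.

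For \eqref{eq:rho-phi} I pair both sides with $h^{\bf n}b_\mu$. Both sides vanish unless $\mu=\Upsilon_j$, i.e.\ $b_\mu$ is a multiple of $e_j$, because each side has $\varphi_j$ in one slot and $\rho_i$ requires weight $0$ in the other. So it suffices to test on $h^{\bf n}e_j$.
\begin{itemize}
\item For $\varphi_j\rho_i$, the relevant term of $\Delta(h^{\bf n}e_j)$ is $\sum_{\bf r}\binom{{\bf n}}{{\bf r}}h^{{\bf n}-{\bf r}}e_j\ot h^{\bf r}k_j$. Pairing $\varphi_j$ against $h^{{\bf n}-{\bf r}}e_j$ forces ${\bf r}={\bf n}$. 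Pairing $\rho_i$ against $h^{\bf n}k_j$ then requires expanding $k_j=\exp(\hbar d_jh_j/2)$: the coefficient of $h^{{\bf 1}_i}$ in $h^{\bf n}k_j$ is nonzero only for ${\bf n}=0$ (giving $\delta_{ij}\hbar d_j/2$) or ${\bf n}={\bf 1}_i$ (giving $1$).
\item For $\rho_i\varphi_j$, the relevant term is $\sum_{\bf r}\binom{{\bf n}}{{\bf r}}h^{{\bf n}-{\bf r}}\ot h^{\bf r}e_j$. This forces ${\bf r}=0$ and ${\bf n}={\bf 1}_i$, giving $1$.
\end{itemize}
Before comparing, I must move $h$ past $e_j$ using $h_ie_j-e_jh_i=A_{ij}e_j$, writing $h^{{\bf n}-{\bf r}}e_j$ in the normal-ordered basis $h^{\bf m}E({\bf j})$ so that \eqref{eq:pair-def} applies. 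Subtracting, the contributions at ${\bf n}={\bf 1}_i$ should cancel once this reordering is accounted for. What remains is $-\delta_{ij}\frac{\hbar d_i}{2}\langle\varphi_j,\cdot\rangle$, the sign coming from whichever order the dual multiplication convention dictates.

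The main obstacle is exactly this bookkeeping. It requires fixing the precise convention for the dual product (whether $\Delta^*$ or $\Delta'^*$), keeping track of the $k_j$ factor in the co-product, and handling the normal ordering of $h^{\bf n}e_j$. These determine the sign and whether the $A_{ij}$ terms cancel. I would first check everything on the lowest cases ${\bf n}=0,{\bf 1}_i$ and then argue that higher ${\bf n}$ contribute zero to both sides, since $\rho_i$ detects only the linear term in $h$.
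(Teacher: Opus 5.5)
Your argument is correct and is essentially the paper's proof. You pair against $h^{\bf n}b_\mu$ and use weights to reduce to $\mu=0$, where $\Delta(h^{\bf n})=\Delta'(h^{\bf n})$ gives \eqref{eq:rho-rho}, and to $b_\mu=e_j$, where expanding $k_j$ inside $\langle\rho_i,h^{\bf n}k_j\rangle$ does the rest. The bookkeeping you flag as the main obstacle is in fact already finished: the terms $h^{{\bf n}-{\bf r}}e_j\ot h^{\bf r}k_j$ and $h^{{\bf n}-{\bf r}}\ot h^{\bf r}e_j$ are already in the form $h^{\bf m}E({\bf j})$ on which \eqref{eq:pair-def} is defined, so no reordering is needed and no $A_{ij}$ terms appear. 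With the convention $\langle xy,u\rangle=\langle x\ot y,\Delta(u)\rangle$ that you stated (the paper's $\Delta^*$), your own values give $\delta_{{\bf n},{\bf 1}_i}-\delta_{{\bf n},{\bf 1}_i}-\delta_{ij}\delta_{{\bf n},{\bf 0}}\frac{\hbar d_i}{2}$, which is exactly $\langle-\delta_{ij}\frac{\hbar d_i}{2}\varphi_j,h^{\bf n}e_j\rangle$ with the correct sign.
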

\begin{proof}

Consider \eqref{eq:rho-rho} first.  We have
\beq\label{eq:wt-match}
\langle \rho_i \rho_j  -  \rho_j \rho_i, h^{\bf m} b_\mu\rangle = \langle \rho_i\ot \rho_j  -  \rho_j\ot \rho_i, \Delta(h^{\bf m} b_\mu)\rangle.   
\eeq
The co-product $\Delta(h^{\bf m} b_\mu)$ is the sum of terms of the form $h^{\bf k} X_\nu \ot h^{\bf n} X'_{\nu'}$ with $X_\nu\in (\wt\U_+)_\nu$ and $X'_{\nu'}\in (\wt\U_+)_{\nu'}$ such that $\nu+\nu'=\mu$.  If $\mu\ne 0$, at least one of $\nu, \nu'$ is non-zero. Hence the right hand side of 
\eqref{eq:wt-match} must vanish by the first relation of \eqref{eq:pair-def}. This shows that 
$\langle \rho_i \rho_j  -  \rho_j \rho_i, h^{\bf m} b_\mu\rangle=0$ if $\mu\ne 0$. 
Now
\[
\langle \rho_i \rho_j -  \rho_j \rho_i, h^{\bf m}\rangle
= \langle \rho_i \ot \rho_j -  \rho_j \ot \rho_i, \Delta(h^{\bf m}) \rangle=- \langle \rho_i \ot \rho_j -  \rho_j \ot \rho_i, \Delta'(h^{\bf m}) \rangle. 
\]
Since $\Delta(h^{\bf m}) = \Delta'(h^{\bf m})$, we  obtain $\langle \rho_i \rho_j -  \rho_j \rho_i, h^{\bf m}\rangle=0$, and hence $\rho_i \rho_j -  \rho_j \rho_i=0$.

To prove \eqref{eq:rho-phi}, we first note that 
$\langle \rho_i \varphi_j -  \varphi_j \rho_i,   h^{\bf m} b_\mu\rangle=0$ if $\mu\not\in \Pi$. 
Indeed, 
\[
\langle \rho_i \varphi_j -  \varphi_j \rho_i,   h^{\bf m} b_\mu\rangle
=\langle \rho_i \ot \varphi_j -  \varphi_j \ot \rho_i,   \Delta(h^{\bf m} b_\mu)\rangle.
\]
Express $\Delta(h^{\bf m} b_\mu)$ as a sum of terms of the form $h^{\bf k} X_\nu \ot h^{\bf n} X'_{\nu'}$ with $X_\nu\in (\wt\U_+)_\nu$ and $X'_{\nu'}\in (\wt\U_+)_{\nu'}$ such that $\nu+\nu'=\mu$, and observe that

$\bullet$
$
\langle \rho_i \ot \varphi_j,  h^{\bf k} X_\nu \ot h^{\bf n} X'_{\nu'}\rangle 
$
 is a scalar multiple of  $\delta_{\bf 0 \nu}\langle \varphi_j,  h^{\bf n} X'_{\mu}\rangle$, and  

$\bullet$
$
\langle \varphi_j \ot \rho_i,   h^{\bf k} X_\nu \ot h^{\bf n} X'_{\nu'})\rangle
$
is a scalar multiple of  $\delta_{\bf 0 \nu'}\langle \varphi_j,  h^{\bf k} X_{\mu}\rangle$. \\
If $\mu\not\in \Pi$,  both $\langle \varphi_j,  h^{\bf n} X'_{\mu}\rangle$ and $\langle \varphi_j,  h^{\bf k} X_{\mu}\rangle$ vanish by the third relation of \eqref{eq:pair-def}.  Hence the right hand side of the equation above vanishes. 

Now
\[
\baln
\langle \rho_i \varphi_j -  \varphi_j \rho_i,  h^{\bf m} e_s\rangle
&=\langle \rho_i \ot \varphi_j -  \varphi_j \ot \rho_i, 
\Delta(h^{\bf m}) (e_s\ot k_s+ 1 \ot e_s)  \rangle\\
&=\delta_{j s} \langle \rho_i \ot \varphi_j,  
 \Delta(h^{\bf m}) (1 \ot e_j)\rangle \\
&- \delta_{j s} \langle \varphi_j \ot \rho_i, 
 \Delta(h^{\bf m})(e_j\ot k_j)\rangle.
\ealn
\] 
By using the following relations
\[
\baln
&\langle \rho_i \ot \varphi_j,    \Delta(h^{\bf m}) (1 \ot e_j)\rangle = m_i  \langle \varphi_j,    h^{{\bf m}-{\bf 1}_i} e_j \rangle,  \\
&\langle \varphi_j \ot \rho_i,  \Delta(h^{\bf m}) (e_j\ot k_j)\rangle=
 m_i  \langle \varphi_j,   h^{{\bf m}-{\bf 1}_i} e_j \rangle + \delta_{i j} \hbar \frac{d_i}{2} \langle \varphi_j,   h^{\bf m} e_j\rangle,
\ealn
\] 
we obtain 
\[
\baln
\langle \rho_i \varphi_j -  \varphi_j \rho_i, h^{\bf m} e_s \rangle
&=- \delta_{i j} \hbar \frac{d_i}{2} \langle \varphi_j,  h^{\bf m} e_s  \rangle. 
\ealn
\] 
Hence $\rho_i \varphi_j -  \varphi_j \rho_i = - \delta_{i j} \frac{\hbar d_j}{2} \varphi_j$, proving \eqref{eq:rho-phi}. 
\end{proof}

Let $\wt\U'_0$ be the topological subalgebra of $\wt\U^+_\hbar(A;\Xi)'$ 
generated by the $\rho_i$'s, 
and $\wt\U'_-$ that generated by the $\varphi_i$'s. 
A basis for $\wt\U_0'$ is given by
$\{\rho^{\bf m}=\prod_{i\in I}\rho_i^{m_i} \mid {\bf m}=(m_i)_{i\in I}\in\Z_+^{|I|}\}$. 
Now $\wt\U'_+$ is $-\Z_+\Pi$-graded.  
For any ${\bf j}=(j_1, \dots, j_n)$ $\in$ $ I^n$, where $n\in\Z_+$, let  
$\phi({\bf j})= \varphi_{j_1}\varphi_{j_2}\dots \varphi_{j_n}$. 
Call 
$\mu({\bf j}) =-\sum_{t=1}^n \Upsilon_{j_t}$ the weight, and $n$ the length, of $\phi({\bf j})$.
The elements $\phi({\bf j})$ form a basis of $\wt\U'_+$, 
and we have the following basis for $\wt\U^+_\hbar(A;\Xi)'$.
\beq
\rho^{\bf m} \phi({\bf j}), \quad \text{for   ${\bf m}\in\Z_+^{|I|}$,  ${\bf j}\in I^n$, $n\in\Z_+$}.
\eeq

\begin{lemma}\label{lem:beta-b} 
The following relations hold for all $b_\nu\in (\wt\U_+)_\nu$ and $\beta_\mu\in (\wt\U'_+)_{\mu}$ with $-\mu, \nu \in \Z_+\Phi^+$, and 
${\bf m}, {\bf n} \in\Z_+^{|I|}$. 
\beq
\langle \rho^{\bf m},  h^{\bf n} b_\nu\rangle&=&\delta_{\nu 0} \delta_{\bf m n}\prod_{r\in I} m_r!, \label{eq:beta-b-0}\\
\langle \rho^{\bf m}\varphi_i,  h^{\bf n} e_j\rangle&=&\delta_{i j} \delta_{\bf m n}\prod_{r\in I} m_r!,  \label{eq:beta-b-1}\\
 \langle \rho^{\bf m}\varphi_i,  h^{\bf n} b_\nu\rangle&=&0, \quad \text{if $\nu\ne \Upsilon_i$},   
\label{eq:beta-b-1-1}\\
\left\langle \rho^{\bf m}\beta_{\mu},  h^{\bf n} b_\nu\right\rangle&=&0
\quad \text{if $\mu+\nu\ne 0$},  \label{eq:beta-b-2}
\eeq

\end{lemma}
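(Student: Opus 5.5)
The plan is to compute every pairing through the defining property $\langle \alpha\beta, x\rangle = \langle \alpha\ot\beta, \Delta(x)\rangle$ of the multiplication on $\wt\U^+_\hbar(A;\Xi)^\vee$. The relations \eqref{eq:pair-def} then reduce each case to a bookkeeping of weights and of powers of $h$. Throughout I will use two facts. First, $\Delta(h^{\bf n} b_\nu)=\Delta(h^{\bf n})\Delta(b_\nu)$, where $\Delta(h^{\bf n})=\sum_{\bf r}\binom{\bf n}{\bf r}h^{{\bf n}-{\bf r}}\ot h^{\bf r}$. Second, by the form of $\Delta(e_i)$, $\Delta(b_\nu)$ is a sum of terms $X_{\nu'}\ot \wt{X}_{\nu''}$ with $X_{\nu'}\in(\wt\U_+)_{\nu'}$, $\wt{X}_{\nu''}\in \wt\U_0(\wt\U_+)_{\nu''}$ and $\nu'+\nu''=\nu$. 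Here the factors $k_i$ are power series in the $h_i$. Because $h_ie_j=e_jh_i+A_{ij}e_j$, I can move all $h$'s to the left, and this preserves the $\Z_+\Pi$-weight.

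For \eqref{eq:beta-b-0} I will induct on $|{\bf m}|$. The base case is $\rho^{\bf 0}=1=\varepsilon$. Then $\langle 1, h^{\bf n}b_\nu\rangle=\delta_{\nu0}\delta_{{\bf n}{\bf 0}}$, since $\varepsilon(h_i)=\varepsilon(e_i)=0$. For the inductive step, write $\rho^{\bf m}=\rho_i\rho^{{\bf m}-{\bf 1}_i}$ using Lemma \ref{lem:dU-dU}. By \eqref{eq:pair-def}, $\rho_i$ kills every component whose first tensor factor has nonzero weight or is not exactly $h_i$. If $\nu\neq0$, then every term of $\Delta(h^{\bf n}b_\nu)$ has nonzero weight in one of its two factors, so the induction hypothesis and \eqref{eq:pair-def} give zero. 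If $\nu=0$, only the terms $h_i\ot h^{{\bf n}-{\bf 1}_i}$ survive, with coefficient $\binom{n_i}{1}=n_i$. Induction then gives $n_i\,\delta_{{\bf m}-{\bf 1}_i,{\bf n}-{\bf 1}_i}\prod_r(m-{\bf 1}_i)_r!$, which equals $\delta_{\bf mn}\prod_r m_r!$.

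For \eqref{eq:beta-b-1}, \eqref{eq:beta-b-1-1} and \eqref{eq:beta-b-2} the main tool is a weight argument. I first show that any $\beta_\mu$ of weight $\mu$ pairs to zero with $h^{\bf n}b_\nu$ unless $\nu=-\mu$. This is another induction, this time on the length of $\phi({\bf j})$, using $\langle\varphi_{j_1}\phi({\bf j}'),x\rangle=\langle\varphi_{j_1}\ot\phi({\bf j}'),\Delta(x)\rangle$. The relations \eqref{eq:pair-def} force the first factor to have weight exactly $\Upsilon_{j_1}$, and induction forces the second factor to have weight $-\mu({\bf j}')$. Combining this with the $\rho^{\bf m}$ part, which pairs only with weight-$0$ factors, yields \eqref{eq:beta-b-2}. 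It also gives \eqref{eq:beta-b-1-1}, as the special case $\mu=-\Upsilon_i$.

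For \eqref{eq:beta-b-1} I pair $\rho^{\bf m}\ot\varphi_i$ with $\Delta(h^{\bf n})(e_j\ot k_j+1\ot e_j)$. The term $e_j\ot k_j$ places $e_j$ in the first factor, where $\rho^{\bf m}$ gives zero. The term $1\ot e_j$ contributes $\sum_{\bf r}\binom{\bf n}{\bf r}\langle\rho^{\bf m},h^{{\bf n}-{\bf r}}\rangle\langle\varphi_i,h^{\bf r}e_j\rangle$. By \eqref{eq:pair-def} only ${\bf r}={\bf 0}$ survives, and \eqref{eq:beta-b-0} then gives $\delta_{ij}\delta_{\bf mn}\prod m_r!$.

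The main obstacle will be the weight argument. In $\Delta(b_\nu)$ the $k_i$ factors generate infinite series in $h$ in the second tensor factor, and $\varphi_i$ must kill $h^{\bf n}e_j$ for all ${\bf n}\neq0$. So I must check carefully that reordering $h$'s past $e$'s never lowers the $e$-weight, and that lattice continuity justifies pairing termwise with these series.
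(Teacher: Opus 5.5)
Your proposal is correct and follows essentially the same route as the paper. It proves \eqref{eq:beta-b-0} by induction on $|{\bf m}|$ through $\Delta(h^{\bf n})$, then proves \eqref{eq:beta-b-1-1} and \eqref{eq:beta-b-2} by a weight-counting induction on the length of $\varphi$-monomials driven by \eqref{eq:pair-def}. The differences are cosmetic: you peel $\varphi_{j_1}$ off the left instead of the last $\varphi_i$ off the right, and you compute \eqref{eq:beta-b-1} directly instead of through the intermediate formula \eqref{eq:pair-general}, which as printed drops the coefficient $\binom{{\bf n}}{{\bf m}}\,{\bf m}!$ (equal to $\prod_r m_r!$ when ${\bf n}={\bf m}$) that your computation keeps.
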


\begin{proof}
The $|{\bf m}|=1$ case of \eqref{eq:beta-b-0} is the first relation of \eqref{eq:pair-def}. 
Using it in
\[
\baln
\langle \rho^{\bf m},  h^{\bf n} b_\nu\rangle&= \langle \rho_i\ot \rho^{{\bf m}-{\bf 1}_i},  \Delta(h_i^{n_i})\Delta(h^{{\bf n}- n_i{\bf 1}_i } b_\nu)\rangle,
\ealn
\]
we can reduce the right hand side to
$
n_i \langle \rho_i, h_i\rangle\langle \rho^{{\bf m}-{\bf 1}_i},  h^{{\bf n}- {\bf 1}_i } b_\nu\rangle =n_i \langle \rho^{{\bf m}-{\bf 1}_i},  h^{{\bf n}- {\bf 1}_i } b_\nu\rangle
$
by using the explicit formulae for $\Delta(h_j)$ and $\Delta(e_j)$. Thus 
\[
\baln
\langle \rho^{\bf m},  h^{\bf n} b_\nu\rangle&=n_i \langle \rho^{{\bf m}-{\bf 1}_i},  h^{{\bf n}- {\bf 1}_i } b_\nu\rangle.
\ealn
\]
An easy induction on $|{\bf m}|$ proves \eqref{eq:beta-b-0}.

Now $\langle \rho^{\bf m}\beta_\mu,  h^{\bf n} b_\nu\rangle
= \langle \rho^{\bf m}\ot \beta_\mu,  \Delta(h^{\bf n} b_\nu)\rangle$. 
Consider the right hand side using \eqref{eq:beta-b-0} 
and the explicit formula for $\Delta(e_j)$ for all $j$, we obtain 
\[
\baln
\sum_{\bf r}\begin{pmatrix}
{\bf n}\\ {\bf r}
\end{pmatrix} \langle \rho^{\bf m}, h^{\bf r} \rangle \langle \beta_\mu,  h^{{\bf n}-{\bf r}}b_\nu\rangle
&= \sum_{\bf r}\begin{pmatrix}
{\bf n}\\ {\bf r}
\end{pmatrix} \delta_{\bf m r} \langle \beta_\mu,  h^{{\bf n}-{\bf r}}b_\nu\rangle.
\ealn
\]
Hence 
\beq\label{eq:pair-general}
\langle \rho^{\bf m}\beta_\mu,  h^{\bf n} b_\nu\rangle
= 
\left\{
\begin{split}
&\langle \beta_\mu,  h^{{\bf n}-{\bf m}}b_\nu\rangle, \quad \text{if $m_i\le n_i$ for all $i$},\\
&0,  \quad \text{otherwise}.
\end{split}
\right.
\eeq
The $\beta_\mu=\varphi_i$ case of this yields \eqref{eq:beta-b-1} and \eqref{eq:beta-b-1-1} by using the second and third relations of \eqref{eq:pair-def}.  

Equation \eqref{eq:beta-b-2} follows from \eqref{eq:pair-general} 
if we can show that $\langle \beta_\mu,  h^{\bf r}b_\nu\rangle=0$ for $\mu+\nu\ne 0$.  
This is true for $|\mu|\le 1$ by the second relation of \eqref{eq:pair-def}.  
For other $\mu\in -\Z_+\Phi^+$, 
we can always express $\beta_{\mu}$ as a linear combination of $\beta_{\mu +\Upsilon_i} \varphi_i$, 
where $\beta_{\mu +\Upsilon_i} \in (\U_+')_{\mu+\Upsilon_i}$ and $i\in I$. Now
$
\langle \beta_{\mu +\Upsilon_i} \varphi_i,  h^{\bf r} b_\nu\rangle
= \langle \beta_{\mu +\Upsilon_i}\ot \varphi_i,  \Delta(h^{\bf r})\Delta(b_\nu)\rangle.
$
Non-zero contributions can only arise from terms 
$\Delta(h^{\bf r})(b_{\nu; i}^{R, +} \ot e_i k_{\nu -\Upsilon_i} )$ in $\Delta(h^{\bf r}b_\nu)$, 
where $b_{\nu; i}^{R, +}$ is as defined by \eqref{eq:deriv-i} and 
\[
k_{\nu -\Upsilon_i} = \prod_{j} k_j^{\nu_j-\delta_{i j}}, \quad \nu=\sum_{i\in I} \nu_i \Upsilon_i.
\]  
Thus  $\langle \beta_{\mu +\Upsilon_i} e_i,  h^{\bf r} b_\nu\rangle$ is equal to  $q^{-(\Upsilon_i, \nu-\Upsilon_i)}\langle \beta_{\mu +\Upsilon_i}, h^{\bf r} b_{\nu, i}^+\rangle$. Now an induction on $|\mu|$ proves \eqref{eq:beta-b-2}. 
\end{proof}

\begin{lemma}\label{lem:co-dU}
The subalgebra $\wt\U^+_\hbar(A;\Xi)'$ has Hopf $(\Gamma, \omega)$-algebra structure, with 
\[
\baln
\text{co-multiplication} &  & & \Delta^0: \wt\U^+_\hbar(A;\Xi)'\lra \wt\U^+_\hbar(A;\Xi)'\wh\ot \wt\U^+_\hbar(A;\Xi)', \\
& & & \Delta^0(\rho_i)=\rho_i\ot 1 + 1 \ot \rho_i, \quad \Delta^0(\varphi_i)=\varphi_i \ot \wt\nu_i+ 1\ot \varphi_i, \\
\text{co-unit} & & & \epsilon^0: \wt\U^+_\hbar(A;\Xi)'\lra\C[[\hbar]], \\
& & & \epsilon^0(\rho_i)=0,  \quad \epsilon^0(\varphi_i)=0, \\
\text{antipode} & & & S^0: \wt\U^+_\hbar(A;\Xi)'\lra \wt\U^+_\hbar(A;\Xi)', \\
& & & S^0(\rho_i)=-\rho_i,  \quad 
S^0(\varphi_i)=- \varphi_i \wt\nu_i^{-1},   
\ealn
\]
where 
$
\wt{\nu}_i= \exp\left(-\frac{\hbar d_i}{2}\wt{h}_i\right) 
$
with $\wt{h}_i=\sum_j A_{i j}\wt{H}_j$.
\end{lemma}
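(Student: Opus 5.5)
The plan is to obtain the Hopf structure on $\wt\U^+_\hbar(A;\Xi)'$ by restricting the structure maps that the dual $\wt\U^+_\hbar(A;\Xi)^\vee$ inherits from $\wt\U^+_\hbar(A;\Xi)$. The product on the dual is $\Delta^*$. The coproduct is the transpose of the multiplication $\mu$ of $\wt\U^+_\hbar(A;\Xi)$, with the braiding convention of Appendix \ref{sect:double}: $\langle \Delta^0(\beta), x\ot y\rangle = \langle \beta, xy\rangle$, possibly composed with $\tau$, depending on the convention fixed for pairings there. The co-unit is evaluation at $1$, and the antipode is the transpose of $S$. The general facts that these make the lattice continuous dual a topological Hopf $(\Gamma,\omega)$-algebra, and that transposes of algebra maps are algebra maps, come from Section \ref{sect:double-gen}.

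The work is to show that $\wt\U^+_\hbar(A;\Xi)'$ is closed under these maps and that they take the stated values on generators. Because $\Delta^0$ and $\epsilon^0$ are algebra homomorphisms and $S^0$ is an anti-homomorphism, it suffices to check the values on $\rho_i$ and $\varphi_i$. The basis $h^{\bf n}E({\bf j})$ and the weight and degree gradings reduce each pairing to a few terms.

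\emph{The element $\rho_i$.} We pair with a product $h^{\bf m}b_\mu\cdot h^{\bf n}b_\nu$. Commuting $b_\mu$ past $h^{\bf n}$ only shifts the $h$'s by scalars, so the product is $h^{{\bf m}}(h-\mu)^{\bf n}b_\mu b_\nu$. By \eqref{eq:pair-def}, $\rho_i$ detects only the coefficient of $h_i$ at weight $0$. This forces $\mu=\nu=0$, and the linear coefficient of $h_i$ in $h^{\bf m}h^{\bf n}$ gives exactly $\langle\rho_i\ot1+1\ot\rho_i, h^{\bf m}\ot h^{\bf n}\rangle$.

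\emph{The element $\varphi_i$.} By weight, the pairing with a product is nonzero only when the product has weight $\Upsilon_i$. There are then two possibilities. In the first, $x=h^{\bf m}$ and $y=h^{\bf n}e_i$; this gives $\delta_{{\bf m}0}\delta_{{\bf n}0}$, which is the term $1\ot\varphi_i$. In the second, $x=h^{\bf m}e_i$ and $y=h^{\bf n}$. Here $e_ih^{\bf n}=(h-\Upsilon_i)^{\bf n}e_i$, with shift given by $A_{ji}$ in the $j$-th slot. Extracting the constant term in $h$ yields the factor $\prod_j(-A_{ji})^{n_j}$. This must be matched with $\langle\wt\nu_i,h^{\bf n}\rangle$, where $\wt\nu_i=\exp(-\frac{\hbar d_i}{2}\sum_jA_{ij}\wt H_j)$ and $\wt H_j=\frac{2}{\hbar d_j}\rho_j$. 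Using \eqref{eq:beta-b-0}, $\langle\rho^{\bf m},h^{\bf n}\rangle=\delta_{\bf mn}{\bf m}!$, the exponential series pairs to $\prod_j(-d_iA_{ij}/d_j)^{n_j}=\prod_j(-A_{ji})^{n_j}$ by the symmetry $d_iA_{ij}=d_jA_{ji}$. This shows $\Delta^0(\varphi_i)=\varphi_i\ot\wt\nu_i+1\ot\varphi_i$, and simultaneously that $\wt\nu_i$ lies in the completed subalgebra, since its series converges in the lattice topology because of the $\hbar$ factors. The $\omega$ factors are trivial here, since $h$ has degree $0$.

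The co-unit values are immediate from \eqref{eq:pair-def}. The antipode follows from the Hopf axiom: solving $\mu(S^0\ot\id)\Delta^0=\epsilon^0$ on generators gives $S^0(\rho_i)=-\rho_i$ and $S^0(\varphi_i)=-\varphi_i\wt\nu_i^{-1}$. Alternatively, one transposes $S$, and uniqueness of the antipode makes the two agree.

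The main obstacle I expect is the $\varphi_i$ computation. There one must keep track of the braiding and the ordering convention in the dual coproduct, and handle convergence and membership of $\wt\nu_i$ in the lattice-completed subalgebra. The $\omega$ signs should cancel, but I would verify them carefully against the pairing conventions of the appendix.
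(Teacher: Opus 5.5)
Your proposal is correct and follows the paper's proof essentially step for step. You restrict the transpose of the multiplication, use the $\Z_+\Pi$-weight to reduce $\Delta^0(\varphi_i)$ to pairings against $h^{\bf m}e_i\otimes h^{\bf n}$ and $h^{\bf m}\otimes h^{\bf n}e_i$, and reduce $\Delta^0(\rho_i)$ to pairings against $h^{\bf m}\otimes h^{\bf n}$. You then identify the resulting factor $\delta_{{\bf m}{\bf 0}}\prod_j(-A_{ji})^{n_j}$ with the pairing of $\wt\nu_i$ via $d_iA_{ij}=d_jA_{ji}$, and the co-unit and antipode are forced by the Hopf axioms.

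One minor point, which the paper also treats only informally (Remark~\ref{rmk:necessity}), concerns your convergence claim for $\wt\nu_i$. Since $\langle \rho^{\bf k}/{\bf k}!, h^{\bf k}\rangle=1$, the series for $\wt\nu_i$ does not converge $\hbar$-adically in the dual lattice $\Hom_R(\wt\U^+_\hbar(A;\Xi)_R,R)$. It converges only term-by-term on each $h^{\bf n}$, and that is what defines $\wt\nu_i$ as a functional. So your phrase ``converges in the lattice topology because of the $\hbar$ factors'' should be read in that weaker sense.
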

\begin{proof}
The map $\Delta^0$ is the restriction of $\mu^*: \U^+_\hbar(A;\Xi)^\vee\lra (\U^+_\hbar(A;\Xi)\wh\ot \U^+_\hbar(A;\Xi))^\vee$ to $\U^+_\hbar(A;\Xi)'$.
Since $h^{\bf m} b_\mu h^{\bf n} b_\nu\in \U_0(\U_+)_{\mu+\nu}$, we conclude that 
\[
\baln
\langle \Delta^0(\varphi_i),  h^{\bf m} b_\mu \ot h^{\bf n}b_\nu \rangle =0, \quad \text{if $\mu+\nu\not\in\Pi$}, \\
\langle \Delta^0(\rho_i),  h^{\bf m} b_\mu \ot h^{\bf n}b_\nu \rangle =0, \quad \text{if $\mu+\nu\ne 0$}.
\ealn
\]

If $\mu+\nu\in \Pi$, then $h^{\bf m} b_\mu \ot h^{\bf n}b_\nu$ is either 
$h^{\bf m} e_s \ot h^{\bf n}$ or $h^{\bf m}\ot  h^{\bf n} e_s$ for some $s$ (up to scalar multiples). 
Now 
\[
\baln
\langle \Delta^0(\varphi_i),  h^{\bf m} e_s \ot h^{\bf n} \rangle 
&= \delta_{i s} \delta_{{\bf m}{\bf 0}} \prod_j(-A_{j i})^{n_j},\\
\langle \Delta^0(\varphi_i),  h^{\bf m}\ot  h^{\bf n} e_s \rangle 
&=\delta_{i s} \delta_{{\bf  m}+{\bf n}, {\bf 0}},   
\ealn
\]
which lead to
\[
\baln
\Delta^0(\varphi_i) &= \varphi_i\ot \sum_{{\bf n}}\prod_j\frac{(-A_{j i}\rho_j)^{n_j}}{n_j!} + 1\ot \varphi_i.
\ealn
\]
Recall that $\rho_i= \hbar \frac{d_i}{2}\wt{H}_i$. Thus $\sum_{{\bf n}}\prod_j\frac{(-A_{j i}\rho_j)^{n_j}}{n_j!}$ is equal to
\[
\baln
\sum_{{\bf n}}\prod_j\frac{(-\hbar(\Upsilon_i, \Upsilon_j)\wt{H}_j)^{n_j}}{n_j!}
&= \exp^{-\hbar\sum_j (\Upsilon_i, \Upsilon_j)\wt{H}_j}
= \exp^{-\frac{\hbar d_i}{2}\sum_j A_{i j}\wt{H}_j}=\wt\nu_i, 
\ealn
\]
where we have used $\frac{d_i}{2}A_{ij}=(\Upsilon_i,\Upsilon_j)$.
Hence
$
\Delta^0(\varphi_i) = \varphi_i\ot  \wt{\nu}_i+ 1\ot \varphi_i.
$

If $\mu+\nu=0$, then $h^{\bf m} b_\mu \ot h^{\bf n}b_\nu=h^{\bf m} \ot h^{\bf n}$.
We have 
\[
\langle \Delta^0(\rho_i),  h^{\bf m} \ot h^{\bf n} \rangle =\delta_{{\bf m}+{\bf n}, {\bf 1}_i},
\]
which leads to $\Delta^0(\rho_i)=\rho_i\ot 1 + 1\ot \rho_i$. 

It is easy to verify that $\epsilon^0$ and $S^0$ have the claimed properties, 
completing the proof of the lemma.
\end{proof}

Note that $\wt{h}_i=\sum_j A_{i j}\wt{H}_j$ satisfies
\[
\wt{h}_i \varphi_j -  \varphi_j \wt{h}_i = - A_{i j} \varphi_j, \quad \forall j.
\]

\begin{lemma}
Retain notation of Lemma \ref{lem:beta-b}.  
The following relation holds.
\beq\label{eq:pairing}
\left\langle \rho^{\bf m}\beta_{\mu},  h^{\bf n} b_\nu\right\rangle=\delta_{\bf m n} \left\langle\beta_{\mu},  b_\nu\right\rangle{\bf m}!.    
\eeq
\end{lemma}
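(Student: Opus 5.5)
The plan is to start from the reduction \eqref{eq:pair-general} obtained in the proof of Lemma \ref{lem:beta-b}. It says that $\langle \rho^{\bf m}\beta_\mu, h^{\bf n} b_\nu\rangle$ equals $\langle \beta_\mu, h^{{\bf n}-{\bf m}} b_\nu\rangle$ (up to the combinatorial factor coming from $\Delta(h^{\bf n})$) when ${\bf m}\le{\bf n}$ componentwise, and vanishes otherwise. Comparing with \eqref{eq:beta-b-0}, the factor produced when $\rho^{\bf m}$ pairs with $h^{\bf r}$ is $\binom{{\bf n}}{{\bf r}}\delta_{\bf m r}{\bf m}!$. So the precise form I would record is
\[
\langle \rho^{\bf m}\beta_\mu, h^{\bf n} b_\nu\rangle=\binom{{\bf n}}{{\bf m}}{\bf m}!\,\langle \beta_\mu, h^{{\bf n}-{\bf m}} b_\nu\rangle ,
\]
with the right side read as $0$ unless ${\bf m}\le{\bf n}$. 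The claim \eqref{eq:pairing} then reduces to one statement: $\langle \beta_\mu, h^{\bf r} b_\nu\rangle=0$ whenever ${\bf r}\ne{\bf 0}$. Granting this, only ${\bf n}={\bf m}$ survives, $\binom{{\bf m}}{{\bf m}}=1$, and we get $\delta_{\bf mn}{\bf m}!\langle\beta_\mu,b_\nu\rangle$.

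To prove the vanishing for ${\bf r}\neq{\bf 0}$, I will argue by induction on the length $|\mu|$, with $\beta_\mu$ a monomial $\phi({\bf j})$.

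\textbf{Base cases.} For $|\mu|=0$, $\beta_\mu$ is a scalar multiple of $1$, which pairs through the co-unit; since $\varepsilon(h_i)=0$, we have $\varepsilon(h^{\bf r} b_\nu)=0$ for ${\bf r}\ne{\bf 0}$. For $|\mu|=1$, $\beta_\mu=\varphi_i$, and the vanishing is the second and third relations of \eqref{eq:pair-def}, because of the factor $\delta_{\bf n0}$ there.

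\textbf{Inductive step.} Write $\beta_\mu=\beta_{\mu+\Upsilon_i}\varphi_i$. Then
\[
\langle\beta_\mu,h^{\bf r}b_\nu\rangle=\langle\beta_{\mu+\Upsilon_i}\ot\varphi_i,\Delta(h^{\bf r})\Delta(b_\nu)\rangle .
\]
Expand $\Delta(h^{\bf r})=\sum_{\bf s}\binom{\bf r}{\bf s}h^{{\bf r}-{\bf s}}\ot h^{\bf s}$. By Lemma \ref{lem:beta-b}, $\varphi_i$ only sees second tensor factors of weight $\Upsilon_i$. By Lemma \ref{lem:primit}, the relevant term of $\Delta(b_\nu)$ is $\hat b^{R,+}_{\nu;i}\ot e_ik_{\nu-\Upsilon_i}$. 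Each such contribution therefore carries $\langle\varphi_i,h^{\bf s}e_ik_{\nu-\Upsilon_i}\rangle$.

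\textbf{Main obstacle.} The difficulty is that this last factor is \emph{not} zero for ${\bf s}\ne0$. Moving $h^{\bf s}$ past $e_i$ shifts it by $A$-entries, and $k_{\nu-\Upsilon_i}$ is itself a power series in the $h$'s. So these terms do not vanish individually, and I have to show they cancel in total.

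My first attempt is to avoid expanding altogether. Using $h_je_i=e_i(h_j+A_{ji})$, I would rewrite $h^{\bf r}b_\nu=b_\nu\,p(h)$ for a polynomial $p(h)=\prod_j(h_j+\nu(h_j))^{r_j}$. The factor $p(h)$ is evaluated by the functionals $\beta_\mu$ through the group-like Cartan parts, exactly as in the computation of $\Delta^0(\varphi_i)$ in Lemma \ref{lem:co-dU}.

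Failing that, I would check whether the intended identity should instead be stated with $h^{\bf n}$ placed to the right of $b_\nu$, or modulo the normalisation $\langle\varphi_i,h^{\bf n}e_j\rangle=\delta_{ij}\delta_{\bf n0}$. In that normalisation the base case already builds in vanishing for ${\bf n}\ne0$, and the induction closes if the $k$-factors are moved to the left-hand tensor slot using \eqref{eq:beta-b-0}. I expect this bookkeeping of the $k_{\nu-\Upsilon_i}$ factors to be the only real work; everything else is formal.
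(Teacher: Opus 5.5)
Your reduction is sound, and you are right that \eqref{eq:pair-general} should carry the factor $\binom{{\bf n}}{{\bf m}}{\bf m}!$. The statement is then equivalent to its ${\bf m}={\bf 0}$ case, namely $\langle\beta_\mu,h^{\bf r}b_\nu\rangle=0$ for ${\bf r}\neq{\bf 0}$.

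\textbf{The gap.} You never prove this vanishing, and the ``main obstacle'' you name is not real. Since $e_ih_j=(h_j-A_{ji})e_i$, one has $e_ik_{\nu-\Upsilon_i}=k'e_i$, where $k'$ is the power series obtained from $k_{\nu-\Upsilon_i}$ by $h_j\mapsto h_j-A_{ji}$. Hence $h^{\bf s}e_ik_{\nu-\Upsilon_i}=\sum_{\bf t}c_{\bf t}\,h^{{\bf s}+{\bf t}}e_i$. The factor $\delta_{{\bf n}{\bf 0}}$ in \eqref{eq:pair-def}, where the $h$'s stand to the left exactly as here, kills every term when ${\bf s}\neq{\bf 0}$. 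So $\langle\varphi_i,h^{\bf s}e_ik_{\nu-\Upsilon_i}\rangle=0$ for ${\bf s}\neq{\bf 0}$, and your induction closes at once. The ${\bf s}\neq{\bf 0}$ terms vanish for this reason, and the ${\bf s}={\bf 0}$ term vanishes by the inductive hypothesis because ${\bf r}\neq{\bf 0}$.

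Your fallback of placing $h^{\bf n}$ to the right of $b_\nu$ would actually make the identity false. For $b_\nu$ of weight $\nu=\sum_t\Upsilon_{j_t}$ one has $b_\nu h^{\bf n}=\prod_j(h_j-\nu(h_j))^{n_j}b_\nu$ with $\nu(h_j)=\sum_tA_{jj_t}$. Its $h^{\bf 0}$-coefficient $\prod_j(-\nu(h_j))^{n_j}$ is generally nonzero; for example $\langle\varphi_i,e_ih_i\rangle=-2$.

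\textbf{The paper's route.} The paper needs no induction. It pairs through $\Delta^0=\mu^*$ instead of through $\Delta$, factoring $h^{\bf n}b_\nu=h^{\bf n}\cdot b_\nu$, so that $\langle\rho^{\bf m}\beta_\mu,h^{\bf n}b_\nu\rangle=\langle\Delta^0(\rho^{\bf m})\Delta^0(\beta_\mu),h^{\bf n}\ot b_\nu\rangle$.
\begin{itemize}
\item By Lemma \ref{lem:co-dU}, $\Delta^0(\varphi_i)=\varphi_i\ot\wt\nu_i+1\ot\varphi_i$, so every Cartan-type factor lands in the right slot.
\item Hence the only term of $\Delta^0(\beta_\mu)$ whose left factor pairs nontrivially with $h^{\bf n}$ is $1\ot\beta_\mu$; the other terms have left factors of nonzero weight, cf.\ \eqref{eq:beta-b-2}.
\item Combining this with $\Delta^0(\rho^{\bf m})=\sum_{\bf r}\binom{{\bf m}}{{\bf r}}\rho^{{\bf m}-{\bf r}}\ot\rho^{\bf r}$, with \eqref{eq:beta-b-0}, and with \eqref{eq:pair-general} at ${\bf n}={\bf 0}$ (which forces ${\bf r}={\bf 0}$), gives $\delta_{\bf mn}{\bf m}!\langle\beta_\mu,b_\nu\rangle$ in one line.
\end{itemize}

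\textbf{Your ``first attempt''.} It is the mirror image of the paper's argument and would also work if carried out. Take $\beta_\mu=\phi({\bf j})$ and $\nu=-\mu$; otherwise everything vanishes by \eqref{eq:beta-b-2}. The only term of $\Delta^0(\beta_\mu)$ with weight-zero right factor is $\beta_\mu\ot\wt\nu_{j_1}\cdots\wt\nu_{j_n}$. This group-like element acts on $\wt\U_0$ as the character $h_j\mapsto-\nu(h_j)$, so $p(h)=\prod_j(h_j+\nu(h_j))^{r_j}$ evaluates to $0$ for ${\bf r}\neq{\bf 0}$.

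As written, however, that step is only gestured at, and the proposal ends in doubt about whether the statement holds. So it is not yet a proof.
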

\begin{proof}
 We have $\left\langle \rho^{\bf m}\beta_{\mu},  h^{\bf n} b_\nu\right\rangle
= \left\langle \Delta^0(\rho^{\bf m}\beta_{\mu}),  h^{\bf n}\ot  b_\nu\right\rangle$.
Using formulae in Lemma \ref{lem:beta-b} and the explicit form of $\Delta^0$, 
we can express the right hand side as 
\[
\baln
\sum_{\bf r} 
\begin{pmatrix}
{\bf m}\\ {\bf r}
\end{pmatrix} \left\langle 
\rho^{{\bf m}-{\bf r}}\ot \rho^{\bf r}\beta_{\mu},  h^{\bf n}\ot  b_\nu\right\rangle
&=\sum_{\bf r} \delta_{{\bf m}, {\bf n}+{\bf r}} {\bf n}!   \begin{pmatrix}
{\bf m}\\ {\bf r}
\end{pmatrix}   
\left\langle 
\rho^{\bf r}\beta_{\mu},  b_\nu\right\rangle.
\ealn
\]
It follows from \eqref{eq:pair-general} that $\left\langle 
\rho^{\bf r}\beta_{\mu},  b_\nu\right\rangle=0$ unless ${\bf r}=0$.  
This proves \eqref{eq:pairing}. 
\end{proof}

\begin{remark}\label{rmk:necessity}
The co-multiplication for 
$\wt\U^+_\hbar(A;\Xi)'$ involves  exponentials $\wt{\nu}_i$ of elements of $\sum_i\C\rho_i$, 
which are well defined only because $\rho_i=\hbar \frac{d_i}{2} \wt{H}_i$.  
Under the Hopf pairing,   $\big\langle \wt{H}^{\bf m},  h^{\bf n} \big\rangle\propto  \delta_{{\bf m} {\bf n}}\hbar^{-|{\bf m}|}$ as follows from $\left\langle \rho^{\bf m},  h^{\bf n} \right\rangle=\delta_{\bf m n}{\bf m}!$.
To conciliate both, we need to work over $\C((\hbar))$ from the start,  instead of working with $\C[[\hbar]]$ and then extensding scalars to $\C((\hbar))$. 
\end{remark}
\subsection{Quantised colour Borel subalgebras}
\subsubsection{Colour quantum Serre relations}\label{sect:U+U-}

Let $\CJ^+=\{x\in \wt\U^+_\hbar(A;\Xi)\mid \langle \wt\U^+_\hbar(A;\Xi)', x\rangle =\{0\} \}$.  
Clearly $\CJ^+$ is a two-sided ideal of $\wt\U^+_\hbar(A;\Xi)$, 
which is homogeneous since dual space pairing preserves the grading.  If $x\in \CJ^+$, we have 
$\langle \beta\ot \beta', \Delta(x)\rangle = \langle \beta\beta', x\rangle = 0$ for all 
$\beta, \beta'\in \wt\U^+_\hbar(A;\Xi)'$, thus $\Delta(x)\in \CJ^+\ot\wt\U^+_\hbar(A;\Xi)   + \wt\U^+_\hbar(A;\Xi)\ot \CJ^+$. This shows that $\CJ^+$ is a 
homogeneous Hopf ideal of $\wt\U^+_\hbar(A;\Xi)$. 

Now we have the Hopf $(\Gamma, \omega)$-algebra $\U^+_\hbar(A;\Xi):=\wt\U^+_\hbar(A;\Xi)/\CJ^+$. We will still denote the images of $e_i$ and $k_i$ in the quotient by the same symbols. 

An element $E_\mu\in (\wt\U_+)_\mu$, where $\mu>0$,  is called skew primitive  if its co-product is of the form $\Delta(E_\mu)=E_\mu\ot k_\mu + 1 \ot E_\mu$, where  $k_\mu$ is a product of $k_i$'s such that $k_\mu e_i k_\mu^{-1}= q^{(\Upsilon_i, \mu)} e_i$ for all $i\in I$. The $e_i$'s are skew primitive. 
Let $S^+_{i j}$, for $i\ne j$, be the elements in $\wt\U_+$
defined by the formula \eqref{eq:def-S+} but interpreted in the present context.
The proof of Lemma \ref{lem:co-prods} is still valid in the current context, showing that
$S^+_{i j}$ are skew primitive.

The following fact is easy to prove.
\begin{lemma} \label{lem:skew}
Skew primitive elements of $(\wt\U_+)_\mu$ belong to $\CJ^+$ if 
$0<\mu\not\in \Pi$. In particular, the elements $S^+_{i j}$,  for all $i\ne j$, 
belong to $\CJ^+$. 
\end{lemma}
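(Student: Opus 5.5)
To show that a skew primitive $E_\mu\in(\wt\U_+)_\mu$ with $0<\mu\notin\Pi$ lies in $\CJ^+$, I will check that $\langle \rho^{\bf m}\beta,\ x\rangle=0$ for every basis element $\rho^{\bf m}\beta$ of $\wt\U^+_\hbar(A;\Xi)'$ and every $x$ in the two-sided ideal generated by $E_\mu$. Because $\CJ^+$ is a two-sided ideal, it is enough to prove $\langle \wt\U^+_\hbar(A;\Xi)', E_\mu\rangle=0$.

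By \eqref{eq:pairing}, $\langle\rho^{\bf m}\beta_\nu,E_\mu\rangle=\delta_{{\bf m}{\bf 0}}\langle\beta_\nu,E_\mu\rangle$. By \eqref{eq:beta-b-2}, this vanishes unless $\nu=-\mu$. So it remains to show $\langle\beta_{-\mu},E_\mu\rangle=0$ for every $\beta_{-\mu}\in(\wt\U'_-)_{-\mu}$.

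Since $|\mu|\ge 2$, every such $\beta_{-\mu}$ is a linear combination of products $\beta'\beta''$ with $\beta'\in(\wt\U'_-)_{-\nu'}$, $\beta''\in(\wt\U'_-)_{-\nu''}$, and $\nu',\nu''>0$, $\nu'+\nu''=\mu$. For instance, one can take $\beta''=\varphi_i$ to be the last letter of a monomial $\phi({\bf j})$, which has length $\ge2$. The pairing is a Hopf pairing, since multiplication in the dual is $\Delta^*$. Hence
\[
\langle\beta'\beta'',E_\mu\rangle=\langle\beta'\ot\beta'',\Delta(E_\mu)\rangle
=\langle\beta',E_\mu\rangle\langle\beta'',k_\mu\rangle+\langle\beta',1\rangle\langle\beta'',E_\mu\rangle,
\]
up to the braiding sign convention, which contributes only a factor of $\omega$.

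Each term on the right vanishes by \eqref{eq:beta-b-2}. In the first term, $\langle\beta'',k_\mu\rangle$ pairs an element of weight $-\nu''\neq0$ with an element of $\wt\U_0$, which has weight $0$. In the second term, $\langle\beta',1\rangle=0$ because $\nu'\neq0$. Here $k_\mu$ is regarded as the exponential series in the $h_i$, and the pairing is applied termwise using lattice continuity. This gives $\langle\beta_{-\mu},E_\mu\rangle=0$, so $E_\mu\in\CJ^+$.

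For the $S^+_{ij}$, their weight is $(1-A_{ij})\Upsilon_i+\Upsilon_j$, which has height $2-A_{ij}\ge2$ and so is not in $\Pi$. They are skew primitive by Lemma \ref{lem:co-prods} (with $n=1-A_{ij}$), as already noted in the text. The first part therefore applies.

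The only delicate point is the bookkeeping in the Hopf pairing on tensor products: the precise form of $\langle\beta'\ot\beta'',a\ot b\rangle$, including any $\omega$-factor from the braided convention of the appendix. Any such factor is a nonzero scalar multiplying terms that already vanish, so it cannot affect the conclusion. One also needs the spanning claim for $(\wt\U'_-)_{-\mu}$, which is immediate from the monomial basis $\phi({\bf j})$.
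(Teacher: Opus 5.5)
Your proof is correct and follows essentially the same route as the paper. You reduce via \eqref{eq:pairing} and \eqref{eq:beta-b-2} to pairings $\langle \phi({\bf j}), E_\mu\rangle$ with $\phi({\bf j})$ of length $|\mu|\ge 2$, split off the last factor $\varphi_{j_n}$, and use $\Delta(E_\mu)=E_\mu\ot k_\mu+1\ot E_\mu$ together with \eqref{eq:beta-b-2} to kill both resulting terms; your remark that any $\omega$-factor from the braided tensor pairing is harmless addresses a point the paper passes over silently.
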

\begin{proof}
Since $0<\mu\not\in \Pi$, we have $\left\langle 1,  E_\mu\right\rangle=0$, and
$\left\langle \varphi_i,  E_\mu\right\rangle=0$ for all $i$ by 
\eqref{eq:beta-b-2}.  Now for any ${\bf j}=(j_1, j_2, \dots, j_n)\in I^n$, we write ${\bf j}'=(j_1, j_2, \dots, j_{n-1})$. Then $\phi({\bf j}) =\phi({\bf j}')\varphi_{j_n}$.    Hence 
\[
\baln
\left\langle \rho^{\bf m}\phi({\bf j}),  E_\mu\right\rangle&=\left\langle \rho^{\bf m}\ot \phi(\bf j),  E_\mu\ot k_\mu + 1 \ot E_\mu\right\rangle= \delta_{0\bf m}\left\langle \phi({\bf j}),  E_\mu\right\rangle, \\
\left\langle \phi({\bf j}),  E_\mu\right\rangle
&=\left\langle \phi({\bf j}')\ot \varphi_{j_n},  E_\mu\ot k_\mu + 1 \ot E_\mu\right\rangle
= 0.
\ealn
\]
This proves the first statement of the lemma.

Since the elements $S^+_{i j}$ ($i\ne j$) are skew primitive with weights not in $\Pi$, 
they must belong to $\CJ^+$. 
\end{proof}

The following result is an immediate consequence of the lemma.
\begin{lemma}\label{lem:Serre-U}
The Hopf $(\Gamma, \omega)$-algebra $\U^+_\hbar(A;\Xi)$ satisfies the  relations 
\[ 
S^+_{i j}+\CJ^+ =0, \quad i\ne j.
\]
Furthermore, $e_i+\CJ^+$ ($i\in I$) are the only skew primitive elements up to scalar multiples.
\end{lemma}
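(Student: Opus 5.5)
The first statement follows at once from Lemma \ref{lem:skew}. Each $S^+_{ij}$ ($i\ne j$) is skew primitive, by the $n=1-A_{ij}$ case of Lemma \ref{lem:co-prods}, whose proof carries over to $\wt\U^+_\hbar(A;\Xi)$. Its weight $(1-A_{ij})\Upsilon_i+\Upsilon_j$ is positive and does not lie in $\Pi$. Hence $S^+_{ij}\in\CJ^+$, so $S^+_{ij}+\CJ^+=0$ in the quotient $\U^+_\hbar(A;\Xi)$.

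For the second statement, let $x\in\U^+_\hbar(A;\Xi)$ be skew primitive of weight $\mu>0$, and lift it to $\tilde x\in(\wt\U_+)_\mu$. Since $\CJ^+$ is a Hopf ideal, $\Delta(\tilde x)-\tilde x\ot k_\mu-1\ot\tilde x$ lies in $\CJ^+\ot\wt\U^+_\hbar+\wt\U^+_\hbar\ot\CJ^+$. The goal is to show $\langle\beta,\tilde x\rangle=0$ for all $\beta\in\wt\U^+_\hbar(A;\Xi)'$ whenever $\mu\notin\Pi$.

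By \eqref{eq:pairing} it suffices to take $\beta=\phi({\bf j})$ with $|{\bf j}|=|\mu|$. I would repeat the computation in the proof of Lemma \ref{lem:skew}: write $\phi({\bf j})=\phi({\bf j}')\varphi_{j_n}$ and pair with $\Delta(\tilde x)$. The correction terms lying in $\CJ^+\ot\wt\U^+_\hbar+\wt\U^+_\hbar\ot\CJ^+$ pair to zero, because $\CJ^+$ is precisely the radical. What remains is $\langle\phi({\bf j}')\ot\varphi_{j_n},\tilde x\ot k_\mu+1\ot\tilde x\rangle$. The first term vanishes because the weight of $\phi({\bf j}')$ does not match $\mu$. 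The second vanishes because $\langle\phi({\bf j}'),1\rangle=0$ when $|{\bf j}'|\ge1$, which holds since $|\mu|\ge2$. Hence $\tilde x\in\CJ^+$, i.e.\ $x=0$.

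It remains to treat weight $\mu=\Upsilon_i$. There $(\wt\U_+)_{\Upsilon_i}=\C e_i$, so $x$ is a scalar multiple of $e_i+\CJ^+$. Finally, $e_i\notin\CJ^+$ because $\langle\varphi_i,e_i\rangle=1$.

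The main obstacle is the skew primitive element that is not homogeneous in weight. I would handle it by decomposing $x$ into weight components, using that $\Delta$ respects the $\Z_+\Pi$-grading (with the $k_\mu$ factor determined by the weight). A skew primitive element of the stated form must then be weight homogeneous, so the argument above applies componentwise. The phrase ``skew primitive'' is to be read with $k_\mu$ attached to a definite weight $\mu$, which forces this homogeneity.
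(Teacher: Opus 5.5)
Your proof is correct and follows the paper's route: the paper obtains both statements directly from Lemma \ref{lem:skew}, as you do. You lift to $\tilde x\in(\wt\U_+)_\mu$ and note that the defect $\Delta(\tilde x)-\tilde x\ot k_\mu-1\ot\tilde x$ lies in $\CJ^+\ot\wt\U^+_\hbar(A;\Xi)+\wt\U^+_\hbar(A;\Xi)\ot\CJ^+$, hence pairs to zero, so the computation of Lemma \ref{lem:skew} still applies; this usefully makes explicit a point the paper's ``immediate consequence'' passes over, namely that a skew primitive element of the quotient need not lift to a skew primitive element of $\wt\U_+$.
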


Define $\CJ^-=\{f\in \wt\U^+_\hbar(A;\Xi)'\mid \langle f, \wt\U^+_\hbar(A;\Xi)\rangle =\{0\} \}$.
We can similarly show, as in the case of $\CJ^+$, that $\CJ^-$  is a homogeneous Hopf ideal of $\wt\U^+_\hbar(A;\Xi)'$. 
Let $S^-_{i j}$, for $i\ne j$, be the elements in $\wt\U^+_\hbar(A;\Xi)'$
defined by the formula \eqref{eq:def-S-} with $f_\ell$'s replaced by $\varphi_\ell$. 
Then $S^-_{i j}$ are skew primitive.

We have  the following result, whose proof is similar to that of Lemma \ref{lem:Serre-U}. 
\begin{lemma}\label{lem:Serre-dU}
The quotient $\U^+_\hbar(A;\Xi)':= \wt\U^+_\hbar(A;\Xi)'/\CJ^-$ is a Hopf $(\Gamma, \omega)$-algebra, which satisfies the  relations 
\[ 
S^-_{i j}+\CJ^- =0, \quad i\ne j.
\]
Furthermore, $f_i+\CJ^-$ ($i\in I$) are the only skew primitive elements up to scalar multiples.
\end{lemma}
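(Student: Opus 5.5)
We mirror the argument for $\CJ^+$ and Lemma \ref{lem:Serre-U}, with the roles of $\wt\U^+_\hbar(A;\Xi)$ and $\wt\U^+_\hbar(A;\Xi)'$ exchanged. The plan has three steps: show $\CJ^-$ is a homogeneous Hopf ideal, show the $S^-_{ij}$ lie in $\CJ^-$, and then identify the skew primitive elements of the quotient.

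\emph{Step 1: $\CJ^-$ is a homogeneous Hopf ideal.} Since $\langle\ ,\ \rangle$ is a Hopf pairing, $\langle \beta\beta', x\rangle=\langle \beta\ot\beta',\Delta(x)\rangle$ and $\langle \Delta^0(\beta), x\ot y\rangle=\langle\beta, xy\rangle$. For $f\in\CJ^-$ we then get $\langle \beta f\beta', x\rangle=0$ and $\langle \Delta^0(f), x\ot y\rangle=0$ for all $x,y$. The first gives that $\CJ^-$ is a two-sided ideal. The second gives $\Delta^0(f)\in \CJ^-\wh\ot\,\wt\U^+_\hbar(A;\Xi)'+\wt\U^+_\hbar(A;\Xi)'\wh\ot\,\CJ^-$; here one uses the basis $\rho^{\bf m}\phi({\bf j})$ and the fact that the radical of a tensor product pairing is spanned as stated, working weight-space by weight-space since the weight spaces are finite rank over $\wt\U'_0$. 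The antipode is compatible with the pairing, so $S^0(\CJ^-)\subseteq\CJ^-$. Finally $\epsilon^0(f)=\langle f,1\rangle=0$, and homogeneity holds because the pairing respects both the $\Gamma$-grading and the weight grading. Hence the quotient $\U^+_\hbar(A;\Xi)'$ is a Hopf $(\Gamma,\omega)$-algebra.

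\emph{Step 2: $S^-_{ij}\in\CJ^-$.} We first show $S^-_{ij}$ is skew primitive with respect to $\Delta^0(\varphi_i)=\varphi_i\ot\wt\nu_i+1\ot\varphi_i$. This is the analogue of the second formula of Lemma \ref{lem:co-prods}, obtained by the same induction with $k_i$ replaced by $\wt\nu_i$. Because $\wt\nu_i$ now sits on the right, the computation matches the $e$-case rather than the $f$-case. One has to check that $\wt\nu_i\varphi_j\wt\nu_i^{-1}=q_i^{A_{ij}}\varphi_j$ (from $\wt h_i\varphi_j-\varphi_j\wt h_i=-A_{ij}\varphi_j$), so the $q$-powers reproduce $\prod_r(1-q_i^{2r+2A_{ij}})$, which vanishes at $n=1-A_{ij}$.

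Next we prove the dual of Lemma \ref{lem:skew}: a skew primitive $F\in(\wt\U'_-)_{-\mu}$ with $0<\mu\notin\Pi$ pairs to zero with every $h^{\bf n}E({\bf j})$. By \eqref{eq:pairing} it suffices to pair with $E({\bf j})$. Writing $E({\bf j})=E({\bf j}')e_{j_n}$ gives $\langle F,E({\bf j}')e_{j_n}\rangle=\langle F\ot\wt\nu_\mu+1\ot F, E({\bf j}')\ot e_{j_n}\rangle$. Each term vanishes by weight: $\langle\wt\nu_\mu,e_{j_n}\rangle=0$, and $\langle F,E({\bf j}')\rangle=0$ because the weights differ. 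The remaining term $\langle 1,E({\bf j}')\rangle$ is nonzero only if ${\bf j}'=\emptyset$, i.e. $\mu\in\Pi$, which is excluded. So $S^-_{ij}\in\CJ^-$, which gives the relations.

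\emph{Step 3: only the $f_i$ are skew primitive in the quotient.} This is the main obstacle. Let $F$ be skew primitive of weight $-\mu$ in the quotient, and lift it to $\wt\U^+_\hbar(A;\Xi)'$. If $|\mu|\ge2$, skew primitivity modulo $\CJ^-$ forces $\langle F,E({\bf j}')e_{j_n}\rangle=0$ by the computation of Step 2. So $F\in\CJ^-$, and $F$ is zero in the quotient.

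If $|\mu|=1$, the element is a multiple of $\varphi_i$, equivalently of $f_i$. If $\mu=0$, skew primitivity with $F$ in $\wt\U'_0$ gives $F$ primitive, i.e. $F$ lies in the span of the $\rho_i$. This is the usual exceptional case, presumably excluded by the convention that skew primitive elements have $\mu>0$, exactly as in Lemma \ref{lem:Serre-U}.

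The delicate point is the lifting: we need $\Delta^0(F)-F\ot\wt\nu_\mu-1\ot F\in\CJ^-\wh\ot(\cdot)+(\cdot)\wh\ot\CJ^-$, and this pairs to zero against $E({\bf j}')\ot e_{j_n}$. That holds precisely because the pairing descends to the quotients. Here I would invoke nondegeneracy of the induced pairing between $\U^+_\hbar(A;\Xi)'$ and $\U^+_\hbar(A;\Xi)$, which holds by construction.
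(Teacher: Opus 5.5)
Your proposal is correct and follows the route the paper intends. The paper says the proof is ``similar'' to the $\CJ^+$ case: $\CJ^-$ is a homogeneous Hopf ideal because it is the radical of a Hopf pairing, the $S^-_{ij}$ are skew primitive by the analogue of Lemma~\ref{lem:co-prods}, and dualising Lemma~\ref{lem:skew} (pairing a skew primitive $F$ against $E({\bf j}')e_{j_n}$) puts them in $\CJ^-$. Your extra details are sound and just make explicit what the paper leaves implicit: the relation $\wt\nu_i\varphi_j\wt\nu_i^{-1}=q_i^{A_{ij}}\varphi_j$ used to rerun the $e$-type coproduct computation, and the argument in the quotient for the \emph{furthermore} clause. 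For the latter, note that what you need is that $\CJ^-$ is by definition the left radical of the pairing, rather than nondegeneracy.
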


The following important fact follows from the definitions of $\U^+_\hbar(A;\Xi)$ and $\U^+_\hbar(A;\Xi)'$.
\begin{theorem}\label{thm:non-degenerate}
The dual space pairing $\wt\U^+_\hbar(A;\Xi)^\vee\ot \wt\U^+_\hbar(A;\Xi)\lra \K$  
descends to a non-degenerate pairing $\langle \  , \  \rangle: \U^+_\hbar(A;\Xi)'\otimes \U^+_\hbar(A;\Xi)\lra \K $ of Hopf $(\Gamma, \omega)$-algebras.
\end{theorem}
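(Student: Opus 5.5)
The proof is essentially formal, and I would carry it out in three steps: well-definedness of the descended pairing, its non-degeneracy, and its compatibility with the Hopf structures.

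First, I would record how the dual space pairing $\langle\ ,\ \rangle:\wt\U^+_\hbar(A;\Xi)^\vee\ot\wt\U^+_\hbar(A;\Xi)\lra\K$, restricted to $\wt\U^+_\hbar(A;\Xi)'$, interacts with the structure maps. By construction the multiplication on $\wt\U^+_\hbar(A;\Xi)'$ is $\Delta^*$, so $\langle\beta\beta',x\rangle=\langle\beta\ot\beta',\Delta(x)\rangle$. By Lemma \ref{lem:co-dU}, $\Delta^0$ is the restriction of $\mu^*$, so $\langle\Delta^0(\beta),x\ot y\rangle=\langle\beta,xy\rangle$. The unit and co-unit are dual to each other, and the antipodes satisfy $\langle S^0\beta,x\rangle=\langle\beta,Sx\rangle$. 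I would check the antipode relation on generators using Lemma \ref{lem:co-dU}, or deduce it from uniqueness of antipodes in the convolution algebra. Here I must use the convention for the pairing on tensor products involving $\omega$, taken from Appendix \ref{sect:Hopf-dual}. The content of this step is that the restricted pairing is a Hopf $(\Gamma,\omega)$-pairing between $\wt\U^+_\hbar(A;\Xi)'$ and $\wt\U^+_\hbar(A;\Xi)$.

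Second, I would show descent. By definition $\CJ^+$ is the right radical and $\CJ^-$ is the left radical of this pairing, so $\langle\CJ^-,\wt\U^+_\hbar(A;\Xi)\rangle=0=\langle\wt\U^+_\hbar(A;\Xi)',\CJ^+\rangle$. Hence the pairing factors through $\wt\U^+_\hbar(A;\Xi)'/\CJ^-\ot\wt\U^+_\hbar(A;\Xi)/\CJ^+=\U^+_\hbar(A;\Xi)'\ot\U^+_\hbar(A;\Xi)$. Since $\CJ^\pm$ are homogeneous Hopf ideals (shown before Lemma \ref{lem:Serre-U} and in Lemma \ref{lem:Serre-dU}), the quotient structure maps are induced ones. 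Therefore the Hopf pairing identities of the first step pass to the quotients.

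Third, I would prove non-degeneracy. If $\bar x\in\U^+_\hbar(A;\Xi)$ pairs to zero with all of $\U^+_\hbar(A;\Xi)'$, then any lift $x$ pairs to zero with all of $\wt\U^+_\hbar(A;\Xi)'$. Hence $x\in\CJ^+$ and $\bar x=0$. The argument on the left side is symmetric, using $\CJ^-$.

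The main obstacle is topological and concerns the first step: verifying that $\langle\beta\ot\beta',\Delta(x)\rangle$ is well defined and that $\mu^*$ really lands in the completed tensor product. Both require the lattice-continuity framework (Definition \ref{def:latt}), together with the weight-grading bookkeeping of Lemma \ref{lem:beta-b} and \eqref{eq:pairing}. Those results show that each weight space pairs through finitely many basis terms modulo powers of $\hbar$. I would argue weight space by weight space, reducing to the finite-dimensional pieces $(\wt\U_+)_\nu$ tensored with the $h^{\bf n}$, so that every sum is finite in each weight.
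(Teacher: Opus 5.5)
Your proposal is correct and takes the paper's approach. The paper's entire proof is your third step: if an element pairs to zero with the whole quotient, any lift lies in the radical $\CJ^+$ (resp.\ $\CJ^-$), so its image vanishes. Your first two steps (the Hopf-pairing identities, and descent because $\CJ^\pm$ are the radicals and homogeneous Hopf ideals) and your topological remarks make explicit what the paper treats as immediate from the definitions.
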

\begin{proof} Assume that for an element $a+\CJ^-\in \U^-_\hbar(A;\Xi)$, 
\[
\langle a+\CJ^-, x+ \CJ^+ \rangle = 0, \quad \forall x+\CJ^+\in \U^+_\hbar(A;\Xi).
\]
The left hand side is equal to $\langle a, x\rangle$, which is the original Hopf pairing between $\wt\U^+_\hbar(A;\Xi)^*$ and $ \wt\U^+_\hbar(A;\Xi)$. This implies $a\in \CJ^-$, 
and hence $a+\CJ^-=0$. The argument goes through verbatim for the $\U^+_\hbar(A;\Xi)$ side, 
proving the theorem. 
\end{proof}

\subsubsection{Relationship to Nichols algebras}\label{sect:complete}

While our primary development focuses on the explicit presentation of the colour quantum group as a whole in terms of generators and relations, the independent Borel subalgebras isolated in this section can be directly contextualised within Nichols algebra approach \cite{AAB, AO17, Hec09, Ros98, Lus10} (see \cite{HS20} for a review) through a bosonisation process. 

Let $\U_{q; \Xi}^{++}$ be the subalgebra of $\U^+_\hbar(A;\Xi)$ generated by the elements $e_i+\CJ^+$ for all $i\in I$. Also denote by $\U_{q; \Xi}^+(A)$ the subalgebra  of $\U^+_\hbar(A;\Xi)$ generated by $e_i+\CJ^+$ and $k_i^{\pm 1}+\CJ^+$ ($i\in I$), which contains $\U_{q; \Xi}^{++}$. 
Now $\U_{q; \Xi}^{++}$ is directly related to the Nichols algebra $\mathfrak{B}_Q(I)$ over a $|I|$-dimensional diagonally braided vector space with braiding matrix $Q=(q_{i j}) = (\omega(\xi_i, \xi_j) q^{A_{i j}})$. 
Denote by $\CB\mathfrak{B}_Q(I)$ the bosonisation of $\mathfrak{B}_Q(I)$.
By inspecting the definitions of $\mathfrak{B}_Q(I)$  and $\U^+_\hbar(A;\Xi)$, 
one can see that  $\CB\mathfrak{B}_Q(I)$ is isomorphic to 
$\U_{q; \Xi}^+(A)$.  
The following fact is extracted from the Nichols algebra reformulation
(see e.g., \cite{AAB, AO17, Hec09, HS20}) of results in \cite{Lus10, Ros98}.

\begin{lemma}\label{lem:Nichols}
For $A$ being a Cartan matrix of finite or affine type,
$\U_{q; \Xi}^{++}\simeq \mathfrak{B}_Q(I)$ is defined by the colour quantum Serre relations 
$S^+_{i j}+\CJ^+=0$, for all $i\ne j$. 
\end{lemma}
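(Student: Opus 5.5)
We reduce the statement to the known result for diagonal-type Nichols algebras of Cartan type (Lusztig, Rosso, and the reformulations in Heckenberger and Andruskiewitsch--Angiono), and the work lies in identifying the objects correctly.

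\textbf{Step 1: $\U^{++}_{q;\Xi}$ is a Nichols algebra.} Let $V=\bigoplus_{i\in I}\K e_i$ be the braided vector space with diagonal braiding $c(e_i\ot e_j)=q_{ij}\, e_j\ot e_i$, where $q_{ij}=\omega(\xi_i,\xi_j)q^{(\Upsilon_i,\Upsilon_j)}$. Bosonising the $\Gamma$-grading together with the $\Z\Pi$-grading makes $\wt\U_+$ the tensor algebra $T(V)$ in the corresponding Yetter--Drinfeld category. The braided coproduct of $T(V)$ is recovered from $\Delta(e_i)=e_i\ot k_i+1\ot e_i$ in the usual way; the factor $\omega$ comes from the braiding $\tau$ of $\Vect$, and the $q$-power comes from commuting $k_i$ past $e_j$.

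The quotient $\U^{++}_{q;\Xi}$ is $\wt\U_+/(\CJ^+\cap\wt\U_+)$. By \eqref{eq:pairing} the Hopf pairing factors as the pairing on the Cartan parts times the pairing $\langle\wt\U'_+,\wt\U_+\rangle$. Hence $\CJ^+\cap\wt\U_+$ is exactly the radical of the restricted pairing $\wt\U'_+\times\wt\U_+\to\K$. This restricted pairing is determined by $\langle\varphi_i,e_j\rangle=\delta_{ij}$ and the Hopf axioms, so it is the Lusztig-type bilinear form on $T(V)$. Its radical is the largest graded Hopf ideal of $T(V)$ generated in degrees $\ge 2$. By the standard characterisation of Nichols algebras as $T(V)$ modulo this radical, $\U^{++}_{q;\Xi}\simeq\mathfrak B_Q(I)$. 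The one point needing care is that the $(\Gamma,\omega)$-braided tensor product used in Appendix~\ref{sect:double} matches, after bosonisation, the Yetter--Drinfeld braiding with matrix $Q$. This is a bookkeeping check on the degree conventions for $\tau$.

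\textbf{Step 2: the radical is generated by the Serre elements.} The braiding satisfies $q_{ii}=q_i^2$, since $\omega(\xi_i,\xi_i)=1$ by the standing hypothesis $\xi_i\in\Gamma^+$. It also satisfies $q_{ij}q_{ji}=q^{2(\Upsilon_i,\Upsilon_j)}=q_i^{2A_{ij}}$, because the $\omega$ factors cancel by \eqref{eq:cycle-1}. So $Q$ is of Cartan type with Cartan matrix $A$. Moreover $q$ is generic: it is an indeterminate, specialised to $\exp(\hbar)$ in $\C((\hbar))$, so no $q_{ii}$ is a root of unity.

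For generic Cartan type (finite or affine, and more generally symmetrisable), the theorem of Rosso and Lusztig, as stated in the Nichols-algebra literature, says the defining ideal is generated by the quantum Serre elements $\mathrm{ad}_c(e_i)^{1-A_{ij}}(e_j)$ for $i\ne j$. Here $\mathrm{ad}_c$ is the braided adjoint. Lusztig's proof is for a twisted braiding; it transfers because the statement depends only on $q_{ii}$ and $q_{ij}q_{ji}$, via twist-equivalence (a cocycle twist preserves the Nichols ideal up to rescaling relations).

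It remains to match $\mathrm{ad}_c(e_i)^{N}(e_j)$ with $Ad_{e_i}^N(e_j)$, up to a nonzero scalar. Both are given by the expansion \eqref{eq:Ad-e}, with coefficients $(-q_{ij})^r$ times $q$-binomials. After rescaling by a power of $q_i$, these reproduce \eqref{eq:def-S+}. So the kernel is exactly the ideal generated by $S^+_{ij}+\CJ^+$.

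As a consistency check, Lemma~\ref{lem:skew} already places every $S^+_{ij}$ in $\CJ^+$, which gives one inclusion directly. The main obstacle is the reverse inclusion: that nothing beyond the Serre elements lies in the radical. This is exactly the content imported from \cite{Lus10, Ros98}. Rather than reprove it, I would invoke it after carefully verifying Cartan type, genericity, and the twist-equivalence reduction.
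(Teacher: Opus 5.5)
Your proposal is correct and takes the same route as the paper: identify $\U^{++}_{q;\Xi}$ with the Nichols algebra $\mathfrak{B}_Q(I)$, then import the Serre presentation from Rosso and Lusztig, using the Andruskiewitsch-school reformulation (in effect, twist-equivalence) to absorb the colour factors $\omega(\xi_i,\xi_j)$. You supply details the paper leaves implicit---that \eqref{eq:pairing} makes $\CJ^+\cap\wt\U_+$ the radical of the restricted pairing, the Cartan-type and genericity checks, and the matching of $S^+_{ij}$ with the braided-adjoint Serre elements---and your braiding $q_{ij}=\omega(\xi_i,\xi_j)q^{(\Upsilon_i,\Upsilon_j)}$ is the right one (the paper's $\omega(\xi_i,\xi_j)q^{A_{ij}}$ is of Cartan type only when $A$ is symmetric).
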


We point out that this is derived from  
the corresponding result in the usual non-graded case, which was first proved by Ross \cite{Ros98} 
for finite type Cartan matrices, and by Lusztig \cite{Lus10} for affine type
Cartan matrices. Work of Andruskiewitsch and co-workers 
(see e.g., \cite{AO17, Hec09} and \cite{AAB} in particular) 
enables one to treat the grading, thus to deduce the lemma from \cite{Lus10, Ros98}. 
[The usual proof for Serre presentations of finite dimensional simple Lie algebras 
(see, e.g., \cite[\S18]{H}) was generalised to the Nichols algebra setting using the notion 
of Weyl groupoid (see e.g., \cite{AO17, Hec09}).]

An analogous statement is true for the dual Hopf algebra $\U_{q; \Xi}^+(A)'$.

\begin{notation}\label{notation}
For simplicity, we denote the elements  $e_i+\CJ^+$ and $h_i+\CJ^+$ of $\U^+_\hbar(A;\Xi)$, and the elements $f_i+\CJ^-$ and $\wt{h_i}+\CJ^-$ of $\U^+_\hbar(A;\Xi)'$, 
by $e_i, h_i , f_i, \wt{h}_i$ respectively. 
\end{notation}


\subsection{Colour quantum groups as quantum doubles}
In this section, the Hopf $(\Gamma, \omega)$-algebra
$\U^+_\hbar(A;\Xi)'$ is taken to be
\beq\label{eq:opp-d}
(\U^+_\hbar(A;\Xi)', \Delta_0=(\Delta^0)', \varepsilon_0=\varepsilon^0, S_0=(S^0)^{-1}). 
\eeq
We can read off the structure maps from Lemma \ref{lem:co-dU}. In particular,   
\beq
\Delta_0(\wt{h}_i) = \wt{h}_i\ot 1 + 1\ot \wt{h}_i, \quad 
\Delta_0(f_i) =f_i\ot 1 +  \wt{\nu}_i\ot f_i.
\eeq

\begin{notation}
Hereafter we denote  the Hopf \((\Gamma,\omega)\)-algebra 
$\U^+_\hbar(A;\Xi)'$ with the opposite Hopf structure \eqref{eq:opp-d} 
by \(\U^-_\hbar(A;\Xi)\).
\end{notation}

\subsubsection{Actions of dual Hopf algebras}
Let us make some preparations for studying the quantum double of $\U^+_\hbar(A;\Xi)$. 
It follows from Section \ref{sect:technical} that there are the following left and right  $\U^+_\hbar(A;\Xi)$-actions on $\U^-_\hbar(A;\Xi)$, 
\[
\baln
\rhd: &\  \U^+_\hbar(A;\Xi)\ot \U^-_\hbar(A;\Xi)\lra \U^-_\hbar(A;\Xi), \\
\lhd: &\  \U^-_\hbar(A;\Xi)\ot \U^+_\hbar(A;\Xi) \lra \U^-_\hbar(A;\Xi), \\
\ealn
\]
and left and right  $\U^-_\hbar(A;\Xi)$-actions on $\U^+_\hbar(A;\Xi)$, 
\[
\baln
\brhd: &\  \U^-_\hbar(A;\Xi)\ot \U^+_\hbar(A;\Xi)\lra \U^+_\hbar(A;\Xi), \\
\blhd: &\  \U^+_\hbar(A;\Xi)\ot \U^-_\hbar(A;\Xi) \lra \U^+_\hbar(A;\Xi), \\
\ealn
\]
which are respectively defined, 
 for any $x\in \U^+_\hbar(A;\Xi)$ and $f\in \U^-_\hbar(A;\Xi)$, by
\[
\baln
&x\rhd f=\sum_{(f)} \omega(d x, d f_{(1)}) \langle f_{(1)}, x\rangle f_{(2)}, \\ 
&f\lhd x=\sum_{(f)} f_{(1)} \langle f_{(2)}, x\rangle,\\
&f\brhd x=\sum_{(x)} \langle S_0(f), x_{(1)}\rangle x_{(2)}, \\
& x\blhd f=\sum_{(x)} \omega(d x_{(2)}, d f) x_{(1)}\langle  S_0(f),  x_{(2)}\rangle. 
\ealn
\]
Clearly 
\beq \label{eq:U-act-dU-0}
1\brhd x=x,  \quad  1\lhd x=\epsilon(x), \quad 
f\brhd 1=\epsilon_0(f), \quad  f\lhd 1=f. 
\eeq

Recall that for any polynomial $E$ of the elements $e_i$ which is of definite weight and is homogeneous in the $\Gamma$-grading, we defined  the elements 
$\hat{E}_{; i}^{R, \pm}$ and $\hat{E}_{; i}^{L, \pm}$ over $\C(q)$ by \eqref{eq:deriv-i}. 
The definition can be adaptedto the present setting without change.  
We can show that for any $E\in (\U_+)_{\mu}$ with $\mu>0$,  
\[
\baln
\varphi_i\brhd E = -k_i \hat{E}_{; i}^{L, -}, \quad
E\blhd \varphi_i
=- \omega(\xi_i, \xi_i) \hat{E}_{; i}^{R, +},
\ealn
\]
and hence we obtain the relation 
\beq\label{eq:Delta(E)}
\quad \Delta(E)= E\ot k_\mu  +  \sum _i \hat{E}_{;i}^{R, +} \ot e_i  k_{\mu-\Upsilon_i}
+\dots  +  \sum _i e_i\ot k_i \hat{E}_{;i}^{L, -} + 1\ot E,
\eeq
which was already proved in Lemma \ref{lem:primit}. 


The lemma below can be verified by direct calculations. 
\begin{lemma} \label{lem:U-act-dU}
The following relations hold.
\[
\baln
&{\rho_i}\brhd {h_j} = -1, \quad  \rho_i \lhd {h_j} =1, \\
& {\wt\nu_i}\brhd {h_j} = h_j + A_{j i}, 
\quad  \rho_i \lhd k_j=\rho_i+ \delta_{i j}\frac{d_i}{2} \hbar, \\
&{\varphi_i}\brhd {h_j}=0, \quad  {\varphi_i}\lhd {h_j} =0, \quad  {\varphi_i}\lhd k_j=\varphi_i, \\
&{\rho_i}\brhd {e_j}=0, \quad  \rho_i \lhd {e_j} = 0,  \quad  \wt{\nu_i}\brhd {e_j} =e_j, \\
&{\varphi_i}\brhd {e_j}= - \delta_{i j} k_j, 
\quad {\varphi_i}\lhd {e_j}=\delta_{i j} \wt\nu_i.
\ealn
\]
\end{lemma}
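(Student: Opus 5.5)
The plan is to evaluate each action directly from its definition, using the explicit coproducts and the pairing values already established. On the $\U^+_\hbar(A;\Xi)$ side these are $\Delta(h_j)=h_j\ot1+1\ot h_j$ and $\Delta(e_j)=e_j\ot k_j+1\ot e_j$. On the $\U^-_\hbar(A;\Xi)$ side, Lemma \ref{lem:co-dU} with the opposite structure \eqref{eq:opp-d} gives $\Delta_0(\rho_i)=\rho_i\ot1+1\ot\rho_i$ and $\Delta_0(\varphi_i)=\varphi_i\ot1+\wt\nu_i\ot\varphi_i$. The antipodes are $S_0(\rho_i)=-\rho_i$, $S_0(\wt\nu_i)=\wt\nu_i^{-1}$, and $S_0(\varphi_i)=-\wt\nu_i^{-1}\varphi_i$ up to the precise normalisation fixed by $(S^0)^{-1}$. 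The basic pairing values all come from \eqref{eq:pair-def}, Lemma \ref{lem:beta-b} and \eqref{eq:pairing}:
\[
\langle\rho_i,h_j\rangle=\delta_{ij},\qquad \langle\rho_i,1\rangle=0,\qquad \langle\rho_i,e_j\rangle=0,\qquad \langle\varphi_i,e_j\rangle=\delta_{ij},\qquad \langle\varphi_i,h^{\bf n}\rangle=0,\qquad \langle\varphi_i,1\rangle=0 .
\]
All degrees involved are either $0$ or $\pm\xi_i$, so the $\omega$ factors reduce to $\omega(\xi_i,\xi_i)=1$ or to $1$.

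\emph{The primitive generators.} For $\rho_i\brhd h_j$, expand $\Delta(h_j)$; only the term $\langle -\rho_i,h_j\rangle\,1$ survives, which gives the stated constant. For $\rho_i\lhd h_j$, expand $\Delta_0(\rho_i)$, use $\langle 1,h_j\rangle=\epsilon(h_j)=0$, and obtain $\langle\rho_i,h_j\rangle$. The relations $\rho_i\brhd e_j=\rho_i\lhd e_j=0$ hold for the same reason: every surviving term pairs $\rho_i$ or $1$ with an element of weight $\Upsilon_j\ne0$, and such pairings vanish by \eqref{eq:beta-b-0}. In the same way, $\varphi_i\brhd h_j$, $\varphi_i\lhd h_j$ and $\varphi_i\lhd k_j$ reduce to pairings of $\varphi_i$ against weight-zero elements. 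These vanish except for the $\langle 1,k_j\rangle=1$ term, which leaves $\varphi_i\lhd k_j=\varphi_i$.

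\emph{Group-like and exponential elements.} For $\rho_i\lhd k_j$, write $k_j=\exp(\hbar d_jh_j/2)$. Since the term $1\ot\rho_i$ contributes $\langle\rho_i,k_j\rangle$ and \eqref{eq:beta-b-0} gives $\langle\rho_i,k_j\rangle=\hbar d_j\delta_{ij}/2$, we obtain $\rho_i+\delta_{ij}\frac{d_i}{2}\hbar$. For $\wt\nu_i\brhd h_j$, use that $\wt\nu_i$ is group-like and that $\langle \wt\nu_i^{-1},h_j\rangle$ equals the derivative of the exponential. Expanding $\wt\nu_i^{\pm1}=\exp(\mp\sum_j A_{ji}\rho_j)$, as in the proof of Lemma \ref{lem:co-dU}, gives $\langle\wt\nu_i^{-1},h_j\rangle=A_{ji}$ and $\langle \wt\nu_i^{-1},1\rangle=1$, hence $h_j+A_{ji}$. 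The relation $\wt\nu_i\brhd e_j=e_j$ follows because $\langle\wt\nu_i^{-1},e_j\rangle=0$ by weight, and $\langle\wt\nu_i^{-1},k_j\rangle$ does not appear in the term that survives (namely $\langle \cdot,1\rangle e_j$).

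\emph{The step I expect to be the main obstacle.} The last two identities need care. For $\varphi_i\brhd e_j$, expand $\Delta(e_j)$. Only $e_j\ot k_j$ survives, giving $\langle S_0(\varphi_i),e_j\rangle k_j$. Since $\langle\wt\nu_i^{-1}\varphi_i,e_j\rangle=\langle\varphi_i,e_j\rangle=\delta_{ij}$ by \eqref{eq:pairing}, the result is $-\delta_{ij}k_j$. For $\varphi_i\lhd e_j$, only the term $\wt\nu_i\ot\varphi_i$ of $\Delta_0(\varphi_i)$ pairs nontrivially, giving $\delta_{ij}\wt\nu_i$.

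The main obstacle is bookkeeping: $\U^-_\hbar(A;\Xi)$ carries the \emph{opposite} coproduct and inverse antipode, and the pairing is taken in the braided sense of the appendix. I would first pin down exactly which tensor leg pairs with which, checking this on the simplest case $\varphi_i\lhd e_i$. Only then would I propagate the resulting signs and the $\omega$ factors to the remaining identities.
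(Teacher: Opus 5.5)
Your proposal is correct and follows the paper's route: the paper says the lemma "can be verified by direct calculations", and you evaluate $\brhd$ and $\lhd$ directly from the coproducts, the opposite structure \eqref{eq:opp-d}, and the pairing values \eqref{eq:pair-def} and \eqref{eq:pairing}. One correction: your own computation gives $\rho_i\brhd h_j=-\langle\rho_i,h_j\rangle=-\delta_{ij}$ and $\rho_i\lhd h_j=\delta_{ij}$, so the constants $-1$ and $1$ in the statement hold only for $i=j$ and should carry a Kronecker delta, rather than being ``the stated constant'' as you wrote; this is harmless, because the two terms cancel when \eqref{eq:h-h} is derived.
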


There is also a left action $R: \U^-_\hbar(A;\Xi)\ot \U^+_\hbar(A;\Xi)\lra \U^+_\hbar(A;\Xi)$ defined by 
\beq
f\ot x\mapsto R_f(x):=\sum_{(x)} \omega(d f, d x_{(1)}) x_{(1)} \langle f, x_{(2)}\rangle,
\eeq
which is the analogue of right translations in the context of Lie groups. 

\begin{lemma} 
The operators $R_{\varphi_j}$ are $\Gamma$-graded skew derivations on $\U^+_\hbar(A;\Xi)$, i.e.,  
for all $x,  y\in\U^+_\hbar(A;\Xi)$, 
\[
R_{\varphi_j}(x y)= \omega(d x, \xi_j) x R_{\varphi_j}(y) 
+   R_{\varphi_j}(x) Ad_{k_j^{-1}}(y).
\]
\end{lemma}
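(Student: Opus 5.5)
The plan is to compute $R_{\varphi_j}(xy)$ directly from the definition of $R$, using three ingredients: the braided multiplicativity of $\Delta$ on $\U^+_\hbar(A;\Xi)$, the Hopf pairing axiom $\langle f, xy\rangle=\langle \Delta^0(f), x\ot y\rangle$, and the explicit formula $\Delta^0(\varphi_j)=\varphi_j\ot\wt\nu_i+1\ot\varphi_j$ (with $i=j$) from Lemma \ref{lem:co-dU}. By linearity, and in the convention of Remark \ref{rmk:conven}, we may take $x,y$ homogeneous in both the $\Gamma$-grading and the weight grading.

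First I write
\[
\Delta(xy)=\sum\omega(d x_{(2)}, d y_{(1)})\, x_{(1)}y_{(1)}\ot x_{(2)}y_{(2)}.
\]
Substituting into the definition gives
\[
R_{\varphi_j}(xy)=\sum \omega(-\xi_j, d x_{(1)}+d y_{(1)})\,\omega(d x_{(2)}, d y_{(1)})\, x_{(1)}y_{(1)}\,\langle\varphi_j, x_{(2)}y_{(2)}\rangle .
\]
I then expand $\langle\varphi_j, x_{(2)}y_{(2)}\rangle$ as $\langle\Delta^0(\varphi_j), x_{(2)}\ot y_{(2)}\rangle$. The tensor pairing is evaluated with the braiding sign $\omega(d b, d u)$ for $\langle a\ot b, u\ot v\rangle$, following the convention of Appendix \ref{sect:double}.

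This produces two groups of terms, which I treat separately.

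\emph{The $1\ot\varphi_j$ term.} Here $\langle 1, x_{(2)}\rangle=\varepsilon(x_{(2)})$ collapses $\sum x_{(1)}\varepsilon(x_{(2)})$ to $x$. What remains is $x\sum y_{(1)}\langle\varphi_j, y_{(2)}\rangle$ times scalar factors. Since $\langle\varphi_j, y_{(2)}\rangle\neq 0$ forces $d y_{(2)}=\xi_j$, the surviving scalar factors are $\omega(-\xi_j, d x)\,\omega(-\xi_j, d y_{(1)})$ together with the sign coming from the pairing. The factor $\omega(-\xi_j,d y_{(1)})$ is exactly what is needed to reassemble $R_{\varphi_j}(y)$. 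The leftover factor should simplify to $\omega(d x,\xi_j)$, using $\omega(-\alpha,\beta)=\omega(\beta,\alpha)$.

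\emph{The $\varphi_j\ot\wt\nu_j$ term.} Here $\wt\nu_j$ has degree $0$, so no braiding sign enters. The $x$-part reassembles to $R_{\varphi_j}(x)$. The $y$-part is $T(y):=\sum y_{(1)}\langle\wt\nu_j, y_{(2)}\rangle$. Because $\wt\nu_j$ is grouplike, $\Delta^0(\wt\nu_j)=\wt\nu_j\ot\wt\nu_j$, so $T$ is an algebra endomorphism and it suffices to identify it on generators. For $h_i$, the pairing contributes only a scalar times $1$, which must be checked to vanish or combine correctly. For $e_i$, only the summand $e_i\ot k_i$ contributes, giving $e_i\langle\wt\nu_j,k_i\rangle=q^{-(\Upsilon_j,\Upsilon_i)}e_i=q_j^{-A_{ji}}e_i=k_j^{-1}e_ik_j$. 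Hence $T=Ad_{k_j^{-1}}$ on generators, and therefore on all of $\U^+_\hbar(A;\Xi)$.

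The main obstacle is the bookkeeping of the $\omega$-factors: the factor from $R$, the factor from braided multiplicativity of $\Delta$, and the factor from the tensor pairing must combine to exactly $\omega(d x,\xi_j)$ in the first term and to $1$ in the second. In the second term, $d x_{(2)}=\xi_j$ and $d y_{(2)}=0$, so the cross factor $\omega(d x_{(2)},d y_{(1)})=\omega(\xi_j,d y)$ must cancel against $\omega(-\xi_j, d y_{(1)})$. I would verify all of these identities using only \eqref{eq:cycle-1}--\eqref{eq:cycle-3}. A second check is the pairing value $\langle\wt\nu_j,k_i\rangle$: I would compute it from \eqref{eq:pairing}, $\langle\rho^{\bf m},h^{\bf n}\rangle=\delta_{\bf mn}{\bf m}!$, by expanding both exponentials and using $\frac{d_j}{2}A_{ji}=(\Upsilon_j,\Upsilon_i)$. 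This verification is where the careful computation lies.
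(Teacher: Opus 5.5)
Your $\omega$-factor bookkeeping is correct, and it is the same direct computation as in the paper. The $1\ot\varphi_j$ summand of $\Delta^0(\varphi_j)$ yields $\omega(d x,\xi_j)\,xR_{\varphi_j}(y)$. The $\varphi_j\ot\wt\nu_j$ summand yields $R_{\varphi_j}(x)\,T(y)$, where $T(y)=\sum y_{(1)}\langle\wt\nu_j,y_{(2)}\rangle$ is an algebra endomorphism because $\wt\nu_j$ is grouplike of degree $0$.

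The gap is exactly the generator check you left open, and it cannot be closed. From the proof of Lemma \ref{lem:co-dU}, $\wt\nu_j=\exp\bigl(-\sum_l A_{lj}\rho_l\bigr)$, and $\langle\rho_l,h_i\rangle=\delta_{li}$. Hence $\langle\wt\nu_j,h_i\rangle=-A_{ij}$; this is the relation $\wt\nu_i\brhd h_j=h_j+A_{ji}$ of Lemma \ref{lem:U-act-dU} in another form. So $T(h_i)=h_i-A_{ij}$, whereas $Ad_{k_j^{-1}}(h_i)=h_i$.

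The identity is in fact false for general $y$. Take $x=e_j$ and $y=h_j$, and use $e_jh_j=h_je_j-2e_j$ together with $\langle\varphi_j,h^{\bf n}e_j\rangle=\delta_{{\bf n}{\bf 0}}$. Expanding $\Delta(e_jh_j)$ directly gives $R_{\varphi_j}(e_jh_j)=h_j-2$. The right-hand side, however, equals $e_jR_{\varphi_j}(h_j)+R_{\varphi_j}(e_j)Ad_{k_j^{-1}}(h_j)=0+h_j$.

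What your argument actually proves is
$R_{\varphi_j}(xy)=\omega(d x,\xi_j)\,xR_{\varphi_j}(y)+R_{\varphi_j}(x)\,T(y)$ for all $x,y$. Here $T$ is the algebra automorphism $h_i\mapsto h_i-A_{ij}$, $e_i\mapsto q_j^{-A_{ji}}e_i$.

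On the subalgebra generated by the $e_i$, only the summand $y\ot k_\mu$ of $\Delta(y)$ pairs nontrivially with $\wt\nu_j$. So $T(y)=q^{-(\Upsilon_j,\mu)}y=k_j^{-1}yk_j$ for $y$ of weight $\mu$. Thus the stated formula holds for arbitrary $x$, provided $y$ is a polynomial in the $e_i$ alone.

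The paper's proof makes the same tacit restriction. It replaces $\sum y_{(1)}\langle\wt\nu_j,y_{(2)}\rangle$ by $y\langle\wt\nu_j,k_\mu\rangle$, which is valid only when $y$ has no Cartan factors, even though it allows $y\in U_0(\U_+)_\mu$. The restricted version is all that is used afterwards, in the formula for $R_{\varphi_j}(E)$ with $E\in(\U_+)_\mu$.
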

\begin{proof} The formula can be verified by direct calculations. We may assume that 
$y\in U_0(\U_+)_\mu$. Then  
\[
\baln
R_{\varphi_j}(x y)&= \sum_{(x), (y)} \omega(d x_{(1)} + d y_{(1)}, \xi_j)  \omega(d x_{(2)}, d y_{(1)})x_{(1)} y_{(1)} \langle \varphi_j, x_{(2)} y_{(2)}\rangle\\
&= \sum_{(x), (y)} \omega(d x_{(1)} + d y_{(1)}, \xi_j)  \omega(d x_{(2)}, d y_{(1)})\\
&\times x_{(1)} y_{(1)} \langle \varphi_j\ot \wt\nu_j + 1\ot \varphi_j, x_{(2)}\ot  y_{(2)}\rangle\\
&= \sum_{(y)} \omega(d x + d y_{(1)}, \xi_j) x y_{(1)} \langle \varphi_j,  y_{(2)}\rangle\\
&+ \sum_{(x)} \omega(d x_{(1)}, \xi_j)  x_{(1)} y 
\langle \varphi_j, x_{(2)}\rangle \langle \wt\nu_j, k_\mu\rangle\\
&= \omega(d x, \xi_j) x R_{\varphi_j}(y) 
+   R_{\varphi_j}(x) y  \langle \wt\nu_j, k_\mu\rangle\\
&= \omega(d x, \xi_j) x R_{\varphi_j}(y) 
+   R_{\varphi_j}(x) Ad_{k_j^{-1}}(y).
\ealn
\]
This proves the lemma. 
\end{proof}

One can show that for any $E\in (\U_+)_\mu$ with $\mu>0$, 
\[
\baln
R_{\varphi_j} (E)&=\sum_{(E_\mu)} \omega( d E_{(1)}, \xi_j) E_{(1)} \langle \varphi_j, E_{(2)}\rangle\\
&= \omega( d {E}-\xi_j, \xi_j) \wh{E}_{; j}^{R, +} q^{-(\Upsilon_j, \mu-\Upsilon_j)}\\
&= \omega( d E-\xi_j, \xi_j) Ad_{k_j^{-1}}(\wh{E}_{; j}^{R, +}), 
\ealn
\]
that is, 
\beq
\wh{E}_{; j}^{R, +}
=\omega(\xi_j, d E) Ad_{k_j}( R_{\varphi_j} (E)).
\eeq

\subsubsection{Quantum double construction}

Theorem \ref{thm:non-degenerate} gives a non-degenerate Hopf pairing between  
$\U^+_\hbar(A;\Xi)$ and $\U^-_\hbar(A;\Xi)$. It enables us to construct 
the quantum double (see Defintion \ref{def:D(H)}) of $\U^+_\hbar(A;\Xi)$,
\beq
\SD_\hbar(A; \Xi)=\U^+_\hbar(A;\Xi)\ot_{\K } \U^-_\hbar(A;\Xi),
\eeq 
 where $ \U^-_\hbar(A;\Xi)$ has the opposite Hopf algebra structure as described 
 by \eqref{eq:opp-d}. 
We denote by $\SR$ the universal $R$-matrix of the topological quasi-triangular Hopf $(\Gamma, \omega)$-algebra $\SD_\hbar(A; \Xi)$. 

The Hopf algebras $\U^+_\hbar(A;\Xi)$ 
and $\U^-_\hbar(A;\Xi)$
are naturally embedded in  $\SD_\hbar(A; \Xi)$ 
with images $\U^+_\hbar(A;\Xi)\ot 1$ and 
and $1\ot \U^-_\hbar(A;\Xi)$ respectively. 
For any $y\in  \U^+_\hbar(A;\Xi)$ and $f\in \U^-_\hbar(A;\Xi)$, 
we shall simply write their images $y\ot 1$ and $1\ot f$ as $y$ and $f$ respectively. 
Then $y f \in \SD_\hbar(A; \Xi)$ is understood as $y\ot f$, and 
$f y\in \SD_\hbar(A; \Xi)$ as $(1\ot f)(y \ot 1)$. 
Recall from Section \ref{sect:double-gen} that in $\SD_\hbar(A; \Xi)$, 
\[
f y
=\sum_{(f), (y)} \omega(d f_{(2)}, d y_{(1)})  (f_{(1)} \brhd y_{(1)})   (f_{(2)} \lhd y_{(2)}).
\]

We now work out the relations between the generators of $\U^+_\hbar(A;\Xi)$ 
and $\U^-_\hbar(A;\Xi)$ in $\SD_\hbar(A; \Xi)$. 
By using equation  \ref{eq:U-act-dU-0},  one can show that
\[
\baln
\rho_i h_j &= \sum \left( ({\rho_i}\brhd {h_j}_{(1)}) (1\lhd {h_j}_{(2)})
+ (1\brhd {h_j}_{(1)}) (\rho_i \lhd {h_j}_{(2)})\right)\\
&= {\rho_i}\brhd {h_j} + \sum {h_j}_{(1)} (\rho_i \lhd {h_j}_{(2)})\\
&= {\rho_i}\brhd {h_j} +  {h_j} \rho_i + \rho_i \lhd {h_j};
\ealn
\]
\[
\baln
\rho_i e_j &= \sum \left( ({\rho_i}\brhd {e_j}_{(1)}) (1\lhd {e_j}_{(2)})
+ (1\brhd {e_j}_{(1)}) (\rho_i \lhd {e_j}_{(2)})\right)\\
&= {\rho_i}\brhd {e_j}+ \sum{e_j}_{(1)}(\rho_i \lhd {e_j}_{(2)})\\
&= {\rho_i}\brhd {e_j}+ \rho_i \lhd {e_j} + {e_j} (\rho_i \lhd k_j);
\ealn
\]
\[
\baln
\varphi_i h_j &= \sum \left( ({\varphi_i}_{(1)}\brhd {h_j}) ({\varphi_i}_{(2)}\lhd 1)
+({\varphi_i}_{(1)}\brhd 1) ({\varphi_i}_{(2)}\lhd {h_j})  \right)\\
&= \sum ({\varphi_i}_{(1)}\brhd {h_j}) {\varphi_i}_{(2)}
+{\varphi_i}\lhd {h_j}  \\
&= {\varphi_i}\brhd {h_j} + ({\wt\nu_i}\brhd {h_j}) \varphi_i
+{\varphi_i}\lhd {h_j}  \\
&=   ({\wt\nu_i}\brhd {h_j}) \varphi_i, 
\ealn
\]
\[
\baln
\varphi_i e_j &=\sum \left( ({\varphi_i}_{(1)}\brhd {e_j}) ({\varphi_i}_{(2)}\lhd k_i)
+\omega(\xi_j, \xi_i) ({\varphi_i}_{(1)}\brhd 1) ({\varphi_i}_{(2)}\lhd {e_j})  \right)\\
&= ({\varphi_i}\brhd {e_j}) (1\lhd k_i) + \omega(\xi_j, \xi_i) (\wt{\nu_i}\brhd {e_j}) ({\varphi_i}\lhd k_i)
+\omega(\xi_j, \xi_i) {\varphi_i}\lhd {e_j}  \\
&= {\varphi_i}\brhd {e_j} 
+ \omega(\xi_j, \xi_i) (\wt{\nu_i}\brhd {e_j}) ({\varphi_i}\lhd k_i)
+\omega(\xi_j, \xi_i) {\varphi_i}\lhd {e_j},  
\ealn
\]
that is, 
\begin{align}
\rho_i h_j &= {\rho_i}\brhd {h_j} +  {h_j} \rho_i + \rho_i \lhd {h_j};\\
\rho_i e_j &= {\rho_i}\brhd {e_j}+ \rho_i \lhd {e_j} + {e_j} (\rho_i \lhd k_j);\\
\varphi_i h_j &=   ({\wt\nu_i}\brhd {h_j}) \varphi_i,  \label{eq:phi-h}\\
\varphi_i e_j &= {\varphi_i}\brhd {e_j} 
+ \omega(\xi_j, \xi_i) (\wt{\nu_i}\brhd {e_j}) ({\varphi_i}\lhd k_i)
+\omega(\xi_j, \xi_i) {\varphi_i}\lhd {e_j}.  
\end{align}
Applying Lemma \ref{lem:U-act-dU}, we obtain 
\begin{align}
&\rho_i h_j = h_j \rho_i; \label{eq:h-h} \\
&\rho_i e_j = e_j\left(\rho_i + \delta_{i j}\frac{d_i}{2} \hbar \right), \quad
\varphi_i h_j = h_j \varphi_i   + A_{j i}\varphi_i,  \label{eq:h-e-h-f} \\
&\varphi_i e_j =\omega(\xi_j, \xi_i)  e_j \varphi_i +\delta_{i j} (\wt{\nu_i} - k_i ).  \label{eq:e-f}
\end{align}
Consider, for example, the second relation of \eqref{eq:h-e-h-f}. 
We have ${\wt\nu_i}\brhd {h_j} = h_j + A_{j i}$ from Lemma \ref{lem:U-act-dU}. 
Using this in \eqref{eq:phi-h}, we obtain the desired relation:
\[
\varphi_i h_j =  ({\wt\nu_i}\brhd {h_j}) \varphi_i =   h_j \varphi_i + A_{j i}\varphi_i, 
\] 
It is equivalent to $h_j \varphi_i - \varphi_i h_j=-  A_{j i}\varphi_i$. 

The equations \eqref{eq:h-h} - \eqref{eq:e-f} can be re-written as follows.  
\begin{lemma}\label{lem:U-dU}
The following relations hold in $\SD_\hbar(A; \Xi)$.
\beq
&&\wt{h}_i h_j - h_j \wt{h}_i = 0, \\ 
&&\wt{h}_i e_j - e_j \wt{h}_i = A_{i j} e_j, \quad
 h_i f_j  - f_j h_i =  - A_{i j}f_j, \\ 
&& e_i f_j - \omega(\xi_j, \xi_i)   f_j e_i =\delta_{i j} \frac{k_i -\wt{\nu_i}}{q_i - q_i^{-1}}.\label{eq:FE-comm}
\eeq
\end{lemma}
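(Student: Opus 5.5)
The plan is to derive Lemma \ref{lem:U-dU} directly from the relations \eqref{eq:h-h}, \eqref{eq:h-e-h-f} and \eqref{eq:e-f}, which were obtained in $\SD_\hbar(A;\Xi)$ from the cross-multiplication formula of the quantum double together with Lemma \ref{lem:U-act-dU}. The steps are a change of generators and linear rescalings: replace $\rho_i$ by $\wt{h}_i=\sum_j A_{ij}\wt{H}_j$, where $\rho_i=\hbar\frac{d_i}{2}\wt{H}_i$, and replace $\varphi_i$ by $f_i=\varphi_i/(q_i-q_i^{-1})$, as in Notation \ref{notation}. No new computation in the double is needed beyond what is already displayed.

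\textbf{Cartan relations.} From \eqref{eq:h-h}, each $\rho_i$ commutes with every $h_j$. Since $\wt{h}_i$ is a $\K$-linear combination of the $\rho_j$, namely $\wt{h}_i=\sum_j A_{ij}\frac{2}{\hbar d_j}\rho_j$, it follows that $\wt{h}_i h_j=h_j\wt{h}_i$. This uses that $\hbar$ is invertible in $\K$, as stressed in Remark \ref{rmk:necessity}.

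\textbf{Relation $\wt{h}_i e_j-e_j\wt{h}_i=A_{ij}e_j$.} The first relation of \eqref{eq:h-e-h-f} gives $\wt{H}_k e_j-e_j\wt{H}_k=\delta_{kj}e_j$. Summing with coefficients $A_{ik}$ yields $[\wt{h}_i,e_j]=A_{ij}e_j$.

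\textbf{Relation $h_if_j-f_jh_i=-A_{ij}f_j$.} The second relation of \eqref{eq:h-e-h-f}, with the indices renamed, reads $h_i\varphi_j-\varphi_jh_i=-A_{ij}\varphi_j$. Dividing by the nonzero scalar $q_j-q_j^{-1}$ gives the claim.

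\textbf{Relation \eqref{eq:FE-comm}.} Starting from \eqref{eq:e-f},
\[
\varphi_i e_j=\omega(\xi_j,\xi_i)e_j\varphi_i+\delta_{ij}(\wt\nu_i-k_i),
\]
multiply both sides by $\omega(\xi_i,\xi_j)=\omega(\xi_j,\xi_i)^{-1}$. This gives
\[
e_j\varphi_i-\omega(\xi_i,\xi_j)\varphi_i e_j=\delta_{ij}(k_i-\wt\nu_i),
\]
where $\omega(\xi_i,\xi_i)=1$ because $\Gamma_R(\fg)\subset\Gamma^+$. Swapping the names $i\leftrightarrow j$ and dividing by $q_i-q_i^{-1}$ then gives \eqref{eq:FE-comm}.

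\textbf{Main obstacle.} The only delicate point is to confirm the sign and $\omega$-factor conventions in \eqref{eq:e-f}, specifically the value $\varphi_i\brhd e_j=-\delta_{ij}k_j$ and the term $\varphi_i\lhd e_j=\delta_{ij}\wt\nu_i$ coming from Lemma \ref{lem:U-act-dU}. These are what fix the right-hand side as $k_i-\wt\nu_i$ rather than its negative. If a check shows a sign discrepancy, I would recompute these two actions from the definitions of $\brhd$ and $\lhd$ using the pairing \eqref{eq:pair-def} with $S_0=(S^0)^{-1}$.
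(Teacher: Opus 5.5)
Your proposal is correct and follows the paper's own route. The paper obtains Lemma \ref{lem:U-dU} simply by rewriting the relations \eqref{eq:h-h}--\eqref{eq:e-f} through the substitutions $\rho_i=\hbar\frac{d_i}{2}\wt{H}_i$, $\wt{h}_i=\sum_j A_{ij}\wt{H}_j$ and $\varphi_i=(q_i-q_i^{-1})f_i$, exactly as you do (including the multiplication by $\omega(\xi_i,\xi_j)$ and the $i\leftrightarrow j$ swap in the last relation), and it likewise takes the values in Lemma \ref{lem:U-act-dU} as given input.
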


It is clear from the definitions of $\U^+_\hbar(A;\Xi)$ and $\U^-_\hbar(A;\Xi)$, 
Lemmas \ref{lem:dU-dU}, \ref{lem:co-dU},  and \ref{lem:U-dU}
that the elements $h_i-\wt{h_i}$, for all $i\in I$, are central and primitive. 
Thus they generate a Hopf ideal in $\SD_\hbar(A; \Xi)$, 
which will be denoted by $\langle h_i-\wt{h_i}\mid i\in I\rangle$. Let 
\[
\mathcal{p}: \SD_\hbar(A; \Xi)\lra \frac{\SD_\hbar(A; \Xi)}{\langle h_i-\wt{h_i}\mid i\in I\rangle}
\]
be the canonical surjection. 

We have the following result. 
\begin{theorem}\label{thm:QD}
There is an isomorphism   
\[
\iota: \U_\hbar(A; \Xi)\stackrel{\simeq}{\lra} \mathcal{p}(\SD_\hbar(A; \Xi))
\]
of topological quasi-triangular Hopf $(\Gamma, \omega)$-algebras. 
Consequently the universal $R$-matrix of $\U_\hbar(A; \Xi)$ is equal to $\iota^{-1}\ot \iota^{-1}(R)$, 
where $R=\mathcal{p}\ot \mathcal{p}(\SR)$. 
\end{theorem}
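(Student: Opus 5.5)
We construct $\iota$ on generators by
\[
\iota(h_i)=\mathcal{p}(h_i)=\mathcal{p}(\wt h_i),\qquad \iota(e_i)=\mathcal{p}(e_i),\qquad \iota(f_i)=\mathcal{p}(f_i),
\]
where on the right $e_i,h_i\in\U^+_\hbar(A;\Xi)$ and $f_i,\wt h_i\in\U^-_\hbar(A;\Xi)$ (Notation \ref{notation}). The plan has three stages: show $\iota$ is a well-defined Hopf map, show it is bijective, and transport the $R$-matrix.

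\emph{Well-definedness and the Hopf property.} Modulo $h_i-\wt h_i$ we have $\wt h_i=\sum_j A_{ij}\wt H_j\equiv h_i$. Hence
\[
\mathcal{p}(\wt\nu_i)=\exp\bigl(-\tfrac{\hbar d_i}{2}h_i\bigr)=\mathcal{p}(k_i^{-1}).
\]
We then check the defining relations of $\U_\hbar(A;\Xi)$ one by one.
\begin{itemize}
\item The commutation of the $h$'s, and the relations $[h_i,e_j]=A_{ij}e_j$ and $[h_i,f_j]=-A_{ij}f_j$, come from Definition \ref{def:wtU+}, Lemma \ref{lem:dU-dU} and Lemma \ref{lem:U-dU}.
\item Relation \eqref{eq:FE-comm} becomes exactly \eqref{eq:quad-3} with $k_i^{-1}$ in place of $\wt\nu_i$.
\item The Serre relations \eqref{eq:S-1} and \eqref{eq:S-2} hold because $S^\pm_{ij}$ vanish in $\U^\pm_\hbar(A;\Xi)$, by Lemmas \ref{lem:Serre-U} and \ref{lem:Serre-dU}.
\end{itemize}
For the coalgebra structure, the double $\SD_\hbar(A;\Xi)$ carries the tensor-product coalgebra in which $\U^-_\hbar$ has the opposite coproduct \eqref{eq:opp-d}. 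This gives $\Delta(f_i)=f_i\ot1+\wt\nu_i\ot f_i$, which maps under $\mathcal{p}$ to the formula \eqref{eq:co-mult}. Counit and antipode agree on generators in the same way, and since the structure maps are (anti)homomorphisms, agreement on generators suffices. Continuity with respect to the lattice topology holds because $\iota$ maps the lattice $\U_\hbar(A;\Xi)_R$ into the image of the $R$-lattices.

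\emph{Bijectivity.} Surjectivity is clear, since the images of $e_i,h_i,f_i$ generate $\mathcal{p}(\SD_\hbar(A;\Xi))$. For injectivity we compare triangular decompositions. The double is $\U^+_\hbar\ot\U^-_\hbar$ as a vector space, and its quotient by the central ideal generated by the elements $h_i-\wt h_i$ is linearly isomorphic to
\[
\U^+_+\ot\K[h_i\mid i\in I]\ot\U^-_-,
\]
completed appropriately; here $\U^\pm_\pm$ are the Serre quotients.

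On the other side, $\U_\hbar(A;\Xi)$ has the triangular decomposition $\U^-_-\U^0\U^+_+$. The positive part $\U^+_+$ of $\U_\hbar$ is the quotient of the free algebra by the Serre elements, and by Lemma \ref{lem:Nichols} this is exactly the positive part of $\U^+_\hbar(A;\Xi)$. The same holds for the negative parts.

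The delicate point is that the triangular spanning set of $\U_\hbar(A;\Xi)$ might a priori be linearly dependent. We avoid needing its independence directly. Since $\iota$ maps the spanning products onto a basis of the quotient of the double, it is injective on $\U_\hbar(A;\Xi)$: these products span, their images are linearly independent, so any linear relation among them in $\U_\hbar$ would map to a relation among the images, which is impossible. The same argument shows the spanning products form a basis of $\U_\hbar$. This is the step I expect to be the main obstacle. It relies on Lemma \ref{lem:Nichols} being used on both halves, and on the linear isomorphism for the quotient of the double, which requires the ideal $\langle h_i-\wt h_i\rangle$ to be exactly $(h_i-\wt h_i)$ times the double (it is, by centrality), together with the $\hbar$-adic completion being handled coefficientwise.

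\emph{Quasi-triangularity.} Since the elements $h_i-\wt h_i$ are central and primitive, $\mathcal{p}$ is a surjective Hopf map. The image of a universal $R$-matrix under a surjective Hopf map is again a universal $R$-matrix: the axioms from Appendix \ref{sect:double}, namely $\Delta'(x)R=R\Delta(x)$ together with the two hexagon identities, are preserved under pushforward. Therefore $R=(\mathcal{p}\ot\mathcal{p})(\SR)$ makes $\mathcal{p}(\SD_\hbar(A;\Xi))$ quasi-triangular. Transporting along the Hopf isomorphism $\iota$ gives $(\iota^{-1}\ot\iota^{-1})(R)$ as the universal $R$-matrix of $\U_\hbar(A;\Xi)$.
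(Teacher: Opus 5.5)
Your route is the paper's. You define $\iota$ on generators, get surjectivity from generation, reduce injectivity to the fact that both halves of the double are presented by the colour quantum Serre relations (Lemma \ref{lem:Nichols}), and push $\SR$ forward along the surjective Hopf map $\mathcal{p}$. Your triangular-basis version of injectivity is an explicit form of the paper's reduction. You do need to match orders: either use the spanning set $\U^+_+\U^0\U^-_-$ of $\U_\hbar(A;\Xi)$, or note that multiplication $\U^-_\hbar(A;\Xi)\ot\U^+_\hbar(A;\Xi)\to\SD_\hbar(A;\Xi)$ is bijective because $\psi$ is (Lemma \ref{lem:psi-map}).

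There is, however, a genuine gap in the Cartan parts, and the paper's proof, which checks only the Serre parts, shares it. Your claims that $\iota(e_i),\iota(h_i),\iota(f_i)$ generate $\mathcal{p}(\SD_\hbar(A;\Xi))$, and that $\iota$ sends $\{h^{\bf m}\}$ onto a basis of the middle factor $\K[h_i\mid i\in I]$, hold only when $\det A\neq 0$.

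The Cartan part of $\U^-_\hbar(A;\Xi)$ is the polynomial algebra on all the $\wt H_j$, $j\in I$. These are algebraically independent, since the $\rho_j=\hbar\frac{d_j}{2}\wt H_j$ are dual to the $h^{\bf n}$ by \eqref{eq:beta-b-0}. In the double, $[\wt H_j,e_k]=\delta_{jk}e_k$, $[\wt H_j,f_k]=-\delta_{jk}f_k$ and $[\wt H_j,h_k]=0$. Dividing by the central elements $h_i-\wt h_i$ identifies $h_i$ with $\wt h_i=\sum_jA_{ij}\wt H_j$. So the quotient has Cartan part $\K[\wt H_j\mid j\in I]$, and $\iota$ acts on Cartan parts through the linear map $h_i\mapsto\sum_jA_{ij}\wt H_j$.

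\emph{Finite type.} This map is invertible, with $\wt H_j=\sum_i(A^{-1})_{ji}\wt h_i$, and your argument goes through.

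\emph{Affine type.} Let $a$ and $a^\vee$ be the positive vectors with $Aa=0$ and $a^\vee A=0$. Two things fail.
\begin{enumerate}
\item Injectivity: $\iota\bigl(\sum_ia^\vee_ih_i\bigr)=\mathcal{p}\bigl(\sum_j(\sum_ia^\vee_iA_{ij})\wt H_j\bigr)=0$. Yet $\sum_ia^\vee_ih_i$ is a non-zero central element of $\U_\hbar(A;\Xi)$: it acts by $\sum_ia^\vee_i\lambda(h_i)$ on a highest weight module of highest weight $\lambda$.
\item Surjectivity: since $a_0\neq 0$, $\wt H_0\notin\operatorname{span}\{\wt h_i\}$. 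By the triangular decomposition of the double, the image of $\iota$ meets the Cartan part only in the completion of $\K[\wt h_i\mid i\in I]$. Hence $\mathcal{p}(\wt H_0)$, which obeys exactly the relations of the derivation $\partial$ of Definition \ref{def:aff}, is not in the image.
\end{enumerate}

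So for affine $A$ the map $\iota$ is neither injective nor surjective, and your argument, like the paper's, proves the theorem only in finite type. To treat affine $A$ you must first enlarge the Cartan part, for instance by adjoining the derivation to $\wt\U^+_\hbar(A;\Xi)$ so that the induced form on the Cartan part is non-degenerate. The double then yields a quantisation of $\wh\fg(\wh A;\wh\Xi)$ including $\partial$, not of $\wh\fg(\wh A;\wh\Xi)'$.
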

\begin{proof}
Recall that $\U_\hbar(A; \Xi)$ is defined by Definition \ref{def:special}, which is essentially the specialisation of $\U_{q;\Xi}(A)$ from $\C(q)$ to $\C((\hbar))$ via the ring homomorphism $q\mapsto \exp(\hbar)$. [However, $\U^+_\hbar(A;\Xi)=\wt\U^+_\hbar(A;\Xi)/\CJ^+$ and 
$\U^-_\hbar(A;\Xi)=\wt\U^+_\hbar(A;\Xi)'/\CJ^-$, which should not be confused with the quantum Borel subalgebras of $\U_\hbar(A; \Xi)$.]
Surjectivity of the map $\iota$ follows from Lemmas \ref{lem:Serre-U} and \ref{lem:Serre-dU}.
To prove injectivity, it suffices to verify that $\U^+_\hbar(A;\Xi)$ and and 
$\U^-_\hbar(A;\Xi)$ have precisely the defining presentations prescribed by 
the colour quantum Serre relations beside those inherited from $\wt\U^+_\hbar(A;\Xi)$ and $\wt\U^+_\hbar(A;\Xi)'$. More explicitly, it suffices to show that
\begin{enumerate}[label=(\roman*)]
\item 
the quantum Serre relations among the elements 
$e_i+\CJ^+$ are the complete set of defining relations for the subalgebra 
$\U^{++}_{q; \Xi}(A)$ of $\U^+_\hbar(A;\Xi)$, and
\item 
the quantum Serre relations among the elements 
$f_i+\CJ^-$ are the complete set of defining relations for  the subalgebra of 
$\U^-_\hbar(A;\Xi)$ generated by all 
$f_i+\CJ^-$. 
\end{enumerate}

Recall that our Cartan matrix $A$ is either of finite type or affine type, thus 
we can apply Lemma \ref{lem:Nichols} to conclude that (i) is indeed true. 
The analogous fact of Lemma \ref{lem:Nichols} for the dual Hopf algebra $\U^+_\hbar(A;\Xi)$
implies (ii). This establishes the  isomorphism  
$\U_\hbar(A; \Xi)\simeq \mathcal{p}(\SD_\hbar(A; \Xi))$.

The definition of quasi-triangularity involves only structure maps, thus
it is preserved by homomorphisms of Hopf colour algebras (in the algebraic and topological settings). Since $\mathcal{p}$ is a Hopf homomorphism,  the universal $R$-matrix of 
of $\mathcal{p}(\SD_\hbar(A; \Xi))$ is given by $R=\mathcal{p}\ot \mathcal{p}(\SR)$. 
Now $\iota$ is a Hopf isomorphism, and the second statement of the theorem follows. 
\end{proof}

\begin{remark}\label{rmk:AAB-2}
It will be interesting to find a clean and precise relationship between 
the Hopf $(\Gamma, \omega)$-algebras $\SD_\hbar(A; \Xi)$ and
Hopf algebras $D(\mathscr{E})$ over $\C(q)$
in \cite[\S3.2]{AAB}, which are quantum doubles of bosonised Nichols algebras.  
This was already highly nontrivial in the case of quantum supergroups  \cite{XZ, XZ25, Z92}. 
\end{remark}

\section{Conclusion}
In this paper we have developed a systematic framework for the construction of quantum groups associated with Lie and affine Lie colour algebras satisfying the Cartan--Weyl paradigm. 

We have identified important families for which a Cartan--Weyl description exists, and have exhibited the obstruction to such a description in the orthogonal and orthosymplectic cases in terms of the $2$-torsion in the grading group and the dimensions of the corresponding homogeneous components. 
For the resulting finite and untwisted affine families, the Kac--Moody construction \cite{Kv, Mr} 
extends naturally to the colour setting, with the algebras determined by reduced Cartan matrices together with grading data. 

This allows us to construct the quantised universal enveloping colour algebras $\U_{q,\Xi}(A)$ explicitly in terms of generators and relations, which recover the ordinary Drinfeld--Jimbo quantum groups when the grading is trivial. Moreover, the construction via Hopf pairings and quantum doubles endows these algebras with a quasi-triangular Hopf $(\Gamma,\omega)$-structure, with the colour quantum Serre relations arising canonically from the radicals of the relevant Hopf pairings.

These results open several directions for mathematical and physical applications. 
The quasi-triangular structure and the resulting universal $R$-matrices provide 
a natural starting point for the study of colour analogues of integrable models 
in statistical mechanics and for constructions in low-dimensional topology and knot theory. 
The preservation of the $\Gamma$-grading and the commutative factor $\omega$ 
also suggests applications to quantum systems in which the grading encodes generalised statistics, including parastatistical structures. 
At roots of unity, the braiding is naturally intertwined with the grading and commutative factor, 
and this appears to provide wide scope for new generalised statistics. 
At the same time, the present framework is naturally related to noncommutative geometry. 
The colour quantum groups may be regarded as quantum deformations of 
the noncommutative para-geometries \cite{Z24} 
and related graded geometries associated with Lie colour algebras.

A particularly interesting connection is with the theory of Nichols algebras. 
The positive colour quantum Borel algebra contains a braided part which, 
after bosonisation, is identified with a Nichols algebra associated with 
the diagonal braiding matrix
$
Q=(q_{ij})=\bigl(\omega(\xi_i,\xi_j)q^{A_{ij}}\bigr),
$
and the colour quantum Serre relations give the defining relations in the finite and affine cases considered here. 
This places the present construction within the broader theory of braided and 
pointed Hopf algebras and Weyl groupoids. 
Nevertheless, the colour formulation retains information that is not visible after bosonisation; 
the grading and the commutative factor remain intrinsic to the Hopf colour algebra. 
A precise comparison between the colour quantum groups, their bosonisations, 
and Nichols algebra constructions therefore is an interesting problem to study.

There are also fundamental structural questions beyond the scope of the Cartan--Weyl paradigm. 
In particular, most orthogonal and orthosymplectic Lie colour algebras do not admit the required Cartan--Weyl description, and hence their structure and graded representation theory cannot be approached directly by the methods developed here. 
It remains an important open problem to develop an alternative framework for these algebras and, in particular, to determine whether explicit quantum deformations like the ones obtained here can be constructed without relying on a root-space decomposition. 

More broadly, understanding the representation theory, invariant theory, 
and geometric structures of colour quantum groups, 
as well as their roles in knot theory, integrable systems and 
generalised statistics, should provide fertile ground for 
further developing the theory of this paper.

The results of this paper thus establish a bridge between Lie colour algebras, Cartan--Weyl and Kac--Moody theory, quantum groups, braided and Nichols algebra methods, and mathematical physics. An important feature of this bridge is that the colour data are retained rather than removed by cocycle twisting or bosonisation. This makes the framework suitable not only as a generalisation of the Drinfeld--Jimbo construction, but also as a setting in which genuinely colour-dependent mathematical and physical phenomena can be investigated.

\appendix
\section{Quasi-triangular Hopf $(\Gamma, \omega)$-algebras}\label{sect:double}
The theory of quasi-triangular Hopf algebras \cite{D2, D3} provides 
a theoretical framework for Baxter's $R$ matrices \cite{D1}. 
It was generalised to Hopf superalgebras in \cite{GZB93}.
Quantum groups \cite{D1, D2, J} and quantum supergroups 
\cite{BGZ, Y91, Y94, ZGB, Z98} are
the most important examples of quasi-triangular Hopf (super)algebras.

The theory also generalises to Hopf $(\Gamma, \omega)$-algebras naturally. 
Here we collect the basics of this graded theory
for an arbitrary grading group $\Gamma$ equipped with a commutative factor $\omega$, 
giving proofs of vital results including the quantum double construction. 
We refer to \cite{AAB} for related results on the quantum double 
from the point of view of Nichols algebras 
for bosonised Hopf algebras. 

We initially develop the framework in the algebraic setting for clarity, 
before extending the results to the topological setting.

Fix a commutative ring $\K$ with identity. Let $\Gamma$ be an additive abelian group with a commutative factor $\omega$.  
We do not impose any condition on $\omega$ in this section.

%
%
%

\medskip
\noindent{\bf Simplification of notation}. We will write the $\Gamma$-degree  of a homogeneous element $v$ in a vector space as $d v$ in this section, as the original notation $d(v)$ is too cumbersome in long handed computations. 

\subsection{Quasi-triangular Hopf $(\Gamma, \omega)$-algebras}\label{sect:Hopf-dual}
We present the definition of
quasi-triangular Hopf $(\Gamma, \omega)$-algebras in this section. 

\subsubsection{Hopf $(\Gamma, \omega)$-algebras}

Let $H$ be an associative $(\Gamma, \omega)$-algebra over $\K$, 
with multiplication $\mu: H\ot H\lra H$ and unit map $u: \K\lra H$. 
Let $1_H$ be the unit element of $H$, then $u: a\mapsto a 1_H$.
The $(\Gamma, \omega)$-algebra $H$ is  a 
$(\Gamma, \omega)$-bi-algebra if there exist $(\Gamma, \omega)$-algebra 
homomorphisms $\varepsilon: H\lra\C$ and $\Delta: H\lra H\ot H$ such that  
\beq
&&(\Delta\ot\id)\Delta= (\id\ot \Delta)\Delta: H\lra H\ot H\ot H, \label{eq:co-asso}\\
&&(\varepsilon\ot\id)\Delta =\id= (\id\ot \varepsilon)\Delta: H\lra H,  \label{eq:counit-Del}
\eeq
where the second condition involves the canonical identifications  
$\C\ot H\simeq H\simeq H\ot \C$.
Call $\varepsilon$ the co-unit and $\Delta$ the co-multiplication.  
Equation \eqref{eq:co-asso} is the co-associativity of the co-multiplication.  

We will adopt Sweedler's notation to write 
$\Delta(x)=\sum_{(x)}x_{(1)}\ot x_{(2)}$, 
$(\Delta\ot\id)\Delta(x)= \sum_{(x)}x_{(1)}\ot x_{(2)}\ot x_{(3)}$,  
and etc. for any $x\in H$. 
We denote by $\Delta'$ the opposite co-multiplication. For any $x\in H$, 
\[
\Delta'(x)=\sum_{(x)}\omega(d x_{(1)}, d x_{(2)}) x_{(2)}\ot x_{(1)}.
\] 
The following formula will be useful. 
\beq\label{eq:D2-H}
\baln
&(\id \ot \Delta)\Delta(y z) =(\Delta\ot\id)\Delta(y z)\\
&=\sum \omega(d y_{(2)}+d y_{(3)}, d z_{(1)}) \omega(d y_{(3)}, d z_{(2)}) \\
& \times  y_{(1)} z_{(1)} \ot y_{(2)} z_{(2)}\ot y_{(3)} z_{(3)},
\ealn
\eeq
where we note that $\omega(d y_{(2)}+d y_{(3)}, d z_{(1)})=\omega(d y -d y_{(1)}, d z_{(1)})$, since 
\(d(x)\) is additive on homogeneous elements (i.e., \(d(ab) = d(a) + d(b)\)).

A Hopf $(\Gamma, \omega)$-algebra \cite[\S2.4]{Z25} is a $(\Gamma, \omega)$-bi-algebra  
$(H, \mu, u, \Delta, \varepsilon)$ with a 
$(\Gamma, \omega)$-algebra anti-homomorphism 
$
S: H\lra H, 
$
called the antipode, such that 
\beq\label{eq:Del-S}
\mu(S\ot\Delta) = \mu(\Delta\ot S)= \varepsilon.
\eeq
The fact that $S$ is an $(\Gamma, \omega)$-algebra anti-homomorphism 
means that 
$S(x y)=\omega(d x, d y)S(y) S(x)$ for $x, y\in H$. 
Equation \eqref{eq:Del-S} can be expressed in Sweedler's notation as 
$\sum_{(x)}S(x_{(1)}) x_{(2)} = \sum_{(x)}x_{(1)}S(x_{(2)})= \varepsilon(x)$ for all $x\in A$.

If the antipode is bijective, 
$(H, \mu, u, \Delta', \epsilon, S^{-1})$ is a Hopf algebra, 
the opposite Hopf algebra of $(H, \mu, u, \Delta, \epsilon, S)$. 

\begin{remark}
For all naturally appearing Hopf (super)algebras, the antipodes are bijective. 
Here we also assume bijectivity of the antipodes of Hopf $(\Gamma, \omega)$-algebras. 
\end{remark}

A $\Gamma$-graded module for a Hopf $(\Gamma, \omega)$-algebra $H$ is a $\Gamma$-graded module for $H$ as an associative $(\Gamma, \omega)$-algebra. 
If $V$ and $W$ are $\Gamma$-graded $H$-modules,   
$V\ot W$ is a $\Gamma$-graded $H\ot H$-module with the action 
$(H\ot H)  \ot (V\ot W)\lra V\ot W$ defined by 
\[
(x\ot y)\cdot(v\ot w)= \omega(d y, d v) x\cdot v\ot y\cdot w, \quad x, y\in H, v\in V, w\in W.
\]
As the co-multiplication $\Delta: H\lra H\ot H$ is a $(\Gamma, \omega)$-algebra homomorphism, 
it induces an $H$-action $H  \ot (V\ot W)\lra V\ot W$ on $V\ot W$, which is given by 
\beq
x\ot(v\ot w)\mapsto \Delta(x)\cdot(v\ot w). \label{eq:act-tensor}
\eeq  

\subsubsection{Quasi-triangular Hopf $(\Gamma, \omega)$-algebras}

It is straightforward to devise a notion of quasi-triangular Hopf $(\Gamma, \omega)$-algebras
by generalising from the $\Z_2$-grading \cite{GZB93} to arbitrary grading. 
\begin{definition}
A Hopf $(\Gamma, \omega)$-algebra 
$(H, \mu, u, \Delta, \epsilon, S)$ with bijective antipode 
is called quasi-triangular if there exists 
an element $R$ of $H\ot H$ (or some completion of the tensor product in the setting of topological Hopf algebras), 
which is homogeneous of degree $0$ and is invertible, 
such that   
\beq
&R\Delta(x) = \Delta'(x) R, \label{eq:YB-1}\\
&(\Delta\ot \id)R = R_{13} R_{23}, \quad (\id\ot \Delta)R= R_{13} R_{12}, \label{eq:YB-2}\\
&(S\ot \id)R= (\id \ot S^{-1})R= R^{-1}, \label{eq:S-R}\\
&(\epsilon\ot \id)R= (\id \ot \epsilon)R= 1\ot 1, \label{eq:counit-R}
\eeq
where $R_{12}=R\ot 1$, $R_{23}=1\ot R$ and 
$R_{13}= (\id\ot \tau)R_{12}$.  
The element $R$ is called the universal $R$-matrix. 
\end{definition}

The following important theorem is a consequence of \eqref{eq:YB-1} and \eqref{eq:YB-2}. 
\begin{theorem}
The universal $R$-matrix satisfies the Yang-Baxter equation.
\beq
R_{12} R_{13} R_{23} = R_{23} R_{13} R_{12}. 
\eeq
\end{theorem}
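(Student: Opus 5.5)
The plan is to follow the classical argument for quasi-triangular Hopf algebras, keeping track of the signs coming from the commutative factor $\omega$. Write $R=\sum_t a_t\ot b_t$ with each summand homogeneous. Since $R$ has degree $0$, we may take $d a_t=-d b_t$.

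The first step is to evaluate $(\Delta\ot\id)\Delta'(x)$ applied to $R$, i.e. the product $R_{12}\,(\Delta\ot\id)(R)$, in two different ways.

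\emph{First way.} Use \eqref{eq:YB-2} to write $(\Delta\ot\id)R=R_{13}R_{23}$. Then
\[
R_{12}R_{13}R_{23}=R_{12}\,(\Delta\ot\id)(R).
\]
Now expand $(\Delta\ot\id)(R)=\sum_t\Delta(a_t)\ot b_t$. Since the multiplication in $H\ot H\ot H$ carries $\omega$-factors, one has to be careful here. The factor $R_{12}=\sum_s a_s\ot b_s\ot 1$ acts only on the first two tensor slots, and its third component is $1$, which has degree $0$. Hence no $\omega$-factors arise between $R_{12}$ and $b_t$, and
\[
R_{12}\,(\Delta\ot\id)(R)=\sum_t \bigl(R\,\Delta(a_t)\bigr)\ot b_t .
\]

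\emph{Second way.} Apply \eqref{eq:YB-1} to each $a_t$: $R\Delta(a_t)=\Delta'(a_t)R$. This gives
\[
\sum_t\Delta'(a_t)R\ot b_t=(\Delta'\ot\id)(R)\cdot R_{12}.
\]
Again no $\omega$-signs appear when moving $R_{12}$ past the third slot, because its third component is $1$.

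The second step is to identify $(\Delta'\ot\id)(R)$. Write $\Delta'=\tau_{H,H}\circ\Delta$, so that
\[
(\Delta'\ot\id)(R)=(\tau\ot\id)(\Delta\ot\id)(R)=(\tau\ot\id)(R_{13}R_{23}).
\]
One then checks that $(\tau\ot\id)$ is an algebra automorphism of $H^{\ot3}$ for the $(\Gamma,\omega)$-tensor product algebra structure. This is where the braiding property \eqref{eq:braid} and the involutivity \eqref{eq:invol} enter, and it is essentially the statement that $\tau$ is a morphism of algebras in $\Vect$.

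The third step is to compute the images of the two factors. Under $(\tau\ot\id)$ we have $R_{13}\mapsto R_{23}$. Similarly, $R_{23}=\sum 1\ot a\ot b$ is sent to $\sum\omega(0,d a)\,a\ot1\ot b=R_{13}$. Here we use the definition $R_{13}=(\id\ot\tau)R_{12}$ and check that it equals $\sum\omega(d b,0)\,a\ot1\ot b=\sum a\ot1\ot b$. Combining these,
\[
(\Delta'\ot\id)(R)=R_{23}R_{13}.
\]
Putting the steps together yields $R_{12}R_{13}R_{23}=R_{23}R_{13}R_{12}$.

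The main obstacle I expect is the sign bookkeeping in the last two steps. One must verify that $\tau\ot\id$ is multiplicative on $H^{\ot3}$, which amounts to checking the identity
\[
\omega(d a_2,d b_1)\,\omega(d a_1,d a_2)=\omega(d a_1+d b_1,\,d a_2+d b_2)\cdots
\]
using the bicharacter properties \eqref{eq:cycle-2} and \eqref{eq:cycle-3}. One must also make sure that in each product the factors being commuted past one another involve $1$ or degree-$0$ pairs, so that no stray $\omega$ survives. I would carry out this check on pure tensors and extend linearly, per Remark \ref{rmk:conven}.
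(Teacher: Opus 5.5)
Your argument is correct and is the standard derivation the paper has in mind when it simply says the Yang--Baxter equation follows from \eqref{eq:YB-1} and \eqref{eq:YB-2}: you show $R_{12}(\Delta\ot\id)(R)=(\Delta'\ot\id)(R)R_{12}$, and $(\tau\ot\id)$ carries $R_{13}R_{23}$ to $R_{23}R_{13}$. One justification in your ``second way'' needs fixing: the $\omega$-factor vanishes not because the third component of $R_{12}$ is $1$, but because $b_t$ must be braided past $a_s\ot b_s$, giving $\omega(d b_t, d a_s+d b_s)=1$ precisely since $R$ is homogeneous of degree $0$.
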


\begin{remark}
The original aim of the theory of quantum groups and supergroups was to solve the Yang-Baxter equation \cite{BGZ, D1, J}.
This equation plays a fundamental role in the theory of Yang-Baxter type soluble models in statistical mechanics. 
\end{remark}

\subsection{Finite duals of Hopf $(\Gamma, \omega)$-algebras} 
We adapt results on finite duals of  Hopf algebras \cite{A} to the $\Gamma$-graded setting.
This section also contains a more detailed treatment of 
related material in \cite[\S2.4]{Z25},

\subsubsection{Finite duals of Hopf $(\Gamma, \omega)$-algebras} 
Let $(H, \mu, u, \Delta, \varepsilon, S)$ be  a Hopf $(\Gamma, \omega)$-algebra, 
whose opposite Hopf algebra is  $(H, \mu, u, \Delta', \epsilon, S^{-1})$.

Denote by $H^*=\Hom_\K(H, \K)$ and $(H\ot H)^*=\Hom_\K(H\ot H, \K)$ the $\Gamma$-graded dual $\K$-modules of $H$ and $H\ot H$ respectively.  Let 
\beq 
&\Delta^*: H^* \ot H^*\lra H^*, \quad \mu^*:  H^*\lra (H\ot H)^*,  \label{eq:maps*}\\
& u^*: H^*\lra \K, \quad \epsilon^*: \K \lra H^*, 
\quad  S^*: H^*\lra H^*
\eeq
be the adjoint maps of $\Delta$, $M$, $u$, $\epsilon$  and $S$ respectively, defined, for any $f, g\in H^*$, by
\beq
&&\langle\Delta^*(f\ot g), x \rangle  = \langle f\ot g, \Delta(x) \rangle, \quad \forall x\in H,  \label{eq:Del*}\\
&&\langle \mu^*(f), x\ot y\rangle =\langle f, x y\rangle,  \quad \forall x, y \in H \\
&&u^*(f) = \langle   f, 1_H \rangle,  \\
&&\epsilon^*(a) = a\epsilon, \quad \forall a\in \K, \\ 
&&\langle S^*(f), x\rangle = \langle f, S(x)\rangle, \quad \forall x\in H. \label{eq:S*}
\eeq
The maps are all homogeneous of degree $0$.
We denote $\epsilon^*$ by $u_{H^*}$. 

Now $(H^*, \Delta^*,  u_{H^*})$ is a $(\Gamma, \omega)$-algebra 
with multiplication $\Delta^*$ and unit map $u_{H^*}$, 
where the associativity of $\Delta^*$ follows from the co-associativity of $\Delta$. 
Now $H^*\ot H^*$ is a subalgebra of $(H\ot H)^*$, but  in general
${\rm Im}(\mu^*)\not\subset H^*\ot H^*$, thus $H^*$ is not a Hopf algebra. 

A co-finite homogeneous ideal $\CJ$ of $H$ (as a $(\Gamma, \omega)$-algebra) is a homogeneous $2$-sided ideal such that $H/\CJ$ is a finitely generated $\K$-module. Let 
\[
H^0 = \{f\in H^*\mid \ker(f)  \text{ contains a co-finite homogeneous ideal}\},
\]
and call it the finite dual of $H$.  Some comments are in order.

(1). 
If for any non-vanishing $x\in H$, there exists $f\in H^0$ such that $\langle f, x\rangle\ne 0$, 
we say that $H^0$ is dense in $H^*$. It is clear that this occurs 
if and only if for any non-zero $x\in H$,
there is a homogeneous co-finite ideal which does not contain $x$. 
In this case, $H$ is called a proper Hopf $(\Gamma, \omega)$-algebra.

(2). 
We call an $H$-module $\K$-finite if it is finitely generated over $\K$.
Given a $\K$-finite $H$-module $V$ generated
by the generators $v_1, \dots, v_r$   over $\K$, 
there are elements $  \rho_{i j} \in H^*$,
such that $x\cdot v_i=\sum_j   \rho_{j i}(x) v_j$. 
Call $  \rho_{j i}$ the matrix coefficients of $V$ 
relative to this generating set.  Clearly $  \rho_{i j}\in H^0$.

(3). 
If $f\in H^0$, let $\CJ_f$ be a co-finite homogeneous ideal contained in 
$\ker(f)$. Then $H/\CJ_f$ is a $\K$-finite $H$-module 
with the action $y\cdot(x+\CJ_f)= xy + \CJ_f$ for all $x, y\in H$. 
If $H/\CJ_f=0$, we have $f=0$. If $H/\CJ_f\ne 0$, let $t_i\in H$, for $i=1, 2, \dots, r$, 
be elements such that their images $t_i+\CJ_f$ generate $H/\CJ_f$ over $\K$. 
Then $1+\CJ_f = \sum_{i} a_i (t_i + \CJ_f)$ for some $a_i\in\K$, 
and hence $x+\CJ_f =   \sum_{i, j} a_i \rho_{j i} (x) (t_j + \CJ_f)$ for all $x\in H$, 
where $\rho_{i j}$ are the matrix coefficients of $H/\CJ_f$ with respect to the generating set $\{ t_i +\CJ_f\}$. 
Then $f(x) = \sum_{i, j}  a_i f(t_j) \rho_{j i}(x)$. This shows that 
$
f = \sum_{i, j}  a_i f(t_j) \rho_{j i}.
$
Therefore $f\in H^0$ if and only if it is a $\K$-linear combination of matrix coefficients of some 
$\K$-finite $H$-modules.

(4). If $\rho^{(1)}_{i j}$ and $\rho^{(2)}_{rs}$ are the matrix coefficients of two $\K$-finite  $H$-modules $V_1$ and $V_2$ respectively,  $\sum_{i, j; r, s} \K\rho^{(1)}_{i j}\rho^{(2)}_{rs}$ is the $\K$-span of the matrix coefficients of the tensor product $H$-module $V_1\ot V_2$, which is again $\K$-finite. Hence $H^0$ is a subalgebra of $H^*$. 

(5). 
As $H/\CJ_f$ is finitely generated over $\K$, and so is also $(H/\CJ_f)^*$. 
There is a canonical embedding $\iota_f: (H/\CJ_f)^*\hookrightarrow H^0$, $\bar{g}\mapsto \iota(\bar{g})$,  
such that $\langle \iota_f(\bar{g}), x\rangle = \langle \bar{g}, x+\CJ_f\rangle$ for all $x\in H$. 
Now it is easy to see that  $\mu^*(f)\in \iota_f((H/\CJ_f)^*)\ot \iota_f((H/\CJ_f)^*)\subset  H^0\ot H^0$.

The following result generalises a basic fact of usual Hopf algebras \cite{A} to 
Hopf $(\Gamma, \omega)$-algebra. 
\begin{lemma}
Let $(H, \mu, u, \Delta, \epsilon, S)$ be a Hopf $(\Gamma, \omega)$-algebra. 
Then its finite dual $H^0$ is a Hopf $(\Gamma, \omega)$-algebra with 
\[
\baln
\text{multiplication} &\quad&&   \mu^0 = \Delta^*|_{H^0}: H^0\ot H^0\lra H^0, \\ 
\text{unit map} &&&   u^0: \K \lra H^0, \quad a\mapsto  u_{H^*}(a),  \\ 
\text{co-multiplication} &&&   \Delta^0 = \mu^*|_{H^0}: H^0\lra    H^0\ot H^0, \\ 
\text{co-unit} &&&  \epsilon^0: H^0\lra \K, \quad f\mapsto f(1), \\
\text{antipode} &&&  S^0=S^*|_{H^0}: H^0\lra H^0.
\ealn
\]
\end{lemma}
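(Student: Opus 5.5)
To prove the lemma, I will check each structure map in turn, using the observations (1)--(5) above. The key input is comment (5): for $f\in H^0$ with co-finite homogeneous ideal $\CJ_f\subset\ker f$, $\mu^*(f)$ lies in $\iota_f((H/\CJ_f)^*)\ot\iota_f((H/\CJ_f)^*)\subset H^0\ot H^0$. So $\Delta^0$ indeed lands in $H^0\ot H^0$, and not merely in $(H\ot H)^*$. Here one must be careful that the $\Gamma$-graded identification $(H/\CJ_f)^*\ot(H/\CJ_f)^*\cong((H/\CJ_f)\ot(H/\CJ_f))^*$ holds for finitely generated modules. The pairing on tensor products must carry the sign convention $\langle f\ot g, x\ot y\rangle=\omega(dg,dx)\langle f,x\rangle\langle g,y\rangle$, so that all adjoint maps are homogeneous of degree $0$ and compatible with the braiding $\tau$.

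Next, comment (4) shows that $H^0$ is a subalgebra of $H^*$ under $\Delta^*$. The unit $\epsilon=\epsilon^*(1)$ lies in $H^0$, since $\ker\epsilon$ is itself a co-finite homogeneous ideal. Associativity and unitality of $\mu^0$ come from co-associativity of $\Delta$ and the co-unit axiom, by dualising. Symmetrically, co-associativity of $\Delta^0$ and the co-unit property of $\epsilon^0(f)=f(1)$ come from associativity of $\mu$ and unitality of $H$. For the antipode, I will show that $S^*(H^0)\subset H^0$. If $\CJ$ is a co-finite homogeneous ideal in $\ker f$, then $S^{-1}(\CJ)$ is a homogeneous two-sided ideal, because $S$ is a graded anti-homomorphism (bijective by standing assumption). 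It is co-finite, because $S$ induces a $\K$-linear bijection $H/S^{-1}(\CJ)\to H/\CJ$. Finally $S^{-1}(\CJ)\subset\ker S^*(f)$.

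The substantive step is verifying the bialgebra compatibility in the graded setting: $\Delta^0$ must be an algebra homomorphism $H^0\to H^0\ot H^0$, where $H^0\ot H^0$ carries the twisted product $(f\ot g)(f'\ot g')=\omega(dg,df')ff'\ot gg'$. Pairing both sides against $x\ot y$, this reduces to the identity $\Delta(xy)=\Delta(x)\Delta(y)$ in $H\ot H$, with its own twisted product. The task is to check that the $\omega$-factors produced by the sign convention of the pairing and by the two twisted products match. I will do this carefully on homogeneous Sweedler components, using bimultiplicativity of $\omega$ and the fact that nonzero pairings force degrees to cancel, e.g.\ $df=-dx$. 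I expect this to be the main obstacle, since it is pure bookkeeping of commutative factors, and formula \eqref{eq:D2-H} will be the template. The same type of check shows that $\epsilon^0$ is multiplicative and that $S^0$ is a graded anti-homomorphism: dualising $S(xy)=\omega(dx,dy)S(y)S(x)$ produces exactly the factor needed for $\Delta^0\circ S^0=\tau(S^0\ot S^0)\Delta^0$ and for $S^0(fg)=\omega(df,dg)S^0(g)S^0(f)$.

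Finally, the antipode identities $\mu^0(S^0\ot\id)\Delta^0=u^0\epsilon^0=\mu^0(\id\ot S^0)\Delta^0$ are checked by pairing with $x\in H$. They reduce to \eqref{eq:Del-S} applied to $x$, after the same degree-cancellation argument for the $\omega$-factors. Bijectivity of $S^0$ then follows from that of $S$, since $(S^{-1})^*$ preserves $H^0$ by the same ideal argument.
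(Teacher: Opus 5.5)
Your plan is the one the paper has in mind: its proof simply defers to comments (1)--(5) and the classical argument for ordinary Hopf algebras. Most of your steps are sound:
\begin{itemize}
\item comment (5) gives $\Delta^0(H^0)\subset H^0\ot H^0$;
\item $\ker\epsilon$ handles the unit;
\item the preimage ideal $S^{-1}(\CJ)$ gives $S^*(H^0)\subset H^0$;
\item multiplicativity of $\Delta^0$ reduces to $\Delta(xy)=\Delta(x)\Delta(y)$, and the antipode identities reduce to \eqref{eq:Del-S}.
\end{itemize}

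The step that fails as written is the anti-multiplicativity of $S^0$. Dualising $S(xy)=\omega(dx,dy)S(y)S(x)$ gives only the co-side identity $\Delta^0\circ S^0=\tau(S^0\ot S^0)\Delta^0$. To get $S^0(fg)=\omega(df,dg)S^0(g)S^0(f)$ you must pair with $x\in H$, which yields $\langle fg,S(x)\rangle=\langle f\ot g,\Delta(S(x))\rangle$. So what you actually need is that $S$ is a braided co-algebra anti-homomorphism:
\[
\Delta(S(x))=\sum_{(x)}\omega(dx_{(1)},dx_{(2)})\,S(x_{(2)})\ot S(x_{(1)}),\qquad \epsilon\circ S=\epsilon .
\]
The second identity is what gives $S^0(\epsilon)=\epsilon$. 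Neither identity is among the paper's axioms for a Hopf $(\Gamma,\omega)$-algebra, which assume only anti-multiplicativity of $S$ and \eqref{eq:Del-S}. Anti-multiplicativity of $S$ says nothing about $\Delta\circ S$, so both must be proved.
\begin{itemize}
\item $\epsilon\circ S=\epsilon$ follows by applying $\epsilon$ to $\sum_{(x)}S(x_{(1)})x_{(2)}=\epsilon(x)1$.
\item For the co-product formula, use the convolution product on $\Hom(H,H\ot H)$, with $H\ot H$ carrying the $\omega$-twisted multiplication. Then $\Delta\circ S$ is a right and $\tau(S\ot S)\Delta$ a left convolution inverse of $\Delta$, so they coincide. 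The $\omega$-factors cancel by the same degree argument you use elsewhere.
\end{itemize}
Alternatively, prove once in the $(\Gamma,\omega)$ setting that an antipode of a bialgebra is automatically anti-multiplicative. Then apply it to $H^0$ after checking the antipode identities.

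A smaller point concerns the ground ring. You assert that $(H/\CJ_f)^*\ot(H/\CJ_f)^*\cong((H/\CJ_f)\ot(H/\CJ_f))^*$ for finitely generated modules. You also rely on injectivity of $H^0\ot H^0\to(H\ot H)^*$ whenever you verify an identity by pairing against $x\ot y$. Both hold over a field, or when the quotients are finitely generated projective. They fail for arbitrary finitely generated modules over a commutative ring: over $\C[t]/(t^2)$ with $M=\C[t]/(t)$, the canonical map $M^*\ot M^*\to(M\ot M)^*$ is zero although both sides are nonzero. The paper's comment (5) makes the same tacit assumption, so you should state explicitly that $\K$ is a field.
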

\begin{proof}
The proof can be extracted from the discussions preceding the statement of the lemma. 
It is a straightforward generalisation,  to the present setting, of the proof of the corresponding result 
for usual Hopf algebras \cite{A}. We omit details. 
\end{proof}

Assume that the antipode of the Hopf $(\Gamma, \omega)$-algebra
$(H, \mu, u, \Delta, \epsilon, S)$ is bijective.  Then $S^0$ is bijective. 
The opposite of the finite dual Hopf $(\Gamma, \omega)$-algebra $H^0$ of $H$ has the structure maps
\beq
&\text{multiplication }  \mu_0=\mu^0, \quad
\text{unit map  }    u_0=u^0,  \label{eq:H0-1}\\ 
&\text{co-multiplication }  \Delta_0 = \tau_{H^0, H^0}\circ \Delta^0, \quad
\text{co-unit }  \epsilon_0=\epsilon^0, \label{eq:H0-2}\\
&\text{antipode  }   S_0=(S^0)^{-1}.  \label{eq:H0-3}
\eeq
Note in particular that 
\[
\baln
\langle \Delta_0(f), x\ot y\rangle &= \langle \Delta^0(f),\tau_{H, H}(x\ot y)\rangle 
=\omega(d x, d y)\langle f, y x\rangle, \\
\langle S_0(f), x\rangle &=\langle f,  S^{-1}(x)\rangle, \quad f\in H^0, x, y\in H. 
\ealn
\]
We denote this Hopf $(\Gamma, \omega)$-algebra by
$(H^0, \mu_0, u_0, \Delta_0, \epsilon_0, S_0)$.

Hereafter we will always refer to $(H^0, \mu_0, u_0, \Delta_0, \epsilon_0, S_0)$ 
whenever $H^0$ is considered as a Hopf $(\Gamma, \omega)$-algebra. 
We write $\Delta_0(f)=\sum_{(f)} f_{(1)} \ot f_{(2)}$ for any $f\in H^0$.

\subsubsection{Technical results on dual Hopf $(\Gamma, \omega)$-algebras}\label{sect:technical}

The dual Hopf $(\Gamma, \omega)$-algebras $H$ and $H^0$ act on each other. We have the  
following {$H$ actions on $H^0$,} 
\beq\label{eq:H-on-H0}
\baln
&\text{left action \ }   H\ot H^0\lra H^0, \quad x\ot f\mapsto x\rhd f, \\ 
&\text{right action \ }   H^0\ot H\lra H^0, \quad f\ot x \mapsto f\lhd x,
\ealn
\eeq
which are respectively defined,  for any $x\in H$ and $f\in H^0$, by
\[
\baln
&x\rhd f=\sum_{(f)} \omega(d x, d f_{(1)}) \langle f_{(1)}, x\rangle f_{(2)}, \\ 
&f\lhd x=\sum_{(f)} f_{(1)} \langle f_{(2)}, x\rangle; 
\ealn
\]
and the {$H^0$ actions on $H$,} 
\beq\label{eq:H0-on-H}
\baln
&\text{left action \ }    H^0\ot H\lra H, \quad f\ot x \mapsto f\brhd x,  \\
&\text{right action \ }   H\ot H^0\lra H, \quad x\ot f\mapsto x\blhd f, 
\ealn
\eeq
which are respectively defined,  for any $x\in H$ and $f\in H^0$, by
\[
\baln
&f\brhd x=\sum_{(x)} \langle S_0(f), x_{(1)}\rangle x_{(2)}, \\
& x\blhd f=\sum_{(x)} \omega(d x_{(2)}, d f) x_{(1)}\langle  S_0(f),  x_{(2)}\rangle. 
\ealn
\]
It is easy to show that these indeed define left or right actions.

The following fact can also be easily proven. 
\begin{lemma}
\begin{enumerate}
\item For any $x\in H$ and $f\in H^0$, 
\beq
\Delta_0(x\rhd f) &=& \sum_{(f)} x\rhd f_{(1)}\ot  f_{(2)}, \\
 \Delta_0( f\lhd x) &=& \sum_{(f)}  f_{(1)}\ot  f_{(2)}\lhd x, \\
\Delta(f\brhd x) &=& \sum_{(f)} f\brhd x_{(1)}\ot  x_{(2)}, \\
 \Delta( x\blhd f) &=& \sum_{(f)} x_{(1)}\ot   x_{(2)}\blhd f.
\eeq

\item
For any $x, y\in H$ and $f, g\in H^0$,
\beq
\langle x\rhd f, y\rangle&=	& \omega(d x, d f+ d y) \langle f, y S^{-1}(x)\rangle \\
\langle f\lhd x, y\rangle&=& \langle f,  x y\rangle \\
\langle f, g\brhd y\rangle
&=& \omega(d f, d g)  \langle S_0(g) f, y\rangle\\						
\langle f, y\blhd g\rangle
&=&\omega(d y, d g) \langle f S_0(g), y\rangle.
\eeq
\end{enumerate}
\end{lemma}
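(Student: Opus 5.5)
The plan is to verify the four compatibility formulas in part (1) and the four pairing identities in part (2) directly from the definitions of the actions $\rhd,\lhd,\brhd,\blhd$ and of the opposite dual structure \eqref{eq:H0-1}--\eqref{eq:H0-3}. Part (2) comes first, since part (1) then follows by pairing against arbitrary elements and using non-degeneracy of the pairing $H^0\ot H\lra\K$ in the second slot. All identities are linear and degree-preserving, so I take every element homogeneous and track the commutative factors by the additivity rules \eqref{eq:cycle-1}--\eqref{eq:cycle-3}.

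For part (2), the key tool is the defining property of $\Delta_0$: $\langle \Delta_0(f), x\ot y\rangle=\omega(dx,dy)\langle f, yx\rangle$. I also use the pairing convention on tensor products, $\langle f\ot g, x\ot y\rangle=\omega(dg,dx)\langle f,x\rangle\langle g,y\rangle$, as fixed in the appendix. The identity $\langle f\lhd x,y\rangle=\langle f,xy\rangle$ is immediate: $\sum f_{(1)}\langle f_{(2)},x\rangle$ paired with $y$ gives $\langle\Delta_0 f, y\ot x\rangle$ up to a factor, and the $\omega$ factors cancel because $\omega(dy,dx)\omega(dx,dy)=1$.

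For $\langle x\rhd f,y\rangle$ I expand similarly to obtain a pairing of $f$ against $xy$ with the tensor factors swapped. I then have to relate this to $y S^{-1}(x)$. I expect the stated form to arise only after rewriting via the antipode axiom in $H^0$, or possibly a different bracketing convention is intended. I would first check the formula on $x$ group-like and on $x$ skew-primitive to pin down the convention before doing the general computation.

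The $\brhd$ and $\blhd$ identities come out the same way. For $\brhd$, expand $\sum\langle S_0g,y_{(1)}\rangle\langle f,y_{(2)}\rangle$. Recognise this as $\langle S_0(g)\ot f,\Delta y\rangle$ with a compensating factor $\omega(df,dy_{(1)})=\omega(df,dg)$. The latter holds because only terms with $dy_{(1)}=-d(S_0g)=dg$ contribute, using that the pairing is of degree $0$. The result equals $\omega(df,dg)\langle S_0(g)f,y\rangle$ by the definition of the multiplication $\Delta^*$. For $\blhd$, the factor $\omega(dy_{(2)},dg)$ combines with the pairing sign to produce $\omega(dy,dg)$, after noting $dy_{(2)}=dy-dy_{(1)}$ and $dy_{(1)}=df$.

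For part (1), I would pair both sides of, for example, $\Delta(f\brhd x)$ against $a\ot b$, or equivalently compute directly. By coassociativity,
\[
\Delta(f\brhd x)=\sum\langle S_0 f,x_{(1)}\rangle\, x_{(2)}\ot x_{(3)}=\sum (f\brhd x_{(1)})\ot x_{(2)},
\]
and no extra $\omega$ factor appears because the scalar sits in degree $0$. The other three cases are analogous, using coassociativity of $\Delta_0$ and, for $\rhd$, that the factor $\omega(dx,df_{(1)})$ is unaffected by further splitting $f_{(2)}$.

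The main obstacle I anticipate is the sign and antipode bookkeeping in $\langle x\rhd f,y\rangle$. There the appearance of $S^{-1}$ must be justified through the relation $\langle S_0 f,x\rangle=\langle f,S^{-1}x\rangle$ together with the anti-homomorphism rule $S(xy)=\omega(dx,dy)S(y)S(x)$. I would handle it by first proving it on generators of a test case and then by a general Sweedler computation.
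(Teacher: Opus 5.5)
Your treatment of part (1) and of the $\lhd$, $\brhd$, $\blhd$ identities in part (2) is the same direct Sweedler computation the paper intends (it writes out only the $\rhd$ case), and it goes through. Part (1) follows from coassociativity alone, so no non-degeneracy is needed. There are small slips in the degree bookkeeping. Since the pairing has degree $0$, you need $dy_{(1)}=-dg$ in the $\brhd$ case (not $dg$) and $dy_{(1)}=-df$ in the $\blhd$ case, and the compensating factor is $\omega(df,dy_{(1)})^{-1}$. The final factors you state are nevertheless correct.

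The genuine gap is the first identity of part (2), and the repair you propose cannot work. Carry out the expansion you started, using the displayed definition of $\rhd$ and $\omega(dx,df_{(1)})=\omega(dx,df)\,\omega(df_{(2)},dx)$:
\[
\langle x\rhd f,y\rangle=\sum\omega(dx,df_{(1)})\langle f_{(1)},x\rangle\langle f_{(2)},y\rangle=\omega(dx,df)\langle\Delta_0(f),x\ot y\rangle=\omega(dx,df+dy)\langle f,yx\rangle .
\]
No $S_0$ occurs in $\rhd$, so $\langle S_0f,x\rangle=\langle f,S^{-1}x\rangle$ and the anti-multiplicativity of $S$ have nothing to act on. Moreover $\langle f,yx\rangle$ genuinely differs from $\langle f,yS^{-1}(x)\rangle$. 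In the trivially graded $\K\Z_3$ (where $H^0=H^*$), take $x=g$ a generator, $y=1$, $f=\delta_g$. The left side is $1$, while the printed right side is $\langle\delta_g,g^2\rangle=0$. The group-like test you planned would have exposed exactly this.

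The paper's proof obtains $yS^{-1}(x)$ only because its first line expands $x\rhd f$ as $\sum\omega(dx,df_{(1)})\langle S_0(f_{(1)}),x\rangle f_{(2)}$, i.e.\ with an action different from the one displayed. For that variant the same factor manipulation gives $\omega(dx,df)\langle\Delta_0(f),S^{-1}(x)\ot y\rangle=\omega(dx,df+dy)\langle f,yS^{-1}(x)\rangle$. But the variant satisfies $x\rhd(x'\rhd f)=(x'x)\rhd f$ up to $\omega$-factors; in $\K G$ it is $\delta_h\mapsto\delta_{hx}$. It is the $S_0$-free $\rhd$ that Lemma \ref{lem:lr-act} and the formula \eqref{eq:tmult-2} for $\eta$ rely on.

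So what is missing is not an antipode trick but the recognition that statement and definition disagree. With the paper's definition, the correct identity is $\langle x\rhd f,y\rangle=\omega(dx,df+dy)\langle f,yx\rangle$, which your expansion already proves. The form $yS^{-1}(x)$ is correct only for the modified action.
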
 
\begin{proof}
The proof of the lemma is entirely straightforward. For example, 
the first relation of part (2) can be shown by the following calculations.
\[
\baln
\langle x\rhd f, y\rangle&=\sum \omega(d x, d f_{(1)}) \langle S_0(f_{(1)}), x\rangle  
										\langle f_{(2)}, y\rangle\\
										&=\sum \omega(d x, d f) 
										\langle f{(1)}\ot f{(2)}, S^{-1}(x)\ot y\rangle \\
										&= \omega(d x, d f+ d y) 
										\langle f, y S^{-1}(x)\rangle.
\ealn
\]
The other relations can be similarly proven. 
\end{proof}

\begin{lemma}
The following relations hold. 
\beq
\quad& \rhd(\id_H\ot\mu_0) = \mu_0 (\rhd\ot \rhd)(\id_H\ot\tau_{H, H^0}\ot\id_{H^0})(\Delta\ot \id_{H^0}\ot\id_{H^0}),  \\
\quad&  \lhd(\mu_0\ot \id_H) = \mu_0 (\lhd\ot \lhd)(\id_{H^0}\ot\tau_{H^0, H}\ot\id_{H})(\id_{H^0}\ot\id_{H^0}\ot \Delta),  \\
\quad&  \brhd(\id_{H^0}\ot\mu) = \mu(\brhd\ot \brhd)(\id_{H^0}\ot\tau_{H^0, H}\ot\id_{H})(\Delta_0\ot \id_{H}\ot\id_{H}),  \\
\quad& \blhd(\mu\ot \id_{H^0}) = \mu(\blhd\ot \blhd)(\id_{H}\ot\tau_{H, H^0}\ot\id_{H^0})(\id_{H}\ot\id_{H}\ot \Delta_0).
\eeq 
That is,  $H^0$ (resp. $H$) inherits the structure of a  $\Gamma$-graded module algebra 
under the left or right action of $H$ (resp. $H^0$).  
\end{lemma}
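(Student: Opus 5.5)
The plan is to verify each of the four identities by pairing both sides against arbitrary test elements and using the nondegeneracy of the dual pairing. For the identities involving $H^0$-valued outputs, the test elements lie in $H$. For those involving $H$-valued outputs, I will instead check the identity directly in Sweedler notation.

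\textbf{First identity.} Take homogeneous $x\in H$ and $f,g\in H^0$. The left side is $x\rhd (fg)$. By the preceding lemma, part (2),
\[
\langle x\rhd (fg), y\rangle=\omega(d x, d f+d g+d y)\langle fg, yS^{-1}(x)\rangle .
\]
Since multiplication in $H^0$ is $\Delta^*$, this equals the pairing of $f\otimes g$ against $\Delta(y)\Delta(S^{-1}(x))$. I will expand $\Delta(S^{-1}(x))$ using the anti-coalgebra property of $S^{-1}$ relative to $\Delta$, which brings in the braiding $\omega(dx_{(1)},dx_{(2)})$ and a flip. On the right side, I will compute $\langle (x_{(1)}\rhd f)(\omega\cdot x_{(2)}\rhd g), y\rangle$, again via $\Delta^*$ and part (2) of the preceding lemma. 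The two expressions then match term by term once the $\omega$-factors are collected. The collection uses only bimultiplicativity, $\omega(\alpha,\beta)\omega(\beta,\alpha)=1$, and additivity of degrees on Sweedler components.

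\textbf{Second identity.} This is easier. Using $\langle f\lhd x,y\rangle=\langle f,xy\rangle$, we get
\[
\langle (fg)\lhd x, y\rangle=\langle f\otimes g,\Delta(x)\Delta(y)\rangle .
\]
Expanding $\Delta(x)\Delta(y)$ with the graded product rule for $H\otimes H$ produces exactly the $\tau_{H^0,H}$ factor in the right side.

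\textbf{Third and fourth identities.} These are dual. Here I will work directly in $H$, using the definition
\[
f\brhd(xy)=\sum\langle S_0(f),(xy)_{(1)}\rangle (xy)_{(2)} .
\]
Expand $\Delta(xy)=\Delta(x)\Delta(y)$ with its $\omega$-sign. Then use that $S_0=(S^0)^{-1}$ is an anti-coalgebra map for $\Delta_0$, i.e.
\[
\Delta_0(S_0 f)=\sum \omega(df_{(1)},df_{(2)})\,S_0f_{(2)}\otimes S_0f_{(1)} ,
\]
to split $\langle S_0 f, x_{(1)}y_{(1)}\rangle$ as a product of two pairings. Recall that pairing against a product uses $\Delta_0$, which is the opposite coproduct, so the two flips cancel. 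The fourth identity follows by the mirror computation. Alternatively, it follows from the third by applying the opposite/co-opposite symmetry $(H,\Delta)\leftrightarrow(H,\Delta')$.

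The main obstacle is bookkeeping of the commutative-factor signs, especially in the first identity, where $S^{-1}$, the opposite coproduct $\Delta_0$ and the braiding all interact. I will first fix a convention for homogeneous Sweedler components (degrees of $x_{(1)},x_{(2)}$ add to $dx$). I will then reduce every $\omega$-factor to a product of pairwise factors $\omega(d a,d b)$ and cancel them systematically. If a stray sign remains, the first thing to check is whether the definition of $\rhd$ requires $\omega(dx,df_{(2)})$ rather than $\omega(dx,df_{(1)})$.
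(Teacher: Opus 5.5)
Your treatment of the second, third and fourth identities is sound and is essentially the paper's own argument. The paper writes out the fourth identity in Sweedler form, splits the pairing with $S_0(f)$ using $\Delta^0(S_0(f))=\sum S_0(f_{(1)})\ot S_0(f_{(2)})$, collects the $\omega$-factors, and declares the other identities similar. A small wording point: pairing against a product is governed by $\Delta^0=\tau\circ\Delta_0$, not by $\Delta_0$. The flip-free formula just quoted is what you actually use, so your conclusion there is right.

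The first identity does not go through as you plan it. You start from $\langle x\rhd f,y\rangle=\omega(dx,df+dy)\langle f,yS^{-1}(x)\rangle$, taken from part (2) of the preceding lemma. That formula does not follow from the definition $x\rhd f=\sum\omega(dx,df_{(1)})\langle f_{(1)},x\rangle f_{(2)}$; the paper's derivation of it inserts an $S_0$ that is not in the definition. If you use it anyway, the claim that the two sides ``match term by term once the $\omega$-factors are collected'' is false. Because $\Delta(S^{-1}(x))=\sum\omega(dx_{(1)},dx_{(2)})\,S^{-1}(x_{(2)})\ot S^{-1}(x_{(1)})$, the left side is built from $\langle f,y_{(1)}S^{-1}(x_{(2)})\rangle\langle g,y_{(2)}S^{-1}(x_{(1)})\rangle$. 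The right side is built from $\langle f,y_{(1)}S^{-1}(x_{(1)})\rangle\langle g,y_{(2)}S^{-1}(x_{(2)})\rangle$. The discrepancy is an exchange of the Sweedler legs of $x$, not a sign. No $\omega$-bookkeeping removes it, and neither does your fallback of replacing $\omega(dx,df_{(1)})$ by $\omega(dx,df_{(2)})$. It disappears only if $\Delta$ is $\omega$-cocommutative, which already fails for $\Delta(e_i)=e_i\ot k_i+1\ot e_i$. That route would prove the identity with $\Delta'$ in place of $\Delta$. Indeed, with the $S^{-1}$ formula, $\rhd$ would be a right action, since $S^{-1}$ reverses products.

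The repair is to compute from the definition. Use the paper's conventions $\langle a\ot b,u\ot v\rangle=\omega(db,du)\langle a,u\rangle\langle b,v\rangle$ and $\langle\Delta_0(f),x\ot y\rangle=\omega(dx,dy)\langle f,yx\rangle$. These give $\langle x\rhd f,y\rangle=\omega(dx,df+dy)\langle f,yx\rangle$, with no antipode. Then $\langle x\rhd(fg),y\rangle=\omega(dx,df+dg+dy)\langle f\ot g,\Delta(y)\Delta(x)\rangle$. Comparing this with $\sum\omega(dx_{(2)},df)\langle(x_{(1)}\rhd f)(x_{(2)}\rhd g),y\rangle$ is a pure $\omega$-factor check, parallel to your second identity. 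Write $a=dx_{(1)}$, $b=dx_{(2)}$, $c=dy_{(1)}$, $e=dy_{(2)}$, and use $df+c+a=0$ and $dg+e+b=0$ on non-vanishing terms. Both sides then reduce to $\omega(a,a)\,\omega(b,b)\,\omega(e,c)^{-1}\omega(b,c)^{-1}\omega(b,a)^{-1}$, so the first identity closes.
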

\begin{proof}
It is easy to show that the lemma is equivalent to the following relations 
for all $x, y\in H$ and $f, g\in H^0$.
\beq
x\rhd (f g) = \sum_{(x)} \omega(d x_{(2)}, d f) (x_{(1)}\rhd f) (x_{(2)}\rhd y),\\
(f g)\lhd x = \sum_{(x)} \omega(d g, d x_{(1)}) (f\lhd x_{(1)}) (g\lhd x_{(2)}),  \\
f\brhd (x y) = \sum_{(f)} \omega(d f_{(2)}, d x) (f_{(1)}\brhd x)(f_{(2)}\brhd  y),\\
(x y)\blhd f = \sum_{(f)} \omega(d y, d f_{(1)}) (x\blhd f_{(1)}) (y\blhd f_{(2)}). \label{eq:xy-r-f}
\eeq
We consider, for example, the last relation. We have 
\[
\baln
(x y)\blhd f&=\sum_{(x), ( y)} \omega(d x_{(2)}+d y_{(2)}, d f) \omega(d x_{(2)}, d y_{(1)}) \\
&\times x_{(1)} y_{(1)}\langle  S_0(f),  x_{(2)}y_{(2)} \rangle.
\ealn
\]
We ca re-write the right hand side as 
\[
\baln
&\sum_{(x), (y)} \omega(d x_{(2)}+d y_{(2)}, d f) \omega(d x_{(2)}, d y_{(1)}) \\
&\times x_{(1)} y_{(1)}\langle  \Delta^0(S_0(f)),  x_{(2)}\ot y_{(2)} \rangle\\
&=\sum_{(x), (y)} \omega(d x_{(2)}+d y_{(2)}), d f) \omega(d x_{(2)}, d y_{(1)}) \\
&\times x_{(1)} y_{(1)}\langle  S_0(f_{(1)})\ot S_0(f_{(2)}),  x_{(2)}\ot y_{(2)} \rangle\\
&=\sum_{(x), (y), (f)} \omega(d x_{(2)}+d y_{(2)}, d f) \omega(d x_{(2)}, d y_{(1)})\\ 
&\times \omega(d f_{(2)}, d x_{(2)}) 
x_{(1)} y_{(1)}\langle  S_0(f_{(1)}),  x_{(2)}\rangle \langle  S_0(f_{(2)}),   y_{(2)} \rangle.
\ealn
\]
Note that $d f_{(1)}+d x_{(2)}=0$ and $d f_{(2)}+d y_{(2)}=0$ on the right hand side. 
This enables us to manipulate the product of commutative factors as follows. 
\[
\baln
&\omega(d x_{(2)}+d y_{(2)}, d f) \omega(d x_{(2)}, d y_{(1)}) 
\omega(d f_{(2)}, d x_{(2)})\\ 
&\leadsto\omega(d f, d f) \omega(d x_{(2)}, d y_{(1)}) \omega(d x_{(2)}, d y_{(2)}) \\ 
&\leadsto\omega(d f, d f) \omega(d x_{(2)}, d y) \\ 
&\leadsto\omega(d f, d f) \omega(d y, d f_{(1)}). 
\ealn
\]
Therefore, 
\[
\baln
(x y)\blhd f
&=\sum_{(x), (y), (f)} \omega(d f, d f) \omega(d y, d f_{(1)}) 
x_{(1)} y_{(1)}\langle  S_0(f_{(1)}),  x_{(2)}\rangle \langle  S_0(f_{(2)}),   y_{(2)} \rangle.
\ealn
\]

We also have 
\[
\baln
&\sum_{(f)} \omega(d y,  d f_{(1)}) (x \blhd f_{(1)}) (y_{(1)}\blhd f_{(2)})\\
&=\sum_{(x), (y), (f)} \omega(d y +d x_{(2)},  d f_{(1)}) \omega(d y_{(2)}, d f_{(2)}) \\
&\times x_{(1)} y_{(1)}\langle  S_0(f_{(1)}),  x_{(2)}\rangle \langle  S_0(f_{(2)}),   y_{(2)} \rangle.
\ealn
\]
Similarly manipulating the product of commutative factors, we obtain
\[
\baln
&\omega(d y+d x_{(2)},  d f_{(1)}) \omega(d y_{(2)}, d f_{(2)}) \\
&\leadsto   \omega(d y,  d f_{(1)}) \omega(d x_{(2)},  d f_{(1)})
 \omega(d y_{(2)}, d f_{(2)})  \\
& \leadsto   \omega(d y,  d f_{(1)}) \omega(d f_{(1)},  d f_{(1)})
 \omega(d f_{(2)}, d f_{(2)})  \\
&\leadsto \omega(d y, d f_{(1)}) \omega(d f, d f) \\
&= \omega(d f, d f)\omega(d y, d f_{(1)}) . 
\ealn
\]
Hence 
\[
\baln
&\sum_{(f)} \omega(d y,  d f_{(1)}) (x \blhd f_{(1)}) (y_{(1)}\blhd f_{(2)})\\
&=\sum_{(x), (y), (f)}  \omega(d f, d f)\omega(d y, d f_{(1)}) 
 x_{(1)} y_{(1)}\langle  S_0(f_{(1)}),  x_{(2)}\rangle \langle  S_0(f_{(2)}),   y_{(2)} \rangle.
\ealn
\]
This proves \eqref{eq:xy-r-f}. The other relations can be shown similarly.
\end{proof}

\begin{lemma}\label{lem:lr-act}
The following relations hold. 
\beq
&
\rhd(\brhd\ot \id_{H^0})(\tau_{H, H^0}\ot\id_{H^0})(\id_H\ot\Delta_0)=I_{H^0}(\epsilon\ot\id_{H^0}), \\
&
\rhd(\brhd\ot \id_{H^0})(\id_{H^0}\ot \tau_{H^0, H})(\Delta_0\ot \id_H)=I_{H^0}(\id_{H^0}\ot \epsilon), \\
&
\blhd(\id_H\ot\lhd)(\id_H\ot\tau_{H, H^0})(\Delta\ot \id_{H^0})= I_{H}(\id_H\ot \epsilon_0), \\
&
\blhd(\id_H\ot\lhd)(\tau_{H^0, H}\ot \id_H)(\id_{H^0}\ot \Delta)= I_{H}(\epsilon_0\ot \id_H), 
\eeq
where $I_V$, for $V= H$ or $H^0$,  denotes the canonical isomorphisms $\K\ot V \simeq V \simeq  V\ot \K$. 
\end{lemma}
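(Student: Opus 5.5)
The plan is to interpret each composite map on elements and reduce it to the defining properties of the antipode together with the duality between $\Delta_0$ and $\mu$. Take homogeneous $x\in H$ and $f\in H^0$. The first identity reads
\[
\sum_{(f)} \omega(d x, d f_{(1)})\, (x\brhd\ldots)
\]
schematically. More precisely, the map $(\tau_{H,H^0}\otimes\id)(\id\otimes\Delta_0)$ sends $x\otimes f$ to $\sum_{(f)}\omega(dx, df_{(1)})\, f_{(1)}\otimes x\otimes f_{(2)}$. The next two maps then produce $\sum_{(f)}\omega(dx,df_{(1)})\,(f_{(1)}\brhd x)\rhd f_{(2)}$. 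The claim is that this equals $\epsilon(x)f$.

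First, substitute the definitions $f_{(1)}\brhd x=\sum_{(x)}\langle S_0(f_{(1)}),x_{(1)}\rangle x_{(2)}$ and $y\rhd g=\sum_{(g)}\omega(dy,dg_{(1)})\langle g_{(1)},y\rangle g_{(2)}$. Using coassociativity of $\Delta_0$, the whole expression becomes
\[
\sum \bigl\langle S_0(f_{(1)})\otimes f_{(2)},\, x_{(1)}\otimes x_{(2)}\bigr\rangle\, f_{(3)},
\]
up to $\omega$-factors. Second, collapse the pairing with the Hopf pairing property $\langle g\otimes h,\Delta(x)\rangle=\langle g h, x\rangle$ (or its opposite version $\Delta_0$, which is where the braiding $\tau$ enters). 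This gives $\langle \mu_0(S_0\otimes\id)\Delta_0(f_{(1)}), x\rangle f_{(2)}=\epsilon_0(f_{(1)})\epsilon(x)f_{(2)}=\epsilon(x)f$, by the antipode axiom for $(H^0,\mu_0,\Delta_0,S_0)$ and then the counit axiom.

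The remaining three identities follow the same pattern. The second uses $\mu_0(\id\otimes S_0)\Delta_0=\epsilon_0$ on the $H^0$ side after rewriting the pairing. The third and fourth are dual: for $x\blhd(f\lhd\cdot)$ one expands $f\lhd x_{(2)}=\sum f_{(1)}\langle f_{(2)},x_{(2)}\rangle$ and $x_{(1)}\blhd g$. One then combines $\langle S_0(f_{(1)}),x_{(2)}\rangle\langle f_{(2)},x_{(3)}\rangle$ into $\langle f, S^{-1}(x_{(2)})x_{(3)}\rangle$ or a similar expression, using $\langle S_0(g),y\rangle=\langle g,S^{-1}(y)\rangle$ and the pairing of $\mu_0$ with $\Delta$. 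The antipode axiom for $S^{-1}$ of the opposite Hopf algebra $(H,\Delta',S^{-1})$ then yields $\epsilon_0(f)x$ or $\epsilon_0(f)\,x$ as required.

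The main obstacle is bookkeeping the commutative factors so that they all cancel. The strategy here is the one used in the proof of \eqref{eq:xy-r-f}: exploit that only degree-matched terms survive a pairing ($d g + d y=0$ whenever $\langle g,y\rangle\neq0$), and use bimultiplicativity together with $\omega(\alpha,\beta)\omega(\beta,\alpha)=1$. With these, each product of $\omega$'s reduces either to $1$ or to the factor exactly produced by the braided opposite coproduct $\Delta_0=\tau\circ\Delta^0$, which identifies the middle step with $\mu_0(S_0\otimes\id)\Delta_0$. We would verify one identity in full detail and state that the others follow identically, mirroring the style of the preceding lemma.
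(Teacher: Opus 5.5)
Your proposal follows the paper's proof: the paper verifies the second identity in full by exactly this expansion of $\brhd$ and $\rhd$, merging the two pairings into $\langle S_0(f_{(1)})f_{(2)}, x\rangle$ and then applying the antipode and counit axioms of $H^0$, and it declares the other three similar. One small correction to your sketch of the remaining cases: the first two identities involve the same composite $(f_{(1)}\brhd x)\rhd f_{(2)}$ and differ only by the prefactor $\omega(d f, d x)$, which equals $1$ whenever $\epsilon(x)\neq 0$, so the second also uses $\mu_0(S_0\ot\id)\Delta_0$, not $\mu_0(\id\ot S_0)\Delta_0$; likewise the third and fourth close by merging into $\langle S_0(f_{(1)})f_{(2)}, x_{(2)}\rangle$, and if you route through $S^{-1}$ instead, the product that appears is $x_{(3)}S^{-1}(x_{(2)})$, not $S^{-1}(x_{(2)})x_{(3)}$.
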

\begin{proof}
To illustrate the proof, we consider the second relation. 
For any $f\ot x\in H^0\ot H$, we have 
\beq\label{eq:lr-act-1}
&&\rhd(\brhd\ot \id_{H^0})(\id_{H^0}\ot \tau_{H^0, H})(\Delta_0\ot \id_H)(f\ot x) \\
&&=\sum_{(f)} \omega(d f_{(2)}, d x)  (f_{(1)}\brhd x)\rhd f_{(2)}.\nonumber 
\eeq
Using the definitions of the maps $\rhd$ and $\brhd$ to the right hand side, we obtain
\[
\baln
&\sum_{(f), (x)} \omega(d f_{(2)}+ d f_{(3)} , d x)  \omega(d x_{(2)}, d f_{(2)})  
\langle S_0(f_{(1)}), x_{(1)} \rangle 
\langle f_{(2)},  x_{(2)}\rangle f_{(3)}\\
&= \sum_{(f), (x)} \omega(d f_{(2)}+ d f_{(3)} , d x)  \omega(d x, d f_{(2)})
  \langle S_0(f_{(1)}) \ot f_{(2)}, x_{(1)}\ot x_{(2)} \rangle  f_{(3)}\\
&=\sum_{(f), (x)} \omega(d f_{(3)}, d x)
 \langle S_0(f_{(1)}) \ot f_{(2)}, x_{(1)}\ot x_{(2)} \rangle  f_{(3)}\\
&=\sum_{(f)} \omega(d f_{(3)} , d x)
 \langle S_0(f_{(1)})  f_{(2)}, x \rangle  f_{(3)}\\
&=\sum_{(f)} \omega(d f_{(2)} , d x)
 \langle \epsilon_0(f_{(1)}), x \rangle  f_{(2)}\\
&=\omega(d f , d x) \sum_{(f)} 
 \langle \epsilon_0(f_{(1)}), x \rangle  f_{(2)} = \epsilon(x) f. 
\ealn
\]
This proves 
\[
\rhd(\brhd\ot \id_{H^0})(\id_{H^0}\ot \tau_{H^0, H})(\Delta_0\ot \id_H)(f\ot x) 
=\epsilon(x) f.
\]
The proofs for the other relations are similar
\end{proof}

Now observe that both
$H^0\ot H$ and $H\ot H^0$  are  Hopf $(\Gamma, \omega)$-algebras with respective co-multiplications 
\beq
\wh\Delta= (\id_{H^0}\ot \tau_{H^0, H}\ot \id_H)(\Delta_0\ot \Delta), \\
\wh\Delta'= (\id_H\ot \tau_{H, H^0}\ot \id_{H^0})(\Delta\ot \Delta_0). 
\eeq
[{\bf Warning}: $\wh\Delta'$ is not the opposite co-multiplication of $\wh\Delta$.] 
They are co-associative, i.e., 
$(\id_{H^0\ot H}\ot \wh\Delta)\wh\Delta = (\wh\Delta\ot \id_{H^0\ot H})\wh\Delta$ and 
$(\id_{H\ot H^0}\ot \wh\Delta')\wh\Delta$ $ = (\wh\Delta'\ot \id_{H\ot H^0})\wh\Delta'$.  
We write $\wh\Delta^{(k)}$ and ${\wh\Delta'}{^{(k)}}$ for the $k$-th iterated co-multiplications. In particular,  
$\wh\Delta^{(2)}=(\id_{H^0\ot H} \ot \wh\Delta)\wh\Delta$ and 
${\wh\Delta'}{^{(2)}}=(\id_{H\ot H^0} \ot \wh\Delta')\wh\Delta'$. Note that 
\beq
 \wh\Delta^{(2)}= (\id_{H^0}\ot\wh\Delta' \ot \id_{H})\wh\Delta, \label{eq:Dh-Dh-1}\\
 {\wh\Delta'}{^{(2)}}= (\id_{H}\ot\wh\Delta \ot \id_{H^0})\wh\Delta'. \label{eq:Dh-Dh-2}
\eeq

The following result is an easy consequence of Lemma \ref{lem:lr-act}.
\begin{corollary} \label{cor:invert-D} The following relations hold. 
\beq
(\rhd\ot \id_H)(\brhd\ot\id_{H^0}\ot \id_H)\wh\Delta=\id_{H^0\ot H},  \label{cor-1}\\
(\blhd\ot\id_{H^0})(\id_H\ot \lhd\ot\id_{H^0})\wh\Delta'= \id_{H\ot H^0}, \label{cor-2}
\eeq
\end{corollary}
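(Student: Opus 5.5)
We will derive both identities directly from Lemma \ref{lem:lr-act}, applying the maps to a generic element and reorganising with Sweedler notation. First unpack the definitions. For $f\ot x\in H^0\ot H$ we have
\[
\wh\Delta(f\ot x)=\sum_{(f),(x)}\omega(d f_{(2)}, d x_{(1)})\, f_{(1)}\ot x_{(1)}\ot f_{(2)}\ot x_{(2)}.
\]
Applying $\brhd\ot\id_{H^0}\ot\id_H$ gives $\sum \omega(d f_{(2)}, d x_{(1)})\,(f_{(1)}\brhd x_{(1)})\ot f_{(2)}\ot x_{(2)}$. Then $\rhd\ot\id_H$ yields $\sum \omega(d f_{(2)}, d x_{(1)})\,\big((f_{(1)}\brhd x_{(1)})\rhd f_{(2)}\big)\ot x_{(2)}$.

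The key observation is that, for fixed $x_{(1)}$, the inner expression $\sum_{(f)}\omega(d f_{(2)}, d x_{(1)})(f_{(1)}\brhd x_{(1)})\rhd f_{(2)}$ is exactly the left side of \eqref{eq:lr-act-1}, with $x$ replaced by $x_{(1)}$. By the second relation of Lemma \ref{lem:lr-act}, it equals $\epsilon(x_{(1)})f$. Summing over $(x)$, the right-hand side becomes $\sum_{(x)}\epsilon(x_{(1)})f\ot x_{(2)}$, and the counit axiom \eqref{eq:counit-Del} gives $f\ot x$. This proves \eqref{cor-1}.

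We need one bookkeeping point here: the sign factor $\omega(d f_{(2)}, d x_{(1)})$ must be exactly the one appearing in \eqref{eq:lr-act-1}. It is, since $\tau_{H^0,H}$ in $\wh\Delta$ produces precisely $\omega(d f_{(2)}, d x_{(1)})$. Also, $\epsilon(x_{(1)})$ is nonzero only on the degree-$0$ part, so no stray factors from $\omega(d f, d x)$-type reorderings survive.

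For \eqref{cor-2} we proceed symmetrically. Write
\[
\wh\Delta'(x\ot f)=\sum\omega(d x_{(2)}, d f_{(1)})\, x_{(1)}\ot f_{(1)}\ot x_{(2)}\ot f_{(2)}.
\]
Applying $\id_H\ot\lhd\ot\id_{H^0}$ and then $\blhd\ot\id_{H^0}$ gives
\[
\sum\omega(d x_{(2)}, d f_{(1)})\,\big(x_{(1)}\blhd(f_{(1)}\lhd x_{(2)})\big)\ot f_{(2)}.
\]
For fixed $f_{(1)}$, the inner sum over $(x)$ is the left side of the third relation of Lemma \ref{lem:lr-act}, evaluated on $x\ot f_{(1)}$, since $\tau_{H,H^0}$ there contributes the same factor $\omega(d x_{(2)}, d f_{(1)})$. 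It therefore equals $x\,\epsilon_0(f_{(1)})$. The counit property of $\Delta_0$ then returns $x\ot f$.

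The only potential obstacle is the matching of the colour signs with the precise composite maps in Lemma \ref{lem:lr-act}. We will check this by writing each composite on elementary tensors once, as above. We also note that, although the lemma is stated with $\tau$ in particular positions, $\Delta$ and $\Delta_0$ commute with the inner contraction because of coassociativity. No further computation is needed.
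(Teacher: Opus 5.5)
Your argument is correct and follows the paper's route. The paper states the corollary only as an easy consequence of Lemma \ref{lem:lr-act}; you make this explicit by factoring the first composite as $(\Phi\ot\id_H)(\id_{H^0}\ot\Delta)$, with $\Phi$ the left side of the second relation of that lemma, and the second composite as $(\Phi'\ot\id_{H^0})(\id_H\ot\Delta_0)$, with $\Phi'$ the left side of the third relation, and then finishing with the counit axioms. Your closing appeal to coassociativity is unnecessary, since the factorisation is just the interchange law for tensor products of maps, but it does no harm.
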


We also have the following result.
\begin{lemma}\label{lem:lr-inv}
The following relations hold. 
\beq
&&(\lhd\ot\brhd)\wh\Delta= \id_{H^0\ot H}, \label{eq:lr-inv}\\
&&(\brhd\ot \id_{H^0\ot H} \ot\lhd)(\id_{H^0}\ot\wh\Delta'\ot \id_H) 
=  \wh\Delta'(\brhd\ot \lhd). \label{eq:lr-D}
\eeq
\end{lemma}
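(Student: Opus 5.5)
We verify both identities on elementary tensors $f\ot x\in H^0\ot H$, with $f$ and $x$ homogeneous, by expanding in Sweedler notation. We then use the duality relations already established: $\langle \Delta_0(f), x\ot y\rangle=\omega(d x,d y)\langle f, yx\rangle$, $\langle S_0(f),x\rangle=\langle f,S^{-1}(x)\rangle$, the explicit formulas for $\rhd,\lhd,\brhd,\blhd$, and part (2) of the lemma preceding Lemma \ref{lem:lr-act}.

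For \eqref{eq:lr-inv}, write $\wh\Delta(f\ot x)=\sum\omega(d f_{(2)},d x_{(1)})\, f_{(1)}\ot x_{(1)}\ot f_{(2)}\ot x_{(2)}$. Applying $\lhd\ot\brhd$ gives
\[
\sum\omega(d f_{(2)},d x_{(1)})\, (f_{(1)}\lhd x_{(1)})\ot (f_{(2)}\brhd x_{(2)}).
\]
Next expand $f_{(1)}\lhd x_{(1)}=\sum f_{(1)}\langle f_{(2)},x_{(1)}\rangle$ and $f_{(3)}\brhd x_{(2)}=\sum\langle S_0(f_{(3)}),x_{(2)}\rangle x_{(3)}$, after relabelling via coassociativity of $\Delta_0$ and $\Delta$. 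The middle factors pair to
\[
\langle f_{(2)}\ot S_0(f_{(3)}),\, x_{(1)}\ot x_{(2)}\rangle.
\]
Degree matching ($d f_{(2)}+d x_{(1)}=0$, and so on) allows the $\omega$-factors to be collected into a single one. That factor is exactly what is needed to rewrite the pairing as $\langle f_{(2)}S_0(f_{(3)}), x_{(1)}\rangle$ or its transposed version, using $\mu^*$ and the transpose of $\Delta$.

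The antipode axiom $\sum f_{(1)}S_0(f_{(2)})=\epsilon_0(f)$ in $H^0$ then collapses the middle, leaving $\sum f_{(1)}\epsilon_0(f_{(2)})\ot \epsilon(x_{(1)})x_{(2)}=f\ot x$. This mirrors the proof of the second relation of Lemma \ref{lem:lr-act}, and we will organise the computation the same way.

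For \eqref{eq:lr-D}, we again apply both sides to $f\ot x$. On the left, $(\id\ot\wh\Delta'\ot\id)\wh\Delta$ produces $f_{(1)}\ot x_{(1)}\ot x_{(2)}\ot f_{(2)}\ot f_{(3)}\ot x_{(3)}$, weighted by suitable $\omega$-factors (compare \eqref{eq:Dh-Dh-1}). Applying $\brhd$ to the first two legs and $\lhd$ to the last two gives
\[
(f_{(1)}\brhd x_{(1)})\ot x_{(2)}\ot f_{(2)}\ot(f_{(3)}\lhd x_{(3)}).
\]
On the right, we first compute $f\brhd x$ and $f\lhd x$. We then apply $\wh\Delta'$, using the first part of the preceding lemma:
\[
\Delta(f\brhd x)=\sum f\brhd x_{(1)}\ot x_{(2)},\qquad \Delta_0(f\lhd x)=\sum f_{(1)}\ot f_{(2)}\lhd x.
\]
This structural step lets us transport the comultiplications through the actions. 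To match the two sides, we must split $f$ and $x$ once more and check that the scalar pairings agree, so that only $\omega$-bookkeeping remains.

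The main obstacle is tracking the commutative factors $\omega$ through the braided flips $\tau$ in $\wh\Delta$ and $\wh\Delta'$. We will handle this as in the proof of \eqref{eq:xy-r-f}: each nonzero pairing forces relations such as $d f_{(i)}=-d x_{(j)}$, and these relations let us replace products of $\omega$'s by canonical forms and compare them factor by factor. The rest is coassociativity and the antipode axiom.
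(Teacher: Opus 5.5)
Your argument for \eqref{eq:lr-inv} is the paper's argument. You expand $\wh\Delta$, relabel by coassociativity, absorb the single factor $\omega(d f_{(3)}, d x_{(1)})$ into $\langle f_{(2)}\ot S_0(f_{(3)}), x_{(1)}\ot x_{(2)}\rangle=\langle f_{(2)}S_0(f_{(3)}), x_{(1)}\rangle$, and collapse with the antipode axiom of $H^0$.

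For \eqref{eq:lr-D}, however, you are not proving the stated identity. Both sides are maps $H^0\ot H\ot H^0\ot H\lra H\ot H^0\ot H\ot H^0$, and neither side contains $\wh\Delta$. The right-hand side $\wh\Delta'(\brhd\ot\lhd)$ cannot even be evaluated on a single $f\ot x$. What you actually set out to check is \eqref{eq:lr-D} precomposed with $\wh\Delta$, namely $(\brhd\ot\id\ot\lhd)(\id\ot\wh\Delta'\ot\id)\wh\Delta=\wh\Delta'\psi$. That is the form in which \eqref{eq:lr-D} is later used in Lemma \ref{lem:psi-map}, but it is strictly weaker. Agreement on the image of $\wh\Delta$ says nothing about general elements $g\ot x\ot f\ot y$.

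Your six-fold tensor also has the middle legs swapped. Up to $\omega$-factors, $(\id\ot\wh\Delta'\ot\id)\wh\Delta(f\ot x)$ is $\sum f_{(1)}\ot x_{(1)}\ot f_{(2)}\ot x_{(2)}\ot f_{(3)}\ot x_{(3)}$. The correct result is therefore $(f_{(1)}\brhd x_{(1)})\ot f_{(2)}\ot x_{(2)}\ot(f_{(3)}\lhd x_{(3)})$; your ordering would land in $H\ot H\ot H^0\ot H^0$.

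The repair is simpler than your plan. Evaluate on $g\ot x\ot f\ot y$. Directly from the definition of $\wh\Delta'$ (the maps $\brhd,\lhd$ have degree $0$), the left side is
\[
\sum\omega(d x_{(2)}, d f_{(1)})\,(g\brhd x_{(1)})\ot f_{(1)}\ot x_{(2)}\ot (f_{(2)}\lhd y).
\]
The right side is $\wh\Delta'\left((g\brhd x)\ot(f\lhd y)\right)$. It becomes the same expression once you insert $\Delta(g\brhd x)=\sum (g\brhd x_{(1)})\ot x_{(2)}$ and $\Delta_0(f\lhd y)=\sum f_{(1)}\ot(f_{(2)}\lhd y)$, from part (1) of the lemma preceding Lemma \ref{lem:lr-act}. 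The lone factor $\omega(d x_{(2)}, d f_{(1)})$ is identical on both sides. So no further splitting of $f$ and $x$, comparison of scalar pairings, or degree-matching is needed. This is the ``easy inspection'' the paper refers to, and the $\omega$-bookkeeping obstacle you anticipate does not arise.
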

\begin{proof}
For any $f\ot x\in H^0\ot H$, 
\[
\baln
(\lhd\ot\brhd)\wh\Delta(f\ot x)
&=\sum_{(f), (x)} \omega(d f_{(2)}, d x_{(1)}) f_{(1)}\lhd x_{(1)}\ot f_{(2)}\brhd x_{(2)}\\
&= \sum_{(f), (x)} \omega(d f_{(3)}, d x_{(1)})
		 f_{(1)}\langle f_{(2)},  x_{(1)}\rangle \ot \langle S_0(f_{(3)}),  x_{(2)}\rangle x_{(3)}\\
&= \sum_{(f), (x)}  f_{(1)}\langle f_{(2)}\ot  S_0(f_{(3)}),  x_{(1)}\ot  x_{(2)}\rangle x_{(3)}\\
&= \sum_{(f), (x)}  f_{(1)}\langle f_{(2)}  S_0(f_{(3)}),  x_{(1)}\rangle x_{(2)}
=f\ot x, 
\ealn
\]
proving the first formula. 

The second formula follows from an easy inspection. 
\end{proof}

Define $\K$-linear maps $
\psi: H^0\ot H\lra H\ot H^0$ and $\eta: H\ot H^0\lra H^0\ot H$
by the following compositions 
\beq
&\psi: H^0\ot H \stackrel{\wh\Delta}\longrightarrow
H^0\ot H\ot H^0\ot H \stackrel{\brhd\ot \lhd}\longrightarrow  H\ot H^0, \label{eq:tmult-1}\\
&\eta: H\ot H^0\stackrel{\wh\Delta'}\longrightarrow
H\ot H^0\ot H\ot H^0\stackrel{\rhd\ot \blhd}\longrightarrow
H^0\ot H. \label{eq:tmult-2}
\eeq
It is easy to see that for any $x\in H$ and $f\in H^0$, 
\[
\baln
\psi(f\ot x)&= \sum_{(f), (x)} \omega(d f_{(2)}, d x_{(1)}) f_{(1)}\brhd x_{(1)}\ot f_{(2)}\lhd x_{(2)}\\
&=\sum_{(f), (x)} \omega(d f-d f_{(1)}, d x-d x_{(3)}) \\
&\times \langle S_0(f_{(1)}), x_{(1)}\rangle  \langle f_{(3)}, x_{(3)}\rangle  x_{(2)}\ot f_{(2)}, \\
\eta(x\ot f) &= \sum_{(f), (x)} \omega(d x_{(2)}, d f_{(1)}) x_{(1)}\rhd  f_{(1)}\ot  x_{(2)}\blhd f_{(2)}\\
&= \sum_{(f), (x)} \omega(d x-d x_{(1)}, d f-d f_{(3)}) \omega(d x_{(3)}, d f_{(3)})\\
& \times \langle f_{(1)}, x_{(1)}\rangle   
\langle S_0(f_{(3)}), x_{(3)}\rangle   f_{(2)}\ot x_{(2)}.
\ealn
\]
The following result will play a crucial role in the quantum double construction. 
\begin{lemma}\label{lem:psi-map}
\begin{enumerate}
\item The maps $\psi$ and $\eta$ are inverses of each other. 
\item The following relations hold.
\begin{enumerate}[label=\roman*)]
\item \label{D-psi}
$(\psi\ot \psi)\wh\Delta =\wh\Delta' \psi: H^0\ot H\lra  H\ot H^0\ot H\ot H^0$;
\item \label{S-psi}
$\psi(S_0\ot S) = (S\ot S_0) \tau_{H^0, H} \psi^{-1} \tau_{H^0, H}: H^0\ot H\lra H\ot H^0$.  
\end{enumerate}
\end{enumerate}
\end{lemma}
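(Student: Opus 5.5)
For part (1), I plan to compute $\eta\circ\psi$ and $\psi\circ\eta$ directly from the defining compositions \eqref{eq:tmult-1} and \eqref{eq:tmult-2}, reducing everything to Corollary \ref{cor:invert-D} and Lemma \ref{lem:lr-inv}. Concretely, $\eta\psi=(\rhd\ot\blhd)\wh\Delta'(\brhd\ot\lhd)\wh\Delta$. By \eqref{eq:lr-D}, $\wh\Delta'(\brhd\ot\lhd)=(\brhd\ot\id_{H^0\ot H}\ot\lhd)(\id_{H^0}\ot\wh\Delta'\ot\id_H)$. Combined with \eqref{eq:Dh-Dh-1}, this rewrites $\eta\psi$ as an expression built from $\wh\Delta^{(2)}$, in which the outer actions pair off as in Lemma \ref{lem:lr-act}. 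Since these are module actions, the nested actions $\rhd(\brhd\ot\cdot)$ and $\blhd(\cdot\ot\lhd)$ collapse to counits. This should give $\id_{H^0\ot H}$, essentially a two-sided version of \eqref{cor-1} and \eqref{cor-2}.

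Since \eqref{eq:lr-inv} alone already shows that $(\lhd\ot\brhd)\wh\Delta$ is the identity, I would alternatively do the computation in Sweedler notation with the explicit formulas for $\psi(f\ot x)$ and $\eta(x\ot f)$ displayed just before the lemma. Writing $\eta(\psi(f\ot x))$ produces five-fold coproducts of $f$ and $x$ with pairings $\langle S_0(f_{(1)}),x_{(1)}\rangle\langle f_{(2)},x_{(2)}\rangle$ on the left and $\langle f_{(4)},x_{(4)}\rangle\langle S_0(f_{(5)}),x_{(5)}\rangle$ on the right. Using the Hopf pairing property $\langle fg,x\rangle=\langle f\ot g,\Delta(x)\rangle$ (with the $\omega$ twist), each adjacent pair combines to $\langle S_0(f_{(1)})f_{(2)},x_{(1)}\rangle=\epsilon_0(f_{(1)})\epsilon(x_{(1)})$, and similarly on the right. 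This leaves $f\ot x$. The other composition $\psi\eta$ is handled symmetrically.

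The main obstacle is bookkeeping of the commutative factors: after combining pairings one must check that the product of $\omega$'s reduces to $1$. This uses the degree constraints $d f_{(k)}+d x_{(k)}=0$ on nonvanishing pairings, exactly as in the proofs of \eqref{eq:xy-r-f} and Lemma \ref{lem:lr-act}. I will track total degrees rather than individual factors, using bimultiplicativity of $\omega$ together with $\omega(\alpha,\beta)\omega(\beta,\alpha)=1$.

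For part (2)(i), I would use that $\wh\Delta$ and $\wh\Delta'$ are coassociative, together with the comodule compatibilities in part (1) of the lemma preceding the module-algebra lemma. These state that $\Delta$ and $\Delta_0$ commute with $\brhd$ on the left tensor leg and with $\lhd$ on the right leg. Thus $\wh\Delta'(\brhd\ot\lhd)\wh\Delta$ can be pushed inside, and by \eqref{eq:Dh-Dh-1} and \eqref{eq:lr-D} it becomes $(\brhd\ot\lhd\ot\brhd\ot\lhd)\wh\Delta^{(3)}$, with the middle legs arranged via the braiding. This equals $(\psi\ot\psi)\wh\Delta$. Again the only delicate point is that the braiding factors $\omega$ coincide on both sides, which follows by comparing degrees in Sweedler form.

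For (ii), I would apply both sides to $f\ot x$ and use the explicit Sweedler formula. The antipode reverses coproducts up to $\omega$-twists ($\Delta S=(S\ot S)\Delta'$), and $\langle S_0(f),S(x)\rangle=\langle f,x\rangle$ follows from $S_0=(S^0)^{-1}$. Combining these, $\psi(S_0f\ot Sx)$ matches $(S\ot S_0)\tau\,\eta(\tau(f\ot x))$, using part (1) to identify $\psi^{-1}=\eta$.
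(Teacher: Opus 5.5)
Your part (1) takes the paper's route: rewrite $\eta\psi$ using \eqref{eq:lr-D} and \eqref{eq:Dh-Dh-1}, then collapse it with Corollary~\ref{cor:invert-D} and Lemma~\ref{lem:lr-act}. Your Sweedler check of $\psi\eta=\id$ also supplies the second composition. The paper never writes that out, yet it is needed before $\psi^{-1}$ can appear in (2)(ii). (A small slip: in your five-fold expansion of $\eta\psi(f\ot x)$ the right-hand pair is $\langle S_0(f_{(4)}),x_{(4)}\rangle\langle f_{(5)},x_{(5)}\rangle$, not the order you wrote. It cancels by $\sum S_0(f_{(1)})f_{(2)}=\epsilon_0(f)$ all the same.) Part (2)(ii) uses the same inputs as the paper: $\Delta S=(S\ot S)\Delta'$, its analogue for $S_0$, and $\langle S_0(f),S(x)\rangle=\langle f,x\rangle$. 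You unpack them in Sweedler form rather than through the paper's identities $\brhd(S_0\ot S)=S\blhd\tau_{H^0,H}$ and $\lhd(S_0\ot S)=S_0\rhd\tau_{H^0,H}$.

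The gap is in (2)(i). Pushing $\wh\Delta'$ through $\psi$ with the comodule compatibilities (which is all \eqref{eq:lr-D} says), then applying \eqref{eq:Dh-Dh-1}, yields only
\[
\wh\Delta'\psi=(\brhd\ot\id_{H^0\ot H}\ot\lhd)\,\wh\Delta^{(2)} .
\]
Here the middle legs $f_{(2)}\ot x_{(2)}$ are untouched. By contrast, $(\psi\ot\psi)\wh\Delta=(\brhd\ot\lhd\ot\brhd\ot\lhd)\wh\Delta^{(3)}$ carries four actions, and so two occurrences of $S_0$ against one on the other side. Coassociativity, comodule compatibility and $\omega$-bookkeeping cannot remove an $S_0$. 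So your claim that the only delicate point is matching the braiding factors is wrong.

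The missing step is to recognise the bare middle legs as $(\lhd\ot\brhd)\wh\Delta$, i.e.\ to invoke \eqref{eq:lr-inv}, which rests on the antipode axiom $\sum f_{(1)}S_0(f_{(2)})=\epsilon_0(f)$:
\[
(\brhd\ot\id_{H^0\ot H}\ot\lhd)\wh\Delta^{(2)}
=(\brhd\ot(\lhd\ot\brhd)\wh\Delta\ot\lhd)\wh\Delta^{(2)}
=(\brhd\ot\lhd\ot\brhd\ot\lhd)\wh\Delta^{(3)}
=(\psi\ot\psi)\wh\Delta .
\]
This is exactly the paper's argument read backwards. You already cite \eqref{eq:lr-inv} in part (1), so the repair is immediate, but as written your outline of (2)(i) does not close.
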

\begin{proof}
Part (1) follows from the following computation.
\[
\baln
\eta\psi &= (\rhd(\brhd\ot\id_{H^0})\ot \blhd(\id_H\ot\lhd)) 
			(\id_{H^0}\ot\wh\Delta'\ot\id_H)\wh\Delta\\
			&=(\rhd(\brhd\ot\id_{H^0})\ot \blhd(\id_H\ot\lhd)) (\wh\Delta\ot \id_{H^0\ot H})\wh\Delta
					\quad \text{(by \eqref{eq:Dh-Dh-1})}\\
			&=(\id_{H^0}\ot \blhd(\id_H\ot\lhd)) (\rhd(\brhd\ot\id_{H^0})\ot\id_{H}) 
			\wh\Delta\ot \id_{H^0\ot H})\wh\Delta\\
			&=(\id_{H^0}\ot \blhd(\id_H\ot\lhd))\wh\Delta 
				\qquad\text{(by Corollary \ref{cor:invert-D})}\\
			&=\id_{H^0\ot H}  \qquad\text{(by Corollary \ref{cor:invert-D})}.
\ealn
\]

To prove part {\em (2). i)}, we consider
\[
\baln
(\psi\ot\psi)\wh\Delta 
= ((\brhd\ot\lhd)\wh\Delta\ot (\brhd\ot\lhd)\wh\Delta) \wh\Delta
= (\brhd\ot\lhd\ot \brhd\ot\lhd)(\wh\Delta\ot\wh\Delta) \wh\Delta.
\ealn
\]
Note that $(\wh\Delta\ot\wh\Delta) \wh\Delta=(\id_{H^0\ot H}\ot\wh\Delta\ot \id_{H^0\ot H}) (\id_{H^0\ot H}\ot\wh\Delta) \wh\Delta$. By using \eqref{eq:Dh-Dh-1}, we obtain
$(\wh\Delta\ot\wh\Delta) \wh\Delta=(\id_{H^0\ot H}\ot\wh\Delta\ot \id_{H^0\ot H}) (\id_{H^0}\ot\wh\Delta'\ot \id_H) \wh\Delta$.
 Hence 
\[
\baln
(\psi\ot\psi)\wh\Delta 
&= (\brhd\ot(\lhd\ot \brhd)\wh\Delta\ot\lhd)(\id_{H^0}\ot\wh\Delta'\ot \id_H) \wh\Delta\\
&=(\brhd\ot \id_{H^0\ot H} \ot\lhd)(\id_{H^0}\ot\wh\Delta'\ot \id_H) \wh\Delta \quad \text{(by \eqref{eq:lr-inv})}\\
&= \wh\Delta'(\brhd\ot \lhd) \wh\Delta  \quad \text{(by \eqref{eq:lr-D})}\\
&= \wh\Delta'\psi.
\ealn
\]

Now we prove part {\em (2) ii)}. We have 
\[
\baln
\psi(S_0\ot S)&=(\brhd\ot\lhd)(\id_{H^0}\ot\tau_{H^0, H}\ot \id_H) (S_0\ot S_0\ot S\ot S)(\Delta_0'\ot \Delta')\\
&=(\brhd\ot\lhd)(S_0\ot S\ot S_0\ot S)(\id_{H^0}\ot\tau_{H^0, H}\ot \id_H) (\Delta_0'\ot \Delta').
\ealn
\]
Claim:
\[
\baln
&\brhd(S_0\ot S) = S\blhd\tau_{H^0, H}: H^0\ot H\lra H, \\
&\lhd(S_0\ot S) = S_0\rhd\tau_{H^0, H}: H^0\ot H\lra H^0, 
\ealn
\]
which will be shown later. Using these relations, we obtain 
\[
\baln
\psi(S_0\ot S)
&=(\brhd\ot\lhd)(S_0\ot S\ot S_0\ot S)(\id_{H^0}\ot\tau_{H^0, H}\ot \id_H) (\Delta_0'\ot \Delta')\\
&=(S\ot S_0)(\blhd\ot \rhd)(\tau_{H^0, H}\ot\tau_{H^0, H})(\id_{H^0}\ot\tau_{H^0, H}\ot \id_H) (\Delta_0'\ot \Delta').
\ealn
\]
Note that $(\tau_{H^0, H}\ot\tau_{H^0, H})(\id_{H^0}\ot\tau_{H^0, H}\ot \id_H) (\Delta_0'\ot \Delta') 
= \tau_{H^0\ot H, H^0\ot H}\wh\Delta'\tau_{H^0, H}$. Thus 
\[
\baln
\psi(S_0\ot S)
&=(\brhd\ot\lhd)(S_0\ot S\ot S_0\ot S)(\id_{H^0}\ot\tau_{H^0, H}\ot \id_H) (\Delta_0'\ot \Delta')\\
&=(S\ot S_0)(\blhd\ot \rhd)\tau_{H^0\ot H, H^0\ot H}\wh\Delta'\tau_{H^0, H}\\
&=(S\ot S_0)\tau_{H^0, H}(\rhd\ot \blhd)\wh\Delta'\tau_{H^0, H}\\
&=(S\ot S_0)\tau_{H^0, H}\eta\tau_{H^0, H}.
\ealn
\]
This proves part {\em (2). ii)} in view of part {\em (2). i)}, granting the claim above.

Now we prove the claim. For any $f\ot x\in H^0\ot H$, we have 
\[
\baln
\brhd(S_0\ot S)(f\ot x) &= \sum_{(f)} \omega(d x_{(1)}, d x_{(2)}) \langle S_0^2(f), S(x_{(2)})\rangle S(x_{(1)})\\
&= \sum_{(f)} \omega(d f, d x - d x_{(2)}) \langle S_0(f), x_{(2)}\rangle S(x_{(1)})\\
&= \omega(d f, d x)  S(x\blhd f)
=S(\blhd\tau_{H^0, H}(f\ot x)); 
\ealn
\]
\[
\baln
\lhd (S_0\ot S)(f\ot x) &=S_0(f)\lhd S(x)\\
&= \sum_{(f)} \omega(d f_{(1)}, d f_{(2)}) S_0(f_{(2)})\langle S_0(f_{(1)}),  S(x)\rangle \\
&=  \omega(d f, d x)\sum_{(f)} \omega(d x, d f_{(1)}) \langle f_{(1)},  x\rangle S_0(f_{(2)})\\
&= \omega(d f, d x) S_0(x\rhd f)
= S_0(\rhd\tau_{H^0, H}(f\ot x)).
\ealn
\] 
These relations imply the claim, completing the proof of the lemma. 
\end{proof}


\subsection{Quantum double construction for Hopf $(\Gamma, \omega)$-algebras}
\label{sect:double-gen}

Assume that $(H, \mu, u, \Delta, \epsilon, S)$ is a proper Hopf $(\Gamma, \omega)$-algebra 
with bijective antipode, and denote by $(H^0, \mu_0, u_0, \Delta_0, \epsilon_0, S_0)$
the opposite of the finite dual Hopf $(\Gamma, \omega)$-algebra. Recall that  
the structure maps are defined 
by equations \eqref{eq:H0-1},  \eqref{eq:H0-2} and \eqref{eq:H0-3}. 
Let 
\beq \label{eq:D}
D(H)=H\ot_\K H^0.
\eeq

We have the following results.

\begin{theorem} \label{thm:D-alg}
 The $\Gamma$-graded $\K$-module $D(H)$ can be endowed with an associative $(\Gamma, \omega)$-algebra
 structure with unit map $\ol{u}: \K\lra D(H)$, $a\mapsto a1_{D(H)} = a 1\ot \epsilon$,  and 
 multiplication $\ol{\mu}: D(H)\ot D(H)\lra D(H)$ 
 defined by the following composition
\beq\label{eq:mubar}
D(H)\ot D(H)\stackrel{\id\ot\psi\ot\id}\loongrightarrow H\ot H\ot H^0\ot H^0 
\stackrel{\mu\ot\mu_0}\loongrightarrow D(H), 
\eeq
where $\psi: H^0\ot H\lra H\ot H^0$ is defined by \eqref{eq:tmult-1}.  
\end{theorem}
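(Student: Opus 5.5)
We need to show that the multiplication $\ol\mu$ is associative, that $1\ot\epsilon$ is a two-sided unit, and that everything is homogeneous of degree $0$. Homogeneity is immediate: $\psi$ is a composite of $\wh\Delta$, $\brhd$ and $\lhd$, all of degree $0$, and the same holds for $\mu$ and $\mu_0$.

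For the unit we compute $\psi(\epsilon\ot x)$ and $\psi(f\ot 1)$ using \eqref{eq:U-act-dU-0}. Since $\Delta_0(\epsilon)=\epsilon\ot\epsilon$ and $\Delta(1)=1\ot 1$, these reduce to $\psi(\epsilon\ot x)=x\ot\epsilon$ and $\psi(f\ot 1)=1\ot f$. It follows that $(1\ot\epsilon)(x\ot f)=x\ot f=(x\ot f)(1\ot\epsilon)$.

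For associativity, the standard strategy (as for the Drinfeld double) is to reduce to three compatibility identities for $\psi$:
\begin{align*}
&\psi(\id_{H^0}\ot\mu)=(\mu\ot\id_{H^0})(\id_H\ot\psi)(\psi\ot\id_H),\\
&\psi(\mu_0\ot\id_H)=(\id_H\ot\mu_0)(\psi\ot\id_{H^0})(\id_{H^0}\ot\psi),
\end{align*}
together with the associativity of $\mu$ and $\mu_0$. Given these two "entwining" identities, expanding $((x\ot f)(y\ot g))(z\ot h)$ and $(x\ot f)((y\ot g)(z\ot h))$ gives in both cases $\mu^{(2)}\ot\mu_0^{(2)}$ applied to $x\ot y'\ot z''\ot f''\ot g'\ot h$, where $f\ot y\mapsto y'\ot f'$ and so on via successive applications of $\psi$. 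The two orders of applying $\psi$ to $f\ot y\ot g\ot z$ agree by a formal braid-type argument, because $\psi$ only ever acts on an adjacent $H^0\ot H$ pair.

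So the work is in the two entwining identities. For the first, write $\psi(f\ot yz)$ using $\wh\Delta$, the fact that $\Delta$ and $\mu$ are compatible (formula \eqref{eq:D2-H} for the iterated coproduct of a product), and the module-algebra property of $\brhd$ on $H$ from the preceding lemma:
\[
f\brhd(xy)=\sum\omega(d f_{(2)},d x)(f_{(1)}\brhd x)(f_{(2)}\brhd y),
\]
along with $\lhd$ being a right action, i.e.\ $f\lhd(yz)=(f\lhd y)\lhd z$ since $\langle f_{(2)},yz\rangle$ splits via $\Delta_0$. The second identity is dual: use the module-algebra property $(fg)\lhd x=\sum\omega(dg,dx_{(1)})(f\lhd x_{(1)})(g\lhd x_{(2)})$ and that $\brhd$ is a left action of $H^0$. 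Alternatively, one can verify both identities directly from the explicit triple-Sweedler formula for $\psi$ displayed before Lemma \ref{lem:psi-map}, pairing coproduct legs and using coassociativity.

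I expect the main obstacle to be the bookkeeping of the commutative factors $\omega$ in these two identities. In each term one must move degree labels across one another, collecting factors such as $\omega(d f_{(2)},d y_{(1)})\,\omega(d f_{(3)},d z_{(1)})$ and matching them with the factors on the other side. I would handle this as in the proof of \eqref{eq:xy-r-f}: impose the degree constraints forced by nonvanishing pairings (e.g.\ $d f_{(1)}+d x_{(1)}=0$), and use bimultiplicativity of $\omega$ to reduce both sides to the same normal form. Working categorically, treating $\psi$ as built from braidings $\tau$ and the structure maps in the braided monoidal category $\Vect$, would make the signs automatic, and I would fall back on that viewpoint if the direct computation gets unwieldy.
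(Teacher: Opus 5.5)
Your argument is correct, but it is organised differently from the paper's proof.

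\textbf{The paper's route.} The paper proves associativity by brute force. It expands both $\ol\mu(\ol\mu(x\ot f\ot y\ot g)\ot z\ot h)$ and $\ol\mu(x\ot f\ot \ol\mu(y\ot g\ot z\ot h))$ down to the common shape
$x(f_{(1)}\brhd y_{(1)})((f_{(2)}g_{(1)})\brhd z_{(1)})\ot (f_{(3)}\lhd(y_{(2)}z_{(2)}))(g_{(2)}\lhd z_{(3)})h$,
and then compares the accumulated products of $\omega$'s. Inside that computation, the step evaluating $\psi((f_{(2)}\lhd y_{(2)})g\ot z)$ is exactly your second entwining identity. The step evaluating $\psi(f\ot y(g_{(1)}\brhd z_{(1)}))$ is exactly your first. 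So the ingredients coincide: multiplicativity of $\Delta$ and $\Delta_0$, the module-algebra properties of $\brhd$ and $\lhd$, and the left/right action properties.

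\textbf{What your route changes.} You isolate those two identities. After that, associativity is the formal twisted-tensor-product argument, and unitality follows from $\psi(\epsilon\ot x)=x\ot\epsilon$ and $\psi(f\ot 1)=1\ot f$. This makes the bookkeeping local and short. The $\omega$-factors in fact agree identically, so you will not need the degree constraints from nonvanishing pairings that you anticipated. For example, once both sides of the first identity are written as sums of $(f_{(1)}\brhd y_{(1)})(f_{(2)}\brhd z_{(1)})\ot (f_{(3)}\lhd y_{(2)})\lhd z_{(2)}$, both carry the factor $\omega(d f_{(2)},d y_{(1)})\,\omega(d f_{(3)},d y_{(1)}+d z_{(1)})\,\omega(d y_{(2)},d z_{(1)})$. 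The second identity works the same way. Your identities are also the algebra-side counterparts of Lemma \ref{lem:psi-map}(2)(i), which the paper uses later for the coalgebra structure. The paper's single expansion, in exchange, avoids invoking the general factorisation criterion.

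\textbf{One missing ingredient.} To expand the right-hand sides of your identities, you apply $\psi$ to elements of the form $f_{(2)}\lhd y_{(2)}$ and $g_{(1)}\brhd z_{(1)}$. For this you need their coproducts:
$\Delta_0(f\lhd x)=\sum f_{(1)}\ot f_{(2)}\lhd x$ and $\Delta(f\brhd x)=\sum f\brhd x_{(1)}\ot x_{(2)}$.
These are in part (1) of the first lemma in Section \ref{sect:technical}. Alternatively, as you note, you can work directly from the explicit triple-Sweedler formula for $\psi$ together with coassociativity.
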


\begin{theorem} \label{thm:D-Hopf}
 The associative $(\Gamma, \omega)$-algebra $(D(H), \ol{\mu}, \ol{u})$ can be endowed with a Hopf $(\Gamma, \omega)$-algebraic 
structure with 
\beq
\text{co-multiplication}& & \ol{\Delta}:  D(H) \stackrel{\wh\Delta'}\longrightarrow D(H)\ot D(H), \\
\text{co-unit} &  & \ol{\epsilon}:  D(H) \stackrel{\epsilon\ot\epsilon^0}\loongrightarrow \K\ot \K\stackrel{\simeq}\lra \K,\\
\text{antipode} &  & \ol{S}:   D(H) \stackrel{(S_0\ot S)\tau}\loongrightarrow H^0\ot H \stackrel{\psi} \lra D(H).
\eeq 
\end{theorem}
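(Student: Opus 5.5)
We follow the standard Drinfeld double argument, transplanted to the braided category $\Vect$ and using the machinery of Section \ref{sect:technical}. The statement concerns Theorem \ref{thm:D-Hopf}: with the algebra structure from Theorem \ref{thm:D-alg}, we must show that $\ol\Delta=\wh\Delta'$ and $\ol\epsilon$ are $(\Gamma,\omega)$-algebra homomorphisms, that they satisfy coassociativity and the counit axiom, and that $\ol S=\psi(S_0\ot S)\tau$ satisfies the antipode identities. Coassociativity of $\wh\Delta'$ is already recorded (it is the tensor product coalgebra $H\ot H^0$ with braided flip), as is the counit property of $\epsilon\ot\epsilon_0$. These need only the braiding identities \eqref{eq:braid} and \eqref{eq:invol}, so the real content lies in multiplicativity and the antipode.

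\emph{Multiplicativity of $\ol\Delta$.} Write $\ol\mu=(\mu\ot\mu_0)(\id_H\ot\psi\ot\id_{H^0})$. On the factors $H\ot1$ and $1\ot H^0$, $\wh\Delta'$ restricts to $\Delta$ and $\Delta_0$, which are algebra maps. It therefore suffices to check compatibility with the twisting map $\psi$, namely
\[
\wh\Delta'\circ\psi=(\psi\ot\psi)\circ\wh\Delta
\]
after the appropriate braidings $\tau$ are inserted to reorder tensor factors. This is exactly Lemma \ref{lem:psi-map}(2)\,i). We then assemble: for $x\ot f, y\ot g\in D(H)$, expand $\ol\Delta((x\ot f)(y\ot g))$ using $\Delta\mu=(\mu\ot\mu)(\id\ot\tau\ot\id)(\Delta\ot\Delta)$, the analogous rule for $\Delta_0$, and Lemma \ref{lem:psi-map}(2)\,i) for the middle $\psi$. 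We compare with $\ol\mu\ot\ol\mu$ applied to $\ol\Delta(x\ot f)\ot\ol\Delta(y\ot g)$. The comparison reduces to an equality of composites of $\tau$'s, which holds by naturality of $\tau$ and the hexagon relation \eqref{eq:braid}; every $\omega$-factor is accounted for because all the maps involved are homogeneous of degree $0$. Multiplicativity of $\ol\epsilon$ reduces to $(\epsilon\ot\epsilon_0)\psi=\epsilon_0\ot\epsilon$, which follows from $\epsilon(f\brhd x)=\langle S_0(f),x\rangle$, $\epsilon_0(f\lhd x)=\langle f,x\rangle$ and $\sum f_{(1)}S_0(f_{(2)})=\epsilon_0(f)$, much as in the proof of \eqref{eq:lr-inv}.

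\emph{Antipode.} Since $H\ot1$ and $1\ot H^0$ are Hopf subalgebras (the embeddings $x\mapsto x\ot\epsilon$ and $f\mapsto1\ot f$ are algebra maps because $\psi(f\ot1)=1\ot f$ and $\psi(\epsilon\ot x)=x\ot\epsilon$ by \eqref{eq:U-act-dU-0}), and since $x\ot f=(x\ot\epsilon)(1\ot f)$ in $D(H)$, the braided antipode must be
\[
\ol S(x\ot f)=\omega(dx,df)\,\ol S(1\ot f)\,\ol S(x\ot\epsilon)=\omega(dx,df)\,(1\ot S_0f)(Sx\ot\epsilon)=\psi(S_0\ot S)\tau(x\ot f),
\]
which agrees with the stated formula. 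We then verify $\ol\mu(\ol S\ot\id)\ol\Delta=\ol u\,\ol\epsilon$ by the generic argument: for a product $ab$ with $a\in H$, $b\in H^0$, the antipode identity follows from those of $a$ and $b$ together with multiplicativity of $\ol\Delta$, since $\ol S$ is a braided anti-homomorphism on these factors. Alternatively, the anti-homomorphism property of $\ol S$ on all of $D(H)$ can be read off from Lemma \ref{lem:psi-map}(2)\,ii), which says precisely that $\ol S$ intertwines $\psi$ with its inverse. Bijectivity of $\ol S$ follows from bijectivity of $S$ and $S_0$ together with Lemma \ref{lem:psi-map}(1).

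The main obstacle is the bookkeeping of commutative factors in the multiplicativity step: one has to confirm that the braided flip inside $\wh\Delta'$ combines with the $\omega$-twists in the formula for $\psi$ exactly as Lemma \ref{lem:psi-map}(2)\,i) requires. We would handle this diagrammatically in $\Vect$, treating $\tau$ as a symmetric braiding, rather than by expanding elements.
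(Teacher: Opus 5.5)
Your proposal is correct and takes essentially the same route as the paper. Multiplicativity of $\ol\Delta$ is reduced, via $x\ot f=(x\ot\epsilon)(1\ot f)$, to the mixed product $(1\ot f)(y\ot\epsilon)=\psi(f\ot y)$ and hence to Lemma \ref{lem:psi-map}(2)\,i); the antipode identity is checked on $(x\ot\epsilon)(1\ot f)$ using the antipodes of $H$ and $H^0$; and the anti-homomorphism property of $\ol S$ comes from Lemma \ref{lem:psi-map}(2)\,ii). One harmless slip: in your argument for $\ol\epsilon$, the identity that actually appears is $\sum S_0(f_{(1)})f_{(2)}=\epsilon_0(f)$, not $\sum f_{(1)}S_0(f_{(2)})=\epsilon_0(f)$, but both hold in any Hopf algebra, so nothing changes.
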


\medskip
We now turn to the proof of the theorems. 

The following facts will be used presently. For any $x, y\in H$ and $f, g\in H^0$, 
\beq\label{eq:mult-d}
\phantom{XXX}
\ol\mu(x\ot f\ot y\ot g)
=\sum_{(f), (y)} \omega(d f_{(2)}, d y_{(1)}) x  (f_{(1)} \brhd y_{(1)}) \ot  (f_{(2)} \lhd y_{(2)}) g, 
\eeq
and in particular, 
\beq
&\ol{\mu}((x\ot \epsilon)\ot (y\ot g)) = x y\ot g,  \label{eq:xyg}\\
&\ol{\mu}((x\ot f)\ot (1\ot g)) = x\ot fg, \label{eq:xfg} \\
&\ol{\mu}((1\ot f)\ot (y\ot \epsilon)) = \psi(f\ot y).
\eeq

\begin{proof}[Proof of Theorem \ref{thm:D-alg}]
The main task is to prove associativity of the multiplication $\ol\mu$.  
We will prove this by direct calculations, 
which are very lengthy but  
involve little more than careful bookkeeping of the commutative factors.  
For peace of mind, we present the details. 

Consider $\ol\mu(\ol\mu(x\ot f\ot y\ot g)\ot z\ot h)$ for any $x, y, z\in H$ and $f, g, h\in H^0$. 
Using \eqref{eq:mult-d} and observing that $\omega(d f_{(2)}, d y_{(1)})= \omega(d f- d f_{(1)}, d y_{(1)})$ in the equation, we can express 
$\ol\mu(\ol\mu(x\ot f\ot y\ot g)\ot z\ot h) $ as 
\[
\baln
(\mu\ot\mu_0)\left(\sum_{(f), (y)} \omega(d f-d f_{(1)}, d y_{(1)}) x  (f_{(1)} \brhd y_{(1)}) \ot  \psi((f_{(2)} \lhd y_{(2)}) g  \ot z) \ot h\right).
\ealn
\]
Now 
\[
\baln
\psi((f_{(2)} \lhd y_{(2)}) g  \ot z) &=\sum \omega(d f_{(3)} +d y_{(2)}, d g_{(1)})\\
&\times  \omega(d f_{(3)} +d y_{(2)}+ d g_{(2)}, d z_{(1)})\\
&\times (f_{(2)} g_{(1)})\brhd z_{(1)}\ot ((f_{(3)}  \lhd y_{(2)}) g_{(2)})\lhd  z_{(2)}\\
&=\sum \omega(d f_{(3)} +d y_{(2)}, d g_{(1)}) \\
&\times  \omega(d f_{(3)} +d y_{(2)}+ d g_{(2)}, d z_{(1)})
  \omega(d g_{(2)}, d z_{(2)})\\
&\times (f_{(2)} g_{(1)})\brhd z_{(1)}\ot (f_{(3)}  \lhd (y_{(2)} z_{(2)})) (g_{(2)}\lhd  z_{(3)})\\
&=\sum \omega(d f_{(3)} +d y_{(2)}, d g_{(1)}+d z_{(1)}) 
 \omega(d g_{(2)}, d z -d z_{(1)})\\
&\times (f_{(2)} g_{(1)})\brhd z_{(1)}\ot (f_{(3)}  \lhd (y_{(2)} z_{(2)})) (g_{(2)}\lhd  z_{(3)}).
 \ealn
\]
Hence 
\beq\label{eq:mu-mu-id}
\baln
&\ol\mu(\ol\mu(x\ot f\ot y\ot g)\ot z\ot h)\\
&= \sum \omega(d f-d f_{(1)}, d y_{(1)})   \omega(d g_{(2)}, d z-d z_{(1)}) \\
&\times \omega(d f_{(3)} +d y_{(2)}, d g_{(1)}+d z_{(1)}) \\
&\times x  (f_{(1)} \brhd y_{(1)}) ((f_{(2)} g_{(1)})\brhd z_{(1)}) \\
&\ot (f_{(3)}  \lhd (y_{(2)} z_{(2)})) (g_{(2)}\lhd  z_{(3)}) h.
\ealn
\eeq

Now consider $\ol\mu(x\ot f\ot \ol\mu(y\ot g\ot z\ot h))$.  We have 
\[
\baln
\ol\mu(y\ot g\ot z\ot h)&=\sum \omega(d g_{(2)},  d z_{(1)})  \\
&\times y  (g_{(1)} \brhd  z_{(1)}) \ot  (g_{(2)}\lhd  z_{(2)}) h\\
&=\sum \omega(d g - d g_{(1)}, d z - d  z_{(2)})  \\
&\times y  (g_{(1)} \brhd  z_{(1)}) \ot  (g_{(2)}\lhd  z_{(2)}) h, 
\ealn
\]
and hence 
\[
\baln
&\ol\mu(x\ot f\ot \ol\mu(y\ot g\ot z\ot h)) \\
&= \sum \omega(d g_{(2)}, d z - d z_{(2)})  \\
&\times (\mu\ot \mu_0)(x\ot \psi(f\ot  y(g_{(1)}\brhd z_{(1)})) \ot (g_{(2)}\lhd z_{(2)})h)
\ealn
\]
Using 
\[
\baln
\psi(f\ot  y(g_{(1)}\brhd z_{(1)}))&=\sum \omega(d y_{(2)}, d g_{(1)}+d z_{(1)}) \\
&\times \omega(d f_{(2)}, d y_{(1)}+d g_{(1)}+d z_{(1)}) \\
&\times f_{(1)} \brhd (y_{(1)} (g_{(1)}\brhd z_{(1)})) \ot  f_{(2)}\lhd   (y_{(2)}  z_{(2)}) \\
&= \sum \omega(d y_{(2)}, d g_{(1)}+d z_{(1)}) \\
&\times \omega(d f_{(3)}, d y_{(1)}+d g_{(1)}+d z_{(1)}) \omega(d f_{(2)}, y_{(1)}) \\
&\times (f_{(1)} \brhd y_{(1)}) ((f_{(2)}g_{(1)}\brhd z_{(1)}) \ot  f_{(3)}\lhd   (y_{(2)}  z_{(2)}), 
\ealn
\]
we obtain 
\beq \label{eq:mu-id-mu}
\baln
&\ol\mu(x\ot f\ot \ol\mu(y\ot g\ot z\ot h)) \\
&= \sum \omega(d g_{(2)}, d z - d z_{(3)})  \omega(d y_{(2)}, d g_{(1)}+d z_{(1)}) \\
&\times \omega(d f_{(3)}, d y_{(1)}+d g_{(1)}+d z_{(1)}) \omega(d f_{(2)}, d y_{(1)}) \\
&\times x (f_{(1)} \brhd y_{(1)}) ((f_{(2)}g_{(1)}\brhd z_{(1)}) \ot  (f_{(3)}\lhd   (y_{(2)}  z_{(2)}) ) (g_{(2)}\lhd z_{(3)})h.
\ealn
\eeq
The product of commutative factors in the above equation can be re-written as 
\[
\baln
&\omega(d g_{(2)}, d z - d z_{(3)})  \omega(d y_{(2)}, d g_{(1)}+d z_{(1)}) \\
&\times \omega(d f_{(3)}, d y_{(1)}+d g_{(1)}+d z_{(1)}) \omega(d f_{(2)}, d y_{(1)}) \\
&= \omega(d g_{(2)}, d z - d z_{(3)})  \omega(d y_{(2)}, d g_{(1)}+d z_{(1)}) \\
&\times 
\omega(d f_{(3)}, d g_{(1)}+d z_{(1)}) \omega(d f_{(3)}, d y_{(1)}) \omega(d f_{(2)}, d y_{(1)}) \\
&=  \omega(d f - d f_{(1)}, d y_{(1)})  \omega(d g_{(2)}, d z - d z_{(3)}) \\
&\times \omega(d f_{(3)}+d y_{(2)}, d g_{(1)}+d z_{(1)}).
\ealn
\]
Therefore, the right hand sides of equations \eqref{eq:mu-mu-id} and \eqref{eq:mu-id-mu} agree, proving the associativity of $\ol\mu$. 

It follows from \eqref{eq:mult-d} that $1\ot\epsilon$ is the multiplicative identity, 
completing the proof of the theorem. 
\end{proof}

The proof of Theorem \ref{thm:D-Hopf} makes essential use of Lemma \ref{lem:psi-map}.  
\begin{proof}[Proof of Theorem \ref{thm:D-Hopf}]
Let us first prove the bi-algebra structure for $D(H)$.

Note that for any $x\in H$ and $f\in H^0$,  we have
\[
\ol{\Delta}(x\ot f)= \sum_{(x), (f)} \omega(d x_{(2)}, d f_{(1)}) x_{(1)}\ot f_{(1)}\ot x_{(2)}\ot f_{(2)}.
\]
In particular, 
\beq
\ol{\Delta}(x\ot \epsilon)= \sum_{(x)} x_{(1)}\ot \epsilon\ot x_{(2)}\ot \epsilon, \label{eq:D-x}\\
\ol{\Delta}(1\ot f)= \sum_{(f)} 1\ot f_{(1)}\ot 1\ot f_{(2)}.\label{eq:D-f}
\eeq
Also,  the following relations immediately follow from the definition of $\ol{S}$. 
\beq
\ol{S}(x\ot \epsilon)= S(x)\ot \epsilon, \quad \ol{S}(1\ot f) = 1\ot S_0(f). \label{eq:S-x-f}
\eeq

We now show that $\ol{\Delta}$ is an algebra homomorphism.
In the calculation below, we show omit the notation for the multiplication $\ol\mu$
whenever that will not cause confusion. 
For example, we will simply write 
$\ol{\mu}((x\ot f)\ot(y\ot g))$ as $(x\ot f)(y\ot g)$ for any $x, y\in H$ and $f, g\in H^0$. 
We can show that 
\[
\baln
\ol{\Delta}((x\ot \epsilon)(y\ot g))&=\ol{\Delta}(x\ot \epsilon)\ol{\Delta}(y\ot g),\\
\ol{\Delta}((x\ot f)(1\ot g))&=\ol{\Delta}(x\ot f)\ol{\Delta}(1\ot g). 
\ealn
\]
Thus $\ol{\Delta}((x\ot f)(y\ot g))=\ol{\Delta}(x\ot \epsilon)\ol{\Delta}((1\ot f) (y\ot \epsilon))\ol{\Delta}(1\ot g)$. If we can show that 
\beq\label{eq:Delta-prod}
\ol{\Delta}((1\ot f) (y\ot \epsilon))=\ol{\Delta}(1\ot f) \ol{\Delta}(y\ot \epsilon), 
\eeq
then $\ol{\Delta}((x\ot f)(y\ot g))= \ol{\Delta}(x\ot f) \ol{\Delta}(y\ot g)$, 
and thus $\ol{\Delta}$  is an algebra homomorphism. 
Let us prove \eqref{eq:Delta-prod}. We have 
\[
\baln
\ol{\Delta}(((1\ot f) (y\ot \epsilon))&= \wh\Delta'\psi(f\ot y)
= (\psi\ot \psi)\wh{\Delta}(f\ot y)  \quad \text{(by Lemma \ref{lem:psi-map}.(2).i))}\\
&= \sum \omega(d f_{(2)}, d y_{(1)})\psi(f_{(1)} \ot y_{(1)}) \ot  \psi(f_{(2)} \ot y_{(2)}).
\ealn
\]
The right hand side can be re-written as 
\[
\baln
&\sum \omega(d(f_{(2)}), d(y_{(1)})) (1\ot f_{(1)})(y_{(1)}\ot \epsilon) \ot  (1\ot f_{(2)})(y_{(2)}\ot \epsilon)\\
&= \sum (1\ot f_{(1)}   \ot  1\ot f_{(2)})  (y_{(1)}\ot \epsilon \ot  y_{(2)}\ot \epsilon)\\
&=\ol{\Delta}(1\ot f) \ol{\Delta}(y\ot \epsilon), 
\ealn
\]
proving equation \eqref{eq:Delta-prod}. 

It is clear now that the map $\ol{\epsilon}: D(H)\lra \K$ is an algebra homomorphism. It is also evident that 
$\ol{\mu}(\ol{\epsilon}\ot\id)\ol{\Delta}= \ol{\mu}(\id\ot\ol{\epsilon})\ol{\Delta}= \id$. 
This proves the bi-algebra structure of $D(H)$. 

We now consider the map $\ol{S}$. Let $x,  y\in H$ and $f, g\in H^0$. We have 
\beq
&&\ol{S}((x\ot \epsilon)(y\ot g))= \omega(d x, d y +d g) \ol{S}(y\ot g) \ol{S}(x\ot \epsilon), \label{eq:Sxyg} \\
&&\ol{S}((x\ot f)(1\ot g))=  \omega(d x+d f, d g) \ol{S}(1\ot g) \ol{S}(x\ot f), \label{eq:Sxfg}\\ 
&&\ol{S}((x\ot \epsilon)(1\ot f))= \ol{S}(x\ot f)   =\omega(d x, d f) \ol{S}(1\ot f))   \ol{S}(x\ot \epsilon), \label{eq:Sxf}\\
&&\ol{S}((1\ot f) (x\ot \epsilon)) = \omega(d f, d x)  \ol{S}(x\ot \epsilon) \ol{S}(1\ot f), \label{eq:Sfx}
\eeq
where the first two equations easily follow from the definition of $\ol{S}$, 
and \eqref{eq:Sxf} is a special case of \eqref{eq:Sxfg}. The relation \eqref{eq:Sfx} can be shown 
by using Lemma  \ref{lem:psi-map}.(2).ii) as follows.
\[
\baln
\ol{S}((1\ot f)(x\ot \epsilon))
&=  \ol{S}\psi(f\ot x) = \mu (S_0 \ot S)\tau \psi(f\ot x)\\
&=  (S\ot S_0) \tau \psi^{-1} \psi(f\ot x) \qquad \text{(by Lemma \ref{lem:psi-map}.(2).ii))}\\
&=\omega(d f, d x)(S(x)\ot S_0(f)) \\
&= \omega(d f, d x) \ol{S}(x\ot \epsilon) \ol{S}(1\ot f).
\ealn
\]

Now we show that $\ol{S}$ is an algebra anti-homomorphism. We have 
\[
\baln
\ol{S}((x\ot f)(y\ot g))
&=  \omega(d x, d f+d y +d g) \omega(d f+d y, d g) \\
&\times \ol{S}(1\ot g)  \ol{S}((1\ot f)(y\ot \epsilon))  \ol{S}(x\ot \epsilon)  
\quad \text{(by \eqref{eq:Sxyg}, \eqref{eq:Sxfg})}.\\
\ealn
\]
The right hand side can be re-written as 
\[
\baln
&\omega(d x, d f+d y +d g) \omega(d f+d y, d g) \omega(d f, d y)\\
&\times \ol{S}(1\ot g)  \ol{S}(y\ot \epsilon)  \ol{S}(1\ot f)   \ol{S}(x\ot \epsilon)  
\quad \text{(by \eqref{eq:Sfx})}\\
&=  \omega(d x+d f, d y +d g)  \ol{S}(y\ot g)    \ol{S}(x\ot f) 
\quad \text{(by \eqref{eq:Sxyg}, \eqref{eq:Sxfg})}, 
\ealn
\]
that is, 
$
\ol{S}((x\ot f)(y\ot g)) = \omega(d x+d f, d y +d g) \ol{S}(y\ot g)    \ol{S}(x\ot f). 
$

Finally we verify that $\ol{S}$ satisfies the required relations with $\ol{\Delta}$ and $\ol{\epsilon}$.  
We have 
\[
\baln
\ol{\mu}(\ol{S}\ot\id_{D(H)})\ol{\Delta}(x\ot f)&=\sum_{(x), (f)} \omega(d x_{(2)},  d f_{(1)})
\ol{S}(x_{(1)}\ot f_{(1)})(x_{(2)}\ot f_{(2)}). 
\ealn
\]
We can re-write the right hand side as follows
\[
\baln
&\sum_{(x), (f)} \omega(d x,  d f_{(1)})
 (1\ot S_0(f_{(1)}))(S(x_{(1)})\ot \epsilon) (x_{(2)}\ot f_{(2)})   \quad \text{(by \eqref{eq:Sxf})}\\
&=\sum_{(f)} \omega(d x,  d f_{(1)})
  (1\ot S_0(f_{(1)}))(\sum_{(x)} S(x_{(1)})x_{(2)}\ot f_{(2)}) \quad \text{(by \eqref{eq:xyg})}\\
&=\epsilon(x)\sum_{(f)} (1\ot S_0(f_{(1)})) (1\ot f_{(2)}) \quad \text{(by property of $S$)}\\
&=\epsilon(x)\sum_{(f)} 1\ot S_0(f_{(1)}) f_{(2)}  \quad \text{(by \eqref{eq:xfg})}\\
&=\epsilon(x) f(1) 1_{D(H)}  \quad \text{(by property of $S_0$)}.
\ealn
\]
where we recall that $1_{D(H)}=1\ot \epsilon$ is the identity of $D(H)$.  Thus 
\beq
\ol{\mu}(\ol{S}\ot\id_{D(H)})\ol{\Delta}(x\ot f) = \ol{\epsilon}(x\ot f) 1_{D(H)},  
\eeq
and we can similarly show that 
\beq
\ol{\mu}(\id_{D(H)}\ot \ol{S})\ol{\Delta}(x\ot f) = \ol{\epsilon}(x\ot f) 1_{D(H)}. 
\eeq

This completes the proof of Theorem \ref{thm:D-Hopf}.
\end{proof}

\begin{definition}\label{def:D(H)}
Call the Hopf $(\Gamma, \omega)$-algebra  $(D(H), \ol{\mu}, \ol{u}, \ol{\Delta}, \ol{\epsilon}, \ol{S})$ the 
quantum double of $(H, \mu, u, \Delta, \epsilon, S)$.  
\end{definition}

By inspecting the definitions of the structure maps of the quantum double, particularly equations \eqref{eq:xyg}, \eqref{eq:xfg}, \eqref{eq:D-x}, \eqref{eq:D-f} and \eqref{eq:S-x-f},   
we can see that $D(H)$ contains $H$ and $H^0$ as Hopf $(\Gamma, \omega)$-sub-algebras. 
More explicitly, 
\begin{lemma}\label{lem:subalgs}
There are injective Hopf $(\Gamma, \omega)$-algebra homomorphisms 
\[
\baln
&\iota: (H, \mu, u, \Delta, \epsilon, S)\lra (D(H), \ol{\mu}, \ol{u}, \ol{\Delta}, \ol{\epsilon}, \ol{S}),\quad x\mapsto x\ot\epsilon, \\
&\iota_0: (H^0, \mu_0, u_0, \Delta_0, \epsilon_0, S_0)\lra (D(H), \ol{\mu}, \ol{u}, \ol{\Delta}, \ol{\epsilon}, \ol{S}), \quad f\mapsto 1\ot f.
\ealn
\]
\end{lemma}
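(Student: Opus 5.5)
The plan is to verify the two maps separately: first that each is a homomorphism for every structure map, then that each is injective. Everything can be read off the explicit formulas for the double. The only genuinely multiplicative input is \eqref{eq:mult-d} together with the normalisations \eqref{eq:U-act-dU-0} of the actions.

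\emph{Algebra maps.} For $\iota$, take $f=\epsilon$ in \eqref{eq:xyg}. This gives $(x\ot\epsilon)(y\ot\epsilon)=xy\ot\epsilon$, and the unit $1\ot\epsilon$ is $\iota(1)$. For $\iota_0$, take $x=1$ in \eqref{eq:xfg}. This gives $(1\ot f)(1\ot g)=1\ot fg$ with the same unit. To be safe, I would also check these identities directly from \eqref{eq:mult-d}: when $f=\epsilon$, the only surviving term uses $\Delta_0(\epsilon)=\epsilon\ot\epsilon$, and $\epsilon\brhd y_{(1)}$ and $\epsilon\lhd y_{(2)}$ collapse by counit properties. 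The case $y=1$ is analogous.

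\emph{Coalgebra maps and antipode.} For the co-multiplication, \eqref{eq:D-x} and \eqref{eq:D-f} say precisely that $\ol\Delta\iota=(\iota\ot\iota)\Delta$ and $\ol\Delta\iota_0=(\iota_0\ot\iota_0)\Delta_0$. Both come from setting $f=\epsilon$, resp. $x=1$, in the formula for $\wh\Delta'$; the commutative factor $\omega(d x_{(2)},d f_{(1)})$ becomes $1$ because $\epsilon$ and $1$ have degree $0$. For the counit, $\ol\epsilon(x\ot\epsilon)=\epsilon(x)\epsilon^0(\epsilon)=\epsilon(x)$, and $\ol\epsilon(1\ot f)=\epsilon_0(f)$. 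For the antipode, \eqref{eq:S-x-f} gives $\ol S\iota=\iota S$ and $\ol S\iota_0=\iota_0S_0$. It is worth checking explicitly that $\psi(\epsilon\ot S(x))=S(x)\ot\epsilon$ and $\psi(S_0(f)\ot 1)=1\ot S_0(f)$; both again follow from \eqref{eq:U-act-dU-0}. Finally, all the maps are visibly homogeneous of degree $0$.

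\emph{Injectivity.} This is the one step needing a module-theoretic remark, and it is where I expect any subtlety to lie. Over a general commutative ring $\K$, the maps $H\to H\ot_\K H^0$, $x\mapsto x\ot\epsilon$, and $H^0\to H\ot_\K H^0$, $f\mapsto 1\ot f$, are injective once there are $\K$-linear retractions. For $\iota$, apply $\id_H\ot\epsilon_0$, using $\epsilon_0(\epsilon)=\epsilon(1)=1$; this sends $x\ot\epsilon\mapsto x$. For $\iota_0$, apply $\epsilon\ot\id_{H^0}$; this sends $1\ot f\mapsto f$. Both retractions are well-defined $\K$-linear maps on the tensor product, so both embeddings are split injections. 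This settles injectivity without any flatness hypothesis.
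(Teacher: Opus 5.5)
Your proposal is correct and follows the paper's route: the paper reads the lemma off \eqref{eq:xyg}, \eqref{eq:xfg}, \eqref{eq:D-x}, \eqref{eq:D-f} and \eqref{eq:S-x-f}, exactly as you do, and your retractions $\id_H\ot\epsilon_0$ and $\epsilon\ot\id_{H^0}$ make explicit the injectivity that the paper leaves implicit. One small slip: in \eqref{eq:xyg} the factor you should set equal to $\epsilon$ is $g$, not $f$.
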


\subsubsection{Quasi-triangular structure of the quantum double}
Let  $(H, \mu, u, \Delta, \epsilon, S)$ be a proper Hopf $(\Gamma, \omega)$-algebra over $\K$, 
with $H$ being a free $\K$-module. 
Let $B=\{x_i\mid i\in I\}$ (for an index set $I$) be a homogeneous basis for $H$, and let $\ol{B}=\{f_i\mid i\in I\}$ 
be a homogeneous basis for $H^0$ such that $\langle f_i, x_j\rangle =\delta_{i j}$ for all $i, j\in I$. Then
for any $y\in H$  and $f\in H^0$, 
\beq\label{eq:ident}
y= \sum_{i\in I} \langle f_i, y\rangle x_i, \quad f= \sum_{i\in I} \langle f, x_i\rangle f_i. 
\eeq

\begin{theorem}
Let  $(H, \mu, u, \Delta, \epsilon, S)$ be a proper Hopf $(\Gamma, \omega)$-algebra over $\K$ 
with $H$ being a free $\K$-module, 
and retain  notation above. 
Assume that there is some appropriate completion  $D(H)\hat\ot D(H)$ of $D(H)\ot D(H)$ which contains 
the element  
\beq
\ol{R}=\sum_{i\in I} \iota(x_i)\ot \iota_0(f_i).
\eeq
Then the quantum double $(D(H), \ol{\mu}, \ol{u}, \ol{\Delta}, \ol{\epsilon}, \ol{S})$ is a 
quasi-triangular Hopf $(\Gamma, \omega)$-algebra with $\ol{R}$ as the universal $R$-matrix. 
\end{theorem}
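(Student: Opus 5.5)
We verify the four axioms \eqref{eq:YB-1}--\eqref{eq:counit-R} for $\ol{R}=\sum_i x_i\ot f_i$ (writing $x_i$ for $\iota(x_i)$ and $f_i$ for $\iota_0(f_i)$), using the dual-basis identities \eqref{eq:ident} throughout. First, $\ol{R}$ is homogeneous of degree $0$. Indeed, $\langle f_i,x_j\rangle=\delta_{ij}$ and the pairing is degree-preserving, so $d f_i=-d x_i$. Hence each summand $x_i\ot f_i$ has degree $0$.

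Next we handle the coproduct identities \eqref{eq:YB-2} and the counit identities \eqref{eq:counit-R}. By Lemma \ref{lem:subalgs}, $(\ol\Delta\ot\id)\ol R=\sum_i\Delta(x_i)\ot f_i$. We expand $\Delta(x_i)=\sum_{j,k}\langle f_j\ot f_k,\Delta(x_i)\rangle x_j\ot x_k$ and use $\langle f_j\ot f_k,\Delta(x_i)\rangle=\langle \mu_0(f_j\ot f_k),x_i\rangle$. Summing over $i$ by \eqref{eq:ident} gives $\sum_{j,k}x_j\ot x_k\ot f_jf_k$. This is to be compared with $R_{13}R_{23}=\sum_{j,k}x_j\ot x_k\ot f_jf_k$, computed in $D(H)^{\ot3}$. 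The braiding in $R_{13}=(\id\ot\tau)R_{12}$ contributes a factor $\omega(d f_j,d x_k)$, and the multiplication in the tensor product contributes another. We must check that these two factors cancel, which follows from $\omega(\alpha,\beta)\omega(\beta,\alpha)=1$. The second identity works the same way, using $\Delta_0=\tau\circ\Delta^0$. In that case the opposite order $R_{13}R_{12}$ arises precisely because $H^0$ carries the opposite coproduct, and this is where the $\omega$-factor in $\langle\Delta_0 f,x\ot y\rangle=\omega(dx,dy)\langle f,yx\rangle$ gets absorbed. The counit identities follow directly: $\sum_i\epsilon(x_i)f_i=\epsilon$ and $\sum_i x_i\langle f_i,1\rangle=1$, again by \eqref{eq:ident}.

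For the antipode identities \eqref{eq:S-R}, we use the standard argument: any invertible element satisfying \eqref{eq:YB-2} and \eqref{eq:counit-R} in a Hopf algebra satisfies \eqref{eq:S-R}. To carry this out, we apply $\ol\mu(\ol S\ot\id)\ot\id$ to $(\ol\Delta\ot\id)\ol R=\ol R_{13}\ol R_{23}$ and use the antipode axiom together with \eqref{eq:counit-R}. This gives $(\ol S\ot\id)(\ol R)\cdot\ol R=1$, and symmetrically $\ol R\cdot(\ol S\ot\id)(\ol R)=1$. This also establishes invertibility of $\ol R$. The identity for $(\id\ot\ol S^{-1})$ follows in the same way from the other half of \eqref{eq:YB-2}. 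Since $\ol R$ has degree $0$, no stray $\omega$ factors appear.

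The main obstacle is \eqref{eq:YB-1}, the identity $\ol R\,\ol\Delta(y)=\ol\Delta'(y)\ol R$. Because $\ol\Delta$ is an algebra map, it suffices to check it on generators $y=x\ot\epsilon$ and $y=1\ot f$. For $x\in H$, this amounts to the identity $\sum_i x_{(1)}x_i\ot x_{(2)}f_i=\sum_i\omega(dx_{(1)},dx_{(2)})x_ix_{(2)}\ot f_ix_{(1)}$ (with the appropriate $\omega$ from multiplication in $D(H)^{\ot 2}$). The term $f_ix_{(1)}$ is expanded via the multiplication rule \eqref{eq:mult-d}, i.e.\ via $\psi$. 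We plan to pair both sides against an arbitrary $y'\in H$ in the second factor. Then \eqref{eq:ident} collapses the sums, and both sides reduce to the same expression in which $\langle S_0(\cdot),\cdot\rangle$ terms cancel through the antipode identity $\sum S^{-1}(x_{(2)})x_{(1)}=\epsilon(x)$. The case $1\ot f$ is handled dually. We expect the only delicate point to be tracking the $\omega$-factors, and we will manage these with the constraint $d f+d x=0$ on nonvanishing pairings, as in the proof of Lemma \ref{lem:lr-act}.
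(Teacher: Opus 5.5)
The reduction of \eqref{eq:YB-1} to the generators $\iota(x)$, $\iota_0(f)$, and your dual-basis treatment of \eqref{eq:YB-2} and \eqref{eq:counit-R}, follow the paper.

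For \eqref{eq:S-R} you take a different but valid route. You obtain $(\ol S\ot\id)\ol R=\ol R^{-1}$ formally from $(\ol\Delta\ot\id)\ol R=\ol R_{13}\ol R_{23}$, the antipode axiom in $D(H)$ and the counit identity. The paper instead proves $(S\ot\id)E=(\id\ot S_0^{-1})E$ and then checks $E(\id\ot S_0^{-1})E=1\ot\epsilon$ by hand. Your version has the advantage of giving a two-sided inverse, which the paper's computation alone does not.

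A minor bookkeeping point: $\ol R_{13}=\sum_j x_j\ot 1\ot f_j$ picks up no $\omega$-factor from $\tau$. The single factor $\omega(d f_j, d x_k)$ comes from the product in $D(H)^{\ot 3}$, and it has to match the factor produced by expanding $\Delta(x_i)$ in the graded dual basis.

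The real problem is the identity you plan to verify for \eqref{eq:YB-1}. Multiplying out gives
$\ol R\,\ol\Delta(x)=\sum_i\omega(d f_i, d x_{(1)})\,x_ix_{(1)}\ot f_ix_{(2)}$
and
$\ol\Delta'(x)\ol R=\sum_i\omega(d x_{(1)}, d x_{(2)})\,\omega(d x_{(1)}, d x_i)\,x_{(2)}x_i\ot x_{(1)}f_i$.
What you wrote, $\sum_i x_{(1)}x_i\ot x_{(2)}f_i=\sum_i\omega(\cdots)\,x_ix_{(2)}\ot f_ix_{(1)}$, is instead $\ol\Delta(x)\ol R=\ol R\,\ol\Delta'(x)$. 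That is the intertwining property of $\ol R^{-1}$, and it is false in general.

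A counterexample already exists with trivial grading. Take $H=\K^G$ for $G$ nonabelian, so $H^0=\K[G]$ and $\ol R=\sum_a \delta_a\ot a$. The product \eqref{eq:mult-d} gives $g\delta_a=\delta_{gag^{-1}}g$. Comparing the $\delta_u\ot(\cdot)$ components of $\ol\Delta(\delta_a)\ol R$ and $\ol R\,\ol\Delta'(\delta_a)$ then forces $u^2a=au^2$ for all $u,a\in G$, which fails in $S_3$. So the pairing argument cannot close as stated.

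With the order corrected, the cross term is $f_ix_{(2)}$ on the left-hand side. The paper rewrites it in the dual basis as $\iota_0(f)\iota(y)=\sum\omega(\cdots)\langle f, y_{(3)}x_\ell S^{-1}(y_{(1)})\rangle\,\iota(y_{(2)})\iota_0(f_\ell)$, see \eqref{eq:fy-1}. The $i$-sum then collapses by \eqref{eq:ident}, and $\sum S^{-1}(y_{(2)})y_{(1)}=\epsilon(y)$ leaves exactly $\ol\Delta'(y)\ol R$. For $\iota_0(f)$ the analogous step uses \eqref{eq:fy-2} inside $\ol\Delta'(\iota_0(f))\ol R$, together with the antipode property of $S_0$.

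Your tools are the right ones, but the target identity must be fixed before the computation can work.
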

\begin{proof}
Let us make some preparations for proving the theorem. 

We use  \eqref{eq:ident} to express $\iota_0(f)\iota(y) $, $\ol\Delta(\iota_0(f))$ and $\ol\Delta(\iota(f))$ 
in such a way that they can be readily used in the proof of the theorem. We first re-write 
\[
\baln
\psi(f\ot y)
&=\sum \omega(d f-d f_{(1)}, d y-d y_{(3)}) \\
&\times  \langle S_0(f_{(1)}), y_{(1)}\rangle  \langle f_{(3)}, y_{(3)}\rangle  y_{(2)}\ot f_{(2)} 
\ealn
\]
in two ways as follows
\[
\baln
\psi(f\ot y)
&=\sum \omega(d f-d f_{(1)}, d y-d y_{(3)}) 	\\
&\times  \langle S_0(f_{(1)}), y_{(1)}\rangle  \langle f_{(2)}, x_\ell \rangle 
\langle f_{(3)}, y_{(3)}\rangle  y_{(2)}\ot f_\ell, \\
\psi(f\ot y)
&=\sum \omega(d f-d f_{(1)}, d y-d y_{(3)})  \\
&\times  \langle S_0(f_{(1)}), y_{(1)}\rangle  \langle f_\ell, y_{(2)}\rangle \langle f_{(3)}, y_{(3)}
\rangle  x_\ell\ot f_{(2)}. 
\ealn
\]
By using properties of $\Delta$ and $\Delta_0$, we obtain 
\[
\baln
\psi(f\ot y)
&= \sum \omega(d f+d y_{(1)}, d y-d y_{(3)} - d y_{(1)})  
\omega(d y_{(3)},  d x_\ell) \\
&\times  \langle f_{(1)} \ot f_{(2)}\ot f_{(3)}, S^{-1}(y_{(1)}) \ot x_\ell\ot  y_{(3)}\rangle  
y_{(2)}\ot f_\ell\\
&= \sum \omega(d y-d y_{(3)}, d y_{(3)}+d x_\ell)\\
&\times  \langle f, y_{(3)} x_\ell S^{-1}(y_{(1)}) \rangle  y_{(2)}\ot f_\ell \quad \text{(by property of $\Delta_0$)}, 
\ealn
\]
\[
\baln
\psi(f\ot y)
&=\sum \omega(d f-d f_{(1)}, d y+d f_{(3)}) \\
&\times \omega(d f_{(3)}, d f_{(1)} +d f_\ell )  \omega(d f_\ell, d f_{(1)})\\
&\times  \langle S_0(f_{(1)})\ot f_\ell\ot f_{(3)}, y_{(1)}\ot y_{(2)}\ot y_{(3)}\rangle  x_\ell\ot f_{(2)}\\
&=\sum 
 \omega( d f_{(1)} +d f_\ell, d f_{(2)})  \omega(d f_\ell, d f_{(1)})\\
&\times  \langle S_0(f_{(1)}) f_\ell f_{(3)}, y\rangle  x_\ell\ot f_{(2)} \quad \text{(by property of $\Delta$)}.
\ealn
\]
These  immediately lead to the following two relations

\beq
\iota_0(f)\iota(y) 
&=& \sum \omega(d y-d y_{(3)},  d y_{(3)}+d x_\ell)  \label{eq:fy-1}\\
&&\times\langle f, y_{(3)} x_\ell S^{-1}(y_{(1)}) \rangle  \iota(y_{(2)}) \iota_0(f_\ell), \nonumber \\
\iota_0(f)\iota(y) 
&=&\sum \omega( d f_{(1)}, d f_{(2)})   \omega(d f_\ell, d f_{(1)}+d f_{(2)})   \label{eq:fy-2}\\
&&\times  \langle S_0(f_{(1)}) f_\ell f_{(3)}, y\rangle  \iota(x_\ell) \iota_0( f_{(2)}). \nonumber
\eeq

By using \eqref{eq:ident}, we obtain  
\[
\baln
{\Delta}(y)
&= \sum_{r, s\in I}\omega(d f_s, d f_r)
\langle f_r \ot f_s, \Delta(y)\rangle   x_r\ot x_s\\
&= \sum_{r, s\in I}\omega(d f_s, d f_r)
\langle f_r  f_s, y\rangle   x_r\ot x_s, 
\ealn
\]
and similar calculations also show that for any $g\in H^0$, 
\[
\baln
\Delta_0(g)
&= \sum_{r, s\in I}\langle g, x_s x_r\rangle   f_r\ot f_s.
\ealn
\]
These immediately lead to the following relations
\beq
&&\ol\Delta(\iota(y))
= \sum_{r, s\in I}\omega(d f_s, d f_r)
\langle f_r  f_s, y\rangle   \iota(x_r)\ot \iota(x_s), \quad y\in H, \label{eq:Dy} \\
&&\ol\Delta_0(\iota_0(g))
= \sum_{r, s\in I}\langle g, x_s x_r\rangle   \iota_0(f_r)\ot \iota_0(f_s), \quad \forall g\in H^0, \label{eq:D0h}
\eeq

Also we write $E=\sum_i x_i \ot  f_i$. Then 
\[
\ol{R} = (\iota\ot\iota_0)(E).
\]

Now we prove the theorem. 

Let us first show that $\ol{R}$  satisfies equation \eqref{eq:YB-1}.  
For any $y\in H$ and  $f\in H^0$,  
\[
\baln
\ol{R}\,  \ol\Delta(\iota(y)) &= \sum \omega(d f_i, d y_{(1)}) \iota(x_i) \iota(y_{(1)}) \ot \iota_0(f_i) \iota(y_{(2)}), \\
\ol{\Delta}'(\iota_0(f)) \ol{R} &=\sum
\omega(d f_{(1)}, d f_{(2)}+ d x_i)
\iota_0(f_{(2)})   \iota(x_i)\ot  \iota_0(f_{(1)})  \iota_0(f_i). 
\ealn
\]
Using \eqref{eq:fy-1} in the first equation, we obtain 
\[
\baln
\ol{R}\,  \ol\Delta(\iota(y)) 
 &= \sum \omega(d f_i, d y_{(1)}) 
\omega(d y_{(2)}+d y_{(3)}, d y_{(4)}+d x_\ell)\\
&\times \langle f_i, y_{(4)} x_\ell S^{-1}(y_{(2)}) \rangle  
 \iota(x_i) \iota(y_{(1)}) \ot  \iota(y_{(3)}) \iota_0(f_\ell)\\
 &= \sum \omega(d y_{(1)}+ d y_{(2)}+d y_{(3)}, d y_{(4)}+d x_\ell)\\
&\times \iota( y_{(4)} x_\ell S^{-1}(S(y_{(1)})y_{(2)}) \ot  \iota(y_{(3)}) \iota_0(f_\ell)\\
 &= \sum \omega(d y_{(1)}, d y_{(2)}+d x_\ell)\\
&\times \iota(y_{(2)} x_\ell)  \ot  \iota(y_{(1)}) \iota_0(f_\ell)  \qquad \text{(by property of $S$)}\\
&=\ol\Delta'(\iota(y)) \ol{R}. 
\ealn
\]
Using \eqref{eq:fy-2} in the second equation, we obtain 
\[
\baln
\ol{\Delta}'(\iota_0(f)) \ol{R} 
%
%
&=\sum\omega(d f_{(1)}, d f-d f_{(1)} + d x_i)\\ 
&\times \omega( d f_{(2)}, d f_{(3)})   \omega(d f_\ell, d f_{(2)}+d f_{(3)}) \\
&\times  \langle S_0(f_{(2)}) f_\ell f_{(4)}, x_i\rangle  \iota(x_\ell) \iota_0( f_{(3)})
\ot  \iota_0(f_{(1)})  \iota_0(f_i) \\
%
%
&=\sum \omega(d f_{(1)}+ d f_{(2)}, d f_{(3)})   \omega(d f_\ell, d f_{(1)}+d f_{(2)}+d f_{(3)}) \\
&\times  \iota(x_\ell) \iota_0( f_{(3)})
\ot  \iota_0(f_{(1)}S_0(f_{(2)}) f_\ell f_{(4)}) \\
&=\sum \omega(d f_\ell, d f_{(1)}) \\
&\times  \iota(x_\ell) \iota_0( f_{(1)})
\ot   \iota_0 (f_\ell) \iota_0( f_{(2)}) \qquad \text{(by property of $S_0$)}\\
&= \ol{R}\,  \ol\Delta(\iota_0(f)).
\ealn
\]
This proves that  $\ol{R}$ satisfies equation \eqref{eq:YB-1}.

Now we consider 
$(\ol{\Delta}\ot\id_{D(H)})\ol{R}=\sum_{i\in I} \ol{\Delta}(\iota(x_i))\ot \iota_0(f_i).$
Using \eqref{eq:Dy}, we obtain
\[
\baln
(\ol{\Delta}\ot\id_{D(H)})\ol{R}&= \sum_{r, s\in I}\omega(d f_s, d f_r)
 \iota(x_r)\ot \iota(x_s) \ot  \sum_{i\in I} \langle f_r  f_s, x_i\rangle  \iota_0(f_i)\\
&= \sum_{r, s\in I}\omega(d f_s, d f_r)
 \iota(x_r)\ot \iota(x_s) \ot  \iota_0(f_r)  \iota_0(f_s)\\
&=\sum_{r, s\in I}
 (\iota(x_r)\ot 1\ot \iota_0(f_r))  (1\ot \iota(x_s) \ot  \iota_0(f_s))\\
&=\ol{R}_{1 3} \ol{R}_{ 2 3}. 
\ealn
\]
By using \eqref{eq:D0h}, we can similarly show that 
\[
\baln
(\id_{D(H)}\ot\ol{\Delta})\ol{R}
&=\sum_{r, s\in I} \sum_{i\in I}\langle f_i, x_s x_r\rangle\iota(x_i) \ot    \iota_0(f_r)\ot \iota_0(f_s)\\
&=\sum_{r, s\in I} \iota(x_s) \iota(x_r) \ot    \iota_0(f_r)\ot \iota_0(f_s)\\
&= \ol{R}_{1 3} \ol{R}_{ 1 2}. 
\ealn
\]
This shows that $\ol{R}$ satisfies equation \eqref{eq:YB-2}.

To prove that $\ol{R}$ satisfies \eqref{eq:S-R}, we note that 
\[
(S\ot\id_{H^0})E= (\id_H\ot S_0^{-1})E, 
\]
as can be verified by the following calculations.
\[
\baln
\sum_i S(x_i)\ot f_i 
&= \sum_{i, j} \langle f_j,  S(x_i)\rangle  x_j \ot f_i 
= \sum_{i, j} \langle S(f_j),  x_i\rangle  x_j \ot f_i \\
&=\sum_i x_i\ot S(f_i) =   \sum_i x_i\ot S_0^{-1}(f_i). 
\ealn
\]
It immediately implies that
\[
(\ol{S}\ot \id_{D(H)})\ol{R} = ( \id_{D(H)}\ot \ol{S}^{-1})\ol{R}.
\]

Consider $
\ol{R} (\id_{D(H)}\ot \ol{S}^{-1})\ol{R}= (\iota\ot\iota_0)(E(\id_H\ot S_0^{-1})E).
$
We have 
\[
\baln
E(\id_H\ot S_0^{-1})E
&=\sum_{i, j, \ell} \omega(d f_i, d x_j)  x_i x_j \ot  f_i S_0^{-1}(f_j)\\
&=\sum_{i, j, \ell} \omega(d f_i, d x_j)  \langle  f_i S_0^{-1}(f_j), x_\ell  \rangle x_i x_j  \ot  f_\ell   \\
&=\sum_{i, j, \ell} \omega(d f_i, d x_j)  \langle  f_i\ot  S_0^{-1}(f_j), \Delta(x_\ell)  \rangle x_i x_j  \ot  f_\ell   \\
&=\sum_{i, j, \ell} \omega(d f_i, d x_j)  \langle  f_i\ot  f_j, (\id\ot S)\Delta(x_\ell)  \rangle x_i x_j  \ot  f_\ell   \\
&=\sum_\ell \mu(\id\ot S)\Delta(x_\ell)  \ot  f_\ell   
=\sum_\ell \epsilon(x_\ell)  \ot  f_\ell. 
\ealn
\]
Note that
\beq\label{eq:counit-E}
\sum_\ell \epsilon(x_\ell)  \ot  f_\ell =  1  \ot  \epsilon=  \sum_\ell x_\ell  \ot  \epsilon_0(f_\ell).
\eeq
Thus it follows the first part of the above relation that
$
E(\id_H\ot S_0^{-1})E= 1  \ot  \epsilon. 
$
Hence 
\[
\ol{R} (\id_{D(H)}\ot \ol{S}^{-1})\ol{R}=1_{D(H)}\ot 1_{D(H)}, 
\]
proving that $\ol{R}$ satisfies \eqref{eq:S-R}.

Finally it follows from \eqref{eq:counit-E} that  $\ol{R}$ satisfies equation \eqref{eq:counit-R}, 
proving the theorem.
\end{proof}

\subsection{Topological Hopf $(\Gamma, \omega)$-algebras}\label{sect:topo}

There are various definitions of topological Hopf algebras depending on the contexts. 
We consider a special kind of topological Hopf $(\Gamma, \omega)$-algebras, 
which are over the formal power series ring.
We also extend the theory to the formal Laurent series ring, 
where the quantum double construction for quantum groups works more naturally.
  
The material of this subsection is included for the convenience of the reader.  We do not claim any originality.  An extensive treatment of topological Hopf algebras over the formal power series ring 
can be found in \cite[\S XVI.1-4]{K}.

\subsubsection{The formal power series ring}
The formal power series ring in $\hbar$ is 
\beq\label{eq:Chbar}
\C[[\hbar]]=\Big\{\sum_{i=0}^\infty c_i \hbar^i\, \Big| \,  c_i \in \C\Big\}.
\eeq
It is endowed with the $\hbar$-adic topology, 
which is defined by a neighbourhood base  
$\{a+ {\mathfrak m}^k \mid k=0, 1, \dots\}$ of each element $a\in \C[[\hbar]]$, 
where ${\mathfrak m}=\hbar \C[[\hbar]]$ (the maximal ideal), 
and hence ${\mathfrak m}^k= \hbar^k \C[[\hbar]]$.  
As translation by $a$ is a homeomorphism, the topology is in fact defined 
by the neighbourhood base $\{ {\mathfrak m}^k \mid k=0, 1, \dots\}$ of $0$.

The $\hbar$-adic topology is also the metric topology of a non-Archimedean norm defined as follows. 
Define the valuation 
\beq\label{eq:valu}
v_\hbar: \C[[\hbar]]\lra \Z_+\cup\{+\infty\}
\eeq
such that $v_\hbar(0)=+\infty$, and for any $a =\sum_{i=0}^\infty c_i \hbar^i\in \C[[\hbar]]$ with $c_i\in \C$,  
\[
v_\hbar(a)=min\{i\mid c_i\ne 0\}. 
\]
Fix a positive number $\rho>1$.  The non-Archimedean norm is defined by 
\beq\label{eq:norm}
\|a\|= \rho^{-v_\hbar(a)}, \quad a\in\C[[\hbar]].
\eeq
Then $\C[[\hbar]]$ is complete, e.g., by considering Cauchy sequences. 

Topological $\C[[\hbar]]$-modules are equipped with the $\hbar$-adic topology defined by the neighbourhood base $\{\hbar^k V\mid k=0, 1, \dots\}$  at $0$.
Observe that every $\C[[\hbar]]$-module homomorphism $f: V\lra W$ is continuous
with respect to the $\hbar$-adic topology, since $f(\hbar^n V)=\hbar^n f(V)\subset \hbar^n W$.  
In particular $V^*=\Hom_{\C[[\hbar]]}(V, \C[[\hbar]])$ 
is the continuous dual module of $V$, i.e., the 
set of $\C[[\hbar]]$-linear maps $V\lra \C[[\hbar]]$. 

The completion of a topological $\C[[\hbar]]$-module $V$ is the inverse limit 
\[
\wh{V}:=\varprojlim_n V/\hbar^n V, 
\]
which is separated (i.e.,  Hausdorff) under its natural $\hbar$-adic topology.  

The topological tensor product $V\hat\ot_{\C[[\hbar]]} W$ 
of any two complete separated  topological $\C[[\hbar]]$-modules $V$ and $W$ 
is the $\hbar$-adic completion of the algebraic tensor product $V\ot_{\C[[\hbar]]}W$.  
The $\hbar$-adic topology on the algebraic tensor product is defined by 
the fundamental system of neighbourhoods of \(0\) given by 
$\hbar^n(V\ot_{\C[[\hbar]]}W)$ for $n\ge 0$, 
and the topological tensor product is the inverse limit 
\[
V\widehat{\otimes }_{\mathbb{C}[[\hbar]]}W:=\varprojlim\limits_n\frac{V\otimes _{\mathbb{C}[[\hbar]]}W}{\hbar^n(V\otimes _{\mathbb{C}[[\hbar]]}W)}.
\]

Since $\C[[\hbar]]$-module homomorphisms are automatically continuous in the $\hbar$-adic topology, it is straightforward to extend definitions of various algebras to this topological setting. 

\begin{definition} 
A topological associative $(\Gamma, \omega)$-algebra over $\C[[\hbar]]$ is 
a  $\Gamma$-graded topological $\C[[\hbar]]$-module $H$ equipped with an associative multiplication 
$\mu: H\hat\ot H$ $\lra$ $H$ and a unit map $u: \C[[\hbar]]\lra H$, 
which are homogeneous $\C[[\hbar]]$-module homomorphisms (thus continuous) of degree $0$. 
\end{definition} 
\begin{definition} 
A topological $(\Gamma, \omega)$-algebra $(H, \mu, u)$ over $\C[[\hbar]]$ is a topological Hopf $(\Gamma, \omega)$-algebra if there are $\C[[\hbar]]$-algebra homomorphisms, 
$\Delta: H\lra  H\hat\ot H$ (the co-multiplication), 
$\varepsilon: H\lra\C[[\hbar]]$ (co-unit), and an
anti-homomorphism 
$S: H\lra H$ (antipode),
which are homogeneous  of degree $0$, and satisfy the conditions \eqref{eq:co-asso}, \eqref{eq:counit-Del} and \eqref{eq:Del-S}.
\end{definition}

Consider the dual $\C[[\hbar]]$-module $H^*$ of $H$ and the
adjoint maps of $\Delta$, $\mu$, $u$, $\varepsilon$  and $S$
respectively defined by analogues of \eqref{eq:Del*}--\eqref{eq:S*}, 
where $H^*\hat\ot H^*$ and $(H\hat\ot H)^*$ are used
for the definitions of $\Delta^*$ and $\mu^*$  in \eqref{eq:maps*}.  
Inspired by \cite[\S4.1.D]{CP}, we take 
\[
H^0=\{ f\in H^*\mid \mu^*(f)\in H^*\hat\ot H^*\}.
\] 
This is a topological Hopf $(\Gamma, \omega)$-algebra. 
Given any sequence $(g_0, g_1, g_2, \dots)$ of elements in $H^0$, let $g=\sum_i \hbar^i g_i$. Then
$\mu^*(g) = \sum_i \hbar^i \mu^*(g_i)\in H^*\hat\ot H^*$, and hence $H^0$
is complete with respect to the $\hbar$-adic topology. We call $H$ proper if there is a positive integer $D$ such 
that for any $x\in \hbar^k H$ with non-zero image in $H/  \hbar^{k+1}H$, we have $H^0(x)/\hbar^{k+D}\C[[\hbar]]\ne 0$, 
where $H^0(x)=\{f(x)\mid f\in H^0\}$. 

We also use a special class of $\C[[\hbar]]$-modules, topologically free modules
(see  \cite[\S XVI. 2]{K} for a detailed treatment).
A $\C[[\hbar]]$-module $V[[\hbar]]$ is topologically free if it is isomorphic to 
\[
V[[\hbar]]=\Big\{\sum_{i=0}^\infty \hbar^i v_i \mid v_i\in V\Big\} \cong\prod_{n=0}^\infty V, 
\]
for some $\C$-vector space $V$.
Any topologically free $\C[[\hbar]]$-module $M$ is isomorphic to 
$M_0[[\hbar]]$ with $M_0=\frac{M}{\hbar M}$, and its dual module $M^*$ is isomorphic to $M_0^*[[\hbar]]$. 
If $M$ and $N$ are topologically free $\C[[\hbar]]$-modules,  $M\hat\ot N\simeq (M_0\ot N_0)[[\hbar]]$.

\subsubsection{Topological Hopf $(\Gamma, \omega)$-algebras over the formal Laurent series ring}\label{sect:lattice} {\ }

\medskip
\noindent
{\em The formal Laurent series ring}.
The formal Laurent series ring 
\beq\label{eq:Laurent}
\K=\C((\hbar)) =\left\{\hbar^{-k} \sum_{i=0}^\infty c_i \hbar^i \mid c_i \in \C, k\in\Z_+\right\} 
\eeq
is the fraction field of $R=\C[[\hbar]]$. The valuation extends to
\[
v_\hbar\left(\sum_{i=k}^\infty c_i\hbar^i\right)
=
\min\{i\mid c_i\neq0\},
\]
and defines the non-Archimedean norm
\[
\|a\|=\rho^{-v_\hbar(a)}.
\]
Thus $\K$ is itself a complete valued field. Its valuation topology
restricts to the $\hbar$-adic topology on $R$, and $R$ is an
open and closed subring of $\K$.

There is, however, an important distinction between the topology on the
field $\K$ and the $\hbar$-adic filtration of a $\K$-vector space. Since
$\hbar$ is invertible in $\K$, for every $\K$-vector space $M$ one has
$
\hbar^n M=M.
$
Consequently, the filtration by the subspaces $\hbar^n M$ is constant and
does not define a useful topology on $M$.

We therefore do not define the topology by this filtration. 
Instead, we retain the $\hbar$-adically complete
$R$-module as a distinguished lattice.

\medskip
\noindent
{\em Lattice equipped vector spaces}. 

\begin{definition}\label{def:latt-vect}
Let $V$ be a $\K$-vector space and let $V_R\subset V$ be an $R$-submodule
such that
$
V\cong  V_R\otimes_R \K.
$
We call $V_R$ an $R$-lattice in $V$, and say that $V$ is a latticed equipped $\K$-vector space.

A $\K$-linear map
$
f:V\longrightarrow W
$
between lattice equipped $\K$-vector spaces
is \emph{lattice continuous} if, for some integer $N\geq 0$,
\[
\hbar^Nf(V_R)\subseteq W_R.
\]
\end{definition}

If $V$ is a finite dimensional lattice equipped $\K$-vector space, 
every $\K$-linear map from $V$ to another
lattice equipped space is lattice continuous: after choosing an
$R$-basis of $V_R$, finitely many powers of $\hbar$ clear the denominators
of the images of the basis vectors.

The lattice continuous dual is
\beq\label{eq:V-vee}
V^\vee
:=
\Hom_R(V_R,R)\ot_R\K.
\eeq
Equivalently, it consists of the $\K$-linear functionals
$f:V\to \K$ for which
$
f(V_R)\subseteq\hbar^{-N}R
$
for some $N\ge 0$.

Let $(V,V_R)$ and $(W,W_R)$ be lattice equipped $\K$-vector spaces, with
$V_R$ an $W_R$ complete and separated. Define their completed tensor product
by
\beq\label{eq:ot_K}
V\wh\ot_\K W
:=
(V_R\wh\ot_R W_R)\ot_R\K.
\eeq
Thus completion is performed before extending scalars.

The lattice equipment is part of the structure, so these constructions
depend on the chosen $R$-lattices. This is natural for quantum
groups, where the $R$-model is part of the data.

\medskip
\noindent
{\em The lattice  topology}. 

\begin{definition}[Lattice Topology]\label{def:latt}
For a lattice equipped $\K$-vector space $(V, V_R)$,  
the \emph{lattice topology} on $V$ is the unique translation-invariant topology,  
for which the fundamental neighbourhoods of $0$ are 
$
 \hbar^n V_R, $ for $n \ge 0.
$
\end{definition}

The lattice continuity condition is equivalent to continuity for the lattice topologies. 
Indeed,
$
\hbar^Nf(V_R)\subseteq W_R
\text{ implies }
f(\hbar^{N+k}V_R)\subseteq\hbar^kW_R
$
for every $k\ge 0$, and the converse follows by taking $k=0$.

\begin{remark}
The lattice topology is understood relative to the chosen
lattice; it is not the $\hbar$-adic topology of the $\K$-module itself.
\end{remark}

\medskip
\noindent
{\em Topological Hopf $(\Gamma, \omega)$-algebras over $\K$}. 
A topological Hopf $(\Gamma,\omega)$-algebra over $\K$ is a lattice equipped
$\Gamma$-graded $\K$-vector space
\[
(H,H_R),\qquad H=H_R\ot_R\K,
\]
where $H_R$ is a topological Hopf $(\Gamma,\omega)$-algebra over $R$, and
the Hopf structure maps on $H$ are obtained by scalar extension.
The completed tensor product in the co-product is understood via
\eqref{eq:ot_K}. All continuity statements are relative to the
distinguished lattice $H_R$.

\section*{List of notations}
We list frequently used notations in each section and give brief explanations. 

\renewcommand{\descriptionlabel}[1]{\hspace{\labelsep}\normalfont #1}
\begin{description}[
  leftmargin=.5cm,
  labelwidth=3.5cm,
  labelsep=0.3cm
]

\item[Section  2] {\ }

\begin{description}[
  leftmargin=1.5cm,
  labelwidth=3.5cm,
  labelsep=0.3cm
]
    \item[$\Gamma$] additive abelian group
    \item[$\omega$] commutative factor
    \item[$\Gamma^\pm$] $\{\alpha\in \Gamma\mid \omega(\alpha, \alpha)=\pm 1\}$
    \item[$\Gamma_{\Z_2}$] $2$-torsion subgroup
    \item[$\Gamma_R(V)$]  support of $\Gamma$-graded vector space $V$
    \item[$\Gamma_{R, \Z_2}(V)$]  $\Gamma_R(V)\cap\Gamma_{\Z_2}$
   \item[$\Gamma_R^\pm(V)$]  $\Gamma_R(V)\cap\Gamma^\pm$
   \item[$V_\pm$]  subspaces with graded support $\Gamma_R^\pm(V)$

   \item[$\Vect$]  category of $\Gamma$-graded vector spaces
   	\item[$d(v)$]	degree of $v$		 			 
   \item[$\tau_{V, W}$]   $\omega$-permutation, \eqref{eq:def-tau}
   \item[$\digamma_\xi$] degree shifting functor 
   \end{description}

\bigskip
 \item[Section  3]{\ }
 
\begin{description}[
  leftmargin=1.5cm,
  labelwidth=3.5cm,
  labelsep=0.3cm
] 
   \item[$\kappa$] bilinear form, non-degenerate 
   \item[$\tr_{(\Gamma, \omega)}$] $\omega$-trace
   \item[$\gl(V), \ \fsl(V)$] general and special linear Lie colour superalgebras
   \item[$\osp(V; \kappa)$] orthosymplectic Lie colour superalgebra
   \item[$\fg$] Lie $(\Gamma, \omega)$-algebra
   \item[$\CL(\fg)$] loop Lie $(\Gamma, \omega)$-algebra
   \item[$\wh\fg$] central extension of $\CL(\fg)$
     
\end{description}

\bigskip

\item[Section  4]{ \ }

\begin{description}[
  leftmargin=1.5cm,
  labelwidth=3.5cm,
  labelsep=0.3cm
]

    \item[$A$] reduced Cartan matrix, finite or affine type 
    \item[$\Phi$; $\Pi$] the set of roots and the set of simple roots
       
    \item[$\Xi$] sequence in $\Gamma$ 
     \item[$\U_{q, \Xi}(A)$] colour quantum group over $\C[q, q^{-1}]$
     \item[$\Delta, \varepsilon, S$] co-multiplication, co-unit, antipode
     \item[$\U^\pm_{q, \Xi}(A)$] quantum Borel subalgebras, \eqref{eq:subgroups}
      \item[$\U^0, \U^+_+, \U^-_-$] subalgebras of $\U^\pm_{q, \Xi}(A)$,  \eqref{eq:subgroups}
    \item[$\wt\U_{q, \Xi}(A)$] auxiliary colour quantum group over $\C[q, q^{-1}]$
     \item[$\wt\U^\pm_{q, \Xi}(A)$] auxiliary quantum Borel subalgebras, \eqref{eq:aux-subgp}
     \item[$\wt\U^0, \wt\U^+_+, \wt\U^-_-$] 	subalgebras of $\wt\U^\pm_{q, \Xi}(A)$, 
     																	\eqref{eq:aux-subgp}
    \item[$S_{i j}^\pm$] colour quantum Serre elements, \eqref{eq:def-S+}, \eqref{eq:def-S-}
    \item[$\CJ$] homogeneous Hopf ideal, Definition \ref{def:Serre-ideal}
    \item[$\CJ^+_+, \CJ^-_-$]  Definition \ref{def:Serre-ideal}
    \item[$\hat{E}_{; i}^{R, \pm}$, $\hat{E}_{; i}^{L, \pm}$] equation \eqref{eq:deriv-i}

\end{description}

\bigskip
\item[Section  5] {\ }

 \begin{description}[
  leftmargin=1.5cm,
  labelwidth=3.5cm,
  labelsep=0.3cm
]
    \item[\text{$\C[[\hbar]]$}]  power series ring with $\hbar$-adic topology
    \item[\text{$\C((\hbar))$}]  Laurent series ring with $\hbar$-adic topology
     \item[\text{lattice topology}] Definition \ref{def:latt}
    \item[$\U_\hbar(A; \Xi)_R$] topological colour quantum group over $\C[[\hbar]]$
    \item[$\U_\hbar(A; \Xi)$]    lattice topological colour quantum group over $\C((\hbar))$
    \item[$\wt\U^+_\hbar(A;\Xi)_R$] auxiliary quantised Borel subalgebra over $\C[[\hbar]]$ 
    \item[$\wt\U^+_\hbar(A;\Xi)$]  auxiliary quantised Borel subalgebra over $\C((\hbar))$
    \item[$\wt\U^0, \wt\U_+$]  subalgebras of $\wt\U^+_\hbar(A;\Xi)$
	\item[$(\wt\U_+)_\mu$] weight $\mu$ subspace
    \item[$\wt\U^+_\hbar(A;\Xi)^\vee$] lattice continuous dual of $\wt\U^+_\hbar(A;\Xi)$,  
    														a $(\Gamma, \omega)$-algebra
     \item[$\wt\U^+_\hbar(A;\Xi)'$]  $(\Gamma, \omega)$-subalgebra 
     													of $\wt\U^+_\hbar(A;\Xi)^\vee$, a Hopf $(\Gamma, \omega)$-algebra
     \item[$\langle \  , \ \rangle$]		Hopf pairing between $\wt\U^+_\hbar(A;\Xi)$ and 
     								$\wt\U^+_\hbar(A;\Xi)'$
    \item[$\CJ^\pm$] 	right and left radicals of Hopf pairing 
     \item[$S^+_{i j}\in  \wt\U_+$] colour quantum Serre elements
     
     \item[$\U^+_\hbar(A;\Xi)'$] $\wt\U^+_\hbar(A;\Xi)'/\CJ^-$
     \item[$\U^-_\hbar(A;\Xi)$]  $\U^-_\hbar(A;\Xi)=\U^+_\hbar(A;\Xi)'$
     \item[$\SD_\hbar(A; \Xi)$] quantum double of $\wt\U^+_\hbar(A;\Xi)$
     \item[$\SR$] universal $R$-matrix of $\SD_\hbar(A; \Xi)$
\end{description}

\bigskip

\item[Appendix A]{ \ }

\begin{description}[
  leftmargin=1.5cm,
  labelwidth=3.5cm,
  labelsep=0.3cm
]

	\item[$H^0$]  finite dual Hopf $(\Gamma, \omega)$-algebra of $H$
	\item[$\rhd$, \  $\lhd$]  left,  right $H$-actions on $H^0$, \eqref{eq:H-on-H0}
	\item[$\brhd$, \ $\blhd$]  left,   right $H^0$-actions on $H$, \eqref{eq:H0-on-H}
	\item[$\psi$]                twist isomorphism $H^0\ot H \lra H\ot H^0$, \eqref{eq:tmult-1}
    \item[\text{lattice equipped}] Definition \ref{def:latt-vect}
     \item[\text{lattice continuous}] Definition \ref{def:latt-vect}
    \item[\text{lattice topology}] Definition \ref{def:latt}
    \item[\text{$\wh\ot_{\C[[\hbar]]}$}]   	completed tensor product over $\C[[\hbar]]$
    														 	in $\hbar$-adic topology
    \item[\text{$\wh\ot_{\C((\hbar))}$}] 	completed tensor product over $\C((\hbar))$
    															in lattice topology
\end{description}
\end{description}

\bigskip

\noindent{\bf Acknowledgements}. 
We thank Naruhiko Aizawa, Zhanna Kuznetsova, Francesco Toppan  and Joris Van der Jeugt for sharing 
their insights on Lie colour algebras.
The main results of this work were reported at 
the 2025 Autumn Conference on Lie Theory,  Nov. 14–16, 2025, 
East China Normal University, Shanghai. We thank the organisers for the invitation 
and the audience for comments.


\bigskip
\noindent{\bf Declaration}.
The authors declare that they have no known competing financial interests or personal relationships 
that could have appeared to influence the work reported in this paper.



\end{document}